\documentclass{amsart}
\usepackage{amsmath}
\usepackage{amssymb}
\usepackage[all]{xy}
\usepackage{graphicx}
\usepackage{mathrsfs}

\setcounter{tocdepth}{1}

\usepackage{hyperref}
\newcommand{\eqrefh}[1]{{\textup{(\ref*{#1})}}}

\let\oldeqn\equation
\let\endoldeqn\endequation
\renewenvironment{equation}{\oldeqn\setlength{\thickmuskip}{10mu plus 5mu}}{\endoldeqn\relax}
\catcode`\*11
\let\oldeqn*\equation*
\let\endoldeqn*\endequation*
\renewenvironment{equation*}{\oldeqn*\setlength{\thickmuskip}{10mu plus 5mu}}{\endoldeqn*\relax}
\catcode`\*12


\usepackage{tikz}
\numberwithin{figure}{section}
\usepackage{subfigure}
\usetikzlibrary{arrows}
\usetikzlibrary{shapes}

\newdimen{\compunum}
\def\getxscale#1#2#3#4#5#6{#1}
\def\getyscale#1#2#3#4#5#6{#4}
\newcommand{\compuinit}{%
  \pgfgettransform{\ta}%
  \edef\xscreg{\expandafter\getxscale\ta}%
  \edef\yscreg{\expandafter\getyscale\ta}%
  \pgfextractx{\compunum}{\pgfpointxyz{0}{0}{1}}%
  \pgfmathsetmacro{\pasx}{\compunum / 1cm}%
  \pgfextracty{\compunum}{\pgfpointxyz{0}{0}{1}}%
  \pgfmathsetmacro{\pasy}{\compunum / 1cm}%
  \pgfmathsetmacro{\pasw}{\yscreg / \xscreg}}
\newcommand{\al}{\scriptstyle}
\newcommand{\qc}[1]{$\vcenter{\hbox{#1}}$}
\newcommand{\vc}[1]{\vcenter{\hbox{#1}}}


\tikzstyle{rearcell}=[white,xscale=-1,fill=gray!20,fill opacity=0.9]
\tikzstyle{frontcell}=[white,fill=gray!75,fill opacity=0.7]

\tikzstyle{patch}=[minimum width=0.5cm*\xscreg,minimum height=0.5cm*\yscreg]
\tikzstyle{rpatch}=[patch,rearcell]
\tikzstyle{fpatch}=[patch,frontcell]

\tikzstyle{flippatch}=[minimum width=0.5cm*\yscreg,minimum height=0.5cm*\xscreg]
\tikzstyle{rflippatch}=[flippatch,rearcell]
\tikzstyle{fflippatch}=[flippatch,frontcell]

\tikzstyle{horizc}=[xslant=\pasx / (\pasy * \pasw),yscale=-\pasy]
\tikzstyle{vertc}=[yslant= \pasy  * \pasw / \pasx,xscale=\pasx]
\tikzstyle{cubef}=[fpatch]
\tikzstyle{cubel}=[vertc,fpatch]
\tikzstyle{cubed}=[vertc,rpatch]
\tikzstyle{cubet}=[horizc,fpatch]
\tikzstyle{cubeb}=[horizc,rpatch]
\tikzstyle{cuber}=[rpatch]

\tikzstyle{diagup}=[xslant=\pasx / ((\pasy-0.5) * \pasw),yscale=0.5-\pasy]
\tikzstyle{diagdp}=[xslant=\pasx / ((\pasy+0.5) * \pasw),yscale=0.5+\pasy]
\tikzstyle{vertp}=[isosceles triangle stretches=true,shape=isosceles triangle,yslant=\pasy*\pasw / \pasx,xscale=\pasx]
\tikzstyle{prismtf}=[diagup,fpatch]
\tikzstyle{prismbf}=[diagdp,fpatch]
\tikzstyle{prismlf}=[vertp,fflippatch]
\tikzstyle{prismdf}=[vertp,shape border rotate=180,rflippatch]

\tikzstyle{prismtr}=[diagdp,rpatch]
\tikzstyle{prismbr}=[diagup,rpatch]
\tikzstyle{prismlr}=[vertp,shape border rotate=180,fpatch]
\tikzstyle{prismdr}=[vertp,rpatch]

\tikzstyle{diagut}=[isosceles triangle stretches=true,shape=isosceles triangle,shape border rotate=90,xslant=\pasx / ((\pasy-0.5) * \pasw),yscale=0.5-\pasy]
\tikzstyle{diagdt}=[isosceles triangle stretches=true,shape=isosceles triangle,shape border rotate=-90,xslant= \pasx / ((\pasy+0.5) * \pasw),yscale=0.5+\pasy]
\tikzstyle{diaglt}=[isosceles triangle stretches=true,shape=isosceles triangle,yslant=(\pasy * \pasw) / (\pasx-0.5),xscale=0.5-\pasx]
\tikzstyle{diagrt}=[isosceles triangle stretches=true,shape=isosceles triangle,yslant=(\pasy * \pasw) / (\pasx+0.5),xscale=0.5+\pasx]

\tikzstyle{tetrtf}=[diagut,fpatch]
\tikzstyle{tetrbf}=[diagdt,fpatch]
\tikzstyle{tetrlr}=[diaglt,rflippatch]
\tikzstyle{tetrdr}=[diagrt,shape border rotate=180,rflippatch]

\tikzstyle{pyraf}=[isosceles triangle stretches=true,shape=isosceles triangle,shape border rotate=180,cm={(1+0.5*\pasx),0.5*\pasy*\pasw,0,1,(0,0)},fflippatch]
\tikzstyle{pyrar}=[isosceles triangle stretches=true,shape=isosceles triangle,cm={1-0.5*\pasx,-0.5*\pasy*\pasw,0,1,(0,0)},rflippatch]
\tikzstyle{pyrat}=[isosceles triangle stretches=true,shape=isosceles triangle,shape border rotate=180,cm={1,0.5*\pasw,-\pasx/\pasw,-\pasy,(0,0)},fflippatch]
\tikzstyle{pyrab}=[isosceles triangle stretches=true,shape=isosceles triangle,cm={1,-0.5*\pasw,-\pasx/\pasw,-\pasy,(0,0)},rflippatch]

\tikzstyle{tricell}=[isosceles triangle stretches=true,shape=isosceles triangle,fflippatch]
\tikzstyle{bigtricell}=[isosceles triangle stretches=true,shape=isosceles triangle,fflippatch,minimum width=1cm*\yscreg]
\tikzstyle{bigtricellr}=[isosceles triangle stretches=true,shape=isosceles triangle,shape border rotate=180,fflippatch,minimum width=1cm*\yscreg]
\tikzstyle{bigtricellu}=[isosceles triangle stretches=true,shape=isosceles triangle,shape border rotate=90,fpatch,minimum width=1cm*\xscreg]
\tikzstyle{bigtricelld}=[isosceles triangle stretches=true,shape=isosceles triangle,shape border rotate=-90,fpatch,minimum width=1cm*\xscreg]
\tikzstyle{rttricellu}=[isosceles triangle stretches=false,shape=isosceles triangle,shape border uses incircle=true,shape border rotate=225,frontcell,minimum width=0.3cm*sqrt(\yscreg*\yscreg + \xscreg*\xscreg),isosceles triangle apex angle=90]
\tikzstyle{rttricelld}=[isosceles triangle stretches=false,shape=isosceles triangle,shape border uses incircle=true,shape border rotate=135,frontcell,minimum width=0.3cm*sqrt(\yscreg*\yscreg + \xscreg*\xscreg),isosceles triangle apex angle=90]
\tikzstyle{lttricellu}=[isosceles triangle stretches=false,shape=isosceles triangle,shape border uses incircle=true,shape border rotate=-45,frontcell,minimum width=0.3cm*sqrt(\yscreg*\yscreg + \xscreg*\xscreg),isosceles triangle apex angle=90]
\tikzstyle{lttricelld}=[isosceles triangle stretches=false,shape=isosceles triangle,shape border uses incircle=true,shape border rotate=45,frontcell,minimum width=0.3cm*sqrt(\yscreg*\yscreg + \xscreg*\xscreg),isosceles triangle apex angle=90]

\pgfmathsetmacro{\tric}{1 / (1 + sqrt(5))}

\tikzstyle{liner}=[densely dotted]
\tikzstyle{linef}=[preaction={draw=white,-,line width=6pt}]
\tikzstyle{cart}=[minimum width=0.25cm*\xscreg,minimum height=0.25cm*\yscreg,draw=gray]
\tikzstyle{arl}=[auto=left,inner sep=2pt]
\tikzstyle{arr}=[auto=right,inner sep=2pt]

\tikzstyle{stdcube}=[xscale=3,yscale=2.5,z={(0.5,-0.4)}]
\tikzstyle{longcube}=[xscale=6,yscale=2.5,z={(0.35,-0.5)}]
\tikzstyle{longcube2}=[xscale=6.5,yscale=2,z={(0.35,-0.5)}]
\tikzstyle{smallcube}=[xscale=2.2,yscale=1.5,z={(0.5,-0.4)}]
\tikzstyle{smallcube2}=[xscale=2.8,yscale=1.2,z={(0.5,-0.4)}]
\tikzstyle{smallercube}=[xscale=1.8,yscale=1.5,z={(0.5,-0.4)}]
\tikzstyle{vsmallcube}=[xscale=1.5,yscale=1.25,z={(0.5,-0.4)}]
\tikzstyle{vsmallcube2}=[xscale=1.5,yscale=1,z={(0.5,-0.4)}]
\tikzstyle{stdprism}=[xscale=3,yscale=2.5,z={(0.5,-0.1)}]
\tikzstyle{longprism}=[xscale=6,yscale=2.5,z={(0.35,-0.1)}]
\tikzstyle{longprism2}=[xscale=5.5,yscale=2,z={(0.35,-0.1)}]
\tikzstyle{smallprism}=[xscale=2.4,yscale=2,z={(0.5,-0.1)}]
\tikzstyle{stdtetr}=[scale=3,z={(-0.2,0.01)}]
\tikzstyle{stdtriangle}=[scale=1.9]
\tikzstyle{stdtriangle2}=[xscale=2.5, yscale=1.5]
\tikzstyle{vsmalltriangle}=[scale=1.2]
\tikzstyle{vsmalltriangle2}=[scale=1.8]
\tikzstyle{stdpyra}=[xscale=4,yscale=2.5,z={(-0.4,-0.6)}]


\newdir{| |}{{}*!/:a(0)15pt/@{}}
\newdir{|o|}{{}*!/:a(0)30pt/@{}}
\newdir{|oo|}{{}*!/:a(0)45pt/@{}}
\newdir{>| |}{!/3pt/@{}*:(1,-.2)@^{>}*:(1,+.2)@_{>}*!/:a(180)15pt/@{}}
\newdir{>|o|}{!/3pt/@{}*:(1,-.2)@^{>}*:(1,+.2)@_{>}*!/:a(180)30pt/@{}}
\newdir{>|oo|}{!/3pt/@{}*:(1,-.2)@^{>}*:(1,+.2)@_{>}*!/:a(180)45pt/@{}}

\newcommand{\C}{\mathbb{C}}
\newcommand{\bk}{\Bbbk}
\newcommand{\Qlb}{\overline{\mathbb{Q}}_\ell}
\newcommand{\fK}{\mathfrak{K}}
\newcommand{\fO}{\mathfrak{O}}
\newcommand{\Z}{\mathbb{Z}}
\newcommand{\Gm}{\mathbb{G}_m}
\newcommand{\bA}{\mathbb{A}}
\newcommand{\pt}{\mathrm{pt}}

\newcommand{\Gv}{{\check G}}
\newcommand{\Tv}{{\check T}}
\newcommand{\Bv}{{\check B}}
\newcommand{\Lv}{{\check L}}
\newcommand{\GO}{G(\fO)}
\newcommand{\LO}{L(\fO)}
\newcommand{\TO}{T(\fO)}
\newcommand{\fg}{\mathfrak{g}}
\newcommand{\fn}{\mathfrak{n}}
\newcommand{\fb}{\mathfrak{b}}
\newcommand{\ft}{\mathfrak{t}}
\newcommand{\fl}{\mathfrak{l}}
\newcommand{\fp}{\mathfrak{p}}

\newcommand{\fu}{\mathfrak{u}}
\newcommand{\fh}{\mathfrak{h}}
\newcommand{\fc}{\mathfrak{c}}
\newcommand{\W}{{W}}

\newcommand{\dT}{\mathbf{T}}
\newcommand{\dTv}{\check{\dT}}
\newcommand{\bW}{\mathbf{W}}
\newcommand{\fH}{\mathfrak{H}}

\newcommand{\PGL}{\mathrm{PGL}}
\newcommand{\GL}{\mathrm{GL}}
\newcommand{\SL}{\mathrm{SL}}
\newcommand{\fgl}{\mathfrak{gl}}
\newcommand{\tglt}{\widetilde{\fgl}(2)}

\newcommand{\bX}{\mathbf{X}}

\newcommand{\Rv}{{\check R}}
\newcommand{\la}{\lambda}
\newcommand{\rhov}{{\check \rho}}
\newcommand{\alv}{{\check \alpha}}


\newcommand{\Gr}{\mathsf{Gr}}
\newcommand{\cL}{\mathcal{L}}
\newcommand{\tcN}{\widetilde{\mathcal{N}}}
\newcommand{\cN}{{\mathcal{N}}}
\newcommand{\cM}{\mathcal{M}}
\newcommand{\fT}{\mathfrak{T}}
\newcommand{\sm}{\mathsf{sm}}
\newcommand{\rs}{\mathrm{rs}}

\newcommand{\cD}{\mathcal{D}}
\newcommand{\cDb}{\cD^{\mathrm{b}}}
\newcommand{\sInd}{\mathsf{Ind}}
\newcommand{\hamma}{\gamma}
\newcommand{\Hamma}{\Gamma}
\newcommand{\Perv}{\mathsf{Perv}}
\newcommand{\PPerv}{\mathsf{P}}
\newcommand{\sP}{\mathsf{P}}
\newcommand{\IC}{\mathrm{IC}}
\newcommand{\p}{{}^p\!}

\newcommand{\Spr}{\underline{\mathsf{Spr}}}
\newcommand{\Groth}{\underline{\mathsf{Groth}}}
\newcommand{\ubk}{\underline{\Bbbk}}
\newcommand{\cT}{\mathcal{T}}

\newcommand{\Rep}{\mathsf{Rep}}
\newcommand{\Vect}{\mathsf{M}}
\newcommand{\sV}{\mathsf{M}}
\newcommand{\For}{\mathsf{For}}

\newcommand{\cA}{\mathcal{A}}
\newcommand{\sCat}{\mathsf{Cat}}
\newcommand{\Yon}{\mathbb{Y}}
\newcommand{\op}{\mathrm{op}}
\newcommand{\bb}{\mathbf{1}}
\newcommand{\sfF}{\mathsf{F}}
\newcommand{\sfG}{\mathsf{G}}
\newcommand{\sfH}{\mathsf{H}}

\newcommand{\BC}{\mathrm{(BC)}}
\newcommand{\Co}{\mathrm{(Co)}}
\newcommand{\Comp}{\hbox to 1pt{\hss$\mathrm{(Co)}$\hss}}
\newcommand{\Adj}{\mathrm{(Adj)}}
\newcommand{\IT}{\mathrm{(InTw)}}
\newcommand{\mFor}{\mathrm{(For)}}
\newcommand{\mInt}{\mathrm{(Int)}}
\newcommand{\Int}{\hbox to 1pt{\hss$\mathrm{(Int)}$\hss}}
\newcommand{\mTr}{\hbox to 1pt{\hss$\mathrm{(Tr)}$\hss}}

\newcommand{\ITr}{\hbox to 1pt{\hss$\mathrm{(ITr)}$\hss}}
\newcommand{\CnstRes}{\mathrm{(CII)}}
\newcommand{\Cnst}{\hbox to 1pt{\hss$\mathrm{(CII)}$\hss}}

\newcommand{\CInt}{\hbox to 1pt{\hss$\mathrm{(CI)}$\hss}}

\newcommand{\CFor}{\hbox to 1pt{\hss$\mathrm{(CF)}$\hss}}
\newcommand{\Poin}{\hbox to 1pt{\hss$\mathrm{(Poin)}$\hss}}
\newcommand{\mIE}{\mathrm{(IE)}}
\newcommand{\IE}{\hbox to 1pt{\hss$\mathrm{(IE)}$\hss}}
\newcommand{\mIEI}{\mathrm{(IEI)}}
\newcommand{\IEI}{\hbox to 1pt{\hss$\mathrm{(IEI)}$\hss}}
\newcommand{\mIBC}{\mathrm{(IBC)}}
\newcommand{\IBC}{\hbox to 1pt{\hss$\mathrm{(IBC)}$\hss}}
\newcommand{\CnstIE}{\mathrm{(CIE)}}
\newcommand{\Rel}{\hbox to 1pt{\hss$\mathrm{(CIE)}$\hss}}

\def\Satake#1{\mathscr{S}_{#1}}
\def\Satakesm#1{\mathscr{S}^{\sm}_{#1}}
\def\Springer#1{\mathbb{S}_{#1}}
\newcommand{\bT}{\mathbb{T}}
\def\ResW#1#2{\mathsf{R}^{#1}_{#2}}
\def\ResG#1#2{\mathrm{R}^{#1}_{#2}}
\def\ResGnonsm#1#2{\overline{\mathrm{R}}^{#1}_{#2}}
\def\ResGzero#1#2{\underline{\mathrm{R}}^{#1}_{#2}}
\def\ResN#1#2{\mathcal{R}^{#1}_{#2}}
\def\ResNderiv#1#2{\widetilde{\mathcal{R}}^{#1}_{#2}}
\def\IndNderiv#1#2{\widetilde{\mathcal{I}}^{#1}_{#2}}
\def\Indg#1#2{\overline{\mathcal{I}}^{#1}_{#2}}
\def\Indgtilde#1#2{\underline{\mathcal{I}}^{#1}_{#2}}

\def\ResGr#1#2{\mathfrak{R}^{#1}_{#2}}
\def\ResGrnonsm#1#2{\overline{\mathfrak{R}}^{#1}_{#2}}
\def\ResGreasy#1#2{\widetilde{\mathfrak{R}}^{#1}_{#2}}
\def\ResGrzero#1#2{\underline{\mathfrak{R}}^{#1}_{#2}}

\newcommand{\simto}{\mathrel{\overset{\sim}{\to}}}
\newcommand{\natisom}{\mathrel{\Longleftrightarrow}}
\newcommand{\directednatisom}{\mathrel{\overset{\sim}{\Longrightarrow}}}
\newcommand{\End}{\mathrm{End}}
\DeclareMathOperator{\Hom}{Hom}
\DeclareMathOperator{\Aut}{Aut}
\DeclareMathOperator{\rank}{rk}
\newcommand{\id}{\mathrm{id}}
\newcommand{\wtimes}{\mathbin{\widetilde{\times}}}
\newcommand{\tboxtimes}{\mathbin{\widetilde{\boxtimes}}}

\newtheorem{thm*}{Theorem}
\numberwithin{equation}{section}
\newtheorem{thm}{Theorem}[section]
\newtheorem{lem}[thm]{Lemma}
\newtheorem{prop}[thm]{Proposition}
\newtheorem{cor}[thm]{Corollary}
\theoremstyle{definition}

\theoremstyle{remark}
\newtheorem{rmk}[thm]{Remark}
\newtheorem{ex}[thm]{Example}

\title[Satake, Springer, Small II]{Geometric Satake, Springer correspondence,\\ and small representations II}

\author{Pramod N. Achar}
\address{Department of Mathematics\\
  Louisiana State University\\
  Baton Rouge, LA 70803\\
  U.S.A.}
\email{pramod@math.lsu.edu}

\author{Anthony Henderson}
\address{School of Mathematics and Statistics\\
  University of Sydney, NSW 2006\\
  Australia}
\email{anthony.henderson@sydney.edu.au}

\author{Simon Riche}
\address{Universit{\'e} Blaise Pascal et CNRS, Laboratoire de  
Math{\'e}ma\-tiques (UMR 6620), Campus universitaire des C{\'e}zeaux,
F-63177 Aubi{\`e}re Cedex, France
}
\email{simon.riche@math.univ-bpclermont.fr}

\subjclass{Primary 17B08, 20G05; Secondary 14M15}

\dedicatory{In memoriam T.~A.~Springer (1926--2011)}

\thanks{P.A. was supported by NSF Grant No.~DMS-1001594.  A.H. was supported by ARC Future Fellowship Grant No.~FT110100504. S.R. was supported by ANR Grants No.~ANR-09-JCJC-0102-01 and No.~ANR-2010-BLAN-110-02.}

\begin{document}

\begin{abstract}
For a split reductive group scheme $\Gv$ over a commutative ring $\bk$ with Weyl group $\W$, there is an important functor $\Rep(\Gv,\bk)\to\Rep(\W,\bk)$ defined by taking the zero weight space. We prove that the restriction of this functor to the subcategory of small representations has an alternative geometric description, in terms of the affine Grassmannian and the nilpotent cone of the Langlands dual group $G$. The translation from representation theory to geometry is via the Satake equivalence and the Springer correspondence. This generalizes the result for the $\bk=\C$ case proved by the first two authors, and also provides a better explanation than in that earlier paper, since the current proof is uniform across all types.
\end{abstract}

\maketitle


\section{Introduction}
\label{sect:intro}

\subsection{}

The close relationship between the geometry of (a portion of) the affine Grassmannian $\Gr$ of a reductive group $G$ and of its nilpotent cone $\cN$, and the implications of that relationship for the representation theory of the dual reductive group $\Gv$, have been much studied in type $A$: see \cite{lus:gp,mvy,mautner2}. Here, continuing the work \cite{ah} of the first two authors, we explore this phenomenon in arbitrary type. 

The `portion' of $\Gr$ that has to be considered in general is the closed subvariety $\Gr^\sm$ consisting of $\GO$-orbits corresponding to \emph{small} representations of $\Gv$ (i.e., those whose weights lie in the root lattice and are such that their convex hull does not contain twice a root). The bridge between this subvariety and the nilpotent cone is provided by a finite map $\pi : \cM \to \cN$, where $\cM$ is an open dense subvariety of $\Gr^\sm$. In type $A$, $\cM$ has two irreducible components, and $\pi$ restricts to an isomorphism on each of these components. In other types, the map $\pi$ is slightly more complicated, but its fibres and its image are described explicitly in \cite{ah}. 

The present paper focuses on the representation-theoretic implications of this map $\pi$. Consider these four functors (which will be defined fully in Section~\ref{sect:preliminaries}):
\begin{itemize}
\item The geometric Satake equivalence $\Satake{G}$ defined in~\cite{mv} restricts to an equivalence $\Satakesm{G}$ between the category $\Perv_{\GO}(\Gr^{\sm},\bk)$ of $\bk$-perverse shea\-ves on $\Gr^\sm$ and the category $\Rep(\Gv,\bk)_{\sm}$ of small $\bk$-representations of $\Gv$.
\item The map $\pi:\cM\to\cN$ gives rise to a functor $\Psi_G: \Perv_{\GO}(\Gr^{\sm},\bk) \to \Perv_G(\cN,\bk)$.
\item The Weyl group $\W$ acts on the zero weight space of any representation of $\Gv$. Tensoring this action with the sign character, we obtain a functor $\Phi_{\Gv}: \Rep(\Gv,\bk)_{\sm} \to \Rep(\W,\bk)$.
\item $\W$ also acts on the Springer sheaf $\Spr$ in $\Perv_G(\cN,\bk)$, giving rise to a functor $\Springer{G}=\Hom(\Spr,-):\Perv_G(\cN,\bk)\to\Rep(\W,\bk)$, which implements the Springer correspondence over $\bk$. 
\end{itemize}
These functors form the diagram:
\begin{equation}\label{eqn:main}
\vcenter{\xymatrix@C=2.5cm@R=0.7cm{
\Perv_{\GO}(\Gr^{\sm},\bk) \ar[d]_-{\Psi_G} \ar[r]^-{\Satakesm{G}}_-{\sim} & \Rep(\Gv,\bk)_{\sm} \ar[d]^-{\Phi_{\Gv}} \\
\Perv_G(\cN,\bk) \ar[r]^-{\Springer G} & \Rep(\W,\bk).
}}
\end{equation}
By~\cite[Theorem 1.3]{ah}, this diagram commutes when $\bk = \C$. But the proof in~\cite{ah} was not totally satisfactory: after reducing to the case of simple $G$ and irreducible small representations, it relied on case-by-case arguments, including Reeder's computations of zero weight spaces~\cite{reeder,reeder-additional,reeder2}. The main result of this paper is that \eqref{eqn:main} commutes for any ring $\bk$ for which the geometric Satake equivalence holds.

\begin{thm}  \label{thm:main-result}
Let $\bk$ be any Noetherian commutative ring of finite global dimension. Then there is a canonical isomorphism of functors:
\[
\Phi_{\Gv}\circ\Satakesm{G}\;\natisom\;\Springer{G}\circ\Psi_G.
\]
\end{thm}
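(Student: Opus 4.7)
My strategy will be to exhibit both compositions as outputs of a common restriction-theoretic machinery indexed by Levi subgroups of $G$, then reduce by dévissage to cases that can be settled uniformly across all types.

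\textbf{Step 1: A functorial comparison morphism.} I would first unpack $\Springer{G}\circ\Psi_G$ using adjunctions. Since $\Spr = \mu_{\ast}\ubk_{\tcN}$ for the Springer resolution $\mu:\tcN\to\cN$, and since $\Psi_G(\mathcal{F})$ arises from $\pi:\cM\to\cN$ applied to $\mathcal{F}|_{\cM}$, one has a natural isomorphism
\[
\Hom(\Spr,\Psi_G(\mathcal{F}))\;\simeq\;\Hom_{\cM}(\pi^{\ast}\Spr,\mathcal{F}|_{\cM}),
\]
and by proper base change $\pi^{\ast}\Spr$ is the pushforward of the constant sheaf along the projection from the fiber product $\cM\times_{\cN}\tcN\to\cM$. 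On the other side, $\Phi_{\Gv}\circ\Satakesm{G}$ can be realized geometrically via the Mirkovi\'c--Vilonen weight functors, which compute the zero weight space of the Satake image as the (appropriately shifted) cohomology of the semi-infinite cell associated with the cocharacter $0$. A natural comparison map between these two models is then built from the geometric correspondence relating $T$-fixed data on $\Gr^{\sm}$ to the fiber product $\cM\times_{\cN}\tcN$.

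\textbf{Step 2: Compatibility with Levi restriction.} The presence of macros such as $\ResGr{G}{L}$, $\ResN{G}{L}$ and $\ResW{W}{W_L}$ suggests that for each Levi subgroup $L\subset G$ the paper sets up restriction functors on the three categories appearing in \eqref{eqn:main}. The crucial technical input is that each of the four functors in the square intertwines its $G$- and $L$-versions through these restrictions: for $\Satakesm{}$ this is the classical geometric constant-term compatibility, while for $\Springer{}$ it is the parabolic-induction compatibility of the Springer correspondence. One would then prove the corresponding compatibilities for $\Phi_{\Gv}$ (essentially tautological, since the zero weight space is preserved by restriction to any Levi containing~$T$) and, most importantly, for $\Psi_G$ and for the comparison morphism constructed in Step~1.

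\textbf{Step 3: Dévissage and base case.} Given the compatibilities of Step~2, checking that the comparison morphism is an isomorphism can be propagated up and down the lattice of Levi subgroups. Taking $L=T$ reduces to comparing the underlying $\bk$-modules, where both sides visibly compute the zero weight space of the Satake image; taking $L$ to be a minimal Levi covering a simple root reduces the Weyl-group equivariance to a rank-one calculation that is elementary and type-independent. Since $W$-representations are detected by restriction to (a suitable family of) these Levis, the two steps combine to conclude the theorem uniformly.

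\textbf{Main obstacle.} The heart of the argument sits in Step~2, specifically in proving that $\Psi_G$ intertwines the geometric constant-term functor on $\Gr^{\sm}$ with the parabolic-restriction functor on $\Perv_G(\cN,\bk)$. This requires a precise understanding of how the finite map $\pi:\cM\to\cN$ interacts with the Mirkovi\'c--Vilonen stratification by semi-infinite orbits, and how the fiber product $\cM\times_{\cN}\tcN$ decomposes under passage to a Levi. Once this restriction-compatibility of $\Psi_G$ is established uniformly in $G$, the classical restriction-compatibilities of Satake and Springer, together with the adjunction manipulations of Step~1, deliver the theorem without any appeal to type-by-type tables.
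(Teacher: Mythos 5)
Your overall architecture (intertwine every functor with restriction to a Levi, then reduce to $T$ for the underlying $\bk$-modules and to rank-one Levis for $\W$-equivariance) is the same as the paper's, but the proposal has two genuine gaps. First, Step 1 never actually constructs the comparison morphism: the phrase ``a natural comparison map \dots{} is then built from the geometric correspondence relating $T$-fixed data on $\Gr^{\sm}$ to the fiber product $\cM\times_{\cN}\tcN$'' is precisely the content of the theorem, not an input to it. There is no known direct correspondence between the semi-infinite cell $\fT_0$ (which computes the zero weight space via the Mirkovi\'c--Vilonen weight functor) and $\cM\times_{\cN}\tcN$ from which such a map falls out. The paper sidesteps this entirely: it does \emph{not} build $\alpha_G$ directly, but characterizes it as the unique isomorphism whose restriction to $T$ equals a composite $\phi_{G,T}$ of five already-constructed intertwining isomorphisms, with existence supplied by the rank-one case plus Proposition~\ref{prop:functors-Rep-W}. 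Without a concrete Step-1 map, your Step 3 has nothing to propagate.

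Second, even granting the intertwining isomorphisms of your Step 2, the d\'evissage does not close as stated. To apply the ``detected by simple reflections'' principle you need, for each rank-one Levi $L$, an isomorphism $\phi^{\W_L}$ whose image under $\ResW{\W_L}{\W_T}$ is \emph{equal to} (not merely isomorphic to) the single fixed identification $\phi_{G,T}$ of underlying $\bk$-modules. This forces the $(G,L)$, $(L,T)$ and $(G,T)$ intertwining isomorphisms to be compatible with the transitivity isomorphisms of the restriction functors --- the commutativity of the four prisms \eqref{eqn:phi-prism}--\eqref{eqn:springer-prism} --- and your proposal never addresses this coherence. Relatedly, you call the restriction compatibility of $\Springer{G}$ ``the parabolic-induction compatibility of the Springer correspondence'' as if it were classical; over general $\bk$ it is not, and proving it (including the exactness of $\ResN{G}{L}$ on $\Perv_G(\cN,\bk)$ and the $\W_L$-equivariance of the induction isomorphism for $\Spr$) is the longest and hardest part of the paper, not a citation. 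By contrast, the $\Psi_G$ intertwiner you flag as the main obstacle reduces to a few cartesian squares and is comparatively routine.
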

\noindent
(The sense in which the isomorphism is canonical is explained in~\S\ref{subsect:canonical}.)
Theorem~\ref{thm:main-result} provides a geometric construction of the functor $\Phi_{\Gv}$, valid in much greater generality than in~\cite{ah}. Notably, our result applies in the setting of modular representation theory, when $\bk$ is a field of positive characteristic; see~\S\ref{subsect:positive-characteristic}. When $\bk=\C$, it provides a new proof of Reeder's results and Broer's covariant restriction theorem; see~\S\ref{subsect:characteristic-zero}.

Moreover, our proof of Theorem~\ref{thm:main-result} is uniform, and thus provides a better explanation of the commutativity of \eqref{eqn:main} than~\cite{ah} did. Indeed, for general $\bk$, a case-by-case argument does not seem feasible: the irreducibles in $\Rep(\Gv,\bk)_{\sm}$ and $\Rep(\W,\bk)$ are poorly understood, and in any case, calculations with irreducibles would be insufficient, since the categories in~\eqref{eqn:main} need not be semisimple.

Note that the geometric results of~\cite{ah} are not superseded by this paper, and that~\cite[Theorem 1.1]{ah} is required in order to define the functor $\Psi_G$.

\subsection{}

Our approach is based on the following elementary observation: \emph{Any representation of $\W$ is determined by the action of the simple reflections.}  The proof of Theorem~\ref{thm:main-result} can be thought of as having just two steps:
\begin{enumerate}
\item For $G$ of semisimple rank~$1$,~\eqref{eqn:main} commutes by direct computation.  
\item Every functor in~\eqref{eqn:main} commutes with `restriction to a Levi subgroup'. 
\end{enumerate}
Together, these two statements imply that~\eqref{eqn:main} becomes commutative after composition with any forgetful functor $\Rep(\W,\bk) \to \Rep(\W_L,\bk)$, where $\W_L$ is the Weyl group of a rank-$1$ Levi subgroup.  The elementary observation above says that an object of $\Rep(\W,\bk)$ can be recovered from its images in the various $\Rep(\W_L,\bk)$, so one might think that the commutativity of~\eqref{eqn:main} follows.

However, there is a subtlety here, which makes the proof far more difficult than this sketch suggests. Of course, a representation of $\W$ is \emph{not} determined by objects in the various $\Rep(\W_L,\bk)$; rather, we also need identifications of their underlying $\bk$-modules.  The two paths around~\eqref{eqn:main} each yield such an identification, but we need to know that the two identifications are the same.  Hence, when showing that a diagram of functors `commutes', as in Step~(2), it is not enough to show the existence of an isomorphism of functors: we must keep track of what the isomorphism is.

Our arguments are therefore forced to be $2$-categorical.
Most of the `commutative diagrams' in the paper are not the ordinary $1$-dimensional kind, but rather `labelled $2$-computads', which contain $0$-cells (categories), $1$-cells (functors), and $2$-cells (natural transformations).  For such a diagram, commutativity is an assertion about equality of compositions of $2$-cells, rather than isomorphism of compositions of $1$-cells. We explain the necessary $2$-categorical background in Appendix~\ref{sect:cubes}.

Note that the idea of reducing to the rank-$1$ case using geometric restriction functors is not new in the context of the geometric Satake equivalence, see e.g.~\cite{bfm, bf}. However, in these instances this idea is used to prove isomorphisms of objects rather than of functors, so the $2$-categorical subtleties do not arise.

We believe that our method will be useful in proving other isomorphisms of functors in geometric representation theory. With this in mind, we have collected in Appendix~\ref{sect:lemmas} the commutativity lemmas that we invoke throughout the paper, expressing the compatibilities of fundamental functors between derived categories.

The most difficult part of our proof is the result, proved in Section~\ref{sect:springer}, that the functor $\Springer{G}$ commutes with restriction to a Levi subgroup. We should emphasize that this is of independent interest in the theory of modular representations of $W$ (independent, that is, of any consideration of $\Gv$ or its small representations): it is a generalization of the restriction theorem in characteristic-$0$ Springer theory. The restriction-to-a-Levi functor for perverse sheaves on the nilpotent cone, which we prove to be exact in Proposition~\ref{prop:ResN-exact}, is studied further in~\cite{am,genspring1,genspring2}.

\subsection{}

Consider the case when $G = \GL(n,\C)$, so that $\W=\mathfrak{S}_n$ and $\Gv \cong \GL(n,\bk)$. In this case, $\Gr^{\sm}$ has two irreducible components (at least when $n\geq 3$ -- see~\cite[\S4.1]{ah} for details). For convenience, replace $\Gr^{\sm}$ with its irreducible component $\Gr^{\sm,+}$, which is essentially the compactification of $\cN$ introduced by Lusztig in~\cite{lus:gp}. The corresponding category $\Rep(\Gv,\bk)_{\sm,+}$ consists of representations of $\GL(n,\bk)$ whose dominant weights are of the form $(\lambda_1-1, \ldots, \lambda_n-1)$ where $\lambda=(\lambda_1 \ge \cdots \ge \lambda_n)$ is a partition of $n$. An important object of this category is $E=(\bk^n)^{\otimes n}\otimes\det^{-1}$.

What makes the $\GL(n)$ case special is that the functor $\Perv_{\GO}(\Gr^{\sm,+},\bk) \to \Perv_G(\cN,\bk)$ obtained by restricting $\Psi_G$ is an equivalence of categories, as shown by Mautner~\cite[Theorem 4.1]{mautner2}. Moreover, $\Psi_G(\Satake{G}^{-1}(E))\cong \Spr$, and the action of $\mathfrak{S}_n$ on $\Spr$ corresponds to the action of $\mathfrak{S}_n$ on $E$ defined by permutation of the tensor factors~\cite[(6.1)]{mautner2}. Given this, the commutativity of~\eqref{eqn:main} (or rather, its analogue for $\Gr^{\sm,+}$) is equivalent to a purely representation-theoretic statement:
\begin{equation} \label{eqn:gln-case}
\Phi_{\Gv}:\Rep(\Gv,\bk)_{\sm,+}\to\Rep(\W,\bk)\text{ is isomorphic to }\Hom(E,-).
\end{equation} 
This follows easily from a well-known analogous isomorphism between two definitions of the Schur functor; see~\cite[A.23(5)]{jantzen}. 

In a sense, then, Theorem~\ref{thm:main-result} can be regarded as a generalization to all $\Gv$ of the property \eqref{eqn:gln-case} of $\GL(n)$, with the Springer sheaf $\Spr$ playing the role of $E$. 

\subsection{}
\label{subsect:positive-characteristic}

Suppose that $\bk$ is a field of characteristic $\ell$. The irreducible representations of $\Gv$ are parametrized by their highest weights: let $L(\lambda)$ denote a small irreducible representation with highest weight $\lambda$. We have $L(\lambda)\cong\Satakesm{G}(\IC(\Gr^{\lambda},\bk))$ where $\IC(\Gr^{\lambda},\bk)$ is the simple perverse sheaf supported on the closure of the $\GO$-orbit $\Gr^{\lambda}$. Applying Theorem~\ref{thm:main-result}, we obtain an isomorphism of representations of $\W$:
\begin{equation} \label{eqn:field-case}
\Phi_{\Gv}(L(\lambda))\cong\Springer{G}(\Psi_G(\IC(\Gr^{\lambda},\bk))).
\end{equation}
The conceptual value of~\eqref{eqn:field-case} is best appreciated by considering the dependence of each side on the characteristic $\ell$. On the left-hand side, the dependence of the zero weight space of $L(\lambda)$ on $\ell$ is part of a famously hard problem of modular representation theory. On the right-hand side, the computation of $\Psi_G(\IC(\Gr^{\lambda},\bk))$ is essentially independent of $\ell$ as long as $\ell\neq 2$, as explained in~\cite[Corollary 5.7]{ahjr} (the reason is that the finite map $\pi:\cM\to\cN$, for simple $G$, is generically $1$-to-$1$ or $2$-to-$1$). Thus, setting aside the $\ell=2$ case,~\eqref{eqn:field-case} says that the dependence on $\ell$ of the left-hand side comes about purely through the dependence on $\ell$ of the Springer correspondence. In~\cite[Section 5.2]{ahjr}, explicit knowledge of the modular Springer correspondence is applied to~\eqref{eqn:field-case} in order to determine $\Phi_{\Gv}(L(\lambda))$ for small $L(\lambda)$ when $\ell\neq 2$.

\subsection{}
\label{subsect:characteristic-zero}

When $\bk = \C$, our main result, Theorem~\ref{thm:main-result}, is very similar to~\cite[Theorem~1.3]{ah}. The difference is that the horizontal arrows have been reversed: our current equivalence $\Satakesm{G}$ is inverse to the equivalence `$\mathrm{Satake}$' of \cite{ah}, and our current $\Springer{G}$ is left inverse to the functor `$\mathrm{Springer}$' of \cite{ah} (which in general has no right inverse). Hence the $\bk=\C$ case of Theorem~\ref{thm:main-result} is slightly weaker than~\cite[Theorem~1.3]{ah}. The additional content of the latter result may be restated as follows: when $\bk = \C$, $\Springer{G}$ is faithful on the image of $\Psi_G$, unless $G$ has factors of type $G_2$.

However, as mentioned above, our new proof of Theorem~\ref{thm:main-result} has an advantage even in the $\bk=\C$ case: it is independent of Reeder's calculation of the functor $\Phi_{\Gv}$ in~\cite{reeder,reeder-additional,reeder2}, and thus provides an alternative way to carry out that calculation. Namely, one can compute the right-hand side of \eqref{eqn:field-case} by combining the computation of $\Psi_G(\IC(\Gr^{\lambda},\bk))$ done in~\cite{ah} with the known values of $\Springer{G}$ on simple objects (dictated by the ordinary Springer correspondence). For the exceptional groups, this is not markedly more complex than Reeder's method. 

Finally, we remark that one of the motivations for~\cite{ah} was the search for a geometric proof of Broer's covariant theorem~\cite{broer}.  This theorem can be interpreted in terms of local equivariant cohomology on $\Gr$ and on $\cN$, and~\cite[\S6.4]{ah} explains how to deduce Broer's result from the commutativity of~\eqref{eqn:main} for $\bk = \C$.  In the context of~\cite{ah}, this argument was circular, because some of Reeder's calculations used Broer's result. With our independent proof of Theorem~\ref{thm:main-result}, the geometric proof of Broer's covariant theorem is now complete.

\subsection{}

The main arguments of this paper could also be carried out in the framework of $\infty$-categories developed by Boardman--Vogt~\cite{bv}, Joyal~\cite{joyal}, and Lurie~\cite{lurie}, among others.  Working with $\infty$-categories rather than $2$-categories would offer certain advantages: for instance, uniqueness questions such as those treated in~\cite{power, powern} are automatically subsumed by `higher homotopies'.  The formalism of Grothendieck's six operations has been developed in an $\infty$-categorical context by Liu--Zheng~\cite{lz} (but in the {\'e}tale setting rather than the classical setting used in this paper). The authors construct all the usual isomorphisms between sheaf functors in an $\infty$-categorical way, which means that their construction simultaneously encodes all higher relationships between those isomorphisms.  At least some of the results in Appendix~\ref{sect:lemmas} can be derived easily from~\cite{lz}, see Remark~\ref{rmk:lz}. However, we believe our $2$-categorical setting is more accessible to nonexperts than $\infty$-categories.

\subsection*{Outline of the paper}

In Section~\ref{sect:preliminaries} we set forth our notation and conventions, and define the categories and functors in diagram~\eqref{eqn:main}.  In Section~\ref{sect:plan} we explain the method of proof of Theorem~\ref{thm:main-result}, showing how to reduce to the case when $G$ has semisimple rank $1$, modulo a certain property of our functors: in essence, what we need is that each functor in~\eqref{eqn:main} commutes with restriction to a Levi subgroup, in a way that is compatible with transitivity of restriction. The remainder of the paper verifies the various ingredients of the proof. In Section~\ref{sect:restriction} we define restriction functors for each of the four categories in~\eqref{eqn:main}, and the transitivity isomorphisms that they satisfy. In Sections~\ref{sect:phi-psi}, \ref{sect:satake}, \ref{sect:springer} we prove the required commutativity statements for the functors in~\eqref{eqn:main}.  In Section~\ref{sect:rankone} we complete the proof by considering the rank-$1$ case. Finally, Appendix~\ref{sect:cubes} is a survey of the $2$-categorical formalism that is used in the paper, and Appendix~\ref{sect:lemmas} contains the basic commutativity lemmas for sheaf functors on which our arguments rely.

\subsection*{Acknowledgments}

This work was greatly assisted by discussions with D.~Juteau, whose modular Springer correspondence~\cite{j} was a key inspiration. The authors are also grateful to S.~Lack for helpful advice on $2$-categories, to C.~Mautner for explaining the results in~\cite{mautner} before their appearance in~\cite{mautner2}, and to D.~Ben-Zvi for drawing their attention to~\cite{lz}.


\section{Preliminaries} \label{sect:preliminaries}


\subsection{Notation}
\label{ss:not}

Fix a Noetherian commutative ring $\bk$ of finite global dimension. All our sheaves will have coefficients in $\bk$. If $X$ is a complex algebraic variety (or ind-variety) and $H$ is a complex algebraic group (or pro-algebraic group) acting on $X$, we write $\cDb(X,\bk)$ for the bounded constructible derived category of $X$ with coefficients in $\bk$ (for the strong topology), and $\Perv_H(X,\bk)$ for its full abelian subcategory of $H$-equivariant perverse $\bk$-sheaves on $X$, as considered, for example, by Mirkovi\'c--Vilonen~\cite{mv}. We write $\cDb_H(X,\bk)$ for the constructible equivariant derived category, defined by Bernstein--Lunts~\cite{bl}. On occasion, it will be convenient to consider the perverse subcategory $\Perv_H'(X,\bk)$ of this equivariant derived category, which is equivalent to $\Perv_H(X,\bk)$ when $H$ is connected. For brevity, we sometimes omit $\bk$ from the notation for these categories.

Some of our results have known analogues in the context of $\Qlb$-sheaves for the \'etale topology, as we will mention in remarks. However, the proofs of those analogues often do not carry across: for instance, the Decomposition Theorem of \cite{bbd} does not hold in the setting of $\bk$-sheaves for general $\bk$.

Given a morphism $f:X\to Y$ of varieties, we have functors $f_*,f_!:\cDb(X,\bk)\to\cDb(Y,\bk)$ and $f^*,f^!:\cDb(Y,\bk)\to\cDb(X,\bk)$ as defined in \cite{kas}, and equivariant versions of these defined in \cite{bl}. (We omit the letter $R$ indicating derived functors; instead we use subscripts or exponents ``$0$'' when considering non-derived analogues of these functors.) The isomorphisms and adjunctions satisfied by these functors, and the compatibilities between these, will be our basic computational tools; Appendix~\ref{sect:lemmas} contains the precise statements that we need. 

We use double arrows 
for natural transformations and natural isomorphisms of functors, except in specific sorts of diagrams explained in Appendix~\ref{sect:cubes}. If $\alpha:\mathsf{G}\Longrightarrow\mathsf{H}$ is a natural transformation, and the domain of the functor $\mathsf{F}$ equals the codomain of $\mathsf{G}$ and $\mathsf{H}$, then the induced natural transformation $\mathsf{F}\circ\mathsf{G}\Longrightarrow \mathsf{F}\circ\mathsf{H}$ is written $\mathsf{F}\circ\alpha$ (following~\cite[\S XII.3]{maclane}); similarly for composition on the other side.

We write $\Vect(\bk)$ for the category of finitely-generated $\bk$-modules. If $\Gamma$ is a group scheme over $\bk$ (for instance, a finite group), we write $\Rep(\Gamma,\bk)$ for the category of representations of $\Gamma$ over $\bk$ that are finitely generated over $\bk$, and $\For^{\Gamma}$ for the forgetful functor $\Rep(\Gamma,\bk)\to\Vect(\bk)$. 

Throughout the paper, we let $G$ be a connected reductive algebraic group over $\C$. We choose a Borel subgroup $B$ of $G$ and a maximal torus $T$ of $B$. Let $\fg\supset\fb\supset\ft$ denote the Lie algebras of these groups. Let $U$ be the unipotent radical of $B$, and $\fn$ its Lie algebra. We write $\W_G$ for the Weyl group $N_G(T)/T$. 

We will often consider a parabolic setting, where we have chosen a parabolic subgroup $P$ of $G$ containing $B$, with Levi decomposition $P=LU_P$ where the Levi subgroup $L$ contains $T$. In this context, we let $C$ denote $B\cap L$, which is a Borel subgroup of $L$ containing $T$. 

Of course, $L$ and $T$ are also connected reductive groups, so any notation we define in terms of the triple $G\supset B\supset T$ applies also to $L\supset C\supset T$ and to $T\supset T\supset T$. We generally use subscripts to indicate which group is meant, as for example in the Weyl groups $\W_G$, $\W_L$ and $\W_T$. When only the one group $G$ is under consideration, the subscript $G$ may be omitted (as in Section~\ref{sect:intro}, where we wrote $\W$ for $\W_G$).

\subsection{The geometric Satake equivalence}
\label{ss:Satake}

Let $\fK=\C((t))$, $\fO=\C[[t]]$. The affine Grassmannian $\Gr_G$ is defined to be the ind-variety $G(\fK)/\GO$, on which $\GO$ acts by left translation. We define $\Gr_H$ for an arbitrary algebraic group $H$ in the same way; observe that any homomorphism $H\to H'$ of algebraic groups induces a morphism $\Gr_H\to\Gr_{H'}$, which is injective if $H\to H'$ is injective.

Recall that $\Perv_{\GO}(\Gr_{G},\bk)$ has the structure of a tensor category under the convolution product $\star$ (see \cite{mv}), and that the functor
\[
\sfF_G := H^{\bullet}(\Gr_{G},-) : \Perv_{\GO}(\Gr_{G},\bk) \to \Vect(\bk)
\]
is a tensor functor (see \cite[Proposition 6.3]{mv}). Consider the $\bk$-group scheme
\[
\Gv := \Aut^{\star}(\sfF_G)
\]
of automorphisms of the tensor functor $\sfF_G$. It follows from \cite{mv} and \cite[Proposition 2.8]{dm} that $\Gv$ is a split connected reductive group scheme over $\bk$, dual to $G$ in the sense of Langlands. Moreover, the action of $\Gv$ on $\sfF_G$ gives rise to an equivalence of tensor categories
\[
\Satake{G} : \Perv_{\GO}(\Gr_G,\bk) \xrightarrow{\sim} \Rep(\Gv,\bk),
\]
known as the geometric Satake equivalence. By definition, $\For^{\Gv}\circ\Satake{G}=\sfF_G$.

Let $\bX$ be the cocharacter lattice of $T$, which we identify with $\Gr_T$. We let $\mathbf{t}_{\lambda}$ be the image of $\lambda$ under the embedding $\bX=\Gr_T \hookrightarrow \Gr_G$. Recall that $\Gr_G$ is the union of the $\GO$-orbits 
\[ \Gr^\lambda := \GO\cdot\mathbf{t}_{\lambda}, \] 
and that $\Gr^\lambda=\Gr^\mu$ if and only if $\lambda$ and $\mu$ are in the same $\W_G$-orbit.
Furthermore, $\Gr_G$ is the disjoint union of the $U(\fK)$-orbits
\[
\fT_{\lambda} := U(\fK) \cdot \mathbf{t}_{\lambda},
\]
as $\lambda$ runs over $\bX$. Let $t_{\lambda} : \fT_{\lambda} \hookrightarrow \Gr_G$ be the inclusion.

Using the identification of $\Gr_T$ with $\bX$, the group $\Tv$ (of automorphisms of the tensor functor $\sfF_T$) is identified with the $\bk$-torus $\Hom_{\Z}(\bX,\bk^{\times})$. In particular, the character lattice $X^*(\Tv)$ is canonically identified with $\bX$. Define the functor
\[
\sfF^{\bX} := \bigoplus_{\lambda \in \bX} H^{\bullet} \bigl( \fT_{\lambda}, (t_{\lambda})^!(-) \bigr) : \Perv_{\GO}(\Gr_G,\bk) \to \Rep(\Tv,\bk),
\]
where we identify $\Rep(\Tv,\bk)$ with the category of $\bX$-graded finitely-generated $\bk$-modules. By~\cite[Theorems 3.5, 3.6]{mv}, we have a canonical isomorphism
\begin{equation}
\label{eqn:fibre-functor-Tv}
\For^{\Tv} \circ \sfF^{\bX} \natisom \sfF_G.
\end{equation}
Moreover, $\sfF^{\bX}$ is a tensor functor, and \eqref{eqn:fibre-functor-Tv} is an isomorphism of tensor functors. So $\sfF^{\bX}$ is the composition of $\Satake{G}$ with a tensor functor $\Rep(\Gv,\bk) \to \Rep(\Tv,\bk)$ compatible with forgetful functors. By \cite[Corollary 2.9]{dm}, the latter functor is induced by a group morphism $\iota_{\Tv}^{\Gv}:\Tv \to \Gv$. It is proved in \cite{mv} that $\iota_{\Tv}^{\Gv}$ is injective, and identifies $\Tv$ with a maximal torus of $\Gv$. 

Let $\Rv\subset\bX$ denote the set of roots of $(\Gv,\Tv)$, or in other words coroots of $(G,T)$. The Weyl group $\W_{\Gv}=N_{\Gv}(\Tv)/\Tv$ is identified, as a subgroup of the group of automorphisms of $\bX$, with $\W_G$. We will call it $\W_G$ (or $\W$) rather than $\W_{\Gv}$.

\subsection{The base connected component of $\Gr$}
\label{ss:conn-compt-Gr}

Let $\Gr^\circ$ be the connected component of $\Gr$ containing $\mathbf{t}_0$. This is the union of the $\GO$-orbits $\Gr^\lambda$ where $\lambda$ runs over $\Z\Rv$. Let $z_G : \Gr^{\circ} \hookrightarrow \Gr$ denote the inclusion. We have a fully faithful functor
\[
(z_G)_* : \Perv_{\GO}(\Gr^{\circ},\bk) \to \Perv_{\GO}(\Gr,\bk).
\]
The essential image of $\Satake{G}\circ(z_G)_*$ is the subcategory $\Rep(\Gv,\bk)^{Z(\Gv)}$ of $\Rep(\Gv,\bk)$ consisting of representations whose $\Tv$-weights belong to $\Z\Rv$, or in other words representations on which the centre $Z(\Gv)$ acts trivially. Let $\mathbf{I}_{\Gv} :  \Rep(\Gv,\bk)^{Z(\Gv)} \hookrightarrow \Rep(\Gv,\bk)$
denote the inclusion; then by definition there is a unique equivalence
\[
\Satake{G}^\circ : \Perv_{\GO}(\Gr^\circ,\bk) \xrightarrow{\sim} \Rep(\Gv,\bk)^{Z(\Gv)}
\]
such that
\begin{equation}
\label{eqn:Satake-z-1}
\mathbf{I}_{\Gv} \circ \Satake{G}^\circ = \Satake{G} \circ (z_G)_*.
\end{equation}
Now $(z_G)_*$ is left adjoint to $(z_G)^!$, and $\mathbf{I}_{\Gv}$ is left adjoint to
\[
(-)^{Z(\Gv)} : \Rep(\Gv,\bk) \to \Rep(\Gv,\bk)^{Z(\Gv)},
\]
the functor of taking $Z(\Gv)$-invariants. We therefore obtain a canonical isomorphism
\begin{equation}
\label{eqn:Satake-z-2}
(-)^{Z(\Gv)} \circ \Satake{G} \natisom \Satake{G}^\circ \circ (z_G)^!.
\end{equation}

\subsection{The functor $\Satakesm{G}$}
\label{ss:small-part-Gr}

Recall that $\lambda\in\bX$ is said to be \emph{small} for $\Gv$ if it belongs to the root lattice $\Z \Rv$ and if the convex hull of $\W \cdot \lambda$ does not contain any element of the form $2\alv$ for $\alv \in \Rv$. We denote by $\Gr^{\sm}$ the closed subvariety of $\Gr$ which is the union of the $\GO$-orbits $\Gr^\lambda$ for small $\lambda\in\bX$. Let $f_G : \Gr^{\sm} \hookrightarrow \Gr$ be the inclusion. We have a fully faithful functor
\[
(f_G)_* : \Perv_{\GO}(\Gr^{\sm},\bk) \to \Perv_{\GO}(\Gr,\bk).
\]
The essential image of $\Satake{G}\circ(f_G)_*$ is the subcategory $\Rep(\Gv,\bk)_{\sm}$ of $\Rep(\Gv,\bk)$ consisting of small representations, i.e.~representations whose $\Tv$-weights are small. Let $
\mathbb{I}_{\Gv} : \Rep(\Gv,\bk)_{\sm} \hookrightarrow \Rep(\Gv,\bk)$
denote the inclusion; then by definition there is a unique equivalence of categories
\[
\Satakesm{G} : \Perv_{\GO}(\Gr^{\sm},\bk) \xrightarrow{\sim} \Rep(\Gv,\bk)_{\sm}
\]
such that
\begin{equation}
\label{eqn:Satake-sm-non-sm}
\mathbb{I}_{\Gv} \circ \Satakesm{G} = \Satake{G} \circ (f_G)_*.
\end{equation}

We denote by $f_G^\circ : \Gr^{\sm} \hookrightarrow \Gr^\circ$ and $\mathbb{I}_{\Gv}^\circ : \Rep(\Gv,\bk)_{\sm} \hookrightarrow \Rep(\Gv,\bk)^{Z(\Gv)}$ the inclusions, so that $f_G=z_G f_G^\circ$ and $\mathbb{I}_{\Gv}=\mathbf{I}_{\Gv}\circ\mathbb{I}_{\Gv}^\circ$. Then there is a unique isomorphism
\begin{equation}
\label{eqn:Satake-sm-non-sm-0}
\mathbb{I}_{\Gv}^\circ \circ \Satakesm{G} \natisom \Satake{G}^\circ \circ (f_G^\circ)_*
\end{equation}
that makes the following diagram of isomorphisms commutative, where $\Co$ denotes the composition isomorphism defined in \S\ref{subsect:composition-definitions}:
\begin{equation}
\label{eqn:diagram-f_G-f_G^0}
\vcenter{
\xymatrix@R=15pt{
\mathbf{I}_{\Gv}\circ \Satake{G}^\circ\circ (f_G^\circ)_* \ar@{<=>}[rr]^-{\eqref{eqn:Satake-z-1}} \ar@{<=>}[d]_-{\eqref{eqn:Satake-sm-non-sm-0}} & & \Satake{G} \circ (z_G)_* \circ (f_G^\circ)_* \ar@{<=>}[d]^-{\Co} \\
\mathbf{I}_{\Gv} \circ \mathbb{I}_{\Gv}^\circ \circ \Satakesm{G} \ar@{=}[r] & \mathbb{I}_{\Gv} \circ \Satakesm{G} \ar@{<=>}[r]^-{\eqref{eqn:Satake-sm-non-sm}} & \Satake{G}\circ (f_G)_*.
}
}
\end{equation} 

\subsection{The functor $\Phi_{\Gv}$}

For any $V\in\Rep(\Gv,\bk)$, we have a natural action of $\W$ on the zero weight space $V^{\Tv}$. It is convenient to tensor this action by the sign character $\epsilon:\W\to\bk^\times$;
the resulting map from representations of $\Gv$ to representations of $\W$, together with the obvious map on morphisms, constitutes an exact functor
\[
\Phi^0_{\Gv} : \Rep(\Gv,\bk) \to \Rep(\W,\bk).
\]
The composition $\For^{\W}\circ\Phi^0_{\Gv}:\Rep(\Gv,\bk)\to\Vect(\bk)$ is the functor of $\Tv$-invariants. 
Let
\[
\Phi_{\Gv} := \Phi^0_{\Gv}\circ\mathbb{I}_{\Gv} : \Rep(\Gv,\bk)_{\sm} \to \Rep(\W,\bk)
\]
be the restriction of $\Phi^0_{\Gv}$ to the subcategory of small representations.

\subsection{The functor $\Psi_G$}

Following~\cite{ah}, we set
$\Gr_0^-:= G(\fO^-) \cdot \mathbf{t}_0$, where $\fO^-:=\C[t^{-1}] \subset \fK$, and let $\mathfrak{G}$ be the kernel of the evaluation map $G(\fO^-) \to G$ at $t=\infty$. There is a natural map from $\mathfrak{G}$ to the kernel of the evaluation map $G(\C[t^{-1}]/t^{-2}) \to G$, which we identify with the Lie algebra $\fg$ of $G$. Moreover, we have an isomorphism $\mathfrak{G} \xrightarrow{\sim} \Gr_0^-:g\mapsto g\cdot\mathbf{t}_0$. Hence we obtain a $G$-equivariant morphism
$\pi^{\dag}_G : \Gr_0^- \to \fg$.

We define the open subvariety
\[
\cM := \Gr^{\sm} \cap \Gr_0^-
\]
of $\Gr^{\sm}$, and let $j_G : \cM \hookrightarrow \Gr^{\sm}$ be the inclusion. As $\cM$ is $G$-stable, we have an exact functor
\[
(j_G)^!:\Perv_{\GO}(\Gr^{\sm},\bk)\to\Perv_G(\cM,\bk).
\]
Let $\cN\subset\fg$ be the nilpotent cone. By \cite[Theorem 1.1]{ah}, we have $\pi^{\dag}_G(\cM)\subseteq\cN$, and the restriction $\pi_G : \cM \to \cN$
is a finite morphism. (The assumption in \cite{ah} is that $G$ is simply connected and simple, but the result for general $G$ follows.) Hence $\pi_G$ induces an exact functor $(\pi_G)_* : \Perv_{G}(\cM,\bk) \to \Perv_G(\cN,\bk)$ (see \cite[Corollaire 4.1.3]{bbd}). We then obtain an exact functor
\[
\Psi_G := (\pi_G)_* \circ (j_G)^! : \Perv_{\GO}(\Gr^{\sm},\bk) \to \Perv_G(\cN,\bk).
\]

\subsection{The functor $\Springer{G}$}
\label{ss:functor-Springer}

Recall the Grothendieck--Springer simultaneous resolution 
\[
\mu_\fg:G\times^B\fb\to\fg:(g,x)\mapsto g \cdot x.
\]
It is well known that $\mu_\fg$ is proper and small, so
\[
\Groth := (\mu_\fg)_! \ubk_{G\times^B\fb}[\dim\fg]
\]
is an object of $\Perv_G(\fg,\bk)$. More explicitly, we have a canonical isomorphism
\[
\Groth \cong (j_\fg)_{!*} \bigl( (\mu_\fg^\rs)_! \ubk_{\mu_\fg^{-1}(\fg^\rs)}[\dim\fg] \bigr)
\]
where $j_\fg:\fg^\rs\hookrightarrow\fg$ is the inclusion of the open set of regular semisimple elements, and $\mu_\fg^\rs$ is the restriction of $\mu_\fg$ to $\mu_\fg^{-1}(\fg^\rs)$. Since $\mu_\fg^\rs$ is a Galois covering with group $\W$, we obtain an action of $\W$ on $\Groth$ by automorphisms in $\Perv_G(\fg,\bk)$.

Let $i_\fg:\cN\hookrightarrow\fg$ be the inclusion of the nilpotent cone, and let $r=\dim\fg-\dim\cN$ be the rank of $G$. 
Let $\mu_{\cN}:G\times^B\fn\to\cN$ be the Springer resolution, i.e.~the restriction of $\mu_\fg$ to $G\times^B\fn$. Since $\mu_{\cN}$ is proper and semi-small, the \emph{Springer sheaf}
\[
\Spr := (\mu_\cN)_! \ubk_{G\times^B\fn}[\dim\cN]
\]
is an object of $\Perv_G(\cN,\bk)$. By base change applied to the cartesian square
\[
\vc{\begin{tikzpicture}[vsmallcube]
\compuinit
\node[cart] at (0.75,0.5) {};
\node (lu) at (0,1) {$G \times^B \fn$};
\node (ru) at (1.5,1) {$G \times^B \fb$};
\node (ld) at (0,0) {$\cN_G$};
\node (rd) at (1.5,0) {$\fg$};
\draw[right hook->] (lu) --  (ru);
\draw[->] (lu) -- node[arr] {$\al \mu_{\cN}$} (ld);
\draw[->] (ru) -- node[arl] {$\al \mu_{\fg}$} (rd);
\draw[right hook->] (ld) -- node[arr] {$\al i_\fg$} (rd);
\end{tikzpicture}}
\]
we obtain a canonical isomorphism
\begin{equation}
\Spr \cong (i_\fg)^*\Groth[-r]. 
\end{equation}
We use this isomorphism to define an action of $\W$ on $\Spr$, which
induces a functor
\[
\Springer{G} : \Perv_G(\cN,\bk) \to \Rep(\W,\bk),
\]
defined on objects by $M\mapsto\Hom_{\Perv_G(\cN)}(\Spr,M)$. We will show in Proposition~\ref{prop:springer-exactness} that $\Springer{G}$ is exact, or in other words that $\Spr$ is a projective object in $\Perv_G(\cN,\bk)$.

\begin{rmk} \label{rmk:springer-sign}
The $\W$-action on $\Groth$ was defined by Lusztig~\cite{lus:gp}. From it one may obtain a $\W$-action on $\Spr$ either via the restriction functor $(i_\fg)^*$ as above (following~\cite{bm}), or via a Fourier transform (following~\cite{brylinski}). It is well known in the $\bk=\Qlb$ case that these two actions coincide up to tensoring with the sign character; in~\cite[Theorem 1.1]{ahjr} this result is generalized to $\bk$ as in the current paper. Moreover,~\cite[Corollary 5.3]{ahjr} shows that the functor $\Springer{G}$ implements a Springer correspondence over $\bk$: for any simple object $M$ of $\Perv_G(\cN,\bk)$, $\Springer{G}(M)$ is either an irreducible representation of $\W$ or zero, with each irreducible representation of $\W$ over $\bk$ occurring for a unique $M$. These results are not required for the proof of Theorem~\ref{thm:main-result}.
\end{rmk}


\section{Plan of the proof of Theorem \ref{thm:main-result}}
\label{sect:plan}


From now on, $2$-categorical methods will be ubiquitous.  Before proceeding, the reader may wish to consult Appendix~\ref{sect:cubes} for a survey of the notions we need.

\subsection{An easy result}

For any subgroup $\W'$ of $\W$, let
\[
\ResW{\W}{\W'} : \Rep(\W,\bk) \to \Rep(\W',\bk)
\]
denote the restriction functor. Note that we have $\For^{\W'} \circ \ResW{\W}{\W'} = \For^{\W}$. In particular, we will use the functor $\ResW{\W}{\W'}$ in the case where $\W'$ is the subgroup $\W_s$ generated by a simple reflection $s$. This following proposition is very easy; its proof is left to the reader.

\begin{prop}
\label{prop:functors-Rep-W}
Suppose we have two $\bk$-linear functors $\sfG,\sfH : \cA \to \Rep(\W,\bk)$, where $\cA$ is some $\bk$-linear category,
and a given isomorphism of functors
\[
\phi : \For^{\W} \circ \sfG \directednatisom \For^{\W} \circ \sfH.
\]
Assume that for any simple reflection $s\in\W$ there is an isomorphism of functors 
\[
\phi^{\W_s} : \ResW{\W}{\W_s} \circ \sfG \directednatisom \ResW{\W}{\W_s} \circ \sfH\;\text{ such that }\;\For^{\W_s} \circ \phi^{\W_s} = \phi.
\] 
There is a unique isomorphism of functors $\phi^{\W} : \sfG \directednatisom \sfH$
such that $\For^{\W} \circ \phi^{\W} = \phi$.
\end{prop}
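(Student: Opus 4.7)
The plan is to construct $\phi^{\W}$ object-wise and then invoke faithfulness of $\For^{\W}: \Rep(\W,\bk) \to \Vect(\bk)$ to dispatch naturality, uniqueness, and invertibility automatically. This reduces the whole statement to a single elementary observation: a $\bk$-linear map between two $\W$-representations is $\W$-equivariant as soon as it intertwines the actions of the simple reflections, because these generate $\W$.

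First, for each $A \in \cA$, I would define $\phi^{\W}_A : \sfG(A) \to \sfH(A)$ as follows. The given component $\phi_A$ is a $\bk$-linear isomorphism $\For^{\W}(\sfG(A)) \simto \For^{\W}(\sfH(A))$. For every simple reflection $s \in \W$, the hypothesis $\For^{\W_s} \circ \phi^{\W_s} = \phi$ says precisely that $\phi_A$, as a $\bk$-linear map, commutes with the action of $s$ on the two sides (since $(\phi^{\W_s})_A$ is a morphism of $\W_s$-representations whose underlying $\bk$-linear map is $\phi_A$). Therefore $\phi_A$ commutes with the full $\W$-action and lifts uniquely to a morphism $\phi^{\W}_A$ in $\Rep(\W,\bk)$, which is an isomorphism because $\phi_A$ is.

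Next, I would verify naturality of $A \mapsto \phi^{\W}_A$ by applying $\For^{\W}$ to each putative naturality square in $\Rep(\W,\bk)$: its image is the corresponding naturality square for $\phi$, which commutes by assumption, and faithfulness of $\For^{\W}$ promotes this to commutativity upstairs. The same faithfulness gives the uniqueness of $\phi^{\W}$ subject to $\For^{\W}\circ\phi^{\W}=\phi$.

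There is no real obstacle here; the only substantive input is the generation fact for $\W$ used in the first step, which is why the authors label the result ``very easy.'' The real work will come later, when the simple-reflection hypothesis must be verified for the specific functors appearing in diagram~\eqref{eqn:main}, via the reduction to the rank-$1$ case and the commutation of each functor with restriction to a Levi subgroup outlined in Section~\ref{sect:plan}.
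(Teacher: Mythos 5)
Your argument is correct and is exactly the intended one (the paper omits the proof as ``very easy''): the only substantive input is that the simple reflections generate $\W$, so a $\bk$-linear map intertwining each $s$-action is $\W$-equivariant, and faithfulness of $\For^{\W}$ then handles naturality, invertibility, and uniqueness. Nothing to add.
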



\subsection{Restriction, transitivity and intertwining}
\label{subsect:four-prisms}

To prove Theorem~\ref{thm:main-result}, we must define an isomorphism of functors 
\begin{equation} \label{eqn:main-square}
\alpha_G:\Phi_{\Gv}\circ\Satakesm{G}\directednatisom\Springer{G}\circ\Psi_G.
\end{equation}
As foreshadowed in the introduction, we will construct $\alpha_G$ in a way that is compatible with certain restriction functors from each of the four categories involved to the corresponding category for a Levi subgroup $L$:
\[
\begin{split}
\ResGr{G}{L}&:\Perv_{\GO}(\Gr_G^{\sm},\bk)\to\Perv_{\LO}(\Gr_L^{\sm},\bk),\\
\ResG{\Gv}{\Lv}&:\Rep(\Gv,\bk)_{\sm}\to\Rep(\Lv,\bk)_{\sm},\\
\ResN{G}{L}&:\Perv_G(\cN_G,\bk)\to\Perv_L(\cN_L,\bk),\\
\ResW{\W_G}{\W_L}&:\Rep(\W_G,\bk)\to\Rep(\W_L,\bk).
\end{split}
\]
The functor $\ResW{\W_G}{\W_L}$ was defined above; the other ones will be defined in Section~\ref{sect:restriction}.

As a foretaste of the general definitions, consider the special case where $L = T$.  Note that $\W_T$ is trivial, so $\Rep(\W_T,\bk)=\Vect(\bk)$, and $\ResW{\W_G}{\W_T}=\For^{\W_G}$. Also $\Rep(\Tv,\bk)_{\sm}=\Vect(\bk)$. We define $\ResG{\Gv}{\Tv}=(-)^{\Tv}$.  Next, $\Gr^\sm_T$ and $\cN_T$ are both just single points, so $H^0:\Perv_{\TO}(\Gr_T^{\sm},\bk)\to\Vect(\bk)$ and $H^0:\Perv_T(\cN_T,\bk)\to\Vect(\bk)$ are equivalences of categories.  We will define $\ResGr{G}{T}$ and $\ResN{G}{T}$ in such a way that
\[
H^0\circ\ResGr{G}{T} = H^0_{\fT_0}(\Gr_G, {-})
\qquad\text{and}\qquad
H^0\circ\ResN{G}{T} = H^0_{\fn}(\cN_G, {-}).
\]


For all four restriction functors we will define \emph{transitivity isomorphisms}: 
\[
\begin{array}{ccccccc}
\ResGr{G}{T} & \natisom & \ResGr{L}{T}\circ\ResGr{G}{L}, & \quad & \ResG{\Gv}{\Tv} & \natisom & \ResG{\Lv}{\Tv}\circ\ResG{\Gv}{\Lv},\\
\ResN{G}{T} & \natisom & \ResN{L}{T}\circ\ResN{G}{L}, & \quad & \ResW{\W_G}{\W_T} & \natisom & \ResW{\W_L}{\W_T}\circ\ResW{\W_G}{\W_L}.
\end{array}
\]
The last of these transitivity isomorphisms is simply the identity isomorphism from $\For^{\W_G}$ to itself. The other three will be defined in Section~\ref{sect:restriction}.
(In each case, a more general transitivity isomorphism exists, replacing $T$ by a Levi subgroup contained in $L$. This generality is not needed for the proof of Theorem \ref{thm:main-result}, hence not considered.)

The bulk of our work will be in showing that the four functors in \eqref{eqn:main} intertwine the restriction functors in a way that is compatible with the transitivity isomorphisms. More precisely, we will define \emph{intertwining isomorphisms}:
\[
\begin{array}{ccccccc}
\ResW{\W_G}{\W_L}\circ\Phi_{\Gv}& \natisom & \Phi_{\Lv}\circ\ResG{\Gv}{\Lv}, & \quad & \ResN{G}{L}\circ\Psi_G & \natisom & \Psi_L\circ\ResGr{G}{L},\\
\ResG{\Gv}{\Lv}\circ\Satakesm{G} & \natisom & \Satakesm{L}\circ\ResGr{G}{L}, & \quad & \ResW{\W_G}{\W_L}\circ\Springer{G}& \natisom & \Springer{L}\circ\ResN{G}{L}
\end{array}
\]
and show that the following four prisms commute, in the sense explained in Example~\ref{ex:prism}. Here we label each triangular face by the appropriate transitivity isomorphism, and each square face by the appropriate intertwining isomorphism, whether that is the general $(G,L)$ version or either of the $(G,T)$ and $(L,T)$ versions that are entailed as special cases.
\begin{equation} \label{eqn:phi-prism}
\vc{\begin{tikzpicture}[longprism]
\compuinit
\node[cuber] at (0.5,0.5,0) {$\IT$};
\node[prismdf] at (1,0.5,\tric) {$\mTr$};
\node (rlu) at (0,1,0) {$\Rep(\Gv,\bk)_\sm$};
\node (rru) at (1,1,0) {$\Rep(\W_G,\bk)$};
\node (fr) at (	1,0.5,1) {$\Rep(\W_L,\bk)$};
\node (rld) at (0,0,0) {$\Rep(\Tv,\bk)_\sm$};
\node (rrd) at (1,0,0) {$\Rep(\W_T,\bk)$};
\draw[liner,->] (rru) -- node[arr,pos=.3] {$\al \ResW{\W_G}{\W_T}$} (rrd);
\draw[->] (rlu) -- node[arl] {$\al \Phi_{\Gv}$} (rru);
\draw[->] (rlu) -- node[arr] {$\al \ResG{\Gv}{\Tv}$} (rld);
\draw[->] (rru) -- node[arl] {$\al \ResW{\W_G}{\W_L}$} (fr);
\draw[->] (rld) -- node[arr] {$\al \Phi_{\Tv}$} (rrd);
\draw[->] (fr) -- node[arl] {$\al \ResW{\W_L}{\W_T}$} (rrd);
\node[prismtf] at (0.5,0.75,0.5) {$\IT$};
\node[prismbf] at (0.5,0.25,0.5) {$\IT$};
\node[prismlf] at (0,0.5,\tric) {$\mTr$};
\node (fl) at (0,0.5,1) {$\Rep(\Lv,\bk)_\sm$};
\draw[->] (rlu) -- node[arl] {$\al \ResG{\Gv}{\Lv}$} (fl);
\draw[->] (fl) -- node[arl,pos=.3] {$\al \ResG{\Lv}{\Tv}$} (rld);
\draw[->] (fl)  -- node[arr,pos=.3] {$\al \Phi_{\Lv}$} (fr);
\end{tikzpicture}}
\end{equation}
\vspace{-8pt}
\begin{equation} \label{eqn:psi-prism}
\vc{\begin{tikzpicture}[longprism]
\compuinit
\node[cuber] at (0.5,0.5,0) {$\IT$};
\node[prismdf] at (1,0.5,\tric) {$\mTr$};
\node (rlu) at (0,1,0) {$\Perv_{\GO}(\Gr_G^{\sm},\bk)$};
\node (rru) at (1,1,0) {$\Perv_G(\cN_G,\bk)$};
\node (fr) at (	1,0.5,1) {$\Perv_L(\cN_L,\bk)$};
\node (rld) at (0,0,0) {$\Perv_{\TO}(\Gr_T^{\sm},\bk)$};
\node (rrd) at (1,0,0) {$\Perv_T(\cN_T,\bk)$};
\draw[liner,->] (rru) -- node[arr,pos=.3] {$\al \ResN{G}{T}$} (rrd);
\draw[->] (rlu) -- node[arl] {$\al \Psi_G$} (rru);
\draw[->] (rlu) -- node[arr] {$\al \ResGr{G}{T}$} (rld);
\draw[->] (rru) -- node[arl] {$\al \ResN{G}{L}$} (fr);
\draw[->] (rld) -- node[arr] {$\al \Psi_T$} (rrd);
\draw[->] (fr) -- node[arl] {$\al \ResN{L}{T}$} (rrd);
\node[prismtf] at (0.5,0.75,0.5) {$\IT$};
\node[prismbf] at (0.5,0.25,0.5) {$\IT$};
\node[prismlf] at (0,0.5,\tric) {$\mTr$};
\node (fl) at (0,0.5,1) {$\Perv_{\LO}(\Gr_L^{\sm},\bk)$};
\draw[->] (rlu) -- node[arl] {$\al \ResGr{G}{L}$} (fl);
\draw[->] (fl) -- node[arl,pos=.3] {$\al \ResGr{L}{T}$} (rld);
\draw[->] (fl)  -- node[arr,pos=.3] {$\al \Psi_L$} (fr);
\end{tikzpicture}}
\end{equation}
\vspace{-8pt}
\begin{equation} \label{eqn:satake-prism}
\vc{\begin{tikzpicture}[longprism]
\compuinit
\node[cuber] at (0.5,0.5,0) {$\IT$};
\node[prismdf] at (1,0.5,\tric) {$\mTr$};
\node (rlu) at (0,1,0) {$\Perv_{\GO}(\Gr_G^{\sm},\bk)$};
\node (rru) at (1,1,0) {$\Rep(\Gv,\bk)_\sm$};
\node (fr) at (	1,0.5,1) {$\Rep(\Lv,\bk)_\sm$};
\node (rld) at (0,0,0) {$\Perv_{\TO}(\Gr_T^{\sm},\bk)$};
\node (rrd) at (1,0,0) {$\Rep(\Tv,\bk)_\sm$};
\draw[liner,->] (rru) -- node[arr,pos=.3] {$\al \ResG{\Gv}{\Tv}$} (rrd);
\draw[->] (rlu) -- node[arl] {$\al \Satakesm{G}$} (rru);
\draw[->] (rlu) -- node[arr] {$\al \ResGr{G}{T}$} (rld);
\draw[->] (rru) -- node[arl] {$\al \ResG{\Gv}{\Lv}$} (fr);
\draw[->] (rld) -- node[arr] {$\al \Satakesm{T}$} (rrd);
\draw[->] (fr) -- node[arl] {$\al \ResG{\Lv}{\Tv}$} (rrd);
\node[prismtf] at (0.5,0.75,0.5) {$\IT$};
\node[prismbf] at (0.5,0.25,0.5) {$\IT$};
\node[prismlf] at (0,0.5,\tric) {$\mTr$};
\node (fl) at (0,0.5,1) {$\Perv_{\LO}(\Gr_L^{\sm},\bk)$};
\draw[->] (rlu) -- node[arl] {$\al \ResGr{G}{L}$} (fl);
\draw[->] (fl) -- node[arl,pos=.3] {$\al \ResGr{L}{T}$} (rld);
\draw[->] (fl)  -- node[arr,pos=.3] {$\al \Satakesm{L}$} (fr);
\end{tikzpicture}}
\end{equation}
\vspace{-8pt}
\begin{equation} \label{eqn:springer-prism}
\vc{\begin{tikzpicture}[longprism]
\compuinit
\node[cuber] at (0.5,0.5,0) {$\IT$};
\node[prismdf] at (1,0.5,\tric) {$\mTr$};
\node (rlu) at (0,1,0) {$\Perv_G(\cN_G,\bk)$};
\node (rru) at (1,1,0) {$\Rep(\W_G,\bk)$};
\node (fr) at (	1,0.5,1) {$\Rep(\W_L,\bk)$};
\node (rld) at (0,0,0) {$\Perv_T(\cN_T,\bk)$};
\node (rrd) at (1,0,0) {$\Rep(\W_T,\bk)$};
\draw[liner,->] (rru) -- node[arr,pos=.3] {$\al \ResW{\W_G}{\W_T}$} (rrd);
\draw[->] (rlu) -- node[arl] {$\al \Springer{G}$} (rru);
\draw[->] (rlu) -- node[arr] {$\al \ResN{G}{T}$} (rld);
\draw[->] (rru) -- node[arl] {$\al \ResW{\W_G}{\W_L}$} (fr);
\draw[->] (rld) -- node[arr] {$\al \Springer{T}$} (rrd);
\draw[->] (fr) -- node[arl] {$\al \ResW{\W_L}{\W_T}$} (rrd);
\node[prismtf] at (0.5,0.75,0.5) {$\IT$};
\node[prismbf] at (0.5,0.25,0.5) {$\IT$};
\node[prismlf] at (0,0.5,\tric) {$\mTr$};
\node (fl) at (0,0.5,1) {$\Perv_L(\cN_L,\bk)$};
\draw[->] (rlu) -- node[arl] {$\al \ResN{G}{L}$} (fl);
\draw[->] (fl) -- node[arl,pos=.3] {$\al \ResN{L}{T}$} (rld);
\draw[->] (fl)  -- node[arr,pos=.3] {$\al \Springer{L}$} (fr);
\end{tikzpicture}}
\end{equation}
The definitions of the intertwining isomorphisms for $\Phi_{\Gv}$ and $\Psi_G$, and the proofs that \eqref{eqn:phi-prism} and \eqref{eqn:psi-prism} commute, will be given in Section~\ref{sect:phi-psi}. The definition of the intertwining isomorphism for $\Satakesm{G}$ and the proof that \eqref{eqn:satake-prism} commutes will be given in Section~\ref{sect:satake}. The definition of the intertwining isomorphism for $\Springer{G}$ and the proof that \eqref{eqn:springer-prism} commutes will be given in Section~\ref{sect:springer}.

To illustrate the meaning of the intertwining isomorphism for $\Springer{G}$ (the most difficult to construct), take $L=T$. The functor $\Springer{T}$ is canonically isomorphic to $H^0:\Perv_T(\cN_T,\bk)\simto\Vect(\bk)$. So we obtain an isomorphism $\For^{W_G}\circ\Springer{G} \cong H^0_{\fn}(\cN_G,-)$. The analogous isomorphism in the case of $\Qlb$-sheaves was found in~\cite{a}.

\subsection{Constructing $\alpha_G$}
\label{subsect:main-proof}

Assuming all the definitions and commutativity results referred to in \S\ref{subsect:four-prisms}, the construction of the isomorphism \eqref{eqn:main-square} proceeds as follows. 

First, we construct an analogous isomorphism for $T$.  Recall that $\Gr_T^{\sm}$ and $\cN_T$ are both single points. The composition $\Phi_{\Tv}\circ\Satakesm{T}$ is the equivalence $H^0:\Perv_{\TO}(\Gr_T^{\sm},\bk)\to\Vect(\bk)$. As observed above, $\Springer{T}$ is canonically isomorphic to the equivalence $H^0:\Perv_T(\cN_T,\bk)\to\Vect(\bk)$. Since $\Psi_T:\Perv_{\TO}(\Gr_T^{\sm},\bk)\to\Perv_T(\cN_T,\bk)$ is the obvious identification, we have a canonical isomorphism 
\begin{equation}
\alpha_T:\Phi_{\Tv}\circ\Satakesm{T}\directednatisom\Springer{T}\circ\Psi_T.
\end{equation}

We can now state a more precise version of Theorem~\ref{thm:main-result}.

\begin{thm} \label{thm:main-result-again} 
There is a unique isomorphism $\alpha_G:\Phi_{\Gv}\circ\Satakesm{G}\directednatisom\Springer{G}\circ\Psi_G$ that makes the following cube commutative:
\begin{equation} \label{eqn:main-cube}
\vc{\begin{tikzpicture}[longcube]
\compuinit
\node (rrd) at (1,0,0) {$\Rep(\Tv,\bk)_{\sm}$};
\node[cuber] at (0.5,0.5,0) {$\IT$};
\node[cubed] at (1,0.5,0.5) {$\IT$};
\node[cubeb] at (0.5,0,0.5) {\Large$\alpha_T$};
\node (rlu) at (0,1,0) {$\Perv_{\GO}(\Gr_G^{\sm},\bk)$};
\node (rru) at (1,1,0) {$\Rep(\Gv,\bk)_{\sm}$};
\node (fru) at (1,1,1) {$\Rep(\W_G,\bk)$};
\node (frd) at (1,0,1) {$\Rep(\W_T,\bk)$};
\node (fld) at (0,0,1) {$\Perv_T(\cN_T,\bk)$};
\node (rld) at (0,0,0) {$\Perv_{\TO}(\Gr_T^{\sm},\bk)$};
\draw[liner,->] (rru) -- node[arl,pos=.7] {$\al \ResG{\Gv}{\Tv}$} (rrd);
\draw[liner,->] (rld) -- node[arl] {$\al \Satakesm{T}$} (rrd);
\draw[liner,->] (rrd) -- node[arl] {$\al \Phi_{\Tv}$} (frd);
\draw[->] (rlu) -- node[arl] {$\al \Satakesm{G}$} (rru);
\draw[->] (rru) -- node[arl] {$\al \Phi_{\Gv}$} (fru);
\draw[->] (fru) -- node[arl] {$\al \ResW{\W_G}{\W_T}$}(frd);
\draw[->] (fld) -- node[arr] {$\al \Springer{T}$} (frd);
\draw[->] (rld) -- node[arr] {$\al \Psi_T$} (fld);
\draw[->] (rlu) -- node[arr] {$\al \ResGr{G}{T}$} (rld);
\node[cubel] at (0,0.5,0.5) {$\IT$};
\node[cubet] at (0.5,1,0.5) {\Large$\alpha_G$};
\node[cubef] at (0.5,0.5,1) {$\IT$};
\node (flu) at (0,1,1) {$\Perv_G(\cN_G,\bk)$};
\draw[->] (rlu) -- node[arl,pos=.7] {$\al \Psi_G$} (flu);
\draw[->] (flu) -- node[arr,pos=.3] {$\al \Springer{G}$} (fru);
\draw[->] (flu) -- node[arl,pos=.3] {$\al \ResN{G}{T}$} (fld);
\end{tikzpicture}}
\end{equation}
Here the top face is to be labelled by $\alpha_G$, the bottom face by $\alpha_T$, and the other faces by the appropriate intertwining isomorphisms.
\end{thm}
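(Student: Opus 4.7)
The plan is to apply Proposition~\ref{prop:functors-Rep-W} to the functors $\sfG := \Phi_{\Gv}\circ\Satakesm{G}$ and $\sfH := \Springer{G}\circ\Psi_G$ from $\Perv_{\GO}(\Gr_G^{\sm},\bk)$ to $\Rep(\W_G,\bk)$; both the uniqueness and the existence of $\alpha_G$ will follow once its hypotheses are verified, using the four intertwining prisms \eqref{eqn:phi-prism}--\eqref{eqn:springer-prism} and the rank-$1$ case (treated in Section~\ref{sect:rankone}).

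First I would produce the underlying $\bk$-module isomorphism $\phi : \For^{\W_G}\circ\sfG\directednatisom\For^{\W_G}\circ\sfH$. Since $\Rep(\W_T,\bk)=\Vect(\bk)$ and $\ResW{\W_G}{\W_T}=\For^{\W_G}$, commutativity of the cube \eqref{eqn:main-cube} after composition with $\For^{\W_G}$ is already forced by the other five faces: the bottom face $\alpha_T$ is canonical (because $\Gr_T^{\sm}$ and $\cN_T$ are points and $\Springer{T}$ is canonically $H^0$), and the four side faces are the intertwining isomorphisms constructed in Sections~\ref{sect:phi-psi}--\ref{sect:springer}. I define $\phi$ as the pasted $2$-cell obtained by walking around those five faces. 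Next, for each simple reflection $s\in\W_G$, let $L_s$ be the semisimple-rank-$1$ Levi with $\W_{L_s}=\W_s$. Assuming the rank-$1$ isomorphism $\alpha_{L_s}$ is in hand, I define $\phi^{\W_s}$ by pasting $\alpha_{L_s}\circ\ResGr{G}{L_s}$ with the four side faces of the cube for the pair $(G,L_s)$ --- that is, the intertwining isomorphisms for $\Phi$ and $\Satakesm{}$ on one side and those for $\Psi$ and $\Springer{}$ on the other.

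The hard part will be verifying the compatibility $\For^{\W_s}\circ\phi^{\W_s}=\phi$ required by Proposition~\ref{prop:functors-Rep-W}; this is the $2$-categorical core of the argument. It is here that the four prisms \eqref{eqn:phi-prism}--\eqref{eqn:springer-prism} play their essential role: each prism expresses compatibility of the corresponding intertwining isomorphism with the transitivity $G\supset L_s\supset T$, and together they let one reconstruct the cube for $(G,T)$ by stacking the cube for $(G,L_s)$ on top of the cube for $(L_s,T)$ and mediating the two restriction chains. With the middle face filled by $\alpha_{L_s}$ and the bottom by $\alpha_T$, the outer pasted $2$-cell of this stacked diagram is on one hand $\For^{\W_s}\circ\phi^{\W_s}$ and on the other hand $\phi$. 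Once this check is in place, Proposition~\ref{prop:functors-Rep-W} yields the unique $\alpha_G$ with $\For^{\W_G}\circ\alpha_G=\phi$ and $\ResW{\W_G}{\W_s}\circ\alpha_G=\phi^{\W_s}$ for every simple reflection, and commutativity of the full cube \eqref{eqn:main-cube} is then the definition of $\phi$. The substantive mathematics lies in the prerequisites --- the intertwining isomorphisms and prism commutativities of Sections~\ref{sect:restriction}--\ref{sect:springer} and the rank-$1$ base case of Section~\ref{sect:rankone}; once these are available, the assembly above is formal.
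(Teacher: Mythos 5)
Your proposal is correct and follows essentially the same route as the paper: define $\phi$ (the paper's $\phi_{G,T}$) by pasting the five known faces of the cube, define $\phi^{\W_s}$ (the paper's $\phi_{G,L}$) from the rank-$1$ isomorphism $\alpha_{L_s}$ and the $(G,L_s)$ side faces, verify $\ResW{\W_{L_s}}{\W_T}\circ\phi^{\W_s}=\phi$ by gluing the commutative rank-$1$ cube to the four prisms \eqref{eqn:phi-prism}--\eqref{eqn:springer-prism}, and conclude by Proposition~\ref{prop:functors-Rep-W}. This matches the paper's argument in \S\ref{subsect:main-proof}, including the gluing picture of Figure~\ref{fig:main-picture}.
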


In Section~\ref{sect:rankone}, we will prove Theorem~\ref{thm:main-result-again} in the special case that $G$ has semisimple rank $1$. Assuming that, the proof of Theorem~\ref{thm:main-result-again} in general is as follows.

\begin{proof}
From the isomorphisms already defined we have an isomorphism 
\begin{equation}
\phi_{G,T}:\ResW{\W_G}{\W_T}\circ\Phi_{\Gv}\circ\Satakesm{G}\directednatisom\ResW{\W_G}{\W_T}\circ\Springer{G}\circ\Psi_G,
\end{equation}
namely that obtained as the composition of the five already constructed edges of the hexagon \eqref{eqn:hexagon} associated to our cube:
\begin{equation}
\vcenter{
\xymatrix@C=8pt@R=10pt{
&&\ResW{\W_G}{\W_T}\circ\Phi_{\Gv}\circ\Satakesm{G}\ar@{<=>}[drr]\\
\ResW{\W_G}{\W_T}\circ\Springer{G}\circ\Psi_G\ar@{<=>}[d]&&&&\Phi_{\Tv}\circ\ResG{\Gv}{\Tv}\circ\Satakesm{G}\ar@{<=>}[d]\\
\Springer{T}\circ\ResN{G}{T}\circ\Psi_G\ar@{<=>}[drr]&&&&\Phi_{\Tv}\circ\Satakesm{T}\circ\ResGr{G}{T}\ar@{<=>}[dll]\\
&&\Springer{T}\circ\Psi_T\circ\ResGr{G}{T}
}
}
\end{equation} 
Saying that $\alpha_G$ makes \eqref{eqn:main-cube} commutative amounts to the equality $\ResW{\W_G}{\W_T}\circ\alpha_G=\phi_{G,T}$. 

By Proposition \ref{prop:functors-Rep-W}, the existence and uniqueness of such $\alpha_G$ will follow if we can show that whenever $L$ has semisimple rank $1$, there exists an isomorphism 
\[ 
\phi^{\W_L}:\ResW{\W_G}{\W_L}\circ\Phi_{\Gv}\circ\Satakesm{G}\directednatisom\ResW{\W_G}{\W_L}\circ\Springer{G}\circ\Psi_G\;\text{ such that }\;\ResW{\W_L}{\W_T}\circ\phi^{\W_L}=\phi_{G,T}.
\]
From now on, let $L$ have semisimple rank $1$. By the special case of Theorem~\ref{thm:main-result-again} we are assuming, there is an isomorphism $\alpha_L:\Phi_{\Lv}\circ\Satakesm{L}\directednatisom\Springer{L}\circ\Psi_L$ such that the cube \eqref{eqn:main-cube}, with $G$ replaced by $L$, is commutative, i.e.\ such that $\ResW{\W_L}{\W_T}\circ\alpha_L=\phi_{L,T}$. Then we can glue to this commutative cube the four commutative prisms \eqref{eqn:phi-prism}, \eqref{eqn:psi-prism}, \eqref{eqn:satake-prism}, \eqref{eqn:springer-prism} to produce the labelled $2$-computad shown in Figure~\ref{fig:main-picture}.

\begin{figure}
\scriptsize
\[
\vcenter{
\xymatrix@R=8pt@C=0pt{
\PPerv_{\GO}(\Gr_G^{\sm})\ar[rrrrrrrr]^{\Satakesm{G}}\ar[dddddddd]_{\ResGr{G}{T}}\ar[ddrrrr]^(.6){\Psi_G}\ar[ddddrr]_{\ResGr{G}{L}}&&&&&&&&\Rep(\Gv)_{\sm}\ar'[ddl][ddddll]^(.3){\ResG{\Gv}{\Lv}}\ar'[dd]'[ddddd]^{\ResG{\Gv}{\Tv}}'[dddddd][dddddddd]\ar[ddrrrrrr]^{\Phi_{\Gv}}\\
\\
&&&&\PPerv_G(\cN_G)\ar[rrrrrrrr]^(.3){\Springer{G}}\ar[dddddddd]_(.6){\ResN{G}{T}}\ar[ddddrr]^(.3){\ResN{G}{L}}&&&&&&&&\Rep(\W_G)\ar[dddddddd]_{\ResW{\W_G}{\W_T}}\ar[ddddll]_{\ResW{\W_G}{\W_L}}&&\Rep(\W_G)\ar[ddddddddll]^{\ResW{\W_G}{\W_T}}\ar'[ddll]^(.65){\ResW{\W_G}{\W_L}}[ddddllll]\\
\\
&&\PPerv_{\LO}(\Gr_L^{\sm})\ar'[rr]^(.7){\Satakesm{L}}'[rrr][rrrr]\ar[lldddd]_{\ResGr{L}{T}}\ar'[drr]^(.7){\Psi_L}[ddrrrr]&&&&\Rep(\Lv)_{\sm}\ar'[rdd]^(.7){\ResG{\Lv}{\Tv}}[rrdddd]\ar[ddrrrr]^(.3){\Phi_{\Lv}}&&&&&&\\
&&&&&&&&\\
&&&&&&\PPerv_L(\cN_L)\ar[rrrr]^(.4){\Springer{L}}\ar[lldddd]_(.3){\ResN{L}{T}}&&&&\Rep(\W_L)\ar[rrdddd]^{\ResW{\W_L}{\W_T}}\\
\\
\PPerv_{\TO}(\Gr_T^{\sm})\ar'[rrrr]^(.6){\Satakesm{T}}'[rrrrr][rrrrrrrr]\ar[ddrrrr]_{\Psi_T}&&&&&&&&\Rep(\Tv)_{\sm}\ar[ddrrrr]^{\Phi_{\Tv}}\\
\\
&&&&\PPerv_T(\cN_T)\ar[rrrrrrrr]_{\Springer{T}}&&&&&&&&\Rep(\W_T)
}
}
\]
\caption{(To save space, we abbreviate $\PPerv=\Perv$.)}\label{fig:main-picture}
\end{figure}

Notice that we have glued the prisms together along the triangular faces that they share, except that we have left unglued the two copies of the face labelled by the transitivity isomorphism $\ResW{\W_G}{\W_T}\natisom\ResW{\W_L}{\W_T}\circ\ResW{\W_G}{\W_L}$. Recall that this isomorphism is in fact just an equality.

By the gluing principle of \S\ref{subsect:gluing}, the labelled $2$-computad in Figure~\ref{fig:main-picture} is commutative. Its boundary consists of two pasting diagrams with domain $\ResW{\W_G}{\W_T}\circ\Phi_{\Gv}\circ\Satakesm{G}$ and codomain $\ResW{\W_G}{\W_T}\circ\Springer{G}\circ\Psi_G$, one of which (on the underside of the picture)
has composite $\phi_{G,T}$ and the other of which has composite $\ResW{\W_L}{\W_T}\circ\phi_{G,L}$, where $\phi_{G,L}:\ResW{\W_G}{\W_L}\circ\Phi_{\Gv}\circ\Satakesm{G}\directednatisom\ResW{\W_G}{\W_L}\circ\Springer{G}\circ\Psi_G$ is defined in the same way as $\phi_{G,T}$. 
Hence $\ResW{\W_L}{\W_T}\circ\phi_{G,L}=\phi_{G,T}$, and $\phi_{G,L}$ is the required isomorphism $\phi^{\W_L}$.
\end{proof}

\subsection{Canonicity of $\alpha_G$}
\label{subsect:canonical}

In Section~\ref{sect:preliminaries}, we fixed a choice $B \supset T$, but the isomorphism $\alpha_G$ of Theorem~\ref{thm:main-result-again} is actually independent of this choice.  We conclude this section by briefly explaining why.

To make sense of this assertion, we must first replace the categories and functors in~\eqref{eqn:main-cube} by versions that do not depend on the choice of $B$ and $T$.  If $G \supset B' \supset T'$ is another choice, then there exists $g \in G$ such that $gB g^{-1} = B'$ and $gT g^{-1} = T'$.  The key observation is that although $g$ is not unique, the induced map
\[
B/[B,B] \to B'/[B',B']
\]
is independent of $g$.  Thus, the groups $B/[B,B]$ and $B'/[B',B']$ are canonically identified.  Let $\dT$ denote either one of them; we call $\dT$ the \emph{universal maximal torus} for $G$.  Its Lie algebra $\fH$, the \emph{universal Cartan algebra}, is acted on by a reflection group $\bW$, the \emph{universal Weyl group}.  (See~\cite[Lemma~3.1.26]{CG} and the discussion following it.)  The pair $B \supset T$ determines a unique isomorphism $\W_G \cong \bW$.  Moreover, the induced action of $\bW$ on $\Spr$ is independent of this choice, so the Springer functor $\Springer{G}$ can be regarded as taking values in $\Rep(\bW,\bk)$.

Similar considerations lead to the notion of the \emph{universal zero weight space} of a $\Gv$-module $V$.  Let $\Tv' \subset \Bv' \subset \Gv$ be a maximal torus and a Borel subgroup.  This choice determines a partial order on the set of characters of $\Tv'$.  Let $V_{\ge 0}$ (resp.~$V_{> 0}$) be the submodule on which $\Tv'$ acts with weights that are${}\ge 0$ (resp.${}>0$) in this order.  Then the quotient $V_{\ge 0}/V_{> 0}$ is independent of our choice. Moreover, the universal Weyl group $\bW$ and the universal maximal torus $\dTv$ act canonically on this space (the latter acting trivially).  We therefore have a universal version of $\Phi_{\Gv}$ taking values in $\Rep(\bW,\bk)$, as well as a functor $\ResG{\Gv}{\dTv}$ taking values in $\Rep(\dTv,\bk)_{\sm}$.

The existence of a universal version of $\ResGr{G}{T}$, taking values in $\Perv_{\dT(\fO)}(\Gr^{\sm}_{\dT},\bk)$, is proved in~\cite[Theorem~3.6]{mv}.  This result is less elementary than the situations considered above: roughly, as the choice $B' \supset T'$ varies, the various functors $\ResGr{G}{T'}$ (perhaps better denoted $\ResGr{G}{B' \supset T'}$) can be assembled into a local system on $G/T$.  That local system is trivial because $G/T$ is simply connected, so the various functors $\ResGr{G}{B' \supset T'}$ are canonically isomorphic to one another.  The same argument shows that $\ResN{G}{T}$ has a universal version as well.

For the remaining functors in~\eqref{eqn:main-cube}, the independence of the choice of $B \supset T$ is obvious.  Taken together, the preceding paragraphs describe how to construct a version of~\eqref{eqn:main-cube} whose $1$-skeleton is universal.  A priori, the top face is labelled by a $2$-cell $\alpha_G = \alpha_{G \supset B \supset T}$ that depends on the choice of $B \supset T$, but the uniqueness asserted in Theorem~\ref{thm:main-result-again} implies that $\alpha_{G \supset B \supset T} = \alpha_{G \supset B' \supset T'}$ for any other choice $B' \supset T'$.  In other words, $\alpha_G$ is independent of this choice.


\section{Restriction to a Levi subgroup}
\label{sect:restriction}


Throughout Sections \ref{sect:restriction}--\ref{sect:springer}, we fix a parabolic subgroup $P \subset G$ containing $B$, with Lie algebra $\mathfrak{p}$, and we let $L$ be the unique Levi factor of $P$ containing $T$. We denote by $U_P$ the unipotent radical of $P$. Of course, any notation or construction for the triple $G \supset P \supset L$ can be used for $G \supset B \supset T$ or $L\supset C\supset T$, where $C=B\cap L$.


\subsection{Review of the Satake equivalence and restriction}

Consider the diagram
\begin{equation} \label{eqn:affine-Grassmannian-diagram}
\xymatrix{
\Gr_L & \Gr_P \ar[l]_-{q_P} \ar[r]^-{i_P} & \Gr_G
}
\end{equation}
where $q_P$ is induced by the projection $P \twoheadrightarrow L$ whose kernel is the unipotent radical $U_P$, and $i_P$ is induced by the embedding $P \hookrightarrow G$. Define the functor
\[
\ResGreasy{G}{L} := (q_P)_* \circ (i_P)^! : \cDb(\Gr_G,\bk) \to \cDb(\Gr_L,\bk).
\]
Recall that the connected components of $\Gr_L$ are parametrized by characters of $Z(\widetilde{L})$, where $\widetilde{L} \subset \Gv$ is the Levi subgroup containing $\Tv$ whose roots are dual to those of $L$ (and $Z(\widetilde{L})$ is its centre), see \cite[Proposition 4.5.4]{bd}. If $M$ is in $\cDb(\Gr_L,\bk)$ and $\chi \in X^*(Z(\widetilde{L}))$, we denote by $M_{\chi}$ the restriction of $M$ to the corresponding connected component. Define the functor $\ResGrnonsm{G}{L} : \cDb(\Gr_G,\bk) \to \cDb(\Gr_L,\bk)$
by the formula
\[
\ResGrnonsm{G}{L}(M) = \bigoplus_{\chi \in X^*(Z(\widetilde{L}))} \, \bigl( \ResGreasy{G}{L}(M) \bigr)_{\chi}[\langle \chi, 2\rho_L -2\rho_G \rangle],
\]
where $\rho_G$ and $\rho_L$ are the half sums of positive roots of $G$ and $L$. 
It is proved in \cite[Proposition 5.3.29]{bd} that $\ResGrnonsm{G}{L}$ restricts to a functor
\[
\ResGrnonsm{G}{L} : \Perv_{\GO}(\Gr_G,\bk) \to \Perv_{\LO}(\Gr_L,\bk).
\]
Moreover, it is explained in \cite[\S5.3.30]{bd} that this functor is a tensor functor.

Applying base change for the cartesian square
\begin{equation}
\label{eqn:affine-Grassmannian-square}
\vc{\begin{tikzpicture}[vsmallcube]
\compuinit
\node[cart] at (0.5,0.5) {};
\node (lu) at (0,1) {$\Gr_B$};
\node (ru) at (1,1) {$\Gr_P$};
\node (ld) at (0,0) {$\Gr_C$};
\node (rd) at (1,0) {$\Gr_L$};
\draw[->] (lu) -- (ru);
\draw[->] (lu) -- (ld);
\draw[->] (ru) -- (rd);
\draw[->] (ld) -- (rd);
\end{tikzpicture}}
\end{equation}
we obtain a natural isomorphism of functors:
\begin{equation}
\label{eqn:composition-restriction-Satake-no-shifts}
\ResGreasy{G}{T} \natisom \ResGreasy{L}{T} \circ \ResGreasy{G}{L} : \cDb(\Gr_G,\bk) \to \cDb(\Gr_T,\bk).
\end{equation}
More precisely, this isomorphism is defined by the following pasting diagram:
\begin{equation} \label{eqn:composition-restriction-Satake-no-shifts-db-version}
\vc{\begin{tikzpicture}[stdtriangle]
\compuinit
\node[lttricelld] at (0.5+0.5*\tric, 1.5+0.5*\tric) {\tiny$\Comp$};
\node[lttricelld] at (1.5+0.5*\tric, 0.5+0.5*\tric) {\tiny$\Comp$};
\node[cubef] at (1.5,1.5) {$\BC$};
\node (lu) at (0,2) {$\cDb(\Gr_G)$};
\node (mu) at (1,2) {$\cDb(\Gr_P)$};
\node (ru) at (2,2) {$\cDb(\Gr_L)$};
\node (mm) at (1,1) {$\cDb(\Gr_B)$};
\node (rm) at (2,1) {$\cDb(\Gr_C)$};
\node (rd) at (2,0) {$\cDb(\Gr_T)$};
\draw[->] (ru) -- node[arl] {{\tiny $\al (\cdot)^!$}} (rm);
\draw[->] (lu) -- node[arl] {{\tiny $\al (\cdot)^!$}} (mu);
\draw[->] (lu) -- node[arr] {{\tiny $\al (\cdot)^!$}} (mm);
\draw[->] (mu) -- node[arl] {{\tiny $(\cdot)_*$}} (ru);
\draw[->] (mu) -- node[arr] {{\tiny $(\cdot)^!$}} (mm);
\draw[->] (rm) -- node[arl] {{\tiny $(\cdot)_*$}} (rd);
\draw[->] (mm) -- node[arl] {{\tiny $(\cdot)_*$}} (rm);
\draw[->] (mm) -- node[arr] {{\tiny $(\cdot)_*$}} (rd);
\end{tikzpicture}}
\end{equation}
For simplicity, we have not indicated the morphisms; all of them are the obvious ones.
The notations $\Co$ and $\BC$, and similar notations used in later diagrams, are explained in Appendix~\ref{sect:lemmas}. Restricting to perverse sheaves and taking shifts into account, one can check that \eqref{eqn:composition-restriction-Satake-no-shifts} induces an isomorphism
\begin{equation}
\label{eqn:composition-restriction-Satake}
\ResGrnonsm{G}{T} \natisom \ResGrnonsm{L}{T} \circ \ResGrnonsm{G}{L} : \Perv_{\GO}(\Gr_G,\bk) \to \Perv_{\TO}(\Gr_T,\bk).
\end{equation}

Consider the case $P=B$, $L=T$. The morphism $i_B : \Gr_B \to \Gr_G$ is a bijection and a locally closed embedding, which factors through a natural identification
\[
\Gr_B \xrightarrow{\sim} \bigsqcup_{\la \in \bX} \, \fT_{\la}.
\]
Using this identification, the composition of $\ResGrnonsm{G}{T}$ with the equivalence
\[
\Satake{T} : \Perv_{\TO}(\Gr_T,\bk) \xrightarrow{\sim} \Rep(\Tv,\bk)
\]
is identified with the functor $\sfF^{\bX}$ of \S\ref{ss:Satake}, so that \eqref{eqn:fibre-functor-Tv} induces an isomorphism
\begin{equation}
\label{eqn:composition-fibre-ResGr}
\sfF_G\natisom \For^{\Tv} \circ \Satake{T} \circ \ResGrnonsm{G}{T}= \sfF_T \circ \ResGrnonsm{G}{T}.
\end{equation}
Hence, composing isomorphism \eqref{eqn:composition-restriction-Satake} with $\sfF_T$ provides an isomorphism of functors
\begin{equation}
\label{eqn:composition-fibre-ResGr-L}
\sfF_G \natisom \sfF_L \circ \ResGrnonsm{G}{L}.
\end{equation}
It is explained in \cite[\S5.3.30]{bd} that this isomorphism is an isomorphism of tensor functors. If $\Lv$ is the $\bk$-algebraic group provided by the constructions of \S\ref{ss:Satake} for the group $L$, we obtain using \eqref{eqn:composition-fibre-ResGr-L} a morphism of algebraic groups 
\[
\iota_{\Lv}^{\Gv} : \Lv = \Aut^{\star}(\sfF_L) \to \Aut^{\star}(\sfF_L \circ \ResGrnonsm{G}{L}) \cong \Aut^{\star}(\sfF_G) = \Gv.
\]
It is known that $\iota_{\Lv}^{\Gv}$ is injective, and that its image is $\widetilde{L}$ (see \cite[Lemma 5.3.31]{bd}); we can therefore identify $\Lv$ and $\widetilde{L}$. Note that the following diagram of isomorphisms of functors is commutative by construction of isomorphism \eqref{eqn:composition-fibre-ResGr-L}:
\begin{equation}
\label{eqn:diag-iota}
\vcenter{
\xymatrix@C=2cm@R=12pt{
\sfF_G \ar@{<=>}[r]^-{\eqref{eqn:composition-fibre-ResGr}_G} \ar@{<=>}[d]_-{\eqref{eqn:composition-fibre-ResGr-L}} & \sfF_T \circ \ResGrnonsm{G}{T} \ar@{<=>}[d]^-{\eqref{eqn:composition-restriction-Satake}} \\
\sfF_L \circ \ResGrnonsm{G}{L} \ar@{<=>}[r]^-{\eqref{eqn:composition-fibre-ResGr}_L} & \sfF_T \circ \ResGrnonsm{L}{T} \circ \ResGrnonsm{G}{L} 
}
}
\end{equation}

Let $\ResGnonsm{\Gv}{\Lv} : \Rep(\Gv,\bk) \to \Rep(\Lv,\bk)$
be the restriction functor (i.e.~inverse image for the morphism $\iota_{\Lv}^{\Gv}$). We have
\begin{equation}
\label{eqn:restGnonsm}
\For^{\Lv} \circ \ResGnonsm{\Gv}{\Lv} = \For^{\Gv}.
\end{equation}
By construction, isomorphism \eqref{eqn:composition-fibre-ResGr-L} lifts to an isomorphism of functors
\begin{equation}
\label{eqn:restriction-Satake-non-sm}
\ResGnonsm{\Gv}{\Lv} \circ \Satake{G} \natisom \Satake{L} \circ \ResGrnonsm{G}{L}.
\end{equation}

In the case $P=B$, $L=T$ the morphism $\iota_{\Tv}^{\Gv} : \Tv \to \Gv$ is the morphism considered in \S\ref{ss:Satake}. Moreover, by commutativity of \eqref{eqn:diag-iota} we have $\iota_{\Lv}^{\Gv} \circ \iota_{\Tv}^{\Lv} = \iota_{\Tv}^{\Gv}$.
It follows that
\begin{equation}
\label{eqn:composition-restriction-G-non-sm}
\ResGnonsm{\Gv}{\Tv} = \ResGnonsm{\Lv}{\Tv} \circ \ResGnonsm{\Gv}{\Lv} : \Rep(\Gv,\bk) \to \Rep(\Tv,\bk).
\end{equation}

\begin{lem}
\label{lem:restriction-Satake-Levi}

The following prism is commutative:
\[
\vc{\begin{tikzpicture}[longprism]
\compuinit
\node[cuber] at (0.5,0.5,0) {\eqrefh{eqn:restriction-Satake-non-sm}};
\node[prismdf] at (1,0.5,\tric) {\eqrefh{eqn:composition-restriction-G-non-sm}};
\node (rlu) at (0,1,0) {$\Perv_{\GO}(\Gr_G,\bk)$};
\node (rru) at (1,1,0) {$\Rep(\Gv,\bk)$};
\node (fr) at (	1,0.5,1) {$\Rep(\Lv,\bk)$};
\node (rld) at (0,0,0) {$\Perv_{\TO}(\Gr_T,\bk)$};
\node (rrd) at (1,0,0) {$\Rep(\Tv,\bk)$};
\draw[liner,->] (rru) -- node[arl,pos=.3] {$\al \ResGnonsm{\Gv}{\Tv}$} (rrd);
\draw[->] (rlu) -- node[arl] {$\al \Satake{G}$} (rru);
\draw[->] (rlu) -- node[arr] {$\al \ResGrnonsm{G}{T}$} (rld);
\draw[->] (rru) -- node[arl] {$\al \ResGnonsm{\Gv}{\Lv}$} (fr);
\draw[->] (rld) -- node[arr] {$\al \Satake{T}$} (rrd);
\draw[->] (fr) -- node[arl] {$\al \ResGnonsm{\Lv}{\Tv}$} (rrd);
\node[prismtf] at (0.5,0.75,0.5) {\eqrefh{eqn:restriction-Satake-non-sm}};
\node[prismbf] at (0.5,0.25,0.5) {\eqrefh{eqn:restriction-Satake-non-sm}};
\node[prismlf] at (0,0.5,\tric) {\eqrefh{eqn:composition-restriction-Satake}};
\node (fl) at (0,0.5,1) {$\Perv_{\LO}(\Gr_L,\bk)$};
\draw[->] (rlu) -- node[arl] {$\al \ResGrnonsm{G}{L}$} (fl);
\draw[->] (fl) -- node[arl,pos=.3] {$\al \ResGrnonsm{L}{T}$} (rld);
\draw[->] (fl)  -- node[arr,pos=.3] {$\al \Satake{L}$} (fr);
\end{tikzpicture}}
\]
\end{lem}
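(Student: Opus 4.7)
The plan is to reduce commutativity of the prism to the already-known commutativity of diagram \eqref{eqn:diag-iota}, and then to descend via a faithfulness property of the forgetful functor $\For^{\Tv}$ on natural transformations.

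First, I would post-compose the entire prism with $\For^{\Tv}: \Rep(\Tv,\bk) \to \Vect(\bk)$. By \eqref{eqn:restGnonsm} applied twice, both composites $\For^{\Tv} \circ \ResGnonsm{\Gv}{\Tv}$ and $\For^{\Tv} \circ \ResGnonsm{\Lv}{\Tv} \circ \ResGnonsm{\Gv}{\Lv}$ agree with $\For^{\Gv}$, so the right-hand triangular face labeled \eqref{eqn:composition-restriction-G-non-sm} collapses to the identity $2$-cell. The three rectangular faces, each labeled by an instance of \eqref{eqn:restriction-Satake-non-sm}, become (by construction of \eqref{eqn:restriction-Satake-non-sm} as the unique tensor-functor lift of \eqref{eqn:composition-fibre-ResGr-L} in the sense of~\cite[Corollary~2.9]{dm}) the fibre-functor isomorphisms \eqref{eqn:composition-fibre-ResGr} and \eqref{eqn:composition-fibre-ResGr-L}. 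The left-hand triangular face becomes \eqref{eqn:composition-restriction-Satake}. Taken together, these four $2$-cells are exactly the four edges of the commutative square \eqref{eqn:diag-iota}, so the two pastings around the $\For^{\Tv}$-image of the prism agree.

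Second, to lift this equality back to an equality of $2$-cells between $\Rep(\Tv,\bk)$-valued functors, I would observe that $\For^{\Tv}$ is faithful on morphisms: since $\Rep(\Tv,\bk)$ is the category of $\bX$-graded finitely generated $\bk$-modules, a natural transformation valued in $\Rep(\Tv,\bk)$ is determined by its underlying $\bk$-module-valued natural transformation, because a morphism of graded modules is just a grading-preserving morphism of the underlying modules. Hence the two composite $2$-cells on the prism already coincide in $\Rep(\Tv,\bk)$, which is the assertion that the prism commutes.

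The hard part will be the $2$-categorical bookkeeping in the first step: one must verify face-by-face that post-composing each rectangular face of the prism with $\For^{\Tv}$ really does produce the correct instance of \eqref{eqn:composition-fibre-ResGr-L}, and that the various composition isomorphisms $\Co$ and base-change isomorphisms $\BC$ implicit in the construction \eqref{eqn:composition-restriction-Satake-no-shifts-db-version} of the transitivity $2$-cell \eqref{eqn:composition-restriction-Satake} combine, under $\For^{\Tv}$, into the edges of \eqref{eqn:diag-iota} rather than a variant differing by some coherence cell. The general commutativity lemmas assembled in Appendix~\ref{sect:lemmas}, together with the gluing principle recalled in Appendix~\ref{sect:cubes}, are precisely what is needed to carry this out mechanically.
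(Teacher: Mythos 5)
Your proposal is correct and follows essentially the same route as the paper's proof: reformulate the prism's commutativity as a square of $2$-cells, compose with the faithful functor $\For^{\Tv}$, and identify the result (via \eqref{eqn:restGnonsm}) with diagram \eqref{eqn:diag-iota}, which commutes by construction of \eqref{eqn:composition-fibre-ResGr-L}. The paper states this in three sentences and leaves the face-by-face bookkeeping you flag as the ``hard part'' implicit.
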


\begin{proof}
We have to prove that the following diagram is commutative:
\[
\xymatrix@R=15pt{
\ResGnonsm{\Lv}{\Tv} \circ \ResGnonsm{\Gv}{\Lv} \circ \Satake{G} \ar@{<=>}[r]^-{\eqref{eqn:composition-restriction-G-non-sm}} \ar@{<=>}[d]_-{\eqref{eqn:restriction-Satake-non-sm}_{G,L}} & \ResGnonsm{\Gv}{\Tv} \circ \Satake{G} \ar@{<=>}[r]^-{\eqref{eqn:restriction-Satake-non-sm}_{G,T}} & \Satake{T} \circ \ResGrnonsm{G}{T} \ar@{<=>}[d]^-{\eqref{eqn:composition-restriction-Satake}} \\
\ResGnonsm{\Lv}{\Tv} \circ \Satake{L} \circ \ResGrnonsm{G}{L} \ar@{<=>}[rr]_-{\eqref{eqn:restriction-Satake-non-sm}_{L,T}} & & \Satake{T} \circ \ResGrnonsm{L}{T} \circ \ResGrnonsm{G}{L}.
}
\]
As the functor $\For^{\Tv} : \Rep(\Tv,\bk) \to \Vect(\bk)$ is faithful, it is enough to prove the commutativity of the diagram obtained by composing each functor with $\For^{\Tv}$. But the resulting diagram can be identified (using \eqref{eqn:restGnonsm}) with diagram \eqref{eqn:diag-iota}, which is commutative by construction.
\end{proof}

\subsection{Restriction functor for small representations}
\label{ss:small-representations-restriction}

Consider now the functor
\[
\ResGzero{\Gv}{\Lv} := (-)^{Z(\Lv)} \circ \ResGnonsm{\Gv}{\Lv} \circ \mathbf{I}_{\Gv} : \Rep(\Gv,\bk)^{Z(\Gv)} \to \Rep(\Lv,\bk)^{Z(\Lv)}.
\]
By \eqref{eqn:composition-restriction-G-non-sm} and the fact that $Z(\Gv)\subset Z(\Lv)\subset Z(\Tv)=\Tv$, we have
\begin{equation}
\label{eqn:composition-restriction-G-zero}
\ResGzero{\Gv}{\Tv} = \ResGzero{\Lv}{\Tv} \circ \ResGzero{\Gv}{\Lv} : \Rep(\Gv,\bk)^{Z(\Gv)} \to \Rep(\Tv,\bk)^{Z(\Tv)}.
\end{equation}

\begin{lem}
\label{lem:ResG}

There is a unique functor $\ResG{\Gv}{\Lv} : \Rep(\Gv,\bk)_{\sm} \to \Rep(\Lv,\bk)_{\sm}$
such that
\begin{equation}
\label{eqn:ResG-ResGnonsm}
\ResGzero{\Gv}{\Lv} \circ \mathbb{I}_{\Gv}^0 = \mathbb{I}_{\Lv}^0 \circ \ResG{\Gv}{\Lv}.
\end{equation}

\end{lem}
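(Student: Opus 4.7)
The plan is to observe that uniqueness is automatic and then verify existence by a direct weight analysis.

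Uniqueness is immediate: $\mathbb{I}_{\Lv}^0$ is the inclusion of a full subcategory, so a functor with values in $\Rep(\Lv,\bk)^{Z(\Lv)}$ that factors through $\mathbb{I}_{\Lv}^0$ does so in at most one way. Hence it suffices to show that for every small representation $V$ of $\Gv$, the representation $\ResGzero{\Gv}{\Lv}(\mathbb{I}_{\Gv}^0(V))$ of $\Lv$ is small, i.e.\ belongs to the essential image of $\mathbb{I}_{\Lv}^0 \circ \mathbb{I}_{\Lv}$.

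First I would identify the set of $\Tv$-weights of $(V|_{\Lv})^{Z(\Lv)}$. Since $Z(\Lv)$ is the kernel of the map $\Tv\to\Tv/Z(\Lv)$, the characters of $\Tv$ that are trivial on $Z(\Lv)$ are precisely the elements of the sublattice $\Z \Rv_L \subset \bX$, where $\Rv_L \subset \Rv$ denotes the set of roots of $(\Lv,\Tv)$. Consequently, the $\Tv$-weights of $(V|_{\Lv})^{Z(\Lv)}$ are exactly those $\Tv$-weights of $V$ that lie in $\Z\Rv_L$.

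Next I would check the two conditions defining smallness for $\Lv$. Let $\lambda$ be such a weight. The root-lattice condition $\lambda \in \Z\Rv_L$ holds by construction. For the convex-hull condition, note that $\W_L$ is a subgroup of $\W$, so $\W_L\cdot\lambda \subseteq \W\cdot\lambda$ and therefore
\[
\mathrm{conv}(\W_L\cdot\lambda) \subseteq \mathrm{conv}(\W\cdot\lambda).
\]
Since $\lambda$ is small for $\Gv$, the right-hand side contains no element of the form $2\alv$ with $\alv \in \Rv$. As $\Rv_L \subseteq \Rv$, it follows a fortiori that $\mathrm{conv}(\W_L\cdot\lambda)$ contains no $2\alv$ with $\alv \in \Rv_L$, so $\lambda$ is small for $\Lv$.

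There is no real obstacle here; the only point that deserves care is correctly identifying the character lattice of $\Tv/Z(\Lv)$ with $\Z\Rv_L$, which is a standard structural fact about reductive groups. Granted this, the above observation shows that $\ResGzero{\Gv}{\Lv}\circ \mathbb{I}_{\Gv}^0$ takes values in the essential image of $\mathbb{I}_{\Lv}^0$, and defining $\ResG{\Gv}{\Lv}$ by this factorisation yields the required functor.
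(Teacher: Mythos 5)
Your proof is correct and follows essentially the same route as the paper's: uniqueness is automatic since $\mathbb{I}_{\Lv}^0$ is a fully faithful inclusion, and existence reduces to checking that the weights of $(\ResGnonsm{\Gv}{\Lv}V)^{Z(\Lv)}$ lie in the root lattice of $\Lv$ (because the action factors through $\Lv/Z(\Lv)$) and satisfy the convex-hull condition (because the relevant convex hulls and the set of roots only shrink in passing from $\Gv$ to $\Lv$). Your write-up is slightly more explicit about identifying $X^*(\Tv/Z(\Lv))$ with $\Z\Rv_L$, but the argument is the same.
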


\begin{proof}
We have to show that for any $V \in \Rep(\Gv,\bk)_{\sm}$, $V':=(\ResGnonsm{\Gv}{\Lv} V)^{Z(\Lv)}$ is in $\Rep(\Lv,\bk)_{\sm}$. By definition, the $\Lv$-action on $V'$ factors through $\Lv/Z(\Lv)$, hence all the $\Tv$-weights of $V'$ are in $\Z \Rv$. Moreover, the convex hull of weights of $V'$ is included in the convex hull of weights of $V$, hence does not contain any weight of the form $2\alv$ for a root $\alv$ of $\Lv$, which proves the lemma.
\end{proof}

We deduce from \eqref{eqn:composition-restriction-G-zero} that we have
\begin{equation}
\label{eqn:composition-restriction-G}
\ResG{\Gv}{\Tv} = \ResG{\Lv}{\Tv} \circ \ResG{\Gv}{\Lv}.
\end{equation}
We therefore define the transitivity isomorphism for $\ResG{\Gv}{\Lv}$ to be simply this equality.

\subsection{Restriction functor for $\Perv_{\GO}(\Gr_G^{\sm})$}
\label{ss:restriction-Grsm}

Let us consider the diagram
\begin{equation}
\xymatrix{
\Gr_L^\circ & \Gr_P^\circ \ar[l]_-{q_P^\circ} \ar[r]^-{i_P^\circ} & \Gr_G^\circ
}
\end{equation}
obtained by restriction of diagram \eqref{eqn:affine-Grassmannian-diagram},
and the functor
\[
\ResGrzero{G}{L} := (q_P^\circ)_* \circ (i_P^\circ)^! : \cDb(\Gr_G^\circ,\bk) \to \cDb(\Gr_L^\circ,\bk).
\]
Recall that $z_G$ denotes the inclusion $\Gr_G^\circ\hookrightarrow\Gr_G$; define $z_P$, $z_L$ similarly.

\begin{lem}

There is a canonical isomorphism of functors
\begin{equation} \label{eqn:resGr-nonsm-zero}
(z_L)^! \circ \ResGrnonsm{G}{L} \natisom \ResGrzero{G}{L} \circ (z_G)^!.
\end{equation}
In particular, $\ResGrzero{G}{L}$ restricts to a functor from $\Perv_{\GO}(\Gr_G^\circ,\bk)$ to $\Perv_{\LO}(\Gr_L^\circ,\bk)$.

\end{lem}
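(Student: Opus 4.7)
The plan is to construct the isomorphism as a composition of base change and composition isomorphisms from the appendix, and then deduce the perversity claim as a formal consequence.

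First, I would identify the left-hand side more concretely. Since $\Gr_L^\circ$ is the connected component of $\Gr_L$ corresponding to the trivial character $\chi = 0 \in X^*(Z(\widetilde{L}))$, the functor $(z_L)^!$ projects onto the $\chi = 0$ summand in the decomposition defining $\ResGrnonsm{G}{L}$, and the shift $[\langle \chi, 2\rho_L - 2\rho_G \rangle]$ is trivial there. Thus
\[
(z_L)^! \circ \ResGrnonsm{G}{L} = (z_L)^! \circ \ResGreasy{G}{L} = (z_L)^! \circ (q_P)_* \circ (i_P)^!.
\]

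Next, I would observe that since $q_P : \Gr_P \to \Gr_L$ induces a bijection of connected components indexed compatibly by $X^*(Z(\widetilde{L}))$, one has $\Gr_P^\circ = q_P^{-1}(\Gr_L^\circ)$, so the square with horizontal arrows $z_P$, $z_L$ and vertical arrows $q_P$, $q_P^\circ$ is cartesian. The base change isomorphism $\BC$ then gives $(z_L)^! \circ (q_P)_* \natisom (q_P^\circ)_* \circ (z_P)^!$. Moreover, as maps $\Gr_P^\circ \to \Gr_G$ we have the equality $i_P \circ z_P = z_G \circ i_P^\circ$, so the composition isomorphisms $\Co$ yield $(z_P)^! \circ (i_P)^! \natisom (i_P^\circ)^! \circ (z_G)^!$. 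Pasting these two isomorphisms with $(i_P)^!$ on the right and $(q_P^\circ)_*$ on the left gives the desired natural isomorphism \eqref{eqn:resGr-nonsm-zero}; this is the canonical one in the sense of the $2$-categorical formalism of Appendix~\ref{sect:cubes}.

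For the ``in particular'' claim, I would argue as follows. The embedding $z_G$ is \emph{clopen} (a connected component is both open and closed), so $(z_G)_* = (z_G)_!$ is exact for the perverse $t$-structure, and similarly for $(z_L)^!$ and $(z_G)^!$. In particular, given $N \in \Perv_{\GO}(\Gr_G^\circ,\bk)$, the object $M := (z_G)_* N$ lies in $\Perv_{\GO}(\Gr_G,\bk)$, and $(z_G)^! M \cong N$ canonically. Applying $(z_L)^!$ to $\ResGrnonsm{G}{L}(M)$, which is perverse by \cite[Proposition 5.3.29]{bd}, yields a perverse sheaf on $\Gr_L^\circ$; but this object is canonically isomorphic, via \eqref{eqn:resGr-nonsm-zero}, to $\ResGrzero{G}{L}(N)$, proving perversity.

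The main obstacle is not mathematical difficulty but bookkeeping: because the later sections rely on the specific identity of this isomorphism (not merely its existence) when pasting $2$-cells into larger diagrams, one must be careful to display \eqref{eqn:resGr-nonsm-zero} as an explicit pasting of $\BC$ and $\Co$ $2$-cells analogous to~\eqref{eqn:composition-restriction-Satake-no-shifts-db-version}, rather than just invoking an abstract isomorphism. The perversity argument itself is then a clean formal consequence.
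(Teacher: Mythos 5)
Your proof is correct and follows essentially the same route as the paper: after noting that $(z_L)^!\circ\ResGrnonsm{G}{L}=(z_L)^!\circ\ResGreasy{G}{L}$, the paper defines \eqref{eqn:resGr-nonsm-zero} by exactly the pasting of the $\Co$ square for $i_P\circ z_P=z_G\circ i_P^\circ$ with the $\BC$ square for the cartesian diagram \eqref{eqn:grzero-cartesian}. Your justification of cartesianness and your explicit perversity argument via the clopen embedding $z_G$ simply fill in details the paper leaves implicit.
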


\begin{proof}
We have a cartesian square
\begin{equation} \label{eqn:grzero-cartesian}
\vc{\begin{tikzpicture}[vsmallcube]
\compuinit
\node[cart] at (0.5,0.5) {};
\node (lu) at (0,1) {$\Gr_P^\circ$};
\node (ru) at (1,1) {$\Gr_L^\circ$};
\node (ld) at (0,0) {$\Gr_P$};
\node (rd) at (1,0) {$\Gr_L$};
\draw[->] (lu) -- node[arl] {$\al q_P^\circ$} (ru);
\draw[->] (lu) -- node[arr] {$\al z_P$} (ld);
\draw[->] (ru) -- node[arl] {$\al z_L$} (rd);
\draw[->] (ld) -- node[arr] {$\al q_P$} (rd);
\end{tikzpicture}}
\end{equation}
Then the pasting diagram
\begin{equation} \label{eqn:resGr-nonsm-zero-db-version}
\vc{\begin{tikzpicture}[smallcube]
\compuinit
\node[cubef] at (0.5,0.5) {$\Comp$};
\node[cubef] at (1.5,0.5) {$\BC$};
\node (lu) at (0,1) {$\cDb(\Gr_G)$};
\node (mu) at (1,1) {$\cDb(\Gr_P)$};
\node (ru) at (2,1) {$\cDb(\Gr_L)$};
\node (ld) at (0,0) {$\cDb(\Gr_G^\circ)$};
\node (md) at (1,0) {$\cDb(\Gr_P^\circ)$};
\node (rd) at (2,0) {$\cDb(\Gr_L^\circ)$};
\draw[->] (lu) -- node[arl] {{\tiny $\al (i_P)^!$}} (mu);
\draw[->] (ld) -- node[arr] {{\tiny $\al (i_P^\circ)^!$}} (md);
\draw[->] (mu) -- node[arl] {{\tiny $(q_P)_*$}} (ru);
\draw[->] (md) -- node[arr] {{\tiny $(q_P^\circ)_*$}} (rd);
\draw[->] (lu) -- node[arr] {{\tiny $(z_G)^!$}} (ld);
\draw[->] (mu) -- node[arl] {{\tiny $(z_P)^!$}} (md);
\draw[->] (ru) -- node[arl] {{\tiny $(z_L)^!$}} (rd);
\end{tikzpicture}}
\end{equation}
defines the desired isomorphism, since $(z_L)^! \circ \ResGrnonsm{G}{L} = (z_L)^! \circ \ResGreasy{G}{L}$.
\end{proof}

Restricting the cartesian square \eqref{eqn:affine-Grassmannian-square} to connected components of base points produces the cartesian square
\begin{equation}
\label{eqn:affine-Grassmannian-square-0}
\vc{\begin{tikzpicture}[vsmallcube]
\compuinit
\node[cart] at (0.5,0.5) {};
\node (lu) at (0,1) {$\Gr_B^\circ$};
\node (ru) at (1,1) {$\Gr_P^\circ$};
\node (ld) at (0,0) {$\Gr_C^\circ$};
\node (rd) at (1,0) {$\Gr_L^\circ$};
\draw[->] (lu) -- (ru);
\draw[->] (lu) -- (ld);
\draw[->] (ru) -- (rd);
\draw[->] (ld) -- (rd);
\end{tikzpicture}}
\end{equation}
Then, using the pasting diagram
\begin{equation} \label{eqn:composition-restriction-Satake-zero-db-version}
\vc{\begin{tikzpicture}[stdtriangle]
\compuinit
\node[lttricelld] at (0.5+0.5*\tric, 1.5+0.5*\tric) {\tiny$\Comp$};
\node[lttricelld] at (1.5+0.5*\tric, 0.5+0.5*\tric) {\tiny$\Comp$};
\node[cubef] at (1.5,1.5) {$\BC$};
\node (lu) at (0,2) {$\cDb(\Gr_G^\circ)$};
\node (mu) at (1,2) {$\cDb(\Gr_P^\circ)$};
\node (ru) at (2,2) {$\cDb(\Gr_L^\circ)$};
\node (mm) at (1,1) {$\cDb(\Gr_B^\circ)$};
\node (rm) at (2,1) {$\cDb(\Gr_C^\circ)$};
\node (rd) at (2,0) {$\cDb(\Gr_T^\circ)$};
\draw[->] (ru) -- node[arl] {{\tiny $\al (\cdot)^!$}} (rm);
\draw[->] (lu) -- node[arl] {{\tiny $\al (\cdot)^!$}} (mu);
\draw[->] (lu) -- node[arr] {{\tiny $\al (\cdot)^!$}} (mm);
\draw[->] (mu) -- node[arl] {{\tiny $\al (\cdot)_*$}} (ru);
\draw[->] (mu) -- node[arr] {{\tiny $\al (\cdot)^!$}} (mm);
\draw[->] (rm) -- node[arl] {{\tiny $\al (\cdot)_*$}} (rd);
\draw[->] (mm) -- node[arl] {{\tiny $\al (\cdot)_*$}} (rm);
\draw[->] (mm) -- node[arr] {{\tiny $\al (\cdot)_*$}} (rd);
\end{tikzpicture}}
\end{equation}
and restricting to perverse sheaves we obtain a canonical isomorphism of functors
\begin{equation}
\label{eqn:composition-restriction-Satake-0}
\ResGrzero{G}{T} \natisom \ResGrzero{L}{T} \circ \ResGrzero{G}{L} : \Perv_{\GO}(\Gr_G^\circ,\bk) \to \Perv_{\TO}(\Gr_T^\circ,\bk).
\end{equation}

Since $P$ is not reductive, we have not hitherto defined the notation $\Gr_P^{\sm}$. We set
\[
\Gr_P^{\sm} := \Gr_P^\circ \cap (i_P)^{-1}(\Gr_G^{\sm}),
\]
and denote by $f_P : \Gr_P^{\sm} \hookrightarrow \Gr_P$ the inclusion. 
We have analogous definitions of $\Gr_B^{\sm}$ 
and $\Gr_C^{\sm}$. 
The following result is a geometric counterpart of Lemma \ref{lem:ResG}.

\begin{lem}
\label{lem:Grsm-P-L}

There is a unique morphism $q_P^{\sm} : \Gr_P^{\sm} \to \Gr_L^{\sm}$
such that $f_L\circ q_P^{\sm}=q_P\circ f_P$. 

\end{lem}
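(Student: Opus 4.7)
My plan is to separate uniqueness from existence. Uniqueness is immediate: $f_L$ is a closed embedding, hence a monomorphism, so any $q_P^{\sm}$ satisfying $f_L\circ q_P^{\sm}=q_P\circ f_P$ is uniquely determined by the composition $q_P\circ f_P$.

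For existence, I would first verify that $\Gr_L^{\sm}$ is a closed subvariety of $\Gr_L$. The standard closure relation gives $\overline{\Gr_L^\mu}=\bigcup_{\nu\le_L\mu}\Gr_L^\nu$, so it suffices to check that if $\nu$ and $\mu$ are $L$-dominant with $\nu\le_L\mu$ and $\mu$ is small for $\Lv$, then so is $\nu$. But $\nu\le_L\mu$ places $\nu\in\mathrm{Conv}(\W_L\cdot\mu)$, whence $\mathrm{Conv}(\W_L\cdot\nu)\subseteq\mathrm{Conv}(\W_L\cdot\mu)$ by $\W_L$-invariance, and both membership in $\Z\Rv_L$ and the non-containment of any $2\alv$ with $\alv\in\Rv_L$ transfer from $\mu$ to $\nu$. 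Consequently, the existence of $q_P^{\sm}$ reduces to the set-theoretic inclusion $q_P(\Gr_P^{\sm})\subseteq\Gr_L^{\sm}$; the morphism $q_P^{\sm}$ is then obtained by the universal property of the closed immersion $f_L$.

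To establish the set-theoretic inclusion, I would fix $x\in\Gr_P^{\sm}$ and let $\la,\mu\in\bX$ be the $G$-dominant and $L$-dominant cocharacters with $i_P(x)\in\Gr_G^\la$ and $q_P(x)\in\Gr_L^\mu$; by hypothesis $\la$ is small for $\Gv$. From the Cartesian square~\eqref{eqn:grzero-cartesian} we have $\Gr_P^\circ=q_P^{-1}(\Gr_L^\circ)$, so $x\in\Gr_P^\circ$ yields $q_P(x)\in\Gr_L^\circ$ and hence $\mu\in\Z\Rv_L$. It remains to show $2\alv\notin\mathrm{Conv}(\W_L\cdot\mu)$ for each $\alv\in\Rv_L$. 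My plan is to establish the containment $\mu\in\mathrm{Conv}(\W_G\cdot\la)$; granted this, the $\W_G$-invariance of $\mathrm{Conv}(\W_G\cdot\la)$ (combined with $\W_L\subseteq\W_G$) yields $\mathrm{Conv}(\W_L\cdot\mu)\subseteq\mathrm{Conv}(\W_G\cdot\la)$, and since $\Rv_L\subseteq\Rv_G$, the smallness of $\la$ for $\Gv$ precludes $2\alv\in\mathrm{Conv}(\W_L\cdot\mu)$ for any $\alv\in\Rv_L$. Thus $\mu$ is small for $\Lv$ and $q_P(x)\in\Gr_L^\mu\subseteq\Gr_L^{\sm}$.

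The hard part of the argument is the containment $\mu\in\mathrm{Conv}(\W_G\cdot\la)$, which is a version of the parabolic Mirkovi{\'c}--Vilonen description of orbit intersections in $\Gr_G$. I would deduce it from the compatibility between the geometric Satake equivalence and parabolic restriction recorded in~\eqref{eqn:restriction-Satake-non-sm}: applied to the simple object $\IC(\Gr_G^\la)$, this identifies $\Satake{L}\circ\ResGrnonsm{G}{L}(\IC(\Gr_G^\la))$ with the restriction to $\Lv$ of the $\Gv$-module $\Satake{G}(\IC(\Gr_G^\la))$, whose $\Tv$-weights all lie in $\mathrm{Conv}(\W_G\cdot\la)$. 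A stalk computation for $(q_P)_\ast(i_P)^!$ at $q_P(x)$, using that $i_P(x)\in\Gr_G^\la$ and that the $q_P$-fibre through $x$ meets the open stratum $\Gr_G^\la$ of $\overline{\Gr_G^\la}$, then forces $\Gr_L^\mu$ to lie in the support of $\ResGrnonsm{G}{L}(\IC(\Gr_G^\la))$, so $\mu$ is one of the cocharacters in $\mathrm{Conv}(\W_G\cdot\la)$.
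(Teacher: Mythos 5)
Your reduction is sound as far as it goes: uniqueness because $f_L$ is a monomorphism, the closedness of $\Gr_L^{\sm}$ in $\Gr_L$, the use of the cartesian square \eqref{eqn:grzero-cartesian} to get $\mu \in \Z\Rv_L$, and the reduction of everything to the containment of $\mathrm{Conv}(\W_L\cdot\mu)$ in $\mathrm{Conv}(\W_G\cdot\la)$ are all correct and essentially parallel to what the paper does (phrased contrapositively there). The gap is in the final step, which is where the entire content of the lemma lives. The assertion that a ``stalk computation'' for $(q_P)_*(i_P)^!$ at $q_P(x)$ forces $\Gr_L^{\mu}$ into the support of $\ResGrnonsm{G}{L}(\IC(\Gr_G^{\la}))$ is not a routine computation: $q_P$ is not proper, so the stalk of $(q_P)_*$ is not given by fibrewise cohomology, and even after passing to the $!$-$*$ form of hyperbolic localization one must rule out cancellation in the compactly supported cohomology of the fibre, which requires the Mirkovi{\'c}--Vilonen dimension estimates on $\fT_{\nu}\cap\Gr_G^{\la}$. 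In other words, the support claim you need is a repackaging of \cite[Theorem 3.2]{mv}, not a consequence of general sheaf formalism; and the further claim that the $\Tv$-weights of $\Satake{G}(\IC(\Gr_G^{\la}))$ lie in $\mathrm{Conv}(\W_G\cdot\la)$ is itself proved by the same theorem. So the detour through \eqref{eqn:restriction-Satake-non-sm} adds machinery without supplying the missing input.

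The fix is to drop the sheaf theory entirely and argue at the level of points, as the paper does. The cartesian square \eqref{eqn:affine-Grassmannian-square} gives $i_P\bigl((q_P)^{-1}(\fT_{\nu}^L)\bigr)=\fT_{\nu}^G$; hence if $x\in\Gr_P^{\sm}$ has $q_P(x)\in\fT_{\nu}^L$, then $i_P(x)\in\fT_{\nu}^G\cap\Gr_G^{\sm}$, and \cite[Theorem 3.2]{mv} puts $\nu$ in the convex hull of $\W_G\cdot\la'$ for some small $\la'$. Since $\Gr_P^{\sm}$ is $\LO$-stable and $q_P$ is $\LO$-equivariant, one may take $q_P(x)=\mathbf{t}_{\nu}$ for \emph{every} $\nu\in\W_L\cdot\mu$, which is exactly what is needed to conclude that $\mathrm{Conv}(\W_L\cdot\mu)$ contains no $2\alv$ with $\alv$ a root of $\Lv$. (Your single point $x$ only controls the one $\nu$ with $q_P(x)\in\fT_{\nu}^L$, not all of $\W_L\cdot\mu$; the equivariance step is needed.)
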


\begin{proof}
We have to show that $q_P(\Gr_P^{\sm})\subset\Gr_L^{\sm}$; assume the contrary. As $\Gr_P^{\sm}$ is $\LO$-stable and $q_P$ is $\LO$-equivariant, there exists $\la \in \bX$ which is not small for $\Lv$ and such that $\mathbf{t}_{\la} \in q_P(\Gr_P^{\sm})$. Then $q_P(\Gr_P^{\sm}) \cap \fT_{\la}^L \neq \emptyset$, where $\fT_{\la}^L$ is the 
subvariety of $\Gr_L$ defined in \S\ref{ss:Satake} (for $L$). 
This implies that $\Gr_P^{\sm} \cap (q_P)^{-1}(\fT_{\la}^L) \neq \emptyset$, hence that $\Gr_G^{\sm} \cap i_P \bigl( (q_P)^{-1}(\fT_{\la}^L) \bigr) \neq \emptyset$
(since $i_P \bigl( \Gr_P^{\sm} \cap (q_P)^{-1}(\fT_{\la}^L) \bigr) \subset \Gr_G^{\sm} \cap i_P \bigl( (q_P)^{-1}(\fT_{\la}^L) \bigr)$).

However we have $i_P \bigl( (q_P)^{-1}(\fT_{\la}^L) \bigr)=\fT_{\la}^G$ (see \eqref{eqn:affine-Grassmannian-square}), hence $\Gr_G^{\sm} \cap \fT_{\la}^G \neq \emptyset$. This means that there exists $\mu \in \bX$ which is small for $\Gv$ and such that $\Gr_{G}^{\mu} \cap \fT_{\la}^G \neq \emptyset$. By \cite[Theorem 3.2]{mv} we deduce that $\la$ is in the convex hull of $\W_G \cdot \mu$, which contradicts the fact that $\la$ is not small for $\Lv$.
\end{proof}

Using the lemma we can consider the diagram
\begin{equation} \label{eqn:grlsm-grgsm}
\xymatrix{
\Gr_L^{\sm} & \Gr_P^{\sm} \ar[l]_-{q_P^{\sm}} \ar[r]^-{i_P^{\sm}} & \Gr_G^{\sm}
}
\end{equation}
where $i_P^{\sm}$ denotes the restriction of $i_P$ to $\Gr_P^{\sm}$, and thus define the functor
\[
\ResGr{G}{L} := (q_P^{\sm})_* \circ (i_P^{\sm})^! : \cDb(\Gr^{\sm}_G,\bk) \to \cDb(\Gr_L^{\sm},\bk).
\]
Let us denote by $f_P^\circ : \Gr_P^{\sm} \hookrightarrow \Gr_P^\circ$ the (closed) inclusion.

\begin{lem}

There is a canonical isomorphism of functors
\[
(f_L^\circ)_* \circ \ResGr{G}{L} \natisom \ResGrzero{G}{L} \circ (f_G^\circ)_*.
\]
In particular, $\ResGr{G}{L}$ restricts to a functor from $\Perv_{\GO}(\Gr_G^{\sm},\bk)$ to $\Perv_{\LO}(\Gr_L^{\sm},\bk)$.

\end{lem}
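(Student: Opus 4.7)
The plan is to construct the isomorphism by pasting together a base change and a composition isomorphism. First, note that by definition $\Gr_P^{\sm} = \Gr_P^\circ \cap (i_P)^{-1}(\Gr_G^{\sm}) = (i_P^\circ)^{-1}(\Gr_G^{\sm})$, so the square
\[
\vc{\xymatrix@R=14pt{
\Gr_P^{\sm} \ar[r]^-{i_P^{\sm}} \ar[d]_-{f_P^\circ} & \Gr_G^{\sm} \ar[d]^-{f_G^\circ} \\
\Gr_P^\circ \ar[r]^-{i_P^\circ} & \Gr_G^\circ
}}
\]
is cartesian, with $f_P^\circ$ and $f_G^\circ$ closed immersions. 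Second, by Lemma~\ref{lem:Grsm-P-L} (whose $\Gr^\circ$-version is automatic, since $q_P$ respects base connected components) we have the commutative square
\[
\vc{\xymatrix@R=14pt{
\Gr_P^{\sm} \ar[r]^-{q_P^{\sm}} \ar[d]_-{f_P^\circ} & \Gr_L^{\sm} \ar[d]^-{f_L^\circ} \\
\Gr_P^\circ \ar[r]^-{q_P^\circ} & \Gr_L^\circ.
}}
\]

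Then I would construct the required isomorphism as the composite displayed in the pasting diagram
\[
\vc{\xymatrix@C=36pt@R=24pt{
\cDb(\Gr_G^{\sm}) \ar[r]^-{(i_P^{\sm})^!} \ar[d]_-{(f_G^\circ)_*}
  & \cDb(\Gr_P^{\sm}) \ar[r]^-{(q_P^{\sm})_*} \ar[d]^-{(f_P^\circ)_*}
  & \cDb(\Gr_L^{\sm}) \ar[d]^-{(f_L^\circ)_*} \\
\cDb(\Gr_G^\circ) \ar[r]_-{(i_P^\circ)^!}
  & \cDb(\Gr_P^\circ) \ar[r]_-{(q_P^\circ)_*}
  & \cDb(\Gr_L^\circ),
}}
\]
where the left square is filled by the base change $2$-cell $\BC$ applied to the first cartesian square above, and the right square is filled by the composition $2$-cell $\Co$ coming from the equality $q_P^\circ \circ f_P^\circ = f_L^\circ \circ q_P^{\sm}$. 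Composing left-to-right along the top yields $(f_L^\circ)_* \circ \ResGr{G}{L}$, and along the bottom yields $\ResGrzero{G}{L} \circ (f_G^\circ)_*$, producing the claimed isomorphism.

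For the second assertion, I would argue as follows: $(f_G^\circ)_*$ preserves perversity because $f_G^\circ$ is a closed immersion, and by the previous lemma $\ResGrzero{G}{L}$ sends $\Perv_{\GO}(\Gr_G^\circ,\bk)$ into $\Perv_{\LO}(\Gr_L^\circ,\bk)$. Therefore, for $F \in \Perv_{\GO}(\Gr_G^{\sm},\bk)$ the object $\ResGrzero{G}{L}((f_G^\circ)_* F)$ is an $\LO$-equivariant perverse sheaf on $\Gr_L^\circ$; via the isomorphism just constructed, it is isomorphic to $(f_L^\circ)_* \ResGr{G}{L}(F)$. Since $f_L^\circ$ is a closed immersion, the functor $(f_L^\circ)_*$ is fully faithful and $t$-exact, and reflects perversity of objects supported on $\Gr_L^{\sm}$; hence $\ResGr{G}{L}(F)$ is itself perverse and $\LO$-equivariant.

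I do not anticipate a serious obstacle: the only delicate point is that the square involving $q$ is \emph{not} cartesian (because $L$-smallness is strictly weaker than $G$-smallness), so the right square has to be filled by $\Co$ rather than $\BC$. This is harmless for the construction of the $2$-cell, which only needs commutativity of the underlying $1$-skeleton and functoriality of pushforward under composition. The cartesianness on the left is genuinely needed, but it is guaranteed by the definition $\Gr_P^{\sm} := \Gr_P^\circ \cap (i_P)^{-1}(\Gr_G^{\sm})$.
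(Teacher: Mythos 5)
Your proposal is correct and is essentially identical to the paper's own proof: the paper defines the isomorphism via the pasting diagram \eqref{eqn:resGr-resGrzero}, whose left square is $\BC$ for the cartesian square \eqref{eqn:grsm-grzero-cartesian} (which holds by the definition of $\Gr_P^{\sm}$) and whose right square is $\Co$. Your additional remarks on why the right square need only commute (not be cartesian) and on deducing the perversity statement from full faithfulness and $t$-exactness of $(f_L^\circ)_*$ are accurate elaborations of what the paper leaves implicit.
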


\begin{proof}
By definition of $\Gr_P^{\sm}$, we have a cartesian square
\begin{equation} \label{eqn:grsm-grzero-cartesian}
\vc{\begin{tikzpicture}[vsmallcube]
\compuinit
\node[cart] at (0.5,0.5) {};
\node (lu) at (0,1) {$\Gr_P^\sm$};
\node (ru) at (1,1) {$\Gr_G^\sm$};
\node (ld) at (0,0) {$\Gr_P^\circ$};
\node (rd) at (1,0) {$\Gr_G^\circ$};
\draw[->] (lu) -- node[arl] {$\al i_P^\sm$} (ru);
\draw[->] (lu) -- node[arr] {$\al f_P^\circ$} (ld);
\draw[->] (ru) -- node[arl] {$\al f_G^\circ$} (rd);
\draw[->] (ld) -- node[arr] {$\al i_P^\circ$} (rd);
\end{tikzpicture}}
\end{equation}
Then the pasting diagram
\begin{equation} \label{eqn:resGr-resGrzero}
\vc{\begin{tikzpicture}[smallcube]
\compuinit
\node[cubef] at (0.5,0.5) {$\BC$};
\node[cubef] at (1.5,0.5) {$\Comp$};
\node (lu) at (0,1) {$\cDb(\Gr_G^{\sm})$};
\node (mu) at (1,1) {$\cDb(\Gr_P^{\sm})$};
\node (ru) at (2,1) {$\cDb(\Gr_L^{\sm})$};
\node (ld) at (0,0) {$\cDb(\Gr_G^\circ)$};
\node (md) at (1,0) {$\cDb(\Gr_P^\circ)$};
\node (rd) at (2,0) {$\cDb(\Gr_L^\circ)$};
\draw[->] (lu) -- node[arl] {{\tiny $\al (i_P^{\sm})^!$}} (mu);
\draw[->] (ld) -- node[arr] {{\tiny $\al (i_P^\circ)^!$}} (md);
\draw[->] (mu) -- node[arl] {{\tiny $\al (q_P^{\sm})_*$}} (ru);
\draw[->] (md) -- node[arr] {{\tiny $\al (q_P^\circ)_*$}} (rd);
\draw[->] (lu) -- node[arr] {{\tiny $\al (f_G^\circ)_*$}} (ld);
\draw[->] (mu) -- node[arl] {{\tiny $\al (f_P^\circ)_*$}} (md);
\draw[->] (ru) -- node[arl] {{\tiny $\al (f_L^\circ)_*$}} (rd);
\end{tikzpicture}}
\end{equation}
produces the desired isomorphism.
\end{proof}

Now we construct a transitivity isomorphism for $\ResGr{G}{L}$. We need some preparation. First, observe that the morphism $\Gr_B \to \Gr_P$ induced by the inclusion $B \hookrightarrow P$ induces a morphism $\Gr_B^{\sm} \to \Gr_P^{\sm}$. Similarly, as the composition $\Gr_B \to \Gr_C \to \Gr_L$ coincides with the composition $\Gr_B \to \Gr_P \to \Gr_L$, one can deduce from Lemma \ref{lem:Grsm-P-L} that the natural morphism $\Gr_B \to \Gr_C$ induces a morphism $\Gr_B^{\sm} \to \Gr_C^{\sm}$.

\begin{lem}
\label{lem:square-Grsm-cartesian}

The following square is cartesian:
\[
\vc{\begin{tikzpicture}[vsmallcube]
\compuinit
\node[cart] at (0.5,0.5) {};
\node (lu) at (0,1) {$\Gr_B^\sm$};
\node (ru) at (1,1) {$\Gr_P^\sm$};
\node (ld) at (0,0) {$\Gr_C^\sm$};
\node (rd) at (1,0) {$\Gr_L^\sm$};
\draw[->] (lu) -- node[arl] {$\al a$} (ru);
\draw[->] (lu) -- node[arr] {$\al b$} (ld);
\draw[->] (ru) -- node[arl] {$\al q_P^\sm$} (rd);
\draw[->] (ld) -- node[arr] {$\al i_C^\sm$} (rd);
\end{tikzpicture}}
\]
\end{lem}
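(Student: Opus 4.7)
The plan is to reduce the statement to the cartesian square \eqref{eqn:affine-Grassmannian-square-0} on the base connected components, by describing $\Gr_B^\sm$ as a pullback of $\Gr_P^\sm$ along the top arrow of that square, and then using Lemma~\ref{lem:Grsm-P-L} to replace the bottom-right corner.

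First, I would note that $\Gr_G^\sm \subset \Gr_G^\circ$ by definition of smallness (small weights lie in $\Z\Rv$). Since $\Gr_B^\circ$ is connected and contains $\mathbf{t}_0$, the map $i_B$ restricts to a map $i_B^\circ : \Gr_B^\circ \to \Gr_G^\circ$, and the definition of $\Gr_B^\sm$ becomes $\Gr_B^\sm = (i_B^\circ)^{-1}(\Gr_G^\sm)$. Commutativity of \eqref{eqn:affine-Grassmannian-square-0} gives $i_B^\circ = i_P^\circ \circ a^\circ$, where $a^\circ$ is the top arrow of that square, and hence
\[
\Gr_B^\sm \;=\; (a^\circ)^{-1}(\Gr_P^\sm).
\]

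Next, since \eqref{eqn:affine-Grassmannian-square-0} is cartesian and $\Gr_P^\sm \hookrightarrow \Gr_P^\circ$ is an immersion, base change yields a canonical identification
\[
(a^\circ)^{-1}(\Gr_P^\sm) \;\cong\; \Gr_P^\sm \times_{\Gr_L^\circ} \Gr_C^\circ,
\]
under which the two projections are identified with $a$ (restricted from $a^\circ$) and with the restriction to $\Gr_B^\sm$ of the left map $b^\circ:\Gr_B^\circ\to\Gr_C^\circ$ of \eqref{eqn:affine-Grassmannian-square-0} (which then automatically factors through $\Gr_C^\sm$, as we are about to see, and gives the map $b$).

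Finally, I would invoke Lemma~\ref{lem:Grsm-P-L} to conclude that $q_P^\sm$ maps $\Gr_P^\sm$ into $\Gr_L^\sm \subset \Gr_L^\circ$. For any point $(p,c)$ of the fiber product $\Gr_P^\sm \times_{\Gr_L^\circ} \Gr_C^\circ$, we then have $i_C^\circ(c) = q_P^\sm(p) \in \Gr_L^\sm$, so $c \in (i_C^\circ)^{-1}(\Gr_L^\sm) = \Gr_C^\sm$. Thus the natural inclusion $\Gr_P^\sm \times_{\Gr_L^\sm} \Gr_C^\sm \hookrightarrow \Gr_P^\sm \times_{\Gr_L^\circ} \Gr_C^\circ$ is an equality, and combining everything yields the desired identification $\Gr_B^\sm = \Gr_P^\sm \times_{\Gr_L^\sm} \Gr_C^\sm$.

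I do not expect a genuine obstacle here: once the factorization $i_B^\circ = i_P^\circ \circ a^\circ$ and the cartesian property of \eqref{eqn:affine-Grassmannian-square-0} are invoked, the argument is bookkeeping with preimages and fiber products, the only non-formal input being Lemma~\ref{lem:Grsm-P-L}. The closest thing to a subtle point is the initial observation that $\Gr_G^\sm \subset \Gr_G^\circ$, which is what allows one to work entirely inside the base connected components.
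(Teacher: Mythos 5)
Your argument is correct and is essentially the paper's proof in slightly different packaging: both reduce to the cartesian square \eqref{eqn:affine-Grassmannian-square-0} on base connected components and then use the definitions of $\Gr_B^\sm$, $\Gr_P^\sm$, $\Gr_C^\sm$ as preimages of the small loci (the paper phrases this as lifting a pair $(x,y)$ to $z\in\Gr_B^\circ$ and checking $i_B(z)=i_P(x)\in\Gr_G^\sm$, whereas you identify $\Gr_B^\sm=(a^\circ)^{-1}(\Gr_P^\sm)$ and shrink the fibre product's base from $\Gr_L^\circ$ to $\Gr_L^\sm$ via Lemma~\ref{lem:Grsm-P-L}). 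No gap.
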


\begin{proof}
Let $x \in \Gr_P^{\sm}$ and $y \in \Gr_C^{\sm}$ be such that $q_P^{\sm}(x) = i_C^{\sm}(y)$. As \eqref{eqn:affine-Grassmannian-square-0} is cartesian, there exists $z \in \Gr_B^\circ$ such that $a(z)=x$ and $b(z)=y$. The fact that $x \in \Gr_P^{\sm}$ implies that $i_P(x) = i_B(z) \in \Gr_G^{\sm}$, hence that $z \in \Gr_B^{\sm}$.
\end{proof}

Using Lemma \ref{lem:square-Grsm-cartesian}, the pasting diagram
\begin{equation} \label{eqn:transitivity-isom-resGr-db-version}
\vc{\begin{tikzpicture}[stdtriangle]
\compuinit
\node[lttricelld] at (0.5+0.5*\tric, 1.5+0.5*\tric) {\tiny$\Comp$};
\node[lttricelld] at (1.5+0.5*\tric, 0.5+0.5*\tric) {\tiny$\Comp$};
\node[cubef] at (1.5,1.5) {$\BC$};
\node (lu) at (0,2) {$\cDb(\Gr_G^{\sm})$};
\node (mu) at (1,2) {$\cDb(\Gr_P^{\sm})$};
\node (ru) at (2,2) {$\cDb(\Gr_L^{\sm})$};
\node (mm) at (1,1) {$\cDb(\Gr_B^{\sm})$};
\node (rm) at (2,1) {$\cDb(\Gr_C^{\sm})$};
\node (rd) at (2,0) {$\cDb(\Gr_T^{\sm})$};
\draw[->] (ru) -- node[arl] {{\tiny $\al (\cdot)^!$}} (rm);
\draw[->] (lu) -- node[arl] {{\tiny $\al (\cdot)^!$}} (mu);
\draw[->] (lu) -- node[arr] {{\tiny $\al (\cdot)^!$}} (mm);
\draw[->] (mu) -- node[arl] {{\tiny $\al (\cdot)_*$}} (ru);
\draw[->] (mu) -- node[arr] {{\tiny $\al (\cdot)^!$}} (mm);
\draw[->] (rm) -- node[arl] {{\tiny $\al (\cdot)_*$}} (rd);
\draw[->] (mm) -- node[arl] {{\tiny $\al (\cdot)_*$}} (rm);
\draw[->] (mm) -- node[arr] {{\tiny $\al (\cdot)_*$}} (rd);
\end{tikzpicture}}
\end{equation}
produces (by restriction to perverse sheaves) the desired isomorphism of functors
\begin{equation}
\label{eqn:composition-restriction-Satake-sm}
\ResGr{G}{T} \natisom \ResGr{L}{T} \circ \ResGr{G}{L} : \Perv_{\GO}(\Gr_G^{\sm},\bk) \to \Perv_{\TO}(\Gr_T^{\sm},\bk).
\end{equation}

\subsection{Restriction functor for $\Perv_G(\cN_G)$}
\label{ss:restriction-N}

Consider the diagram
\begin{equation} \label{eqn:cnl-cng}
\xymatrix{
\cN_L & \cN_P \ar[l]_-{p_P} \ar[r]^-{m_P} & \cN_G
}
\end{equation}
where $\cN_P\subset\fp$ denotes the nilpotent cone of $P$ (as with our notation for reductive groups), $p_P$ is induced by the projection $P \to L$, and $m_P$ is induced by the inclusion $P \hookrightarrow G$. We define the functor
\[
\ResN{G}{L} := (p_P)_* \circ (m_P)^! : \cDb(\cN_G,\bk) \to \cDb(\cN_L,\bk).
\]
 
\begin{prop}
\label{prop:ResN-exact}

The functor $\ResN{G}{L}$ restricts to an exact functor (denoted similarly) from $\Perv_G(\cN_G,\bk)$ to $\Perv_L(\cN_L,\bk)$. 

\end{prop}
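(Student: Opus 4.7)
The plan is to show that $\ResN{G}{L} = (p_P)_* \circ (m_P)^!$ is both left and right $t$-exact for the perverse $t$-structure; exactness on $\Perv_G(\cN_G,\bk)$ then follows, and the image lies in $\Perv_L(\cN_L,\bk)$ because $m_P$ and $p_P$ are $L$-equivariant.

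For left $t$-exactness, note first that the morphism $p_P \colon \cN_P \to \cN_L$ is an affine bundle with fiber $\fu_P$: under the splitting $\fp = \fl \oplus \fu_P$ we have a scheme-theoretic identification $\cN_P \cong \cN_L \times \fu_P$, since an element $x+u$ with $x \in \fl$ and $u \in \fu_P$ is $G$-nilpotent if and only if $x \in \cN_L$ (apply a cocharacter of $Z(L)$ with $\fu_P$ in strictly positive weights to contract $x+u$ to $x$, and conversely). Hence $p_P$ is an affine morphism, and by Artin vanishing $(p_P)_*$ is left $t$-exact. Since $m_P$ is a closed inclusion, $(m_P)^!$ is also left $t$-exact, and the composition $\ResN{G}{L}$ is therefore left $t$-exact.

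For right $t$-exactness the plan is to appeal to Braden's hyperbolic localization theorem. I would choose a cocharacter $\lambda \colon \Gm \to Z(L)$ in general position, so that the centralizer of $\lambda(\Gm)$ in $G$ equals $L$, and $\fg$ decomposes under $\lambda$ as $\fg = \fu_{P^-} \oplus \fl \oplus \fu_P$ with $\fu_P$ in strictly positive weights. The induced $\Gm$-action on $\cN_G$ then has fixed locus $\cN_L$, attractor $\cN_P$ (with attractor map $p_P$ and inclusion $m_P$), and repeller $\cN_{P^-}$ (with analogous maps $p_{P^-}$ and $m_{P^-}$). Since every $G$-equivariant sheaf is in particular $\Gm$-equivariant via $\lambda$, Braden's theorem (applied to $-\lambda$, which interchanges attractor and repeller) supplies a canonical isomorphism of functors
\[
\ResN{G}{L} \;=\; (p_P)_* \circ (m_P)^! \;\natisom\; (p_{P^-})_! \circ (m_{P^-})^*.
\]
The right-hand side is manifestly right $t$-exact: $(m_{P^-})^*$ is right $t$-exact since $m_{P^-}$ is a closed inclusion, and $(p_{P^-})_!$ is right $t$-exact since $p_{P^-}$ is affine. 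Combined with the previous paragraph, $\ResN{G}{L}$ is $t$-exact.

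The principal technical point to check is the validity of Braden's hyperbolic localization in the present generality, namely with coefficients in a Noetherian ring $\bk$ of finite global dimension and for sheaves equivariant for the larger group $G$ rather than merely $\Gm$. Neither is a serious obstacle: Braden's argument relies only on base-change identities and the cellular structure of attractors and repellers, both of which remain valid over $\bk$, and $G$-equivariance is stronger than the $\Gm$-equivariance demanded by the theorem. Verifying that $\cN_L$, $\cN_P$ and $\cN_{P^-}$ really are the fixed locus, attractor and repeller for the chosen $\lambda$ is immediate from the weight-space description of $\fg$ recalled above.
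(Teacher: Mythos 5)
Your overall architecture---identify $\ResN{G}{L}$ with hyperbolic restriction for a cocharacter of $Z(L)$, prove one $t$-exactness bound for $(p_P)_*\circ(m_P)^!$ and the other for $(p_{P^-})_!\circ(m_{P^-})^*$, and glue them with Braden's theorem---is exactly the strategy of the paper. But both of your exactness steps fail because you have the direction of Artin vanishing reversed: for an affine morphism $f$, Artin vanishing says that $f_*$ is \emph{right} $t$-exact (it preserves ${}^p\cD^{\leq 0}$) and, dually, that $f_!$ is \emph{left} $t$-exact. (For $p:\bA^1\to\pt$ one has $p_*\ubk[1]\cong\bk[1]$ and $p_!\ubk[1]\cong\bk[-1]$; your claimed directions would make $p_*$ and $p_!$ fully $t$-exact for every affine morphism, which is true only in the quasi-finite case.) With the correct directions, $(p_P)_*\circ(m_P)^!$ is a right-$t$-exact functor composed with a left-$t$-exact one, and symmetrically on the other side, so nothing follows formally. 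Indeed $(p_P)_*$ genuinely is not left $t$-exact: it is right adjoint to $(p_P)^*$, which is not right $t$-exact because $p_P$ is smooth of relative dimension $\dim\fu_P>0$.

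This is not merely a sign slip to be patched: the $t$-exactness of hyperbolic restriction on $\cN_G$ is not a formal consequence of affineness plus Braden, and it is precisely here that the $G$-equivariance hypothesis (absent from your argument except as a remark about Braden's theorem) must be used. The paper's proof passes to equivariant derived categories, exhibits a left adjoint $\IndNderiv{G}{L}$ (parabolic induction), and proves that this adjoint is right exact by reducing to the objects $\Delta(\mathscr{O},\mathcal{E})$ and invoking Lusztig's estimate \eqref{eqn:semismall}, i.e.\ the semismallness of $n_{\mathscr{O}}:G\times^P(\mathscr{O}+\fu_P)\to\cN_G$; this yields left exactness of $\ResNderiv{G}{L}$, hence of $\ResN{G}{L}$ on $\Perv_G(\cN_G,\bk)$. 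The dual bound comes from the right adjoint ${}'\IndNderiv{G}{L}$ attached to the opposite parabolic together with the codimension estimate of Lemma~\ref{lem:vanishing-cohomology-support}, and only then is Braden's theorem invoked, exactly as you propose, to identify the two functors. To make your route work you would have to supply a substitute for these dimension estimates tied to the $G$-orbit stratification; the affineness of $p_P$ and $p_{P^-}$ alone cannot do it.
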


\begin{rmk}
The analogue of Proposition~\ref{prop:ResN-exact} for $\Qlb$-sheaves follows from Lusztig's results on character sheaves, especially~\cite[Proposition 15.2]{lus:charsh}.
\end{rmk}

To prove Proposition~\ref{prop:ResN-exact}, it is convenient to consider the similar functor for equivariant derived categories. First, a general remark: although we have defined $\Perv_H(X)$ as a full subcategory of $\cDb(X)$, there is also the full subcategory of $\cDb_H(X)$ consisting of perverse sheaves (see \cite[\S5.1]{bl}), which we denote $\Perv_H'(X)$. Recall that for connected $H$, the forgetful functor $\For:\cDb_H(X)\to\cDb(X)$ restricts to an equivalence $\Perv_H'(X)\simto\Perv_H(X)$ (see \cite[Theorem A.3(i)]{mv1}).

We denote by $\ResNderiv{G}{L}$ the composition of functors
\begin{equation*}
\xymatrix{
\cDb_G(\cN_G)\ar[r]^-{\For^G_P}&\cDb_P(\cN_G)\ar[r]^-{(m_P)^!}&\cDb_P(\cN_P)\ar[r]^-{(p_P)_*}&\cDb_P(\cN_L)\ar[r]^-{\For^P_L}&\cDb_L(\cN_L).
}
\end{equation*}
Here, $P$ acts on $\cN_L$ via the projection $P\to L$, and the functors are defined as in \S\ref{subsect:equivariant} and \S\ref{sss:forgetting}.
The functor $\ResNderiv{G}{L}$ lifts $\ResN{G}{L}$ in the sense that there is an isomorphism 
\begin{equation}
\label{eqn:ResN-ResNderiv}
\ResN{G}{L}\circ\For\natisom\For\circ\ResNderiv{G}{L}
\end{equation}
obtained from the following pasting diagram:
\[
\vc{\begin{tikzpicture}[smallcube]
\compuinit
\node[lttricelld] at (0.5+0.6*\tric,0.5+0.5*\tric) {\tiny$\mTr$};
\node[cubef] at (1.5,0.5) {$\mFor$};
\node[cubef] at (2.5,0.5) {$\mFor$};
\node[rttricelld] at (3.5-0.6*\tric,0.5+0.5*\tric) {\tiny$\mTr$};
\node (llu) at (0,1) {$\cDb_G(\cN_G)$};
\node (lu) at (1,1) {$\cDb_P(\cN_G)$};
\node (mu) at (2,1) {$\cDb_P(\cN_P)$};
\node (ru) at (3,1) {$\cDb_P(\cN_L)$};
\node (rru) at (4,1) {$\cDb_L(\cN_L)$};
\node (ld) at (1,0) {$\cDb(\cN_G)$};
\node (md) at (2,0) {$\cDb(\cN_P)$};
\node (rd) at (3,0) {$\cDb(\cN_L)$};
\draw[->] (llu) -- node[arl] {{\tiny $\For^G_P$}} (lu);
\draw[->] (lu) -- node[arl] {{\tiny $(m_P)^!$}} (mu);
\draw[->] (mu) -- node[arl] {{\tiny $(p_P)_*$}} (ru);
\draw[->] (ru) -- node[arl] {{\tiny $\For^P_L$}} (rru);
\draw[->] (ld) -- node[arl] {{\tiny $(m_P)^!$}} (md);
\draw[->] (md) -- node[arl] {{\tiny $(p_P)_*$}} (rd);
\draw[->] (llu) -- node[arr] {{\tiny $\For$}} (ld);
\draw[->] (lu) -- node[arl] {{\tiny $\For$}} (ld);
\draw[->] (mu) -- node[arl] {{\tiny $\For$}} (md);
\draw[->] (ru) -- node[arl] {{\tiny $\For$}} (rd);
\draw[->] (rru) -- node[arl] {{\tiny $\For$}} (rd);
\end{tikzpicture}}
\]

The functor $\ResNderiv{G}{L}$ has a left adjoint $\IndNderiv{G}{L}:\cDb_L(\cN_L)\to\cDb_G(\cN_G)$, defined as the following composition:
\begin{equation*}
\xymatrix{
\cDb_G(\cN_G)\ar@{<-}[r]^-{\hamma^G_P}&\cDb_P(\cN_G)\ar@{<-}[r]^-{(m_P)_!}&\cDb_P(\cN_P)\ar@{<-}[r]^-{(p_P)^*}&\cDb_P(\cN_L)\ar@{<-}[r]^-{\hamma^P_L}&\cDb_L(\cN_L).
}
\end{equation*}
Here, $\hamma^H_K$ is the left adjoint of $\For^H_K$ (see \cite[\S3.7.1]{bl} or \S\ref{sss:forgetting}). Note that since $U_P$ is contractible and acts trivially on $\cN_L$, the functor $\hamma^P_L:\cDb_L(\cN_L)\to\cDb_P(\cN_L)$ is an equivalence, with inverse $\For^P_L$ (see \cite[Theorem 3.7.3]{bl}).

\begin{lem}
\label{lem:ind-Springer-exact}

The functor $\IndNderiv{G}{L}$ is right exact for the perverse $t$-structure.

\end{lem}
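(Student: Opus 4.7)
The plan is to check right $t$-exactness of $\IndNderiv{G}{L}$ factor by factor in its defining composition; since compositions of right $t$-exact functors are right $t$-exact, this suffices. Three of the four factors are essentially formal. As noted in the paragraph preceding the lemma, $\hamma^P_L$ is an equivalence of categories, hence $t$-exact. The map $m_P\colon\cN_P\to\cN_G$ is a closed embedding (indeed $\cN_P=\cN_G\cap\fp$, a closed subset of $\cN_G$), so $(m_P)_!=(m_P)_*$ is $t$-exact. Finally, the projection $p_P\colon\cN_P\to\cN_L$ is smooth of relative dimension $d:=\dim\fu_P$: using the decomposition $\fp=\fl\oplus\fu_P$ and the standard fact that an element of $\fp$ is nilpotent if and only if its projection to $\fl$ is nilpotent, the fibre of $p_P$ over any $x\in\cN_L$ is the affine subspace $x+\fu_P$, so $p_P$ is a trivial vector bundle of rank $d$. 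Hence $(p_P)^*[d]$ is $t$-exact, making $(p_P)^*$ right $t$-exact.

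The remaining factor $\hamma^G_P$ is the one I expect to be the main step. The natural approach is to exploit the adjunction $\hamma^G_P\dashv\For^G_P$ directly: the forgetful functor $\For^G_P\colon\cDb_G(\cN_G)\to\cDb_P(\cN_G)$ is $t$-exact, because the perverse $t$-structure on an equivariant derived category is by definition inherited from that on the plain derived category via the non-equivariant forgetful functor (and the relative forgetful $\For^G_P$ is compatible with this). It is a formal fact that a left adjoint to a left $t$-exact functor is right $t$-exact. Explicitly, for $X\in\cDb^{\le 0}_P(\cN_G)$ and $Y\in\cDb^{\ge 1}_G(\cN_G)$, the adjunction yields
\[
\Hom(\hamma^G_P X,Y)\cong\Hom(X,\For^G_P Y)=0,
\]
and the characterisation of $\cDb^{\le 0}$ by $\Hom$-vanishing against $\cDb^{\ge 1}$ then gives $\hamma^G_P X\in\cDb^{\le 0}_G(\cN_G)$.

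Should the formal adjunction argument above require more justification than one wishes to undertake in the Bernstein--Lunts framework, a more hands-on alternative is to identify $\IndNderiv{G}{L}$ (up to a shift) with the geometric functor $(\mu)_!(\nu)^*$ attached to the correspondence
\[
\cN_L\xleftarrow{\;\nu\;}G\times^P\cN_P\xrightarrow{\;\mu\;}\cN_G,
\]
where $\mu$ is the action map, which is proper because $G/P$ is projective, and $\nu$ is induced by $p_P$, hence smooth. Right $t$-exactness of the proper pushforward $(\mu)_!$, combined with the $t$-exactness of the appropriately shifted smooth pullback $(\nu)^*$, then yields the lemma directly. Either route completes the proof.
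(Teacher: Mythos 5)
There is a genuine gap, and it sits exactly where the real content of the lemma lives. Your factor-by-factor strategy fails at $(p_P)^*$: for a smooth morphism $g$ of relative dimension $d$, the $t$-exact functor is $g^*[d]$, so $g^*$ itself sends ${}^p\cD^{\le 0}$ into ${}^p\cD^{\le d}$ and ${}^p\cD^{\ge 0}$ into ${}^p\cD^{\ge d}\subset{}^p\cD^{\ge 0}$ --- that is, $g^*$ is \emph{left} $t$-exact, not right $t$-exact (compare: $a^*\ubk_{\pt}=\ubk_{\bA^d}$ sits in perverse degree $+d$, not $\le 0$). You have the direction of the shift backwards. The other three factors are fine --- $\hamma^P_L$ is a $t$-exact equivalence, $(m_P)_!=(m_P)_*$ for the closed embedding $m_P$ is $t$-exact, and your adjunction argument correctly shows $\hamma^G_P$ is right $t$-exact (the paper uses the reverse of this formal argument in the proof of Proposition~\ref{prop:ResN-exact}) --- but they are only right $t$-exact, so the composition is only shown to land in ${}^p\cD^{\le \dim\fu_P}$. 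Your fallback route has the same defect: proper pushforward is \emph{not} right $t$-exact in general (push $\ubk_{\mathbb{P}^1}[1]$ to a point and you get cohomology in degree $+1$); to make $\mu_!$ right $t$-exact on the relevant objects you need a bound on the dimensions of its fibres, i.e.\ semismallness.

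That fibre-dimension bound is precisely the ingredient your argument never touches and cannot be avoided: the $d$ lost at $(p_P)^*$ must be recovered from the geometry of the induction map, not from formal $t$-exactness. The paper's proof reduces to the standard objects $\Delta(\mathscr{O},\mathcal{E})$ (which generate ${}^p\cD^{\le 0}_L(\cN_L)$ under nonnegative shifts and extensions), uses the induction equivalence to rewrite $\IndNderiv{G}{L}\Delta(\mathscr{O},\mathcal{E})$ as $(n_{\mathscr{O}})_!$ of a local system placed in degree $-\dim\mathscr{O}-\dim G+\dim L$ on $G\times^P(\mathscr{O}+\fu_P)$, and then verifies the support condition for ${}^p\cD^{\le 0}_G(\cN_G)$ using Lusztig's estimate \eqref{eqn:semismall} on $\dim n_{\mathscr{O}}^{-1}(x)$. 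The remark following the lemma makes the point explicitly: $n_{\mathscr{O}}$ is semismall but not proper, so only this one-sided conclusion is available. Without invoking that estimate (or an equivalent geometric fact about nilpotent orbits), no purely formal decomposition of $\IndNderiv{G}{L}$ will yield the lemma.
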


\begin{proof}
For any $L$-orbit $\mathscr{O} \subset \cN_L$, we denote by $j_{\mathscr{O}} : \mathscr{O} \hookrightarrow \cN_L$ the inclusion. Then, for any $L$-equivariant local system $\mathcal{E}$ on $\mathscr{O}$, we consider the object
\[
\Delta(\mathscr{O},\mathcal{E}) := (j_{\mathscr{O}})_! \mathcal{E} [\dim \mathscr{O}]
\]
of $\cDb_{L}(\cN_L)$. (By a local system, we mean a locally constant sheaf of finitely-generated $\bk$-modules.) 
Then ${}^p \cD_L^{\leq 0}(\cN_L)$ is the smallest full subcategory of $\cDb_L(\cN_L)$ that contains all $\Delta(\mathscr{O},E)[n]$ with $n \ge 0$ and is stable under extensions. Hence to prove the lemma it is sufficient to prove that for all $\mathscr{O}$ and $\mathcal{E}$
\begin{equation}\label{eqn:res-exact}
\IndNderiv{G}{L} \Delta(\mathscr{O},\mathcal{E}) \in {}^p \cD^{\leq 0}_G(\cN_G).
\end{equation}

Let us fix such a pair $(\mathscr{O},\mathcal{E})$. Consider the map
\[
n_{\mathscr{O}} : G \times^P (\mathscr{O} + \fu_P) \to \cN_G
\]
induced by the $G$-action on $\cN_G$, where $\fu_P:=\mathrm{Lie}(U_P)$. For $x \in \cN_G$, an estimate of the dimension of the fibre $n_{\mathscr{O}}^{-1}(x)$ is given in~\cite[Proposition~1.2(b)]{lus:icc}:
\begin{equation}\label{eqn:semismall}
\dim \bigl( n_{\mathscr{O}}^{-1}(x) \bigr) \le \frac{1}{2} \bigl(\dim G - \dim (G \cdot x) - \dim L  + \dim \mathscr{O} \bigr).
\end{equation}

Now, by definition we have $\IndNderiv{G}{L} \Delta(\mathscr{O},\mathcal{E}) \cong \hamma_P^G M_{\mathscr{O},\mathcal{E}}$, where
\[
M_{\mathscr{O},\mathcal{E}} := (j'_{\mathscr{O}})_! \bigl( \mathcal{E} \boxtimes \ubk_{\fu_P} \bigr) [\dim \mathscr{O}]
\]
and $j'_{\mathscr{O}} : \mathscr{O} + \fu_P \hookrightarrow \cN_G$ is the inclusion. Let also $i_{\mathscr{O}} : \mathscr{O} + \fu_P \hookrightarrow G \times^P (\mathscr{O} + \fu_P)$ be the natural inclusion. Then we have
\[
\hamma_P^G M_{\mathscr{O},\mathcal{E}} \cong \hamma_P^G (n_{\mathscr{O}})_! (i_{\mathscr{O}})_! \bigl( \mathcal{E} \boxtimes \ubk \bigr) [\dim \mathscr{O}] \mathrel{\overset{\mInt}{\cong}} (n_{\mathscr{O}})_! \hamma_P^G (i_{\mathscr{O}})_! \bigl( \mathcal{E} \boxtimes \ubk \bigr) [\dim \mathscr{O}]
\]
where $\mInt$ is defined in \S\ref{sss:forgetting}. As explained in \S\ref{subsect:induction-equiv}, the composition $\hamma_P^G (i_{\mathscr{O}})_!:\cDb_P(\mathscr{O} + \fu_P)\to\cDb_G(G \times^P (\mathscr{O} + \fu_P))$ is an equivalence, and is inverse to the functor $(i_{\mathscr{O}})^* \For^G_P[-\dim(G)+\dim(L)]$. Hence $\hamma_P^G (i_{\mathscr{O}})_! \bigl( \mathcal{E} \boxtimes \ubk \bigr) [\dim \mathscr{O}]$ is concentrated in degree $-\dim(\mathscr{O}) - \dim(G) + \dim(L)$. Using \eqref{eqn:semismall}, we deduce that, for any $x \in \cN_G$,
\[
H^i \bigl( (\IndNderiv{G}{L} \Delta(\mathscr{O},\mathcal{E}))|_x \bigr) \cong H^i_c \bigl( n_{\mathscr{O}}^{-1}(x), (\hamma_P^G (i_{\mathscr{O}})_! \bigl( \mathcal{E} \boxtimes \ubk \bigr) [\dim \mathscr{O}])|_{n_{\mathscr{O}}^{-1}(x)} \bigr)
\]
vanishes unless $i \leq -\dim(G \cdot x)$, see \cite[Proposition X.1.4]{iv}, which proves \eqref{eqn:res-exact}.
\end{proof}

\begin{rmk}
The dimension estimate~\eqref{eqn:semismall} amounts to saying that $n_{\mathscr{O}}$ is \emph{semismall}.  That notion is usually applied to proper maps, where it implies that the push-forward of the constant sheaf is (a suitable shift of) a perverse sheaf.  Here, since $n_{\mathscr{O}}$ is not proper, we obtain only a one-sided statement.
\end{rmk}

Let $P^-$ be the parabolic subgroup of $G$ which is opposite to $P$ (i.e.~the $T$-weights of the Lie algebra of $P^-$ are opposite to those of $\mathfrak{p}$). We have a diagram
\[
\xymatrix{
\cN_L & \cN_{P^-} \ar[l]_-{p_{P^-}} \ar[r]^-{m_{P^-}} & \cN_G
}
\]
hence we can consider the functor
\[
{}'\ResN{G}{L} := (p_{P^-})_! \circ (m_{P^-})^* : \cDb(\cN_G) \to \cDb(\cN_L).
\]
As for $\ResN{G}{L}$, this functor has a lift ${}'\ResNderiv{G}{L}$ to equivariant derived categories, which is the composition
\begin{equation*}
\xymatrix{
\cDb_G(\cN_G)\ar[r]^-{\For^G_{P^-}}&\cDb_{P^-}(\cN_G)\ar[r]^-{(m_{P^-})^*} &\cDb_{P^-}(\cN_{P^-})\ar[r]^-{(p_{P^-})_!}&\cDb_{P^-}(\cN_L)\ar[r]^-{\For^{P^-}_L}&\cDb_L(\cN_L).
}
\end{equation*}
The functor ${}'\ResNderiv{G}{L}$ has a right adjoint ${}'\IndNderiv{G}{L}$, defined as the composition
\begin{equation*}
\xymatrix{
\cDb_G(\cN_G)\ar@{<-}[r]^-{\Hamma^G_{P^-}}&\cDb_{P^-}(\cN_G)\ar@{<-}[r]^-{(m_{P^-})_*} &\cDb_{P^-}(\cN_{P^-})\ar@{<-}[r]^-{(p_{P^-})^!}&\cDb_{P^-}(\cN_L)\ar@{<-}[r]^-{\Hamma^{P^-}_L}&\cDb_L(\cN_L).
}
\end{equation*}
Here, $\Hamma_K^H$ is the right adjoint of $\For_K^H$ (see \cite[\S3.7.1]{bl}).

\begin{lem}
\label{lem:ind-Springer-exact-2}

The functor ${}'\IndNderiv{G}{L}$ is left exact for the perverse $t$-structure.

\end{lem}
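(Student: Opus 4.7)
The cleanest plan is to deduce this lemma from Lemma~\ref{lem:ind-Springer-exact} applied to the opposite parabolic, by way of Verdier duality. First, I would observe that the proof of Lemma~\ref{lem:ind-Springer-exact} goes through verbatim with $P$ replaced by $P^-$: the generation of ${}^p\cD^{\leq 0}_L(\cN_L)$ by the standards $\Delta(\mathscr{O},\mathcal{E})$ is intrinsic to $L$, the dimension estimate \eqref{eqn:semismall} of Lusztig is stated for any parabolic having $L$ as a Levi factor, and $\hamma^{P^-}_L$ is again an equivalence since $U_{P^-}$ is contractible. Hence the variant
\[
\IndNderiv{G}{L}^{(P^-)} := \hamma^G_{P^-}\circ(m_{P^-})_!\circ(p_{P^-})^*\circ\hamma^{P^-}_L : \cDb_L(\cN_L)\to\cDb_G(\cN_G)
\]
is right exact for the perverse $t$-structure, by the very same argument.

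Next, I would establish a natural isomorphism
\[
{}'\IndNderiv{G}{L}\;\cong\;\mathbb{D}_{\cN_G}\circ\IndNderiv{G}{L}^{(P^-)}\circ\mathbb{D}_{\cN_L}.
\]
Comparing the two four-fold compositions factor by factor, this follows from the standard Verdier duality identities $\mathbb{D} f_*\cong f_!\mathbb{D}$ and $\mathbb{D} f^!\cong f^*\mathbb{D}$ applied to the morphisms $m_{P^-}$ and $p_{P^-}$, together with the Bernstein--Lunts counterpart stating that Verdier duality interchanges the left adjoint $\hamma^H_K$ and the right adjoint $\Hamma^H_K$ of the forgetful functor (up to a shift by $2\dim(H/K)$). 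The shifts attached to the two equivariant adjoint factors $\Hamma^G_{P^-}$ and $\Hamma^{P^-}_L$ cancel against one another, so no residual shift appears.

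Finally, since Verdier duality exchanges ${}^p\cD^{\leq 0}$ and ${}^p\cD^{\geq 0}$, the right exactness of $\IndNderiv{G}{L}^{(P^-)}$ translates directly into the left exactness of ${}'\IndNderiv{G}{L}$, which is the desired conclusion. The main technical obstacle will be the careful verification of the Verdier duality identity for the equivariant adjoints $\hamma^H_K$ and $\Hamma^H_K$ and the confirmation that the associated shifts cancel exactly. If one prefers to avoid this bookkeeping, an equivalent strategy is to mimic the proof of Lemma~\ref{lem:ind-Springer-exact} directly, replacing the standards $\Delta(\mathscr{O},\mathcal{E})$ by the costandards $\nabla(\mathscr{O},\mathcal{E}) := (j_{\mathscr{O}})_*\mathcal{E}[\dim\mathscr{O}]$, the functors $(-)_!$ by $(-)_*$, and invoking the same semismallness estimate for the map $n^-_{\mathscr{O}}:G\times^{P^-}(\mathscr{O}+\fu_{P^-})\to\cN_G$; in this version the main obstacle is instead the parallel computation of the stalks of $i_{G\cdot x}^!\,{}'\IndNderiv{G}{L}\nabla(\mathscr{O},\mathcal{E})$.
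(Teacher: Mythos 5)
Your primary (Verdier duality) route has a genuine gap in the generality of this paper. The functor identity ${}'\IndNderiv{G}{L}\cong\mathbb{D}\circ\hamma^G_{P^-}\circ(m_{P^-})_!\circ(p_{P^-})^*\circ\hamma^{P^-}_L\circ\mathbb{D}$ is itself fine --- it follows most cleanly from uniqueness of right adjoints, since conjugating ${}'\ResNderiv{G}{L}$ by $\mathbb{D}$ yields the analogue of $\ResNderiv{G}{L}$ defined with $P^-$ in place of $P$; this also sidesteps your shift bookkeeping, which as written is not right (two shifts of $2\dim(G/P^-)$ and $2\dim(P^-/L)$ in the same direction would add, not cancel; what actually happens is that each relation $\mathbb{D}\circ\hamma^H_K\cong\Hamma^H_K\circ\mathbb{D}$ is already shift-free once equivariant duality is normalized to commute with $\For$). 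The real problem is the final step: over a Noetherian ring $\bk$ of finite global dimension, Verdier duality does \emph{not} exchange ${}^p\cD^{\le 0}$ and ${}^p\cD^{\ge 0}$; it exchanges the middle-perversity $t$-structure with the dual $t$-structure $p^+$ of \cite[\S 3.3]{bbd}. Already for $\bk=\Z$ and $X=\pt$ one has $\mathbb{D}(\Z/2)\cong(\Z/2)[-1]$, so $\mathbb{D}$ does not carry $\cD^{\ge 0}$ into $\cD^{\le 0}$. Hence from $M\in{}^p\cD^{\ge 0}_L(\cN_L)$ you cannot conclude $\mathbb{D}M\in{}^p\cD^{\le 0}_L(\cN_L)$, and right exactness of the $P^-$-induction functor only gives ${}'\IndNderiv{G}{L}(M)\in{}^p\cD^{\ge -n}$ with $n$ the global dimension of $\bk$. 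Your argument is valid when $\bk$ is a field, but not for the rings this lemma is meant to cover; repairing it would require establishing right exactness for the $p^+$-structure as well, at which point the direct argument is simpler.

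That direct argument --- your fallback --- is exactly what the paper does. One repeats the proof of Lemma~\ref{lem:ind-Springer-exact} with the costandard objects $\nabla(\mathscr{O},\mathcal{E})=(j_{\mathscr{O}})_*\mathcal{E}[\dim\mathscr{O}]$, which generate ${}^p\cD^{\ge 0}_L(\cN_L)$ under extensions and shifts $[n]$ with $n\le 0$, and computes costalks $i_x^!$ instead of stalks. The semismallness estimate \eqref{eqn:semismall} is indeed unchanged, but it alone does not finish the costalk computation: the substitute for the bound on compactly supported cohomology of the fibres is a bound on local cohomology with supports, namely $H^i_Y(X,\mathcal{L})=0$ for $i<2\,\mathrm{codim}_X(Y)$ when $X$ is smooth and $\mathcal{L}$ is a local system --- this is precisely the paper's Lemma~\ref{lem:vanishing-cohomology-support}. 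With that input, your second strategy goes through and is the intended proof.
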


\begin{proof}
Similar to the proof of Lemma \ref{lem:ind-Springer-exact}, using $\nabla(\mathscr{O},\mathcal{E}) := (j_{\mathscr{O}})_* \mathcal{E}[\dim \mathscr{O}]$
instead of $\Delta(\mathscr{O},\mathcal{E})$. The required vanishing statement is provided by Lemma \ref{lem:vanishing-cohomology-support} below.
\end{proof}

\begin{lem}
\label{lem:vanishing-cohomology-support}

Let $X$ be a smooth variety, and $Y \subset X$ a closed subvariety. Then for any local system $\mathcal{E}$ on $X$ we have $H^i_Y(X,\mathcal{E})=0$
unless $i \geq 2\,\mathrm{codim}_X(Y)$.

\end{lem}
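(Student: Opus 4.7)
The plan is to proceed by induction on $\dim Y$, reducing the general case to the case $Y$ smooth, where cohomological purity for smooth pairs yields the result immediately.

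For the base case, suppose $Y$ is smooth of pure codimension $c$ in $X$. Cohomological purity in the classical topology gives $(i_Y)^! \ubk_X \cong \ubk_Y[-2c]$, and the projection formula (valid because $\mathcal{E}$ is locally constant) upgrades this to $(i_Y)^! \mathcal{E} \cong (i_Y)^* \mathcal{E}\,[-2c]$, a sheaf concentrated in cohomological degree $2c$. Taking hypercohomology on $Y$ gives $H^i_Y(X,\mathcal{E}) = H^{i-2c}(Y,(i_Y)^* \mathcal{E}) = 0$ for $i < 2c$.

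For the inductive step, choose a proper closed subset $Z \subsetneq Y$ such that $U := Y \setminus Z$ is smooth; one takes $Z$ large enough to contain both the singular locus of $Y$ and every irreducible component of $Y$ of dimension strictly less than $\dim Y$. Then $\dim Z < \dim Y$ and, crucially, every irreducible component of $Z$ has codimension $> c$ in $X$. Set $V := X \setminus Z$, write $k : V \hookrightarrow X$, $j : U \hookrightarrow Y$, $i : Z \hookrightarrow Y$ for the inclusions, and abbreviate $\tilde{i}_Z := i_Y \circ i$ and $\tilde{i}_U := i_Y \circ j$. The open-closed distinguished triangle on $Y$ applied to $(i_Y)^! \mathcal{E}$ reads
\[
i_* (\tilde{i}_Z)^! \mathcal{E} \longrightarrow (i_Y)^! \mathcal{E} \longrightarrow j_*\,j^* (i_Y)^! \mathcal{E} \xrightarrow{+1} .
\]
Base change for the cartesian square with vertices $U,V,Y,X$ identifies $j^* (i_Y)^! \mathcal{E}$ with $(\tilde{i}_U)^!(\mathcal{E}|_V)$. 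Since $U$ is a closed smooth subvariety of the smooth variety $V$ with each component of codimension $\geq c$, the base case gives $(\tilde{i}_U)^!(\mathcal{E}|_V) \in \cDb(U)$ in degrees $\geq 2c$, and applying $j_*$ preserves this. Meanwhile the inductive hypothesis applied to $Z \subset X$ (of codimension $> c$) gives $(\tilde{i}_Z)^! \mathcal{E} \in \cDb(Z)$ in degrees $> 2c$. The triangle then forces $(i_Y)^! \mathcal{E}$ to lie in degrees $\geq 2c$, and taking global sections on $Y$ finishes the proof.

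The only real subtlety, and the main point to be careful about, is the codimension bookkeeping: one must enlarge $Z$ beyond the naive singular locus to absorb any low-dimensional components of $Y$, ensuring simultaneously that $U$ remains smooth and that every irreducible component of $Z$ has codimension strictly greater than $c$ in $X$. With this arrangement the rest is a routine dévissage via the open-closed triangle and cohomological purity for local systems on smooth pairs.
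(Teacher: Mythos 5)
Your proof is correct, but it follows a genuinely different route from the paper's. The paper's argument (given only as a sketch) quotes the full statement for \emph{constant} coefficients from Iversen's Theorem X.2.1 --- which already covers arbitrary singular $Y$ --- and then deduces the local-system case by choosing a covering of $X$ trivializing $\mathcal{E}$ and patching via excision and the Mayer--Vietoris-type isomorphisms of [Iv, II.9.5--9.6]. You instead redo the reduction from general $Y$ to smooth $Y$ yourself, by induction on $\dim Y$ using the open--closed triangle $i_*i^!K \to K \to Rj_*j^*K \xrightarrow{+1}$ together with the compatibility of $(\cdot)^!$ with restriction to the open set $V = X \smallsetminus Z$; the local system then causes no extra difficulty in the base case, since purity for a smooth closed pair, $(i_Y)^!\mathcal{E} \cong (i_Y)^*\mathcal{E}[-2c]$, holds verbatim for locally constant coefficients (the complex normal bundle is canonically oriented, and the isomorphism can be checked locally where $\mathcal{E}$ is constant). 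What your version buys is a self-contained argument that avoids the trivializing-cover/excision step entirely, and it in fact establishes the slightly stronger local statement that the complex $(i_Y)^!\mathcal{E}$ has vanishing cohomology sheaves in degrees $<2\,\mathrm{codim}_X(Y)$; the cost is that you essentially reprove the content of Iversen's theorem. Your codimension bookkeeping for $Z$ (absorbing both the singular locus and the lower-dimensional components of $Y$) is exactly right. One minor terminological point: the isomorphism $(i_Y)^!\mathcal{E} \cong (i_Y)^*\mathcal{E} \otimes (i_Y)^!\ubk_X$ is better justified by the local nature of $i^!$ on locally constant inputs (i.e.\ purity) than by the projection formula, but the claim itself is standard and correct.
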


\begin{proof}[Sketch of proof]
If $\mathcal{E}$ is constant this follows from \cite[Theorem X.2.1]{iv}. One deduces the general case using a covering of $X$ which trivializes $\mathcal{E}$, together with the excision exact sequence and isomorphism~\cite[II.9.5 and II.9.6]{iv}.
\end{proof}

\begin{proof}[Proof of Proposition {\rm \ref{prop:ResN-exact}}]
As the left adjoint $\IndNderiv{G}{L}$ of $\ResNderiv{G}{L}$ is right exact (see Lemma \ref{lem:ind-Springer-exact}), $\ResNderiv{G}{L}$ is left exact. As the functor $\For : \Perv_G'(\cN_G) \to \Perv_G(\cN_G)$ is an equivalence, using \eqref{eqn:ResN-ResNderiv} and the definition of the perverse $t$-structure on $\cDb_G(\cN_G)$, it follows that $\ResN{G}{L}$ sends $\Perv_G(\cN_G)$ inside ${}^p \cD^{\geq 0}(\cN_L)$. By the same argument (using Lemma \ref{lem:ind-Springer-exact-2}), the functor ${}' \ResNderiv{G}{L}$ is right exact. As above, it follows that ${}'\ResN{G}{L}$ sends $\Perv_G(\cN_G)$ inside ${}^p \cD^{\leq 0}(\cN_L)$. Finally, by \cite[Theorem 1]{br}, for any $M$ in $\Perv_G(\cN_G)$ we have $\ResN{G}{L}(M) \cong {}' \ResN{G}{L}(M)$,
hence both of these objects are in $\Perv_L(\cN_L)$.
\end{proof}

\begin{cor}
\label{cor:ResNderiv-exact}
The functor $\ResNderiv{G}{L}$ restricts to an exact functor from $\Perv_G'(\cN_G)$ to $\Perv_L'(\cN_L)$.
\end{cor}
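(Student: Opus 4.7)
The plan is to deduce the corollary directly from Proposition~\ref{prop:ResN-exact} by transporting the statement across the forgetful functor. Since $G$ and $L$ are connected, the forgetful functors
\[
\For : \Perv_G'(\cN_G) \xrightarrow{\sim} \Perv_G(\cN_G), \qquad \For : \Perv_L'(\cN_L) \xrightarrow{\sim} \Perv_L(\cN_L)
\]
are equivalences of categories (this is the result of~\cite[Theorem A.3(i)]{mv1} already quoted above). Moreover, by definition of the perverse $t$-structure on $\cDb_H(X)$, the forgetful functor $\For:\cDb_H(X)\to\cDb(X)$ is $t$-exact and an object belongs to $\Perv_H'(X)$ iff its image under $\For$ is perverse.

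First I would show that $\ResNderiv{G}{L}$ sends $\Perv_G'(\cN_G)$ into $\Perv_L'(\cN_L)$. Let $M\in\Perv_G'(\cN_G)$. Then $\For(M)\in\Perv_G(\cN_G)$, so by Proposition~\ref{prop:ResN-exact}, $\ResN{G}{L}(\For(M))\in\Perv_L(\cN_L)$. By the isomorphism~\eqref{eqn:ResN-ResNderiv} we obtain
\[
\For\bigl(\ResNderiv{G}{L}(M)\bigr)\;\cong\;\ResN{G}{L}\bigl(\For(M)\bigr)\;\in\;\Perv_L(\cN_L).
\]
By the characterization of $\Perv_L'(\cN_L)$ as the preimage of $\Perv_L(\cN_L)$ under $\For$, this gives $\ResNderiv{G}{L}(M)\in\Perv_L'(\cN_L)$.

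For exactness, consider a short exact sequence in $\Perv_G'(\cN_G)$, which is equivalent (by the $t$-structure definition) to a distinguished triangle whose three vertices lie in $\Perv_G'(\cN_G)$. Applying the triangulated functor $\ResNderiv{G}{L}$ produces a distinguished triangle in $\cDb_L(\cN_L)$; by the previous paragraph its three vertices lie in $\Perv_L'(\cN_L)$, so it corresponds to a short exact sequence in $\Perv_L'(\cN_L)$. This establishes the claimed exactness. No serious obstacle arises: the only nontrivial ingredient is Proposition~\ref{prop:ResN-exact} itself, and once one has the forgetful equivalence for connected groups together with the lifting isomorphism~\eqref{eqn:ResN-ResNderiv}, the corollary is essentially formal.
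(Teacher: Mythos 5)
Your proposal is correct and is exactly the argument the paper intends: the paper's proof of this corollary is the one-line statement that it follows from Proposition~\ref{prop:ResN-exact} and the isomorphism~\eqref{eqn:ResN-ResNderiv}, and your write-up simply spells out the two formal steps (preservation of the heart via the characterization of $\Perv_L'(\cN_L)$ as the preimage of $\Perv_L(\cN_L)$ under $\For$, then exactness via distinguished triangles). No gaps.
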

\begin{proof}
This follows from Proposition \ref{prop:ResN-exact} and \eqref{eqn:ResN-ResNderiv}.
\end{proof}

Finally we must explain how to construct a transitivity isomorphism
\begin{equation}
\label{eqn:composition-restriction-Springer}
\ResN{G}{T} \natisom \ResN{L}{T} \circ \ResN{G}{L} : \Perv_G(\cN_G,\bk) \to \Perv_T(\cN_T,\bk).
\end{equation}
In fact, using the cartesian square
\begin{equation}
\vc{\begin{tikzpicture}[vsmallcube]
\compuinit
\node[cart] at (0.5,0.5) {};
\node (lu) at (0,1) {$\cN_B$};
\node (ru) at (1,1) {$\cN_P$};
\node (ld) at (0,0) {$\cN_C$};
\node (rd) at (1,0) {$\cN_L$};
\draw[->] (lu) -- (ru);
\draw[->] (lu) -- (ld);
\draw[->] (ru) -- (rd);
\draw[->] (ld) -- (rd);
\end{tikzpicture}}
\end{equation}
(where all morphisms are the natural ones), the pasting diagram
\begin{equation} \label{eqn:transitivity-isom-resN-db-version}
\vc{\begin{tikzpicture}[stdtriangle]
\compuinit
\node[lttricelld] at (0.5+0.5*\tric, 1.5+0.5*\tric) {\tiny$\Comp$};
\node[lttricelld] at (1.5+0.5*\tric, 0.5+0.5*\tric) {\tiny$\Comp$};
\node[cubef] at (1.5,1.5) {$\BC$};
\node (lu) at (0,2) {$\cDb(\cN_G)$};
\node (mu) at (1,2) {$\cDb(\cN_P)$};
\node (ru) at (2,2) {$\cDb(\cN_L)$};
\node (mm) at (1,1) {$\cDb(\cN_B)$};
\node (rm) at (2,1) {$\cDb(\cN_C)$};
\node (rd) at (2,0) {$\cDb(\cN_T)$};
\draw[->] (ru) -- node[arl] {{\tiny $(\cdot)^!$}} (rm);
\draw[->] (lu) -- node[arl] {{\tiny $(\cdot)^!$}} (mu);
\draw[->] (lu) -- node[arr] {{\tiny $(\cdot)^!$}} (mm);
\draw[->] (mu) -- node[arl] {{\tiny $(\cdot)_*$}} (ru);
\draw[->] (mu) -- node[arr] {{\tiny $(\cdot)^!$}} (mm);
\draw[->] (rm) -- node[arl] {{\tiny $(\cdot)_*$}} (rd);
\draw[->] (mm) -- node[arl] {{\tiny $(\cdot)_*$}} (rm);
\draw[->] (mm) -- node[arr] {{\tiny $(\cdot)_*$}} (rd);
\end{tikzpicture}}
\end{equation}
produces the desired isomorphism of functors (by restriction to perverse sheaves).


\section{The functors $\Phi_{\Gv}$ and $\Psi_G$ and restriction to a Levi}
\label{sect:phi-psi}



\subsection{Intertwining isomorphism for the functor $\Phi_{\Gv}$}

Let $V$ be in $\Rep(\Gv,\bk)$. Since $Z(\Lv)\subset\Tv$, the zero weight space of $V$ is the same as the zero weight space of $V^{Z(\Lv)}$. Of course, the sign character of $\W_G$ restricts to that of $\W_L$. Hence we have the following equality, which we declare to be the intertwining isomorphism:
\begin{equation}
\ResW{\W_G}{\W_L}\circ\Phi_{\Gv} = \Phi_{\Lv}\circ\ResG{\Gv}{\Lv}.
\end{equation}
The prism \eqref{eqn:phi-prism} is trivially commutative.

\subsection{Intertwining isomorphism for the functor $\Psi_G$}

We need some preparatory results. In the next lemma, we identify $\Gr_L$ with its image in $\Gr_G$.

\begin{lem}
\label{lem:birkhoff-levi}

We have equalities $\Gr_{0,G}^- \cap \Gr_L = \Gr_{0,L}^-$, $\cM_G \cap \Gr_L = \cM_L$.

\end{lem}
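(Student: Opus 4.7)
For the first equality $\Gr_{0,G}^-\cap\Gr_L=\Gr_{0,L}^-$, I would invoke the Birkhoff decomposition of the affine Grassmannian. For any connected reductive $H\supseteq T$, $\Gr_H$ admits a disjoint Birkhoff decomposition
\[
\Gr_H=\bigsqcup_{\mu\in\bX}\mathfrak{G}_H\cdot\mathbf{t}_\mu,
\]
where $\mathfrak{G}_H:=\ker(H(\fO^-)\to H)$, and the $\mu=0$ cell is $\Gr_{0,H}^-$. Since $\mathfrak{G}_L\subseteq\mathfrak{G}_G$, the inclusion $\Gr_{0,L}^-\subseteq\Gr_{0,G}^-\cap\Gr_L$ is immediate. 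Conversely, any $x\in\Gr_L\cap\Gr_{0,G}^-$ lies in a unique Birkhoff cell $\mathfrak{G}_L\cdot\mathbf{t}_\mu$ of $\Gr_L$, and thus in $\mathfrak{G}_G\cdot\mathbf{t}_\mu$; since this meets $\mathfrak{G}_G\cdot\mathbf{t}_0$, disjointness in $\Gr_G$ forces $\mu=0$, so $x\in\Gr_{0,L}^-$.

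For $\cM_G\cap\Gr_L=\cM_L$: the first equality reformulates the task as the identification $\Gr_G^{\sm}\cap\Gr_{0,L}^-=\Gr_L^{\sm}\cap\Gr_{0,L}^-$. Both sides are unions of $\Gr_L^\mu\cap\Gr_{0,L}^-$ over those $\mu\in\Z R_L^\vee$ (a condition forced by containment in $\Gr_L^\circ$) which are small for $\Gv$ (left) or for $\Lv$ (right). The inclusion $\cM_G\cap\Gr_L\subseteq\cM_L$ follows from $R_L^\vee\subseteq R_G^\vee$ together with $\mathrm{conv}(\W_L\cdot\mu)\subseteq\mathrm{conv}(\W_G\cdot\mu)$, which jointly show that smallness for $\Gv$ implies smallness for $\Lv$ on weights in the $L$-coroot lattice.

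The main obstacle I anticipate is the reverse inclusion, since combinatorially smallness for $\Lv$ is strictly weaker than smallness for $\Gv$. My plan is to bypass a direct weight-label comparison by arguing geometrically via $\pi^\dag$: restricting $\pi_G^\dag$ to $\Gr_{0,L}^-\subseteq\Gr_{0,G}^-$ gives $\pi_L^\dag$ landing in $\fl\subseteq\fg$, while $\cN_L=\cN_G\cap\fl$ since nilpotency is intrinsic. Combined with a strengthening of \cite[Theorem~1.1]{ah} characterizing $\cM_H$ as $(\pi_H^\dag)^{-1}(\cN_H)\cap\Gr_{0,H}^-$, one then obtains $\cM_L=(\pi_G^\dag)^{-1}(\cN_G)\cap\Gr_L\cap\Gr_{0,G}^-=\cM_G\cap\Gr_L$; establishing the needed strengthening is, I expect, the main work.
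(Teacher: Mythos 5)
Your strategy for the first equality is in the same spirit as the paper's (both ultimately rest on the Birkhoff decomposition), but the decomposition you invoke is false as stated: the sets $\mathfrak{G}_H\cdot\mathbf{t}_\mu$ for $\mu\in\bX$ are pairwise disjoint, but they do \emph{not} cover $\Gr_H$. Indeed, every $x\in\mathfrak{G}_H\cdot\mathbf{t}_\mu$ satisfies $\lim_{s\to\infty}s\cdot x=\mathbf{t}_\mu$ under loop rotation (conjugating an element of $\mathfrak{G}_H$ by $s$ sends it to $1$ as $s\to\infty$), so the only loop-rotation-fixed points lying in $\bigcup_\mu\mathfrak{G}_H\cdot\mathbf{t}_\mu$ are the $\mathbf{t}_\mu$ themselves; yet the fixed locus of loop rotation is the union of the partial flag varieties $H\cdot\mathbf{t}_\mu$, which are positive-dimensional for non-central $\mu$. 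Consequently the step ``any $x\in\Gr_L\cap\Gr_{0,G}^-$ lies in a unique cell $\mathfrak{G}_L\cdot\mathbf{t}_\mu$'' is unjustified --- for the cell with $\mu=0$ it is essentially the assertion to be proved. The argument is easily repaired: use the genuine Birkhoff decomposition $\Gr_H=\bigsqcup_{\mu\ \text{dominant}}H(\fO^-)\cdot\mathbf{t}_\mu$ together with $H(\fO^-)\cdot\mathbf{t}_0=\mathfrak{G}_H\cdot\mathbf{t}_0$, or, as the paper does, the equivalent dynamical characterization $\Gr_{0,H}^-=\{x\mid\lim_{s\to\infty}s\cdot x=\mathbf{t}_0\}$.

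For the second equality, your derivation of $\cM_G\cap\Gr_L\subseteq\cM_L$ matches the paper's (which quotes the inclusion $\Gr_G^{\sm}\cap\Gr_L^\circ\subseteq\Gr_L^{\sm}$), and you are right to flag the reverse inclusion as the delicate point. But the route you propose for it cannot work: the ``strengthening'' $\cM_H=(\pi_H^\dag)^{-1}(\cN_H)\cap\Gr_{0,H}^-$ is false for elementary reasons. Under the isomorphism $\Gr_{0,H}^-\cong\mathfrak{G}_H$, the map $\pi_H^\dag$ is the projection onto the coefficient of $t^{-1}$, so $(\pi_H^\dag)^{-1}(\cN_H)$ is an infinite-dimensional cylinder over $\cN_H$ (a nilpotency condition on the first-order term only, with all higher-order terms unconstrained), whereas $\cM_H$ is a finite-dimensional variety. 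No amount of work will establish that characterization, so the inclusion $\cM_L\subseteq\cM_G$ --- equivalently, $\Gr_L^{\sm}\cap\Gr_{0,L}^-\subseteq\Gr_G^{\sm}$ --- is not proved by your plan. As you yourself observe, it also does not follow from comparing smallness for $\Lv$ and for $\Gv$ pointwise on coweights; this half of the statement genuinely requires a different input, and your proposal currently supplies none.
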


\begin{proof}
The first equality follows from the fact that $\Gr_{0,G}^- = \{x \in \Gr_G \mid \lim_{s \to \infty} s \cdot x = \mathbf{t}_0 \}$,
where the $\Gm$-action considered here is the loop rotation. (This fact follows from the Birkhoff decomposition.) The second equality is a consequence, using the obvious inclusion $\Gr_G^\sm \cap \Gr_L^\circ \subset \Gr_L^\sm$.
\end{proof}

\begin{lem}
\label{lem:diagram-cartesian-L-G}

The following square is cartesian:
\[
\vc{\begin{tikzpicture}[vsmallcube]
\compuinit
\node[cart] at (0.5,0.5) {};
\node (lu) at (0,1) {$\cM_L$};
\node (ru) at (1,1) {$\cM_G$};
\node (ld) at (0,0) {$\cN_L$};
\node (rd) at (1,0) {$\cN_G$};
\draw[right hook->] (lu) -- (ru);
\draw[->] (lu) -- node[arr] {$\al \pi_L$} (ld);
\draw[->] (ru) -- node[arl] {$\al \pi_G$} (rd);
\draw[right hook->] (ld) -- (rd);
\end{tikzpicture}}
\]
\end{lem}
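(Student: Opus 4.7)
My first move is to establish that the diagram makes sense and commutes, producing a canonical comparison morphism. By Lemma~\ref{lem:birkhoff-levi}, $\cM_L = \cM_G \cap \Gr_L$, so the top horizontal arrow is the restriction of the (closed) embedding $\Gr_L \hookrightarrow \Gr_G$; in particular, it factors through $\Gr_{0,L}^- \hookrightarrow \Gr_{0,G}^-$.  The inclusion $L \hookrightarrow G$ induces $\mathfrak{L} \hookrightarrow \mathfrak{G}$ compatible with the isomorphisms $\mathfrak{L} \simto \Gr_{0,L}^-$ and $\mathfrak{G} \simto \Gr_{0,G}^-$, and with the natural maps $\mathfrak{L} \to \fl$, $\mathfrak{G} \to \fg$ obtained from evaluation modulo $t^{-2}$.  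It follows that $\pi_G^\dag|_{\Gr_{0,L}^-} = \pi_L^\dag$ under $\fl \hookrightarrow \fg$, so $\pi_G|_{\cM_L} = \pi_L$ under $\cN_L \hookrightarrow \cN_G$; this is the commutativity of the square, and yields a canonical morphism $\varphi : \cM_L \to \cM_G \times_{\cN_G} \cN_L$.

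Next I reduce cartesianness to a fibrewise assertion.  The inclusion $\cN_L \hookrightarrow \cN_G$ is a closed immersion (both are closed subvarieties of $\fg$), and $\pi_G$ is finite by \cite[Theorem~1.1]{ah}; hence $\cM_G \times_{\cN_G} \cN_L = \pi_G^{-1}(\cN_L)$ is a closed subscheme of $\cM_G$, and $\varphi$ factors $\cM_L$ as a closed subscheme of $\pi_G^{-1}(\cN_L)$.  Since both $\cM_L$ and (the reduction of) $\pi_G^{-1}(\cN_L)$ are reduced, it suffices to establish the set-theoretic equality
\[
\pi_G^{-1}(\cN_L) = \cM_L \quad\text{inside } \cM_G,
\]
and then observe that the scheme structures coincide because $\cN_L \hookrightarrow \cN_G$ is a regular enough closed immersion (or, more directly, because $\cM_G$ is reduced and both candidate subschemes have the same underlying topological space and the same generic structure along each irreducible component, via the finiteness of $\pi_G$).

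The set-theoretic containment $\cM_L \subseteq \pi_G^{-1}(\cN_L)$ has already been observed.  For the reverse, take $m \in \cM_G$ with $x := \pi_G(m) \in \cN_L$; I must show $m \in \Gr_L$.  Writing $m = g(t)\cdot\mathbf{t}_0$ with $g = 1 + xt^{-1} + \sum_{k\ge 2} g_k t^{-k} \in \mathfrak{G}$, the constraint is only on $g_1 = x$, so a priori the higher Taylor coefficients $g_k$ need not lie in $\fl$.  The point is that the smallness hypothesis $m \in \Gr_G^{\sm}$ forces $m$ to lie in a fibre of $\pi_G$ that is already described in \cite[\S1--\S3]{ah}; the explicit identification there shows that, when $x \in \cN_L$, the entire fibre $\pi_G^{-1}(x)$ is contained in $\Gr_L$ and coincides with $\pi_L^{-1}(x)$.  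Combined with $\cM_L = \cM_G \cap \Gr_L$, this yields $m \in \cM_L$.

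\textbf{Main obstacle.}  The delicate step is the last one: purely formally, a power series $g(t) \in \mathfrak{G}$ whose linear coefficient lies in $\fl$ need not lie in $\mathfrak{L}$, so the inclusion $\pi_G^{-1}(\cN_L) \subseteq \cM_L$ is not a routine consequence of the definition of $\pi^\dag$.  The argument must genuinely use the smallness hypothesis through the fibre description of $\pi_G$ in \cite{ah}.  Abstract attempts (for instance via a Białynicki--Birula argument using a regular cocharacter of $Z(L)^\circ$) fail because conjugation by $\eta(s)$ does not preserve $\Gr_{0,G}^-$ in a way that converges without the a priori hypothesis $g_k \in \mathfrak{p}$; hence the appeal to \cite{ah} appears unavoidable.
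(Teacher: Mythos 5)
Your reduction of the problem is sound: commutativity of the square follows from the compatibility of $\pi_L^\dag$ with $\pi_G^\dag$ (this is \cite[Lemma~2.5]{ah}, which the paper cites), and cartesianness then amounts to showing that every $m \in \cM_G$ with $\pi_G(m) \in \cN_L$ already lies in $\Gr_L$ (whence in $\cM_G \cap \Gr_L = \cM_L$ by Lemma~\ref{lem:birkhoff-levi}). But at exactly this point your argument has a genuine gap: you assert that ``the explicit identification [of the fibres of $\pi$] in \cite{ah} shows that, when $x \in \cN_L$, the entire fibre $\pi_G^{-1}(x)$ is contained in $\Gr_L$,'' without verifying it. That assertion is precisely the content to be proved; the fibre descriptions in \cite{ah} are obtained case-by-case for simple simply connected $G$ and do not hand you this containment without further work relating nilpotent orbits of $G$ meeting $\cN_L$ to the Levi. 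Your closing claim that ``the appeal to \cite{ah} appears unavoidable'' is also not correct.

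The paper closes the gap with a short uniform argument that you did not find: let $Z^\circ(L)$ be the identity component of the centre of $L$ and take $x \in \cN_L$. Then $x$ is fixed by $Z^\circ(L)$, and since $\pi_G$ is $G$-equivariant the finite set $\pi_G^{-1}(x)$ is $Z^\circ(L)$-stable; a connected group acting on a finite set acts trivially, so every $y \in \pi_G^{-1}(x)$ is $Z^\circ(L)$-fixed. Since the fixed-point set of $Z^\circ(L)$ on $\Gr_G$ is exactly $\Gr_L$, one gets $y \in \cM_G \cap \Gr_L = \cM_L$ and $\pi_L(y) = x$, i.e.\ $\pi_G^{-1}(x) = \pi_L^{-1}(x)$. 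This uses only finiteness of $\pi_G$, equivariance, and the standard description of torus fixed points on the affine Grassmannian — no fibre computations from \cite{ah} are needed. (Your remarks about scheme structures are also more elaborate than necessary here, since the statement is one about reduced varieties and the set-theoretic fibre identification suffices.)
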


\begin{proof}
Note first that the square commutes by~\cite[Lemma~2.5]{ah}.  Let $Z^\circ(L)$ denote the identity component of the center of $L$, and let $x \in \cN_L$.  Since $x$ is fixed by $Z^\circ(L)$ and $\pi_G^{-1}(x)$ is a finite set, each point $y \in \pi_G^{-1}(x)$ must be fixed by $Z^\circ(L)$ as well.  It is known that the fixed-point set of $Z^\circ(L)$ on $\Gr_G$ is precisely $\Gr_L$.  In view of Lemma~\ref{lem:birkhoff-levi}, we have $y \in \cM_G \cap \Gr_L = \cM_L$.  But then $\pi_L(y) = x$.  In other words, $y \in \pi_L^{-1}(x)$, so $\pi_L^{-1}(x) = \pi_G^{-1}(x)$, as desired.
\end{proof}

Now recall diagrams \eqref{eqn:grlsm-grgsm} 
and 
\eqref{eqn:cnl-cng}.
We need a similar diagram relating $\cM_L$ and $\cM_G$. 
First we define
\[
\cM_P:=(q_P^{\sm})^{-1}(\cM_L)
\]
and denote by $j_P : \cM_P \hookrightarrow \Gr_P^{\sm}$ the inclusion. (Note that $\cM_P$ depends on $L$ and $G$.) 
We have analogous definitions of $\cM_B$ (when $(G,L)$ is replaced by $(G,T)$) and $\cM_C$ (when $(G,L)$ is replaced by $(L,T)$). 


\begin{prop}
\label{prop:M_P}
We have $i_P(\cM_P) \subset \cM_G$, and there is a morphism $\pi_P : \cM_P \to \cN_P$ making the following square cartesian:
\[
\vc{\begin{tikzpicture}[vsmallcube]
\compuinit
\node[cart] at (0.5,0.5) {};
\node (lu) at (0,1) {$\cM_P$};
\node (ru) at (1,1) {$\cM_G$};
\node (ld) at (0,0) {$\cN_P$};
\node (rd) at (1,0) {$\cN_G$};
\draw[right hook->] (lu) -- node[arl] {$\al i_P$} (ru);
\draw[->] (lu) -- node[arr] {$\al \pi_P$} (ld);
\draw[->] (ru) -- node[arl] {$\al \pi_G$} (rd);
\draw[right hook->] (ld) -- node[arr] {$\al m_P$} (rd);
\end{tikzpicture}}
\]
\end{prop}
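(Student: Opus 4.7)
The approach is to realize $\cM_P$ as an attracting locus in $\cM_G$ for a suitable $\Gm$-action. Fix a cocharacter $\lambda : \Gm \to Z^\circ(L)$ with $\langle \alpha, \lambda \rangle > 0$ for every $T$-weight $\alpha$ of $\mathrm{Lie}(U_P)$; the resulting action of $\Gm$ on $\Gr_G$ and $\cN_G$ by left translation (through $Z^\circ(L) \subset G$) has fixed loci $\Gr_L$ and $\cN_L$, attracting loci $\Gr_P$ and $\cN_P$, and retractions $x \mapsto \lim_{s \to 0} \lambda(s) \cdot x$ that coincide with $q_P$ and $p_P$ respectively. These are the standard descriptions of the parabolic decompositions via a dominant central cocharacter of the Levi.

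First I would verify that $\cM_G$ is $\lambda(\Gm)$-stable: $\Gr_G^{\sm}$ is preserved because $Z^\circ(L) \subset G \subset \GO$, and $\Gr_{0,G}^- = G(\fO^-) \cdot \mathbf{t}_0$ is preserved because $Z^\circ(L) \subset G \subset G(\fO^-)$ fixes $\mathbf{t}_0$. By Lemmas~\ref{lem:birkhoff-levi} and~\ref{lem:diagram-cartesian-L-G}, the $\lambda$-fixed points of $\cM_G$ are exactly $\cM_L$, and $\pi_G$ is $\lambda$-equivariant by $G$-equivariance. Since $\pi_G$ is finite, the valuative criterion of properness identifies the attracting locus $\cM_G^+ := \{x \in \cM_G : \lim_{s \to 0} \lambda(s) \cdot x \ \text{exists in}\ \cM_G\}$ with $\pi_G^{-1}(\cN_P)$: equivariance gives the inclusion $\cM_G^+ \subset \pi_G^{-1}(\cN_P)$, and properness of $\pi_G$ lifts limits existing in $\cN_P$ to limits in $\cM_G$.

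The main step is then to show that $i_P$ induces a scheme-theoretic isomorphism $\cM_P \xrightarrow{\sim} \cM_G^+$. Any $x \in \cM_G^+$ lies in the ambient attracting locus $\Gr_P$, so $x = i_P(y)$ for a unique $y \in \Gr_P$; then $q_P(y) = \lim_{s \to 0} \lambda(s) \cdot y$ is identified via $i_P$ with the corresponding limit in $\cM_L \subset \Gr_L^\circ$, and since $q_P$ induces a bijection of connected components $\pi_0(\Gr_P) \xrightarrow{\sim} \pi_0(\Gr_L)$, we deduce $y \in \Gr_P^\circ$, whence $y \in \Gr_P^{\sm} \cap (q_P^{\sm})^{-1}(\cM_L) = \cM_P$. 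Conversely, for $y \in \cM_P$ one has $i_P(y) \in \Gr_G^{\sm}$ by the definition of $\Gr_P^{\sm}$, and $\lim_{s \to 0} \lambda(s) \cdot i_P(y) = i_P(q_P(y)) \in i_P(\cM_L) \subset \cM_G \subset \Gr_{0,G}^-$; since $\Gr_{0,G}^-$ is open in $\Gr_G$ and $\lambda$-stable, $i_P(y)$ itself must lie in $\Gr_{0,G}^-$, so $i_P(y) \in \cM_G$ and hence in $\cM_G^+$. Because $i_P$ is a locally closed immersion on $\Gr_P^\circ$, this set-theoretic bijection is an isomorphism of schemes.

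Defining $\pi_P$ to be the composition $\cM_P \xrightarrow{\sim} \cM_G^+ = \pi_G^{-1}(\cN_P) \xrightarrow{\pi_G} \cN_P$ makes the square tautologically cartesian, since $\cM_G \times_{\cN_G} \cN_P = \pi_G^{-1}(\cN_P)$ ($m_P$ being a closed immersion). The hard part is the input from the first paragraph: the identification of $\Gr_P$, $\cN_P$, $q_P$, and $p_P$ as attracting loci and retractions for the cocharacter $\lambda$ requires an Iwasawa-type decomposition for $G(\fK)$ together with its evident analog on $\fg$. Once those are in place, the cartesian property is essentially formal, with the valuative criterion applied to the finite morphism $\pi_G$ serving as the one nontrivial scheme-theoretic input.
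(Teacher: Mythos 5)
Your proof is correct. It rests on the same two pillars as the paper's argument---the $\Gm$-action through a cocharacter $\lambda$ of $Z^\circ(L)$ that is dominant for $P$ (the paper takes $\lambda = 2\rhov_G - 2\rhov_L$), and the finiteness of $\pi_G$---but the two key steps are executed differently. For the inclusion $i_P(\cM_P)\subset\cM_G$, the paper argues group-theoretically: since $U_P$ is unipotent, $U_P(\fK)\cdot\mathbf{t}_0 = U_P(\fO^-)\cdot\mathbf{t}_0$, so $i_P(\cM_P)\subset P(\fO^-)\cdot\mathbf{t}_0\subset\Gr_{0,G}^-$; you instead deduce it from the openness and $\lambda$-stability of $\Gr_{0,G}^-$ together with the fact that the limit of $i_P(y)$ lands in $\cM_L$. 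For the cartesian property, the paper argues by contradiction: if $\pi_G(x)\in\cN_P\smallsetminus\cN_L$ but $x\notin\cM_P$, then neither the $s\to 0$ nor the $s\to\infty$ limit of $x$ lies in $\cM_G$ (the latter requiring the opposite parabolic $P^-$ and the identity $\cN_P\cap\cN_{P^-}=\cN_L$), so the $\lambda$-orbit of $x$ is closed in $\cM_G$, hence has closed image in $\cN_G$ by finiteness---absurd, since the orbit of $\pi_G(x)$ accumulates at a point of $\cN_L$ outside itself. Your valuative-criterion argument identifying $\pi_G^{-1}(\cN_P)$ with the attracting locus $\cM_G^+$ is a direct and arguably cleaner substitute that avoids $P^-$ and the $s\to\infty$ limit entirely; the trade-off is that you invoke the full identification of $i_P(\Gr_P)$, $q_P$, $\cN_P$, $p_P$ as attracting loci and retractions as a black box, whereas the paper uses only the fragments of that picture it needs (and uses them just as implicitly, e.g.\ in the step ``as $x \notin \cM_P$, we have $y \notin \cM_L$''). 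Your closing observation that the set-theoretic bijection upgrades to an isomorphism of varieties because $i_P$ restricts to a locally closed immersion with closed image is a point the paper glosses over, so on that score your write-up is slightly more complete.
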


\begin{proof}
By definition, $i_P(\cM_P)$ is contained in
$i_P(q_P^{-1}(\Gr_{0,L}^-)) = L(\fO^-) \cdot U_P(\fK) \cdot \mathbf{t}_0$.
Now we have $U_P(\fK) = U_P(\fO^-) \cdot U_P(\fO)$ since $U_P$ is unipotent, so $U_P(\fK) \cdot \mathbf{t}_0 = U_P(\fO^-) \cdot \mathbf{t}_0$. Therefore 
\begin{equation} \label{eqn:parabolic-preserved}
i_P(\cM_P)\subset P(\fO^-) \cdot \mathbf{t}_0 \subset \Gr_{0,G}^-.
\end{equation} 
Also $i_P(\cM_P)\subset i_P(\Gr^{\sm}_P)\subset\Gr^{\sm}_G$, so $i_P(\cM_P) \subset \cM_G$. Moreover, \eqref{eqn:parabolic-preserved} implies that $\pi_G(i_P(\cM_P))\subset \fp\cap\cN_G=\cN_P$. Let $\pi_P:\cM_P\to\cN_P$ be the restriction of $\pi_G\circ i_P$.

To prove that the square is cartesian, we have to show that if $x \in \cM_G$ and $\pi_G(x) \in \cN_P$, then $x \in i_P(\cM_P)$.  So, consider some $x \in \cM_G$ such that $\pi_G(x) \in \cN_P$. For convenience of notation, we identify $\cM_L$ and $\cM_P$ with their images in $\cM_G$. First, if $\pi_G(x) \in \cN_L$, then by Lemma \ref{lem:diagram-cartesian-L-G} we have $x \in \cM_L$, which proves the result. 

Assume now that $\pi_G(x) \in \cN_P \smallsetminus \cN_L$. Assume also, for a contradiction, that $x \notin \cM_P$. Let $\lambda=(2\rhov_G - 2\rhov_L) \in \bX$, where $\rhov_G$, respectively $\rhov_L$, is the half sum of positive coroots of $G$, respectively of $L$. Consider the point $y := \lim_{s \to 0} \lambda(s) \cdot x$.
As $x \notin \cM_P$, we have $y \notin \cM_L$. As $y \in \Gr_L$, we deduce from the second equality in Lemma \ref{lem:birkhoff-levi} that $y \notin \cM_G$. Similarly, consider $z := \lim_{s \to \infty} \lambda(s) \cdot x$.
If $z \in \cM_L$, then $x \in \cM_{P^-}$ (where $\cM_{P^-}$ is defined in the same way as $\cM_P$, but for the parabolic $P^-$), hence we would have $\pi_G(x) \in \cN_P \cap \cN_{P^-} = \cN_L$, which is not the case by assumption. Hence $z \notin \cM_L$, which implies as above that $z \notin \cM_G$. 

It follows from these considerations that the orbit $\{\lambda(s) \cdot x \mid s \in \C^{\times}\} \subset \cM_G$
is closed in $\cM_G$. As $\pi_G$ is a finite morphism, we deduce that the orbit $\{\lambda(s) \cdot \pi_G(x) \mid s \in \C^{\times}\} \subset \cN_G$.
is closed in $\cN_G$. This is absurd since $\pi_G(x) \in \cN_P \smallsetminus \cN_L$, which finishes the proof.
\end{proof}

Let $i_P^{\cM}:\cM_P\to\cM_G$ and $q_P^{\cM}:\cM_P\to\cM_L$ be the restrictions of $i_P$ and $q_P$ respectively. We now have a diagram of commutative squares
\begin{equation} \label{eqn:squareofsquares}
\vc{\begin{tikzpicture}[vsmallcube]
\compuinit
\node[cart] at (0.5,0.5) {};
\node[cart] at (1.5,1.5) {};
\node (lu) at (0,2) {$\Gr_G^{\sm}$};
\node (mu) at (1,2) {$\cM_G$};
\node (ru) at (2,2) {$\cN_G$};
\node (lm) at (0,1) {$\Gr_P^{\sm}$};
\node (mm) at (1,1) {$\cM_P$};
\node (rm) at (2,1) {$\cN_P$};
\node (ld) at (0,0) {$\Gr_L^{\sm}$};
\node (md) at (1,0) {$\cM_L$};
\node (rd) at (2,0) {$\cN_L$};
\draw[<-right hook] (ru) -- node[arl] {$\al m_P$} (rm);
\draw[<-right hook] (mu) -- node[arr] {$\al i_P^\cM$} (mm);
\draw[<-right hook] (lu) -- node[arr] {$\al i_P^\sm$} (lm);
\draw[->] (rm) -- node[arl] {$\al p_P$} (rd);
\draw[->] (mm) -- node[arl] {$\al q_P^\cM$} (md);
\draw[->] (lm) -- node[arr] {$\al q_P^\sm$} (ld);
\draw[<-left hook] (lu) -- node[arl] {$\al j_G$} (mu);
\draw[->] (mu) -- node[arl] {$\al \pi_G$} (ru);
\draw[<-left hook] (lm) -- node[arl] {$\al j_P$} (mm);
\draw[->] (mm) -- node[arl] {$\al \pi_P$} (rm);
\draw[<-left hook] (ld) -- node[arr] {$\al j_L$} (md);
\draw[->] (md) -- node[arr] {$\al \pi_L$} (rd);
\end{tikzpicture}}
\end{equation}
where the top right square is cartesian by Proposition \ref{prop:M_P} and the bottom left square is cartesian by definition of $\cM_P$.

Recall that the functors $\Psi_G$, $\Psi_L$, $\ResGr{G}{L}$, and $\ResN{G}{L}$ are obtained by restricting
functors that are defined on the level of the derived categories. So to define our intertwining isomorphism, it suffices to define an isomorphism $\ResN{G}{L}\circ\Psi_G\natisom\Psi_L\circ\ResGr{G}{L}$ of functors from $\cDb(\Gr_G^{\sm})$ to $\cDb(\cN_L)$. We define this isomorphism by the following pasting diagram, where the morphisms are those in \eqref{eqn:squareofsquares}:
\begin{equation} \label{eqn:intertwining-isom-psi}
\vc{\begin{tikzpicture}[smallcube]
\compuinit
\node[cubef] at (0.5,0.5) {$\BC$};
\node[cubef] at (1.5,0.5) {$\Comp$};
\node[cubef] at (0.5,1.5) {$\Comp$};
\node[cubef] at (1.5,1.5) {$\BC$};
\node (lu) at (0,2) {$\cDb(\Gr_G^{\sm})$};
\node (mu) at (1,2) {$\cDb(\cM_G)$};
\node (ru) at (2,2) {$\cDb(\cN_G)$};
\node (lm) at (0,1) {$\cDb(\Gr_P^{\sm})$};
\node (mm) at (1,1) {$\cDb(\cM_P)$};
\node (rm) at (2,1) {$\cDb(\cN_P)$};
\node (ld) at (0,0) {$\cDb(\Gr_L^{\sm})$};
\node (md) at (1,0) {$\cDb(\cM_L)$};
\node (rd) at (2,0) {$\cDb(\cN_L)$};
\draw[->] (ru) -- node[arl] {{\tiny $(\cdot)^!$}} (rm);
\draw[->] (mu) -- node[arl] {{\tiny $(\cdot)^!$}} (mm);
\draw[->] (lu) -- node[arr] {{\tiny $(\cdot)^!$}} (lm);
\draw[->] (rm) -- node[arl] {{\tiny $(\cdot)_*$}} (rd);
\draw[->] (mm) -- node[arl] {{\tiny $(\cdot)_*$}} (md);
\draw[->] (lm) -- node[arr] {{\tiny $(\cdot)_*$}} (ld);
\draw[->] (lu) -- node[arl] {{\tiny $(\cdot)^!$}} (mu);
\draw[->] (mu) -- node[arl] {{\tiny $(\cdot)_*$}} (ru);
\draw[->] (lm) -- node[arl] {{\tiny $(\cdot)^!$}} (mm);
\draw[->] (mm) -- node[arl] {{\tiny $(\cdot)_*$}} (rm);
\draw[->] (ld) -- node[arr] {{\tiny $(\cdot)^!$}} (md);
\draw[->] (md) -- node[arr] {{\tiny $(\cdot)_*$}} (rd);
\end{tikzpicture}}
\end{equation}

\subsection{Proof that the prism \eqref{eqn:psi-prism} is commutative}

It suffices to prove the analogous statement with the categories of perverse sheaves replaced by their ambient derived categories.

\begin{prop} \label{prop:psi-prism-db-version}
The following prism is commutative:
\begin{equation*}
\vc{\begin{tikzpicture}[longprism]
\compuinit
\node[cuber] at (0.5,0.5,0) {$\eqrefh{eqn:intertwining-isom-psi}$};
\node[prismdf] at (1,0.5,\tric) {$\eqrefh{eqn:transitivity-isom-resN-db-version}$};
\node (rlu) at (0,1,0) {$\cDb(\Gr_G^{\sm})$};
\node (rru) at (1,1,0) {$\cDb(\cN_G)$};
\node (fr) at (	1,0.5,1) {$\cDb(\cN_L)$};
\node (rld) at (0,0,0) {$\cDb(\Gr_T^{\sm})$};
\node (rrd) at (1,0,0) {$\cDb(\cN_T)$};
\draw[liner,->] (rru) -- node[arr,pos=.3] {$\al \ResN{G}{T}$} (rrd);
\draw[->] (rlu) -- node[arl] {$\al \Psi_G$} (rru);
\draw[->] (rlu) -- node[arr] {$\al \ResGr{G}{T}$} (rld);
\draw[->] (rru) -- node[arl] {$\al \ResN{G}{L}$} (fr);
\draw[->] (rld) -- node[arr] {$\al \Psi_T$} (rrd);
\draw[->] (fr) -- node[arl] {$\al \ResN{L}{T}$} (rrd);
\node[prismtf] at (0.5,0.75,0.5) {$\eqrefh{eqn:intertwining-isom-psi}$};
\node[prismbf] at (0.5,0.25,0.5) {$\eqrefh{eqn:intertwining-isom-psi}$};
\node[prismlf] at (0,0.5,\tric) {$\eqrefh{eqn:transitivity-isom-resGr-db-version}$};
\node (fl) at (0,0.5,1) {$\cDb(\Gr_L^{\sm})$};
\draw[->] (rlu) -- node[arl] {$\al \ResGr{G}{L}$} (fl);
\draw[->] (fl) -- node[arl,pos=.3] {$\al \ResGr{L}{T}$} (rld);
\draw[->] (fl)  -- node[arr,pos=.3] {$\al \Psi_L$} (fr);
\end{tikzpicture}}
\end{equation*}
\end{prop}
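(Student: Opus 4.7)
The plan is to fill the interior of the prism with a three-dimensional pasting diagram built from base-change and composition isomorphisms, and then invoke the gluing principle from Appendix~\ref{sect:cubes}.

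First I would introduce auxiliary spaces $\cM_B$ and $\cM_C$, defined in complete analogy with $\cM_P$ from \S\ref{ss:restriction-Grsm} but replacing the pair $(G,L)$ by $(G,T)$ and $(L,T)$ respectively. This produces a three-dimensional array of spaces indexed by the three ``levels'' $\Gr^\sm$, $\cM$, $\cN$ and by the six groups $\{G,P,L,B,C,T\}$. The key geometric inputs are the cartesianness of the relevant $2\times 2$ diagrams in each level: at the $\Gr^\sm$-level this is Lemma~\ref{lem:square-Grsm-cartesian}; at the $\cM$-level it follows by combining Proposition~\ref{prop:M_P} (applied twice) with Lemma~\ref{lem:diagram-cartesian-L-G}; and at the $\cN$-level it is immediate since each $\cN_H$ is just $\cN_G \cap \mathrm{Lie}(H)$. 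Between consecutive levels, the cartesian squares appearing in \eqref{eqn:squareofsquares} persist after replacing $(G,P,L)$ by $(L,C,T)$ or $(G,B,T)$, again by Proposition~\ref{prop:M_P}.

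With these cartesianness statements in hand, each square face of the resulting arrangement supports a base-change $2$-cell, and each composable pair of edges supports a composition $2$-cell. The five faces of the prism can then be identified with pasting subdiagrams on the boundary of this arrangement: the two triangular faces correspond to the transitivity pastings \eqref{eqn:transitivity-isom-resGr-db-version} and \eqref{eqn:transitivity-isom-resN-db-version}, while each of the three rectangular faces is an instance of the intertwining pasting \eqref{eqn:intertwining-isom-psi}. By the gluing principle of \S\ref{subsect:gluing}, commutativity of the prism reduces to commutativity of the interior cubes of the arrangement.

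The main obstacle I anticipate is purely organizational: each interior $2$-cell must be identified with an instance of a standard compatibility from Appendix~\ref{sect:lemmas}---typically the compatibility of base change with composition along a cartesian cube---and one must verify that the orientations of all $2$-cells paste consistently. No new geometric input beyond the cartesianness assertions above should be required.
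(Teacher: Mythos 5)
Your proposal matches the paper's proof in all essentials: the paper likewise introduces $\cM_B$ and $\cM_C$, fills the prism with the three-dimensional array of derived categories indexed by the levels $\Gr^{\sm}$, $\cM$, $\cN$ and the groups $G,P,L,B,C,T$ (organized there as the two glued prisms \eqref{eqn:j-prism} and \eqref{eqn:pi-prism}), establishes the same handful of new cartesian squares, and concludes by the gluing principle together with Lemmas~\ref{lem:^!composition^!}, \ref{lem:_*composition^!}, \ref{lem:_*^!basechange^!}, \ref{lem:_*composition_*}, \ref{lem:^!composition_*} and \ref{lem:_*^!basechange_*}. The only discrepancy is cosmetic: the paper derives the cartesianness of the square on $\cM_B,\cM_P,\cM_C,\cM_L$ from Lemma~\ref{lem:square-Grsm-cartesian} and the defining cartesian squares of $\cM_B,\cM_C$ rather than from Proposition~\ref{prop:M_P}, but this does not affect the argument.
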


\begin{proof}
By Lemmas  \ref{lem:^!composition^!}, \ref{lem:_*composition^!} and \ref{lem:_*^!basechange^!}, the constituent prisms and cube in the following prism are all commutative, so the prism as a whole is commutative by the gluing principle:
\begin{equation} \label{eqn:j-prism}
\vc{\begin{tikzpicture}[longprism2]
\compuinit
\node (rlu) at (0,2,0) {$\cDb(\Gr_G^\sm)$};
\node (rru) at (1,2,0) {$\cDb(\cM_G)$};
\node (rlm) at (0,1,0) {$\cDb(\Gr_B^\sm)$};
\node (rrm) at (1,1,0) {$\cDb(\cM_B)$};
\node (rld) at (0,0,0) {$\cDb(\Gr_T^\sm)$};
\node (rrd) at (1,0,0) {$\cDb(\cM_T)$};
\node (mlu) at (0,1.5,1) {$\cDb(\Gr_P^\sm)$};
\node (mru) at (1,1.5,1) {$\cDb(\cM_P)$};
\node (mld) at (0,0.5,1) {$\cDb(\Gr_C^\sm)$};
\node (mrd) at (1,0.5,1) {$\cDb(\cM_C)$};
\node (fl) at (0,1,2) {$\cDb(\Gr_L^{\sm})$};
\node (fr) at (	1,1,2) {$\cDb(\cM_L)$};
\draw[->] (rlu) -- node[arl] {$\al (\cdot)^!$} (rlm);
\draw[->] (rlm) -- node[arl] {$\al (\cdot)_*$} (rld);
\draw[->] (rru) -- node[arl,pos=.3] {$\al (\cdot)^!$} (rrm);
\draw[->] (rrm) -- node[arl,pos=.4] {$\al (\cdot)_*$} (rrd);

\draw[->] (mru) -- node[arl] {$\al (\cdot)^!$} (rrm);
\draw[->] (fr)  -- node[arl] {$\al (\cdot)^!$} (mrd);
\draw[->] (mrd) -- node[arl] {$\al (\cdot)_*$} (rrd);

\draw[->] (mlu) -- node[arr] {$\al (\cdot)^!$} (rlm);
\draw[->] (fl)  -- node[arr] {$\al (\cdot)^!$} (mld);
\draw[->] (mld) -- node[arr] {$\al (\cdot)_*$} (rld);

\draw[->] (rru) -- node[arl] {$\al (\cdot)^!$} (mru);
\draw[->] (mru) -- node[arl] {$\al (\cdot)_*$} (fr);
\draw[->] (rrm) -- node[arl,pos=.8] {$\al (\cdot)_*$} (mrd);

\draw[->] (rlm) -- node[arl,pos=.3] {$\al (j_B)^!$} (rrm);

\draw[->] (rlu) -- node[arl] {$\al (\cdot)^!$} (mlu);
\draw[->,linef] (mlu) -- node[arl] {$\al (\cdot)_*$} (fl);
\draw[->] (rlm) -- node[arl] {$\al (\cdot)_*$} (mld);

\draw[->] (rlu) -- node[arl] {$\al (j_G)^!$} (rru);
\draw[->] (rld) -- node[arl] {$\al (j_T)^!$} (rrd);
\draw[->,linef] (mlu) -- node[arl] {$\al (j_P)^!$} (mru);
\draw[->,linef] (mld) -- node[arl] {$\al (j_C)^!$} (mrd);
\draw[->,linef] (fl)  -- node[arl,pos=.7] {$\al (j_L)^!$} (fr);
\end{tikzpicture}}
\end{equation}
The only new cartesian squares required to define \eqref{eqn:j-prism} are
\begin{equation}
\label{eqn:cartesian-squares-j-prism}
\vc{\begin{tikzpicture}[vsmallcube]
\compuinit
\node[cart] at (0.5,0.5) {};
\node (lu) at (0,1) {$\cM_B$};
\node (ru) at (1,1) {$\cM_C$};
\node (ld) at (0,0) {$\Gr_B^\sm$};
\node (rd) at (1,0) {$\Gr_C^\sm$};
\draw[->] (lu) -- (ru);
\draw[->] (lu) -- (ld);
\draw[->] (ru) -- (rd);
\draw[->] (ld) -- (rd);
\end{tikzpicture}}
\qquad\text{and}\qquad
\vc{\begin{tikzpicture}[vsmallcube]
\compuinit
\node[cart] at (0.5,0.5) {};
\node (lu) at (0,1) {$\cM_B$};
\node (ru) at (1,1) {$\cM_P$};
\node (ld) at (0,0) {$\cM_C$};
\node (rd) at (1,0) {$\cM_L$};
\draw[->] (lu) -- (ru);
\draw[->] (lu) -- (ld);
\draw[->] (ru) -- (rd);
\draw[->] (ld) -- (rd);
\end{tikzpicture}}
\end{equation}
The first one follows from the cartesian squares giving the definitions of $\cM_B$ and $\cM_C$.
The second cartesian square follows from the one of Lemma \ref{lem:square-Grsm-cartesian}, the first cartesian square in \eqref{eqn:cartesian-squares-j-prism} and the bottom left cartesian square in \eqref{eqn:squareofsquares}.

By Lemmas  \ref{lem:_*composition_*}, \ref{lem:^!composition_*} and \ref{lem:_*^!basechange_*}, the constituent prisms and cube in the following prism are all commutative, so the prism as a whole is commutative:
\begin{equation} \label{eqn:pi-prism}
\vc{\begin{tikzpicture}[longprism2]
\compuinit
\node (rlu) at (0,2,0) {$\cDb(\cM_G)$};
\node (rru) at (1,2,0) {$\cDb(\cN_G)$};
\node (rlm) at (0,1,0) {$\cDb(\cM_B)$};
\node (rrm) at (1,1,0) {$\cDb(\cN_B)$};
\node (rld) at (0,0,0) {$\cDb(\cM_T)$};
\node (rrd) at (1,0,0) {$\cDb(\cN_T)$};
\node (mlu) at (0,1.5,1) {$\cDb(\cM_P)$};
\node (mru) at (1,1.5,1) {$\cDb(\cN_P)$};
\node (mld) at (0,0.5,1) {$\cDb(\cM_C)$};
\node (mrd) at (1,0.5,1) {$\cDb(\cN_C)$};
\node (fl) at (0,1,2) {$\cDb(\cM_L)$};
\node (fr) at (	1,1,2) {$\cDb(\cN_L)$};
\draw[->] (rlu) -- node[arl] {$\al (\cdot)^!$} (rlm);
\draw[->] (rlm) -- node[arl] {$\al (\cdot)_*$} (rld);
\draw[->] (rru) -- node[arl,pos=.3] {$\al (\cdot)^!$} (rrm);
\draw[->] (rrm) -- node[arl,pos=.4] {$\al (\cdot)_*$} (rrd);

\draw[->] (mru) -- node[arl] {$\al (\cdot)^!$} (rrm);
\draw[->] (fr)  -- node[arl] {$\al (\cdot)^!$} (mrd);
\draw[->] (mrd) -- node[arl] {$\al (\cdot)_*$} (rrd);

\draw[->] (mlu) -- node[arr] {$\al (\cdot)^!$} (rlm);
\draw[->] (fl)  -- node[arr] {$\al (\cdot)^!$} (mld);
\draw[->] (mld) -- node[arr] {$\al (\cdot)_*$} (rld);

\draw[->] (rru) -- node[arl] {$\al (\cdot)^!$} (mru);
\draw[->] (mru) -- node[arl] {$\al (\cdot)_*$} (fr);
\draw[->] (rrm) -- node[arl,pos=.8] {$\al (\cdot)_*$} (mrd);

\draw[->] (rlm) -- node[arl,pos=.3] {$\al (\pi_B)_*$} (rrm);

\draw[->] (rlu) -- node[arl] {$\al (\cdot)^!$} (mlu);
\draw[->,linef] (mlu) -- node[arl] {$\al (\cdot)_*$} (fl);
\draw[->] (rlm) -- node[arl] {$\al (\cdot)_*$} (mld);

\draw[->] (rlu) -- node[arl] {$\al (\pi_G)_*$} (rru);
\draw[->] (rld) -- node[arl] {$\al (\pi_T)_*$} (rrd);
\draw[->,linef] (mlu) -- node[arl] {$\al (\pi_P)_*$} (mru);
\draw[->,linef] (mld) -- node[arl] {$\al (\pi_C)_*$} (mrd);
\draw[->,linef] (fl)  -- node[arl,pos=.7] {$\al (\pi_L)_*$} (fr);
\end{tikzpicture}}
\end{equation}
The only new cartesian square required to define \eqref{eqn:pi-prism} is
\begin{equation}
\vc{\begin{tikzpicture}[vsmallcube]
\compuinit
\node[cart] at (0.5,0.5) {};
\node (lu) at (0,1) {$\cM_B$};
\node (ru) at (1,1) {$\cN_B$};
\node (ld) at (0,0) {$\cM_P$};
\node (rd) at (1,0) {$\cN_P$};
\draw[->] (lu) -- (ru);
\draw[->] (lu) -- (ld);
\draw[->] (ru) -- (rd);
\draw[->] (ld) -- (rd);
\end{tikzpicture}}
\end{equation}
which follows from the cartesian square in Proposition~\ref{prop:M_P} and its analogue with $B$ in place of $P$.

We can then glue the prisms \eqref{eqn:j-prism} and \eqref{eqn:pi-prism} together along the face with vertices $\cDb(\cM_G),\cDb(\cM_L),\cDb(\cM_T)$ to obtain the desired commutative prism.
\end{proof}


\section{The Satake equivalence and restriction to a Levi}
\label{sect:satake}



\subsection{Intertwining isomorphism for $\Satake{G}^\circ$}

We begin with the compatibility between the transitivity isomorphism for $\ResGrnonsm{G}{L}$, defined in \eqref{eqn:composition-restriction-Satake}, and that for $\ResGrzero{G}{L}$, defined in \eqref{eqn:composition-restriction-Satake-0}.

\begin{lem} \label{lem:connectedcomponent-prism}
The following prism is commutative:
\[
\vc{\begin{tikzpicture}[longprism]
\compuinit
\node[cuber] at (0.5,0.5,0) {$\eqrefh{eqn:resGr-nonsm-zero}$};
\node[prismdf] at (1,0.5,\tric) {\eqrefh{eqn:composition-restriction-Satake-0}};
\node (rlu) at (0,1,0) {$\Perv_{\GO}(\Gr_G)$};
\node (rru) at (1,1,0) {$\Perv_{\GO}(\Gr_G^\circ)$};
\node (fr) at (	1,0.5,1) {$\Perv_{\LO}(\Gr_L^\circ)$};
\node (rld) at (0,0,0) {$\Perv_{\TO}(\Gr_T)$};
\node (rrd) at (1,0,0) {$\Perv_{\TO}(\Gr_T^\circ)$};
\draw[liner,->] (rru) -- node[arl,pos=.3] {$\al \ResGrzero{G}{T}$} (rrd);
\draw[->] (rlu) -- node[arl] {$\al (z_G)^!$} (rru);
\draw[->] (rlu) -- node[arr] {$\al \ResGrnonsm{G}{T}$} (rld);
\draw[->] (rru) -- node[arl] {$\al \ResGrzero{G}{L}$} (fr);
\draw[->] (rld) -- node[arr] {$\al (z_T)^!$} (rrd);
\draw[->] (fr) -- node[arl] {$\al \ResGrzero{L}{T}$} (rrd);
\node[prismtf] at (0.5,0.75,0.5) {$\eqrefh{eqn:resGr-nonsm-zero}$};
\node[prismbf] at (0.5,0.25,0.5) {$\eqrefh{eqn:resGr-nonsm-zero}$};
\node[prismlf] at (0,0.5,\tric) {\eqrefh{eqn:composition-restriction-Satake}};
\node (fl) at (0,0.5,1) {$\Perv_{\LO}(\Gr_L)$};
\draw[->] (rlu) -- node[arl] {$\al \ResGrnonsm{G}{L}$} (fl);
\draw[->] (fl) -- node[arl,pos=.3] {$\al \ResGrnonsm{L}{T}$} (rld);
\draw[->] (fl)  -- node[arr,pos=.3] {$\al (z_L)^!$} (fr);
\end{tikzpicture}}
\]
\end{lem}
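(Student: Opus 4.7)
The plan is to prove the analogous commutativity at the level of ambient derived categories $\cDb(\Gr_H,\bk)$ and $\cDb(\Gr_H^\circ,\bk)$, from which the perverse statement follows by restriction. Each face of the derived-category prism is already presented as a pasting diagram: the three square faces are each of the form \eqref{eqn:resGr-nonsm-zero-db-version} (one for each of the pairs $(G,L)$, $(G,T)$, $(L,T)$), while the two triangular faces are \eqref{eqn:composition-restriction-Satake-no-shifts-db-version} and \eqref{eqn:composition-restriction-Satake-zero-db-version}. So the task is to stack these five 2-dimensional pasting diagrams into a single three-dimensional labelled $2$-computad whose boundary reproduces both composites of the prism.

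First, I would set up the computad. Its vertices are the twelve derived categories $\cDb(\Gr_H)$ and $\cDb(\Gr_H^\circ)$ for $H\in\{G,P,L,B,C,T\}$, forming two triangular prisms connected by the functors $(z_H)^!$. Its $1$-cells are the canonical functors $(i)^!$, $(q)_*$, and $(z)^!$ associated to the natural morphisms among these ind-varieties. Its interior $2$-cells are of two kinds: base-change isomorphisms $\BC$ for the various cartesian squares, and composition isomorphisms $\Co$. This follows the strategy used in the proof of Proposition~\ref{prop:psi-prism-db-version}, where an analogous 3D gluing was performed for $\Psi_G$.

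Second, one must verify that the relevant cartesian squares exist and that each interior $2$-cell commutes. The needed cartesian squares arise by combining \eqref{eqn:grzero-cartesian} (and its analogues for $(G,T)$, $(L,T)$) with \eqref{eqn:affine-Grassmannian-square} and \eqref{eqn:affine-Grassmannian-square-0}; any triple-pullback square such as the one relating $\Gr_B \to \Gr_P \to \Gr_G$ to $\Gr_B^\circ \to \Gr_P^\circ \to \Gr_G^\circ$ is automatically cartesian, being obtained from pullbacks of cartesian squares. Once the squares are in place, every interior $2$-cell commutes by one of the lemmas \ref{lem:_*composition_*}, \ref{lem:^!composition^!}, \ref{lem:_*composition^!}, \ref{lem:^!composition_*}, \ref{lem:_*^!basechange^!}, or \ref{lem:_*^!basechange_*} from Appendix~\ref{sect:lemmas}.

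Finally, the gluing principle of \S\ref{subsect:gluing} yields the commutativity of the boundary of the computad, which unpacks to the commutativity of the prism. The main obstacle is bookkeeping: one has to identify the right set of cartesian squares and arrange the base-change and composition cells so that they tile the 3D prism shape consistently along its five faces. There is no essentially new geometric input beyond the existence of pullback squares such as \eqref{eqn:grzero-cartesian}; the work is entirely formal, and structurally identical to the 3D stacking carried out in the proof of Proposition~\ref{prop:psi-prism-db-version}.
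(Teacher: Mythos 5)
Your proposal is correct and follows essentially the same route as the paper: reduce to the ambient derived categories, glue the same twelve-vertex picture as \eqref{eqn:j-prism} (with $j_H$ replaced by $z_H$), check the one new cartesian square coming from \eqref{eqn:grzero-cartesian}, and invoke the appendix lemmas together with the gluing principle. The only cosmetic difference is that you list a superset of the commutativity lemmas, whereas only Lemmas~\ref{lem:^!composition^!}, \ref{lem:_*composition^!} and \ref{lem:_*^!basechange^!} are actually used for this orientation.
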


\begin{proof}
It suffices to prove the commutativity of the prism:
\begin{equation} \label{eqn:conncompt-prism}
\vc{\begin{tikzpicture}[longprism]
\compuinit
\node[cuber] at (0.5,0.5,0) {$\eqrefh{eqn:resGr-nonsm-zero-db-version}$};
\node[prismdf] at (1,0.5,\tric) {\eqrefh{eqn:composition-restriction-Satake-zero-db-version}};
\node (rlu) at (0,1,0) {$\cDb(\Gr_G)$};
\node (rru) at (1,1,0) {$\cDb(\Gr_G^\circ)$};
\node (fr) at (	1,0.5,1) {$\cDb(\Gr_L^\circ)$};
\node (rld) at (0,0,0) {$\cDb(\Gr_T)$};
\node (rrd) at (1,0,0) {$\cDb(\Gr_T^\circ)$};
\draw[liner,->] (rru) -- node[arl,pos=.3] {$\al \ResGrzero{G}{T}$} (rrd);
\draw[->] (rlu) -- node[arl] {$\al (z_G)^!$} (rru);
\draw[->] (rlu) -- node[arr] {$\al \ResGreasy{G}{T}$} (rld);
\draw[->] (rru) -- node[arl] {$\al \ResGrzero{G}{L}$} (fr);
\draw[->] (rld) -- node[arr] {$\al (z_T)^!$} (rrd);
\draw[->] (fr) -- node[arl] {$\al \ResGrzero{L}{T}$} (rrd);
\node[prismtf] at (0.5,0.75,0.5) {$\eqrefh{eqn:resGr-nonsm-zero-db-version}$};
\node[prismbf] at (0.5,0.25,0.5) {$\eqrefh{eqn:resGr-nonsm-zero-db-version}$};
\node[prismlf] at (0,0.5,\tric) {\eqrefh{eqn:composition-restriction-Satake-no-shifts-db-version}};
\node (fl) at (0,0.5,1) {$\cDb(\Gr_L)$};
\draw[->] (rlu) -- node[arl] {$\al \ResGreasy{G}{L}$} (fl);
\draw[->] (fl) -- node[arl,pos=.3] {$\al \ResGreasy{L}{T}$} (rld);
\draw[->] (fl)  -- node[arr,pos=.3] {$\al (z_L)^!$} (fr);
\end{tikzpicture}}
\end{equation}
But by definition, the prism \eqref{eqn:conncompt-prism} is obtained by gluing together two prisms and a cube that are known to be commutative by Lemmas \ref{lem:^!composition^!}, \ref{lem:_*composition^!},  and \ref{lem:_*^!basechange^!}. The gluing picture is identical to \eqref{eqn:j-prism}, but with $j_H:\cM_H\hookrightarrow\Gr_H^{\sm}$ replaced by $z_H:\Gr_H^\circ\hookrightarrow\Gr_H$ for all groups $H$. The only new cartesian square required here is
\begin{equation}
\vc{\begin{tikzpicture}[vsmallcube]
\compuinit
\node[cart] at (0.5,0.5) {};
\node (lu) at (0,1) {$\Gr_B^\circ$};
\node (ru) at (1,1) {$\Gr_C^\circ$};
\node (ld) at (0,0) {$\Gr_B$};
\node (rd) at (1,0) {$\Gr_C$};
\draw[->] (lu) -- (ru);
\draw[->] (lu) -- (ld);
\draw[->] (ru) -- (rd);
\draw[->] (ld) -- (rd);
\end{tikzpicture}}
\end{equation}
which follows from the $(G,T)$ and $(L,T)$ cases of \eqref{eqn:grzero-cartesian}.
\end{proof}

Recall that we have defined an isomorphism $\ResGnonsm{\Gv}{\Lv}\circ\Satake{G}\natisom\Satake{L}\circ\ResGrnonsm{G}{L}$ in \eqref{eqn:restriction-Satake-non-sm}. To define an analogous isomorphism $\ResGzero{\Gv}{\Lv}\circ\Satake{G}^\circ\natisom\Satake{L}^\circ\circ\ResGrzero{G}{L}$, we use the cube:
\begin{equation} \label{eqn:connectedcomponent-cube}
\vc{\begin{tikzpicture}[longcube]
\compuinit
\node (rrd) at (1,0,0) {$\Rep(\Lv)$};
\node[cuber] at (0.5,0.5,0) {$\eqrefh{eqn:restriction-Satake-non-sm}$};
\node[cubed] at (1,0.5,0.5) {$=$};
\node[cubeb] at (0.5,0,0.5) {$\eqrefh{eqn:Satake-z-2}$};
\node (rlu) at (0,1,0) {$\Perv_{\GO}(\Gr_G)$};
\node (rru) at (1,1,0) {$\Rep(\Gv)$};
\node (fru) at (1,1,1) {$\Rep(\Gv)^{Z(\Gv)}$};
\node (frd) at (1,0,1) {$\Rep(\Lv)^{Z(\Lv)}$};
\node (fld) at (0,0,1) {$\Perv_{\LO}(\Gr_L^\circ)$};
\node (rld) at (0,0,0) {$\Perv_{\LO}(\Gr_L)$};
\draw[liner,->] (rru) -- node[arl,pos=.7] {$\al \ResGnonsm{\Gv}{\Lv}$} (rrd);
\draw[liner,->] (rld) -- node[arl] {$\al \Satake{L}$} (rrd);
\draw[liner,->] (rrd) -- node[arl] {$\al (-)^{Z(\Lv)}$} (frd);
\draw[->] (rlu) -- node[arl] {$\al \Satake{G}$} (rru);
\draw[->] (rru) -- node[arl] {$\al (-)^{Z(\Gv)}$} (fru);
\draw[->] (fru) -- node[arl] {$\al \ResGzero{\Gv}{\Lv}$}(frd);
\draw[->] (fld) -- node[arr] {$\al \Satake{L}^\circ$} (frd);
\draw[->] (rld) -- node[arr] {$\al (z_L)^!$} (fld);
\draw[->] (rlu) -- node[arr] {$\al \ResGrnonsm{G}{L}$} (rld);
\node[cubel] at (0,0.5,0.5) {$\eqrefh{eqn:resGr-nonsm-zero-db-version}$};
\node[cubet] at (0.5,1,0.5) {$\eqrefh{eqn:Satake-z-2}$};
\node[cubef] at (0.5,0.5,1) {?};
\node (flu) at (0,1,1) {$\Perv_{\GO}(\Gr_G^\circ)$};
\draw[->] (rlu) -- node[arl,pos=.7] {$\al (z_G)^!$} (flu);
\draw[->] (flu) -- node[arr,pos=.3] {$\al \Satake{G}^\circ$} (fru);
\draw[->] (flu) -- node[arl,pos=.3] {$\al \ResGrzero{G}{L}$} (fld);
\end{tikzpicture}}
\end{equation}
Here every face is labelled with an already-defined isomorphism of functors 
except the front face marked with `?'. Since $(z_G)^!:\Perv_{\GO}(\Gr_G)\to\Perv_{\GO}(\Gr_G^\circ)$ is full and essentially surjective, 
there is a unique isomorphism with which to label the front face so as to make the cube commutative (see Example \ref{ex:cube2}).

We now prove that the isomorphism $\ResGzero{\Gv}{\Lv}\circ\Satake{G}^\circ\natisom\Satake{L}^\circ\circ\ResGrzero{G}{L}$ defined by \eqref{eqn:connectedcomponent-cube} is compatible with the relevant transitivity isomorphisms.

\begin{lem} \label{lem:satakezero-prism}
The following prism is commutative:
\[
\vc{\begin{tikzpicture}[longprism]
\compuinit
\node[cuber] at (0.5,0.5,0) {$\eqrefh{eqn:connectedcomponent-cube}$};
\node[prismdf] at (1,0.5,\tric) {\eqrefh{eqn:composition-restriction-G-zero}};
\node (rlu) at (0,1,0) {$\Perv_{\GO}(\Gr_G^{\circ})$};
\node (rru) at (1,1,0) {$\Rep(\Gv)^{Z(\Gv)}$};
\node (fr) at (	1,0.5,1) {$\Rep(\Lv)^{Z(\Lv)}$};
\node (rld) at (0,0,0) {$\Perv_{\TO}(\Gr_T^{\circ})$};
\node (rrd) at (1,0,0) {$\Rep(\Tv)^{Z(\Tv)}$};
\draw[liner,->] (rru) -- node[arr,pos=.3] {$\al \ResGzero{\Gv}{\Tv}$} (rrd);
\draw[->] (rlu) -- node[arl] {$\al \Satake{G}^\circ$} (rru);
\draw[->] (rlu) -- node[arr] {$\al \ResGrzero{G}{T}$} (rld);
\draw[->] (rru) -- node[arl] {$\al \ResGzero{\Gv}{\Lv}$} (fr);
\draw[->] (rld) -- node[arr] {$\al \Satake{T}^\circ$} (rrd);
\draw[->] (fr) -- node[arl] {$\al \ResGzero{\Lv}{\Tv}$} (rrd);
\node[prismtf] at (0.5,0.75,0.5) {$\eqrefh{eqn:connectedcomponent-cube}$};
\node[prismbf] at (0.5,0.25,0.5) {$\eqrefh{eqn:connectedcomponent-cube}$};
\node[prismlf] at (0,0.5,\tric) {\eqrefh{eqn:composition-restriction-Satake-0}};
\node (fl) at (0,0.5,1) {$\Perv_{\LO}(\Gr_L^{\circ})$};
\draw[->] (rlu) -- node[arl] {$\al \ResGrzero{G}{L}$} (fl);
\draw[->] (fl) -- node[arl,pos=.3] {$\al \ResGrzero{L}{T}$} (rld);
\draw[->] (fl)  -- node[arr,pos=.3] {$\al \Satake{L}^\circ$} (fr);
\end{tikzpicture}}
\]
\end{lem}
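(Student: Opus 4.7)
The plan is to derive this prism's commutativity from the already-known commutative prism of Lemma~\ref{lem:restriction-Satake-Levi} (the analogous statement in the ``non-small'' setting involving $\Satake{\cdot}$, $\ResGrnonsm{\cdot}{\cdot}$, $\ResGnonsm{\cdot}{\cdot}$) via a three-dimensional gluing argument that uses the cube \eqref{eqn:connectedcomponent-cube} — which \emph{defines} the intertwining isomorphism for $\Satake{G}^\circ$ — as the essential bridge between the ``non-small'' and the ``zero'' worlds.

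I would assemble a $3$-dimensional configuration out of the following commutative pieces: (i) the prism of Lemma~\ref{lem:restriction-Satake-Levi} on one ``side''; (ii) the target prism on the opposite ``side''; (iii) three copies of the cube \eqref{eqn:connectedcomponent-cube}, one for each of the Levi pairs $(G,L)$, $(L,T)$, $(G,T)$, glued along the three pairs of corresponding rectangular faces of (i) and (ii), each cube being commutative by the very construction of its front face; (iv) the prism of Lemma~\ref{lem:connectedcomponent-prism}, glued along the top triangular faces of (i) and (ii), relating the transitivity isomorphisms \eqref{eqn:composition-restriction-Satake} and \eqref{eqn:composition-restriction-Satake-0} via the functors $(z_\cdot)^!$; and finally (v) an analogous prism glued along the bottom triangular faces, relating the transitivity equalities \eqref{eqn:composition-restriction-G-non-sm} and \eqref{eqn:composition-restriction-G-zero} via the inclusions $\mathbf{I}_\cdot$ and the invariant functors $(-)^{Z(\cdot)}$. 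The prism (v) is essentially trivial: because both transitivities in question are genuine equalities of functors, its commutativity reduces to a naturality property of the isomorphism \eqref{eqn:Satake-z-2}, or equivalently of the counit of the adjunction $\mathbf{I}_\cdot \dashv (-)^{Z(\cdot)}$, composed with the (equal) restriction functors.

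Once this configuration is in place, every interior piece is commutative, so by the gluing principle of \S\ref{subsect:gluing} the commutativity of the target prism is equivalent to that of the prism of Lemma~\ref{lem:restriction-Satake-Levi}, which is already known. The main obstacle is not conceptual but bookkeeping: one must carefully match up the labels on all commonly shared faces of the $3$-dimensional figure (in particular matching the two rectangular faces of each cube \eqref{eqn:connectedcomponent-cube} to the corresponding rectangular faces of the two prisms, and matching the ``$\Satake{G}$, $\Satake{G}^\circ$'' faces appearing in \eqref{eqn:connectedcomponent-cube} with those entering into (iv) and (v)), and to check the easy ingredient (v). Alternatively — and perhaps more transparently — one could instead post-compose the prism with the fully faithful functor $\mathbf{I}_{\Lv}$ (which reflects equalities of $2$-cells valued in $\Rep(\Lv,\bk)^{Z(\Lv)}$) and then use the defining cubes to rewrite each rectangular face in terms of its ``non-small'' counterpart, at which point the equation reduces directly to Lemma~\ref{lem:restriction-Satake-Levi} together with Lemma~\ref{lem:connectedcomponent-prism}.
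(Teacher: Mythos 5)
Your proposal is correct and is essentially the paper's own argument: the paper decomposes the very same three-dimensional shell into the prism of Lemma~\ref{lem:restriction-Satake-Levi}, the three cubes \eqref{eqn:connectedcomponent-cube} in their $(G,L)$, $(L,T)$, $(G,T)$ versions, the prism of Lemma~\ref{lem:connectedcomponent-prism}, and the trivially commutative prism \eqref{eqn:centralinvariants-prism} on the representation-theoretic side (your piece (v), whose faces are all genuine equalities, so no naturality check is even needed). The only presentational difference is that the paper makes the converse-gluing step explicit by first gluing the target prism to that of Lemma~\ref{lem:connectedcomponent-prism} along their common triangular face and invoking the essential surjectivity of $(z_G)^!$ via Example~\ref{ex:gluing-converse}, whereas you assert the ``equivalence'' directly from the gluing principle; also, your alternative cancellation argument should use the fully faithful inclusion $\mathbf{I}_{\Tv}$ rather than $\mathbf{I}_{\Lv}$, since the prism's composite $2$-cells take values in functors landing in $\Rep(\Tv,\bk)^{Z(\Tv)}$.
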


\begin{proof}
By the essential surjectivity of $(z_G)^!$, it suffices to prove the commutativity of the prism obtained by gluing together those in Lemmas \ref{lem:connectedcomponent-prism} and \ref{lem:satakezero-prism} along their common triangular face (see Example \ref{ex:gluing-converse}). 
But this prism can also be obtained by 
gluing 
the commutative prism in Lemma \ref{lem:restriction-Satake-Levi}, the commutative cube \eqref{eqn:connectedcomponent-cube} in its $(G,L)$, $(L,T)$, and $(G,T)$ versions, and the following prism:
\begin{equation} \label{eqn:centralinvariants-prism}
\vc{\begin{tikzpicture}[longprism2]
\compuinit
\node (rlu) at (0,1,0) {$\Rep(\Gv)$};
\node (rru) at (1,1,0) {$\Rep(\Gv)^{Z(\Gv)}$};
\node (fr) at (	1,0.5,1) {$\Rep(\Lv)^{Z(\Lv)}$};
\node (rld) at (0,0,0) {$\Rep(\Tv)$};
\node (rrd) at (1,0,0) {$\Rep(\Tv)^{Z(\Tv)}$};
\draw[->] (rru) -- node[arr,pos=.3] {$\al \ResGzero{\Gv}{\Tv}$} (rrd);
\draw[->] (rlu) -- node[arl] {$\al (-)^{Z(\Gv)}$} (rru);
\draw[->] (rlu) -- node[arr] {$\al \ResGnonsm{\Gv}{\Tv}$} (rld);
\draw[->] (rru) -- node[arl] {$\al \ResGzero{\Gv}{\Lv}$} (fr);
\draw[->] (rld) -- node[arr] {$\al (-)^{Z(\Tv)}$} (rrd);
\draw[->] (fr) -- node[arl] {$\al \ResGzero{\Lv}{\Tv}$} (rrd);
\node (fl) at (0,0.5,1) {$\Rep(\Lv)$};
\draw[->] (rlu) -- node[arl] {$\al \ResGnonsm{\Gv}{\Lv}$} (fl);
\draw[->] (fl) -- node[arl,pos=.3] {$\al \ResGnonsm{\Lv}{\Tv}$} (rld);
\draw[linef,->] (fl)  -- node[arr,pos=.3] {$\al (-)^{Z(\Lv)}$} (fr);
\end{tikzpicture}}
\end{equation}
which is trivially commutative because every face is labelled by an equality.
\end{proof}

\subsection{Intertwining isomorphism for $\Satakesm{G}$}

We now want to pass from the setting of the functor $\Satake{G}^\circ$ to that of the functor $\Satakesm{G}$. Recall the transitivity isomorphism for $\ResGr{G}{L}$, defined via the diagram \eqref{eqn:transitivity-isom-resGr-db-version}, and the isomorphism relating $\ResGr{G}{L}$ and $\ResGrzero{G}{L}$, defined via the diagram \eqref{eqn:resGr-resGrzero}.

\begin{lem} \label{lem:small-prism}
The following prism is commutative:
\[
\vc{\begin{tikzpicture}[longprism]
\compuinit
\node[cuber] at (0.5,0.5,0) {$\eqrefh{eqn:resGr-resGrzero}$};
\node[prismdf] at (1,0.5,\tric) {\eqrefh{eqn:composition-restriction-Satake-zero-db-version}};
\node (rlu) at (0,1,0) {$\Perv_{\GO}(\Gr_G^\sm)$};
\node (rru) at (1,1,0) {$\Perv_{\GO}(\Gr_G^\circ)$};
\node (fr) at (	1,0.5,1) {$\Perv_{\LO}(\Gr_L^\circ)$};
\node (rld) at (0,0,0) {$\Perv_{\TO}(\Gr_T^\sm)$};
\node (rrd) at (1,0,0) {$\Perv_{\TO}(\Gr_T^\circ)$};
\draw[liner,->] (rru) -- node[arl,pos=.3] {$\al \ResGrzero{G}{T}$} (rrd);
\draw[->] (rlu) -- node[arl] {$\al (f_G^\circ)_*$} (rru);
\draw[->] (rlu) -- node[arr] {$\al \ResGr{G}{T}$} (rld);
\draw[->] (rru) -- node[arl] {$\al \ResGrzero{G}{L}$} (fr);
\draw[->] (rld) -- node[arr] {$\al (f_T^\circ)_*$} (rrd);
\draw[->] (fr) -- node[arl] {$\al \ResGrzero{L}{T}$} (rrd);
\node[prismtf] at (0.5,0.75,0.5) {$\eqrefh{eqn:resGr-resGrzero}$};
\node[prismbf] at (0.5,0.25,0.5) {$\eqrefh{eqn:resGr-resGrzero}$};
\node[prismlf] at (0,0.5,\tric) {\eqrefh{eqn:transitivity-isom-resGr-db-version}};
\node (fl) at (0,0.5,1) {$\Perv_{\LO}(\Gr_L^\sm)$};
\draw[->] (rlu) -- node[arl] {$\al \ResGr{G}{L}$} (fl);
\draw[->] (fl) -- node[arl,pos=.3] {$\al \ResGr{L}{T}$} (rld);
\draw[->] (fl)  -- node[arr,pos=.3] {$\al (f_L^\circ)_*$} (fr);
\end{tikzpicture}}
\]
\end{lem}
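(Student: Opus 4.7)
The plan is to imitate the argument used for Proposition~\ref{prop:psi-prism-db-version} and Lemma~\ref{lem:connectedcomponent-prism}: reduce to a statement at the level of the ambient derived categories, then exhibit the prism as a gluing of commutative sub-prisms and cubes coming from the composition and base change lemmas of Appendix~\ref{sect:lemmas}.

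First, I would observe that each of the six functors on the edges of the prism is obtained by restricting to perverse sheaves a functor between ambient bounded derived categories (namely $(q_\bullet^\sm)_*\circ(i_\bullet^\sm)^!$, $(q_\bullet^\circ)_*\circ(i_\bullet^\circ)^!$, or $(f_\bullet^\circ)_*$), and each of the five $2$-cells labelling a face of the prism is likewise obtained by restricting a $2$-cell between such derived functors (the ones defined in~\eqref{eqn:transitivity-isom-resGr-db-version}, \eqref{eqn:composition-restriction-Satake-zero-db-version}, and~\eqref{eqn:resGr-resGrzero}). It therefore suffices to prove commutativity of the corresponding prism with $\Perv$-categories replaced by $\cDb$-categories.

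Next, to establish that derived-category statement, I would construct a larger $3$-dimensional labelled $2$-computad whose vertices are the categories $\cDb(\Gr_X^Z)$ for $X \in \{G,P,L,B,C,T\}$ and $Z \in \{\sm, \circ\}$, with edges the various $(\cdot)^!$ and $(\cdot)_*$ functors between them. The arrangement mirrors the prism~\eqref{eqn:j-prism} used in the proof of Proposition~\ref{prop:psi-prism-db-version}: the two ``levels'' at the back and front of the middle prism (indexed by $Z=\sm$ and $Z=\circ$) are connected by the pushforward functors $(f_\bullet^\circ)_*$ for all six choices of $\bullet$. The inner square on the $\sm$-level reproduces the construction of~\eqref{eqn:transitivity-isom-resGr-db-version}, the inner square on the $\circ$-level reproduces~\eqref{eqn:composition-restriction-Satake-zero-db-version}, and the five vertical rectangles between the two levels reproduce the base-change/composition squares appearing in~\eqref{eqn:resGr-resGrzero}, once for each of the inclusions $f_G^\circ$, $f_P^\circ$, $f_L^\circ$, $f_B^\circ$, $f_C^\circ$, $f_T^\circ$.

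The required cartesian squares are: \eqref{eqn:grsm-grzero-cartesian} and its $(L,T)$ and $(G,T)$ versions; the squares~\eqref{eqn:affine-Grassmannian-square} and~\eqref{eqn:affine-Grassmannian-square-0} together with Lemma~\ref{lem:square-Grsm-cartesian}; and the two new cartesian squares
\[
\vc{\begin{tikzpicture}[vsmallcube]
\compuinit
\node[cart] at (0.5,0.5) {};
\node (lu) at (0,1) {$\Gr_B^\sm$};
\node (ru) at (1,1) {$\Gr_P^\sm$};
\node (ld) at (0,0) {$\Gr_B^\circ$};
\node (rd) at (1,0) {$\Gr_P^\circ$};
\draw[->] (lu) -- (ru);
\draw[->] (lu) -- (ld);
\draw[->] (ru) -- (rd);
\draw[->] (ld) -- (rd);
\end{tikzpicture}}
\qquad\text{and}\qquad
\vc{\begin{tikzpicture}[vsmallcube]
\compuinit
\node[cart] at (0.5,0.5) {};
\node (lu) at (0,1) {$\Gr_C^\sm$};
\node (ru) at (1,1) {$\Gr_L^\sm$};
\node (ld) at (0,0) {$\Gr_C^\circ$};
\node (rd) at (1,0) {$\Gr_L^\circ$};
\draw[->] (lu) -- (ru);
\draw[->] (lu) -- (ld);
\draw[->] (ru) -- (rd);
\draw[->] (ld) -- (rd);
\end{tikzpicture}}
\]
both of which follow by combining~\eqref{eqn:grsm-grzero-cartesian} with Lemma~\ref{lem:square-Grsm-cartesian} and~\eqref{eqn:affine-Grassmannian-square-0}. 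Each constituent sub-prism and sub-cube of the big $3$-computad is then commutative by one of Lemmas~\ref{lem:^!composition^!}, \ref{lem:_*composition^!}, \ref{lem:_*composition_*}, \ref{lem:^!composition_*}, and~\ref{lem:_*^!basechange^!}. By the gluing principle of~\S\ref{subsect:gluing}, the whole diagram commutes, and its boundary yields exactly the required commutative prism.

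The hard part will be merely organizational: laying out the $3$-dimensional computad in such a way that the five constituent commutativities immediately identify with specific lemmas from Appendix~\ref{sect:lemmas}, and verifying that the two boundary pasting diagrams that result are precisely the two composites described in the statement of the lemma. There is no new geometric input beyond the cartesian squares listed above, so the argument is a direct analogue of those in Section~\ref{sect:phi-psi} and of Lemma~\ref{lem:connectedcomponent-prism}.
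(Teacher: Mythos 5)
Your overall strategy is exactly the paper's: pass to the ambient derived categories, exhibit the prism as a gluing of two prisms and a cube, invoke the composition/base-change lemmas of Appendix~\ref{sect:lemmas}, and check the cartesian squares. The paper describes the gluing picture as identical in shape to \eqref{eqn:pi-prism} with $\pi_H$ replaced by $f_H^\circ$, and records that the only \emph{new} cartesian square is the one with vertices $\Gr_B^\sm,\Gr_P^\sm,\Gr_B^\circ,\Gr_P^\circ$ (your $\Gr_C^\sm$/$\Gr_L^\sm$ square is simply the instance of \eqref{eqn:grsm-grzero-cartesian} for the pair $(L,T)$ with $C$ in place of $P$, so it needs no separate argument).

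One citation in your list needs correcting, and it is the kind of detail this paper is built to track. Because the connecting functors $(f_\bullet^\circ)_*$ are \emph{pushforwards} rather than shriek-pullbacks, the correct model is \eqref{eqn:pi-prism}, not \eqref{eqn:j-prism}, and the middle cube of the gluing is commutative by Lemma~\ref{lem:_*^!basechange_*} --- the version of base change intertwined with $(\cdot)_*$ --- not by Lemma~\ref{lem:_*^!basechange^!}, which intertwines the $(\cdot)_*/(\cdot)^!$ base change with upper-shriek functors and does not apply here. The two flanking prisms are handled by Lemmas~\ref{lem:_*composition_*} and \ref{lem:^!composition_*}, which you do list correctly; Lemmas~\ref{lem:^!composition^!} and \ref{lem:_*composition^!} are the analogues for a shriek-pullback in the long direction and are not needed. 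With that substitution the argument is complete and coincides with the paper's proof.
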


\begin{proof}
It suffices to prove the commutativity of the prism obtained by replacing $\Perv$ with $\cDb$. By definition, that prism is obtained by gluing together two prisms and a cube that are known to be commutative by Lemmas  \ref{lem:_*composition_*}, \ref{lem:^!composition_*} and \ref{lem:_*^!basechange_*}. The gluing picture is identical to \eqref{eqn:pi-prism}, with $\pi_H:\cM_H\rightarrow\cN_H$ replaced by $f_H^\circ:\Gr_H^\sm\hookrightarrow\Gr_H^\circ$ for all groups $H$. The only new cartesian square required here is
\begin{equation}
\vc{\begin{tikzpicture}[vsmallcube]
\compuinit
\node[cart] at (0.5,0.5) {};
\node (lu) at (0,1) {$\Gr_B^\sm$};
\node (ru) at (1,1) {$\Gr_P^\sm$};
\node (ld) at (0,0) {$\Gr_B^\circ$};
\node (rd) at (1,0) {$\Gr_P^\circ$};
\draw[->] (lu) -- (ru);
\draw[->] (lu) -- (ld);
\draw[->] (ru) -- (rd);
\draw[->] (ld) -- (rd);
\end{tikzpicture}}
\end{equation}
which follows from \eqref{eqn:grsm-grzero-cartesian} and its analogue with $P$ replaced by $B$.
\end{proof}

We come now to the definition of the intertwining isomorphism $\ResG{\Gv}{\Lv}\circ\Satakesm{G}\natisom\Satakesm{L}\circ\ResGr{G}{L}$. Consider the following cube:
\begin{equation} \label{eqn:small-cube}
\vc{\begin{tikzpicture}[longcube]
\compuinit
\node (rrd) at (1,0,0) {$\Rep(\Lv)^{Z(\Lv)}$};
\node[cuber] at (0.5,0.5,0) {$\eqrefh{eqn:connectedcomponent-cube}$};
\node[cubed] at (1,0.5,0.5) {$\eqrefh{eqn:ResG-ResGnonsm}$};
\node[cubeb] at (0.5,0,0.5) {$\eqrefh{eqn:Satake-sm-non-sm-0}$};
\node (rlu) at (0,1,0) {$\Perv_{\GO}(\Gr_G^\circ)$};
\node (rru) at (1,1,0) {$\Rep(\Gv)^{Z(\Gv)}$};
\node (fru) at (1,1,1) {$\Rep(\Gv)_{\sm}$};
\node (frd) at (1,0,1) {$\Rep(\Lv)_{\sm}$};
\node (fld) at (0,0,1) {$\Perv_{\LO}(\Gr_L^\sm)$};
\node (rld) at (0,0,0) {$\Perv_{\LO}(\Gr_L^\circ)$};
\draw[liner,->] (rru) -- node[arl,pos=.7] {$\al \ResGzero{\Gv}{\Lv}$} (rrd);
\draw[liner,->] (rld) -- node[arl] {$\al \Satake{L}^\circ$} (rrd);
\draw[liner,<-] (rrd) -- node[arl] {$\al \mathbb{I}_{\Lv}^\circ$} (frd);
\draw[->] (rlu) -- node[arl] {$\al \Satake{G}^\circ$} (rru);
\draw[<-] (rru) -- node[arl] {$\al \mathbb{I}_{\Gv}^\circ$} (fru);
\draw[->] (fru) -- node[arl] {$\al \ResG{\Gv}{\Lv}$}(frd);
\draw[->] (fld) -- node[arr] {$\al \Satakesm{L}$} (frd);
\draw[<-] (rld) -- node[arr] {$\al (f_L^\circ)_*$} (fld);
\draw[->] (rlu) -- node[arr] {$\al \ResGrzero{G}{L}$} (rld);
\node[cubel] at (0,0.5,0.5) {$\eqrefh{eqn:resGr-resGrzero}$};
\node[cubet] at (0.5,1,0.5) {$\eqrefh{eqn:Satake-sm-non-sm-0}$};
\node[cubef] at (0.5,0.5,1) {?};
\node (flu) at (0,1,1) {$\Perv_{\GO}(\Gr_G^\sm)$};
\draw[<-] (rlu) -- node[arl,pos=.7] {$\al (f_G^\circ)_*$} (flu);
\draw[->] (flu) -- node[arr,pos=.3] {$\al \Satakesm{G}$} (fru);
\draw[->] (flu) -- node[arl,pos=.3] {$\al \ResGr{G}{L}$} (fld);
\end{tikzpicture}}
\end{equation}
Here every face is labelled with an already-defined isomorphism of functors except the front face marked with `?'. Since $\mathbb{I}_{\Lv}^\circ:\Rep(\Lv)_\sm\to\Rep(\Lv)^{Z(\Lv)}$ is full and faithful, there is a unique isomorphism with which to label the front face so as to make the cube commutative (see Example \ref{ex:cube2}). 

\subsection{Proof that the prism \eqref{eqn:satake-prism} is commutative}

Consider the following prism, which is trivially commutative because every face is labelled by an equality:
\begin{equation} \label{eqn:embedding-prism}
\vc{\begin{tikzpicture}[longprism2]
\compuinit
\node (rlu) at (0,1,0) {$\Rep(\Gv)_\sm$};
\node (rru) at (1,1,0) {$\Rep(\Gv)^{Z(\Gv)}$};
\node (fr) at (	1,0.5,1) {$\Rep(\Lv)^{Z(\Lv)}$};
\node (rld) at (0,0,0) {$\Rep(\Tv)_\sm$};
\node (rrd) at (1,0,0) {$\Rep(\Tv)^{Z(\Tv)}$};
\draw[->] (rru) -- node[arr,pos=.3] {$\al \ResGzero{\Gv}{\Tv}$} (rrd);
\draw[->] (rlu) -- node[arl] {$\al \mathbb{I}_{\Gv}^0$} (rru);
\draw[->] (rlu) -- node[arr] {$\al \ResG{\Gv}{\Tv}$} (rld);
\draw[->] (rru) -- node[arl] {$\al \ResGzero{\Gv}{\Lv}$} (fr);
\draw[->] (rld) -- node[arr] {$\al \mathbb{I}_{\Tv}^0$} (rrd);
\draw[->] (fr) -- node[arl] {$\al \ResGzero{\Lv}{\Tv}$} (rrd);
\node (fl) at (0,0.5,1) {$\Rep(\Lv)_\sm$};
\draw[->] (rlu) -- node[arl] {$\al \ResG{\Gv}{\Lv}$} (fl);
\draw[->] (fl) -- node[arl,pos=.3] {$\al \ResG{\Lv}{\Tv}$} (rld);
\draw[linef,->] (fl)  -- node[arr,pos=.3] {$\al \mathbb{I}_{\Lv}^0$} (fr);
\end{tikzpicture}}
\end{equation}
Since $\mathbb{I}_{\Tv}^0:\Rep(\Tv)_\sm\to\Rep(\Tv)^{Z(\Tv)}$ is faithful (indeed, an equivalence), to prove that \eqref{eqn:satake-prism} is commutative it suffices, by Example \ref{ex:gluing-converse}, to prove the following result.

\begin{prop} \label{prop:satake-prism-modified}
The prism
\[
\vc{\begin{tikzpicture}[longprism2]
\compuinit
\node (rlu) at (0,1,0) {$\Perv_{\GO}(\Gr_G^\sm)$};
\node (rru) at (1,1,0) {$\Rep(\Gv)^{Z(\Gv)}$};
\node (fr) at (	1,0.5,1) {$\Rep(\Lv)^{Z(\Lv)}$};
\node (rld) at (0,0,0) {$\Perv_{\TO}(\Gr_T^\sm)$};
\node (rrd) at (1,0,0) {$\Rep(\Tv)^{Z(\Tv)}$};
\draw[->] (rru) -- node[arr,pos=.3] {$\al \ResGzero{\Gv}{\Tv}$} (rrd);
\draw[->] (rlu) -- node[arl] {$\al \mathbb{I}_{\Gv}^0\circ\Satakesm{G}$} (rru);
\draw[->] (rlu) -- node[arr] {$\al \ResGr{G}{T}$} (rld);
\draw[->] (rru) -- node[arl] {$\al \ResGzero{\Gv}{\Lv}$} (fr);
\draw[->] (rld) -- node[arr] {$\al \mathbb{I}_{\Tv}^0\circ\Satakesm{T}$} (rrd);
\draw[->] (fr) -- node[arl] {$\al \ResGzero{\Lv}{\Tv}$} (rrd);
\node (fl) at (0,0.5,1) {$\Perv_{\LO}(\Gr_L^\sm)$};
\draw[->] (rlu) -- node[arl] {$\al \ResGr{G}{L}$} (fl);
\draw[->] (fl) -- node[arl,pos=.3] {$\al \ResGr{L}{T}$} (rld);
\draw[linef,->] (fl)  -- node[arr,pos=.3] {$\al \mathbb{I}_{\Lv}^0\circ\Satakesm{L}$} (fr);
\end{tikzpicture}}
\]
obtained by gluing \eqref{eqn:satake-prism} and \eqref{eqn:embedding-prism} is commutative.
\end{prop}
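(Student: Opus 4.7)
The approach mirrors the proof of Lemma~\ref{lem:satakezero-prism}: we deduce commutativity of the displayed prism from that of Lemma~\ref{lem:satakezero-prism} (governing $\Satake{G}^\circ$) and Lemma~\ref{lem:small-prism} (governing the closed embedding $(f_G^\circ)_*$), by gluing in three instances of the cube~\eqref{eqn:small-cube}. The key observation is that the intertwining isomorphism on each square face of the displayed prism factors through the corresponding cube~\eqref{eqn:small-cube}: for the $(G,L)$ square, the isomorphism
\[
\ResGzero{\Gv}{\Lv} \circ \mathbb{I}_{\Gv}^0 \circ \Satakesm{G} \natisom \mathbb{I}_{\Lv}^0 \circ \Satakesm{L} \circ \ResGr{G}{L}
\]
is, by construction of the front face of~\eqref{eqn:small-cube}, the pasting of~\eqref{eqn:Satake-sm-non-sm-0}, the front face of~\eqref{eqn:connectedcomponent-cube}, the compatibility~\eqref{eqn:resGr-resGrzero}, the inverse of~\eqref{eqn:Satake-sm-non-sm-0} for $L$, and~\eqref{eqn:ResG-ResGnonsm}; analogous factorisations hold for the $(L,T)$ and $(G,T)$ squares.

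The first step of the proof is to glue onto each of the three square faces of the displayed prism the corresponding cube~\eqref{eqn:small-cube}, for the pairs $(G,L)$, $(L,T)$, and $(G,T)$. Since each of these cubes is commutative by its very definition (the front face having been introduced precisely as the unique $2$-cell making the cube commute), the gluing principle of~\S\ref{subsect:gluing} ensures that the resulting enlarged $2$-computad is commutative if and only if the displayed prism is.

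The second step is to recognise the enlarged computad as a gluing of three commutative pieces along matching shared faces: the prism of Lemma~\ref{lem:satakezero-prism}, the prism of Lemma~\ref{lem:small-prism}, and a trivially commutative auxiliary prism (analogous to~\eqref{eqn:embedding-prism}) obtained by applying the embeddings $\mathbb{I}_{\Gv}^0$, $\mathbb{I}_{\Lv}^0$, $\mathbb{I}_{\Tv}^0$ to the equality transitivity isomorphism for $\ResG{(-)}{(-)}$. The shared-face bookkeeping parallels that in the proof of Lemma~\ref{lem:satakezero-prism}, and the gluing principle then yields commutativity of the enlarged computad, and hence of the displayed prism.

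The main obstacle will be purely combinatorial: careful verification that, under the second decomposition, the two triangular transitivity $2$-cells (for $\ResGr{G}{T}\natisom\ResGr{L}{T}\circ\ResGr{G}{L}$ defined via~\eqref{eqn:transitivity-isom-resGr-db-version}, and for the equality $\ResGzero{\Gv}{\Tv}=\ResGzero{\Lv}{\Tv}\circ\ResGzero{\Gv}{\Lv}$) are recovered correctly, and that the interior shared triangular faces match those introduced by the glued cubes. This is a routine diagram chase modelled on the corresponding step in the proof of Lemma~\ref{lem:satakezero-prism}, and requires no faithfulness hypothesis beyond the gluing principle itself.
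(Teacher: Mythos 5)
Your proof is correct in substance and uses exactly the pieces the paper uses: the commutative prisms of Lemmas~\ref{lem:satakezero-prism} and~\ref{lem:small-prism} glued to the $(G,L)$, $(L,T)$ and $(G,T)$ versions of the commutative cube~\eqref{eqn:small-cube}. The one point to repair is the claim in your first step that ``the gluing principle'' yields an if and only if: the gluing principle only gives that commutativity of the pieces implies commutativity of the assembled computad, and the converse direction in general requires a faithfulness or essential-surjectivity hypothesis as in Example~\ref{ex:gluing-converse}, so as written this step is misjustified. In fact no converse is needed here, because each square face of the displayed prism is \emph{by construction} the pasting of the front face of the corresponding cube~\eqref{eqn:small-cube} with the equality square~\eqref{eqn:ResG-ResGnonsm}, which is precisely that cube's down face; hence the prism's commutativity equation is literally among those asserted by commutativity of the assembled object, and this is why the paper phrases the argument as exhibiting the displayed prism itself as an alternative gluing of the commutative pieces (with no auxiliary prism needed, since the squares~\eqref{eqn:ResG-ResGnonsm} already occur as faces of the cubes), invoking only the forward gluing principle.
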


\begin{proof}
This prism can be obtained by an alternative gluing procedure, in which the pieces to be glued are the commutative prisms in Lemmas \ref{lem:satakezero-prism} and \ref{lem:small-prism} and the commutative cube \eqref{eqn:small-cube} in its $(G,L)$, $(L,T)$, and $(G,T)$ versions.
\end{proof}


\section{The Springer functor and restriction to a Levi}
\label{sect:springer}



\subsection{Restriction for equivariant derived categories}
\label{ss:restriction-equivariant}

Our first step is to pass from categories of equivariant perverse sheaves to equivariant derived categories. Recall the functor $\ResNderiv{G}{L} : \cDb_G(\cN_G) \to \cDb_L(\cN_L)$ defined in \S\ref{ss:restriction-N}. There is a transitivity isomorphism for this functor, namely an isomorphism
\begin{equation}
\label{eqn:composition-restriction-Springerderiv}
\ResNderiv{G}{T} \natisom \ResNderiv{L}{T} \circ \ResNderiv{G}{L} : \cDb_G(\cN_G) \to \cDb_T(\cN_T),
\end{equation}
defined by the following elaboration of \eqref{eqn:transitivity-isom-resN-db-version}:
\begin{equation}
\label{eqn:transitivity-elaborate}
\vc{\begin{tikzpicture}[stdtriangle]
\compuinit
\node[lttricelld] at (0.5+0.5*\tric, 3.5+0.5*\tric) {\tiny$\mTr$};
\node[lttricelld] at (1.5+0.5*\tric, 2.5+0.5*\tric) {\tiny$\Comp$};
\node[lttricelld] at (2.5+0.5*\tric, 1.5+0.5*\tric) {\tiny$\Comp$};
\node[lttricelld] at (3.5+0.5*\tric, 0.5+0.5*\tric) {\tiny$\mTr$};
\node[lttricelld] at (3.5+0.5*\tric, 3.5+0.5*\tric) {\tiny$\mTr$};
\node[rttricellu] at (3.5-0.5*\tric, 3.5-0.5*\tric) {\tiny$\mTr$};
\node[cubef] at (1.5,3.5) {$\mFor$};
\node[cubef] at (2.5,3.5) {$\mFor$};
\node[cubef] at (2.5,2.5) {$\BC$};
\node[cubef] at (3.5,2.5) {$\mFor$};
\node[cubef] at (3.5,1.5) {$\mFor$};
\node (lluu) at (0,4) {$\cDb_G(\cN_G)$};
\node (luu) at (1,4) {$\cDb_P(\cN_G)$};
\node (muu) at (2,4) {$\cDb_P(\cN_P)$};
\node (ruu) at (3,4) {$\cDb_P(\cN_L)$};
\node (rruu) at (4,4) {$\cDb_L(\cN_L)$};
\node (lu) at (1,3) {$\cDb_B(\cN_G)$};
\node (mu) at (2,3) {$\cDb_B(\cN_P)$};
\node (ru) at (3,3) {$\cDb_B(\cN_L)$};
\node (rru) at (4,3) {$\cDb_C(\cN_L)$};
\node (mm) at (2,2) {$\cDb_B(\cN_B)$};
\node (rm) at (3,2) {$\cDb_B(\cN_C)$};
\node (rrm) at (4,2) {$\cDb_C(\cN_C)$};
\node (rd) at (3,1) {$\cDb_B(\cN_T)$};
\node (rrd) at (4,1) {$\cDb_C(\cN_T)$};
\node (rrdd) at (4,0) {$\cDb_T(\cN_T)$};
\draw[->] (lluu) -- node[arl] {{\tiny $\For^G_P$}} (luu);
\draw[->] (luu) -- node[arl] {{\tiny $(\cdot)^!$}} (muu);
\draw[->] (muu) -- node[arl] {{\tiny $(\cdot)_*$}} (ruu);
\draw[->] (ruu) -- node[arl] {{\tiny $\For^P_L$}} (rruu);
\draw[->] (lu) -- node[arl] {{\tiny $(\cdot)^!$}} (mu);
\draw[->] (mu) -- node[arl] {{\tiny $(\cdot)_*$}} (ru);
\draw[->] (ru) -- node[arl] {{\tiny $\For^B_C$}} (rru);
\draw[->] (mm) -- node[arl] {{\tiny $(\cdot)_*$}} (rm);
\draw[->] (rm) -- node[arl] {{\tiny $\For^B_C$}} (rrm);
\draw[->] (rd) -- node[arl] {{\tiny $\For^B_C$}} (rrd);
\draw[->] (luu) -- node[arl] {{\tiny $\For^P_B$}} (lu);
\draw[->] (muu) -- node[arl] {{\tiny $\For^P_B$}} (mu);
\draw[->] (ruu) -- node[arl] {{\tiny $\For^P_B$}} (ru);
\draw[->] (rruu) -- node[arl] {{\tiny $\For^L_C$}} (rru);
\draw[->] (mu) -- node[arl] {{\tiny $(\cdot)^!$}} (mm);
\draw[->] (ru) -- node[arl] {{\tiny $(\cdot)^!$}} (rm);
\draw[->] (rru) -- node[arl] {{\tiny $(\cdot)^!$}} (rrm);
\draw[->] (rm) -- node[arl] {{\tiny $(\cdot)_*$}} (rd);
\draw[->] (rrm) -- node[arl] {{\tiny $(\cdot)_*$}} (rrd);
\draw[->] (rrd) -- node[arl] {{\tiny $\For^C_T$}} (rrdd);
\draw[->] (lluu) -- node[arr] {{\tiny $\For^G_B$}} (lu);
\draw[->] (lu) -- node[arr] {{\tiny $(\cdot)^!$}} (mm);
\draw[->] (mm) -- node[arr] {{\tiny $(\cdot)_*$}} (rd);
\draw[->] (rd) -- node[arr] {{\tiny $\For^B_T$}} (rrdd);
\draw[->] (ruu) -- node[arl] {{\tiny $\For^P_C$}} (rru);
\end{tikzpicture}}
\end{equation}
Recall also that we have defined an isomorphism $\ResN{G}{L}\circ\For\natisom\For\circ\ResNderiv{G}{L}$ in \eqref{eqn:ResN-ResNderiv}.

\begin{lem} \label{lem:forgetfulness}
Isomorphism \eqref{eqn:ResN-ResNderiv} is compatible with transitivity in the sense that the following prism is commutative:
\[
\vc{\begin{tikzpicture}[longprism]
\compuinit
\node[cuber] at (0.5,0.5,0) {\eqrefh{eqn:ResN-ResNderiv}};
\node[prismdf] at (1,0.5,\tric) {\eqrefh{eqn:composition-restriction-Springer}};
\node (rlu) at (0,1,0) {$\cDb_G(\cN_G)$};
\node (rru) at (1,1,0) {$\cDb(\cN_G)$};
\node (fr) at (1,0.5,1) {$\cDb(\cN_L)$};
\node (rld) at (0,0,0) {$\cDb_T(\cN_T)$};
\node (rrd) at (1,0,0) {$\cDb(\cN_T)$};
\draw[liner,->] (rru) -- node[arr,pos=.3] {$\al \ResN{G}{T}$} (rrd);
\draw[->] (rlu) -- node[arl] {$\al \For$} (rru);
\draw[->] (rlu) -- node[arr] {$\al \ResNderiv{G}{T}$} (rld);
\draw[->] (rru) -- node[arl] {$\al \ResN{G}{L}$} (fr);
\draw[->] (rld) -- node[arr] {$\al \For$} (rrd);
\draw[->] (fr) -- node[arl] {$\al \ResN{L}{T}$} (rrd);
\node[prismtf] at (0.5,0.75,0.5) {\eqrefh{eqn:ResN-ResNderiv}};
\node[prismbf] at (0.5,0.25,0.5) {\eqrefh{eqn:ResN-ResNderiv}};
\node[prismlf] at (0,0.5,\tric) {\eqrefh{eqn:composition-restriction-Springerderiv}};
\node (fl) at (0,0.5,1) {$\cDb_L(\cN_L)$};
\draw[->] (rlu) -- node[arl] {$\al \ResNderiv{G}{L}$} (fl);
\draw[->] (fl) -- node[arl,pos=.3] {$\al \ResNderiv{L}{T}$} (rld);
\draw[->] (fl)  -- node[arr,pos=.3] {$\al \For$} (fr);
\end{tikzpicture}}
\]
\end{lem}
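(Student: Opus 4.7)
The plan is to prove commutativity of the prism by the gluing principle of \S\ref{subsect:gluing}, in the style of the proofs of Propositions~\ref{prop:psi-prism-db-version} and~\ref{prop:satake-prism-modified}. The strategy is to unfold all five labels of the prism into their defining pasting diagrams, producing a single large labelled $2$-computad whose commutativity is equivalent to the prism's, and then exhibit it as a gluing of elementary cells each of which commutes by a lemma in Appendix~\ref{sect:lemmas}.

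First I would expand each face. The right-hand triangular face is labelled by \eqref{eqn:composition-restriction-Springer}, defined by the pasting diagram \eqref{eqn:transitivity-isom-resN-db-version}, which consists only of base-change and composition cells for $(\cdot)^!$ and $(\cdot)_*$. The left-hand triangular face is labelled by \eqref{eqn:composition-restriction-Springerderiv}, defined by the larger pasting diagram \eqref{eqn:transitivity-elaborate}, which in addition to base-change and composition cells involves cells expressing the commutation of a forgetful functor $\For^H_K$ with $(\cdot)^!$ and $(\cdot)_*$ as well as cells expressing transitivity of forgetful functors. The three square faces are labelled by the $(G,L)$, $(L,T)$, and $(G,T)$ instances of \eqref{eqn:ResN-ResNderiv}, each of which is given by the pasting diagram displayed immediately after \eqref{eqn:ResN-ResNderiv}, consisting of two forgetful-compatibility squares sandwiched between two transitivity triangles.

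Second, I would verify the gluing. The key observation is that \eqref{eqn:transitivity-elaborate} is deliberately constructed as an equivariant lift of \eqref{eqn:transitivity-isom-resN-db-version}: the base-change and composition cells of the former are in one-to-one correspondence with those of the latter, and the remaining cells interpolate between equivariant and non-equivariant derived categories exactly in the manner prescribed by \eqref{eqn:ResN-ResNderiv}. Concretely, one can partition the cells of \eqref{eqn:transitivity-elaborate} into three sub-pastings that match the three instances of \eqref{eqn:ResN-ResNderiv} on the square faces of the prism, together with a residual sub-pasting that coincides with \eqref{eqn:transitivity-isom-resN-db-version}.

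The main obstacle will be bookkeeping: \eqref{eqn:transitivity-elaborate} contains a dozen or so cells arranged around several interlocking transitivity triangles and forgetful-compatibility squares, and matching it against three instances of \eqref{eqn:ResN-ResNderiv} simultaneously requires a careful choice of subdivision. A convenient organisation is to peel off the three square faces one at a time along their common edges with the prism, reducing at each step to a smaller commutative computad, until only the cells of \eqref{eqn:transitivity-isom-resN-db-version} remain, at which point they manifestly match the right-hand triangular face. No essentially new ideas beyond those used in Sections~\ref{sect:phi-psi} and~\ref{sect:satake} are required.
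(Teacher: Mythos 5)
Your overall strategy (unfold the faces of the prism and invoke the gluing principle) is the right one, but the decomposition you describe does not exist, and it hides the actual content of the proof. You claim the cells of \eqref{eqn:transitivity-elaborate} can be partitioned into three sub-pastings ``matching'' the three instances of \eqref{eqn:ResN-ResNderiv} together with a residual sub-pasting that ``coincides with'' \eqref{eqn:transitivity-isom-resN-db-version}. This is impossible as stated: every $0$-cell of \eqref{eqn:transitivity-elaborate} is an equivariant derived category $\cDb_H(\cdot)$ for a nontrivial $H$ among $G,P,B,L,C,T$, whereas \eqref{eqn:transitivity-isom-resN-db-version} and the outer rows of the pasting diagram defining \eqref{eqn:ResN-ResNderiv} live in the plain categories $\cDb(\cdot)$. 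No sub-pasting of the former can equal the latter, so the ``one-to-one correspondence'' between the base-change and composition cells of the two transitivity diagrams is not an identification to be bookkept; it is a genuine assertion that each equivariant cell and its non-equivariant analogue fit into a commutative three-dimensional polyhedron whose connecting edges are the forgetful functors $\For=\For^H_{\{1\}}$.

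That assertion is the entire mathematical content of the lemma, and it is exactly what your proposal omits. The correct gluing pairs each cell of \eqref{eqn:transitivity-elaborate} with its non-equivariant counterpart as the left and right faces of a cube or prism (a prism, not a cube, when the equivariant cell is a $\mathrm{(For)}$ or $\mathrm{(Tr)}$ cell, since these degenerate after forgetting all equivariance), and the commutativity of each such piece is a specific compatibility statement about $\For$ versus composition, base change, and transitivity of forgetful functors: Lemmas \ref{lem:cocycle-For}, \ref{lem:for_*for}, \ref{lem:for^!for}, \ref{lem:_*compositionfor}, \ref{lem:^!compositionfor} and \ref{lem:basechangefor}. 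Without identifying and invoking these, the ``bookkeeping'' you defer to cannot be carried out: the argument is not a two-dimensional subdivision to be peeled apart face by face, but a three-dimensional gluing each of whose elementary constituents requires one of these results.
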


\begin{proof}
This prism is obtained by gluing together cubes and prisms whose left faces are the squares and triangles in \eqref{eqn:transitivity-elaborate}, and whose right faces are the non-equivariant analogues. These are commutative by Lemmas \ref{lem:cocycle-For}, \ref{lem:for_*for}, \ref{lem:for^!for}, \ref{lem:_*compositionfor}, \ref{lem:^!compositionfor} and \ref{lem:basechangefor}. (Recall that $\For:\cDb_H(X)\to\cDb(X)$ is the same as $\For^H_{\{1\}}$.)
\end{proof}

Now consider the diagram
\begin{equation} \label{eqn:fiddle}
\vcenter{
\xymatrix@R=14pt{
\Perv_G'(\cN_G)\ar[d]_{\ResNderiv{G}{L}}\ar[r]^-{\For}_-{\sim}&\Perv_G(\cN_G)\ar[d]_{\ResN{G}{L}}\ar[r]^-{\Springer{G}}&\Rep(\W_G)\ar[d]^{\ResW{\W_G}{\W_L}}\\
\Perv_L'(\cN_L)\ar[r]^-{\For}_-{\sim}&\Perv_L(\cN_L)\ar[r]^-{\Springer{L}}&\Rep(\W_L)
}
}
\end{equation}
(where the left-hand square is well defined by Proposition \ref{prop:ResN-exact} and Corollary \ref{cor:ResNderiv-exact}). Our goal in this section is to define an isomorphism for the right-hand square in \eqref{eqn:fiddle} and show that it is compatible with transitivity. We already have such an isomorphism for the left-hand square, and it is compatible with transitivity by Lemma \ref{lem:forgetfulness}. Since $\For:\Perv_G'(\cN_G)\to\Perv_G(\cN_G)$ is an equivalence, it suffices to define an isomorphism for the outer square in \eqref{eqn:fiddle}, 
and show that it is compatible with transitivity (see Example \ref{ex:gluing-converse}). Now $\Springer{G}\circ\For:\Perv_G'(\cN_G)\to\Rep(\W_G)$ is clearly isomorphic to the functor $\Springer{G}':\Perv_G'(\cN_G)\to\Rep(\W_G)$ defined on objects by $M\mapsto \Hom_{\Perv_G'(\cN_G)}(\Spr_G,M)$.
The following observation, which is immediate from Example \ref{ex:prism}, allows us to consider $\Springer{G}'$ instead of $\Springer{G}\circ\For$.

\begin{lem} \label{lem:whisker}
Suppose we have an isomorphism
\begin{equation} \label{eqn:whisker1}
\ResW{\W_G}{\W_L}\circ\Springer{G}'\natisom\Springer{L}'\circ\ResNderiv{G}{L}
\end{equation}
that is compatible with transitivity in the sense that the following prism:
\[
\vc{\begin{tikzpicture}[longprism]
\compuinit
\node[cuber] at (0.5,0.5,0) {\eqrefh{eqn:whisker1}};
\node[prismdf] at (1,0.5,\tric) {$\mTr$};
\node (rlu) at (0,1,0) {$\Perv_G'(\cN_G,\bk)$};
\node (rru) at (1,1,0) {$\Rep(\W_G,\bk)$};
\node (fr) at (	1,0.5,1) {$\Rep(\W_L,\bk)$};
\node (rld) at (0,0,0) {$\Perv_T'(\cN_T,\bk)$};
\node (rrd) at (1,0,0) {$\Rep(\W_T,\bk)$};
\draw[liner,->] (rru) -- node[arr,pos=.3] {$\al \ResW{\W_G}{\W_T}$} (rrd);
\draw[->] (rlu) -- node[arl] {$\al \Springer{G}'$} (rru);
\draw[->] (rlu) -- node[arr] {$\al \ResNderiv{G}{T}$} (rld);
\draw[->] (rru) -- node[arl] {$\al \ResW{\W_G}{\W_L}$} (fr);
\draw[->] (rld) -- node[arr] {$\al \Springer{T}'$} (rrd);
\draw[->] (fr) -- node[arl] {$\al \ResW{\W_L}{\W_T}$} (rrd);
\node[prismtf] at (0.5,0.75,0.5) {\eqrefh{eqn:whisker1}};
\node[prismbf] at (0.5,0.25,0.5) {\eqrefh{eqn:whisker1}};
\node[prismlf] at (0,0.5,\tric) {\eqrefh{eqn:composition-restriction-Springerderiv}};
\node (fl) at (0,0.5,1) {$\Perv_L'(\cN_L,\bk)$};
\draw[->] (rlu) -- node[arl] {$\al \ResNderiv{G}{L}$} (fl);
\draw[->] (fl) -- node[arl,pos=.3] {$\al \ResNderiv{L}{T}$} (rld);
\draw[->] (fl)  -- node[arr,pos=.3] {$\al \Springer{L}'$} (fr);
\end{tikzpicture}}
\]
is commutative.
Then the isomorphism 
defined as the composition
\begin{equation}
\ResW{\W_G}{\W_L}\circ\Springer{G}\circ\For \natisom \ResW{\W_G}{\W_L}\circ\Springer{G}' \overset{\eqref{eqn:whisker1}}{\natisom} \Springer{L}'\circ\ResNderiv{G}{L} \natisom \Springer{L}\circ\For\circ\ResNderiv{G}{L}
\end{equation}
is also compatible with transitivity.
\end{lem}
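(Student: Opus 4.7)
The plan is to express the target prism (for the composed isomorphism on the unprimed functors) as a pasting of three sub-prisms, one for each of the three natural isomorphisms appearing in the definition of the composition. The middle sub-prism is the one that we assume to be commutative; the other two are seen to be commutative for essentially formal reasons.

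First, I would observe that the isomorphism $\Springer{G}\circ\For\natisom\Springer{G}'$ is canonical, given by transport along the equivalence $\For:\Perv_G'(\cN_G)\simto\Perv_G(\cN_G)$ and the fact that both functors represent $\Hom$ out of (compatible lifts of) the Springer sheaf with the same $\W_G$-action. In particular, this isomorphism is an isomorphism of functors to $\Rep(\W_G,\bk)$ that does not involve the restriction functors. As a consequence, the prism whose square faces are labelled by the whiskered isomorphism $\ResW{\W_G}{\W_L}\circ\Springer{G}\circ\For\natisom\ResW{\W_G}{\W_L}\circ\Springer{G}'$ (in the $(G,L)$, $(L,T)$, $(G,T)$ versions), with top face labelled by the transitivity isomorphism for $\ResW{}{}$ on $\Springer{}\circ\For$ and bottom face labelled by the transitivity isomorphism for $\ResW{}{}$ on $\Springer{}'$, is trivially commutative. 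An analogous prism commutes on the other side, for $\Springer{L}'\circ\ResNderiv{G}{L}\natisom\Springer{L}\circ\For\circ\ResNderiv{G}{L}$, where the triangular faces encode the transitivity isomorphism~\eqref{eqn:composition-restriction-Springerderiv} for $\ResNderiv{}{}$, whiskered on the left by the appropriate Springer-composed-with-forgetful functor.

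Second, the middle sub-prism is precisely the one stated in the hypothesis of the lemma, whose commutativity we are assuming. By pasting the three sub-prisms together along their shared triangular faces, we obtain a prism whose square faces carry the full three-step composition defining the target isomorphism of the lemma, whose top triangular face is the transitivity for $\ResNderiv{G}{T}$ from~\eqref{eqn:composition-restriction-Springerderiv}, and whose bottom triangular face is the transitivity for $\ResW{\W_G}{\W_T}$. By the gluing principle of \S\ref{subsect:gluing}, this pasted prism is commutative.

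Finally, I would compare this pasted prism with the prism of the lemma's conclusion (which has the transitivity~\eqref{eqn:composition-restriction-Springer} on the $\ResN{}{}$ side of the forgetful functors, rather than~\eqref{eqn:composition-restriction-Springerderiv}). The step required is to paste in the commutative prism of Lemma~\ref{lem:forgetfulness}, which precisely identifies the two transitivity isomorphisms via the isomorphism~\eqref{eqn:ResN-ResNderiv}. The main technical obstacle is purely notational: keeping track of the various pastings and ensuring that everything glues along the correct faces. Once the bookkeeping is set up, each constituent commutative sub-diagram is either a hypothesis, a special case of Lemma~\ref{lem:forgetfulness}, or a trivial compatibility of whiskerings, so no further commutativity lemmas from Appendix~\ref{sect:lemmas} are needed.
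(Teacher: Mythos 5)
Your core argument is sound and is essentially the paper's (the paper in fact offers no written proof, declaring the lemma ``immediate from Example~\ref{ex:prism}''): the composed square faces of the target prism differ from those of the hypothesis prism only by conjugation with the fixed isomorphisms $\Springer{H}\circ\For\natisom\Springer{H}'$ for $H=G,L,T$, and when one writes out the pentagon of Example~\ref{ex:prism} for the target prism these conjugating $2$-cells cancel in adjacent pairs (using the interchange law at the two triangular faces), reducing its commutativity to that of the hypothesis pentagon.

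Two caveats. First, your final paragraph rests on a misreading of the conclusion: the prism expressing ``compatibility with transitivity'' for the composed isomorphism still has $\ResNderiv{G}{L}$, $\ResNderiv{L}{T}$, $\ResNderiv{G}{T}$ as its left edges and \eqref{eqn:composition-restriction-Springerderiv} as its left triangular face --- the conclusion concerns the outer square of \eqref{eqn:fiddle}, which lives entirely on the equivariant side --- so no appeal to Lemma~\ref{lem:forgetfulness} or to \eqref{eqn:composition-restriction-Springer} is needed \emph{inside} this lemma. (That conversion is performed afterwards, in the surrounding text, via Lemma~\ref{lem:forgetfulness} and the converse gluing principle of Example~\ref{ex:gluing-converse}, using that $\For$ is an equivalence.) Second, the outer two pieces of your decomposition are not prisms of the paper's standard polyhedral shape: their ``square faces'' are bigons between the parallel $1$-cells $\Springer{H}\circ\For$ and $\Springer{H}'$, so ``gluing along shared triangular faces'' is not literally available for them. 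It is cleaner either to argue directly on the pentagon as above, or to observe that replacing every horizontal edge of a commutative prism by an isomorphic functor, and each square face by its conjugate under the corresponding whiskerings, again yields a commutative prism. Neither point is fatal, but the last paragraph should be deleted and the gluing step rephrased.
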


The functor $\Springer{G}'$ extends to a functor $\cDb_G(\cN_G)\to\Rep(\W_G)$ defined on objects by $M\mapsto \Hom_{\cDb_G(\cN_G)}(\Spr_G,M)$. We will denote the latter functor by $\Springer{G}'$ also. Our conclusion is that it suffices to define an intertwining isomorphism 
\begin{equation}
\label{eqn:springer'-intertwine}
\vc{\begin{tikzpicture}[smallcube2]
\compuinit
\node[cubef] at (0.5,0.5) {$?$};
\node (lu) at (0,1) {$\cDb_G(\cN_G)$};
\node (ld) at (0,0) {$\cDb_L(\cN_L)$};
\node (ru) at (1,1) {$\Rep(\W_G)$};
\node (rd) at (1,0) {$\Rep(\W_L)$};
\draw[->] (lu) -- node[arl] {$\al \Springer{G}'$} (ru);
\draw[->] (lu) -- node[arr] {$\al \ResNderiv{G}{L}$} (ld);
\draw[->] (ld) -- node[arr] {$\al \Springer{L}'$} (rd);
\draw[->] (ru) -- node[arl] {$\al \ResW{\W_G}{\W_L}$} (rd);
\end{tikzpicture}}
\end{equation}
and show that it is compatible with transitivity.

\subsection{Induction}

There is a transitivity isomorphism 
\begin{equation}
\label{eqn:composition-induction-Springerderiv}
\IndNderiv{G}{T} \natisom \IndNderiv{L}{T} \circ \IndNderiv{G}{L} : \cDb_T(\cN_T) \to \cDb_G(\cN_G)
\end{equation}
(where $\IndNderiv{G}{L}$ is defined in \S\ref{ss:restriction-N}) defined by the following pasting diagram:
\begin{equation}
\label{eqn:induction-transitivity}
\vc{\begin{tikzpicture}[stdtriangle]
\compuinit
\node[lttricelld] at (0.5+0.5*\tric, 3.5+0.5*\tric) {\tiny$\mTr$};
\node[lttricelld] at (1.5+0.5*\tric, 2.5+0.5*\tric) {\tiny$\Comp$};
\node[lttricelld] at (2.5+0.5*\tric, 1.5+0.5*\tric) {\tiny$\Comp$};
\node[lttricelld] at (3.5+0.5*\tric, 0.5+0.5*\tric) {\tiny$\mTr$};
\node[lttricelld] at (3.5+0.5*\tric, 3.5+0.5*\tric) {\tiny$\mTr$};
\node[rttricellu] at (3.5-0.5*\tric, 3.5-0.5*\tric) {\tiny$\mTr$};
\node[cubef] at (1.5,3.5) {$\mInt$};
\node[cubef] at (2.5,3.5) {$\mInt$};
\node[cubef] at (2.5,2.5) {$\BC$};
\node[cubef] at (3.5,2.5) {$\mInt$};
\node[cubef] at (3.5,1.5) {$\mInt$};
\node (lluu) at (0,4) {$\cDb_G(\cN_G)$};
\node (luu) at (1,4) {$\cDb_P(\cN_G)$};
\node (muu) at (2,4) {$\cDb_P(\cN_P)$};
\node (ruu) at (3,4) {$\cDb_P(\cN_L)$};
\node (rruu) at (4,4) {$\cDb_L(\cN_L)$};
\node (lu) at (1,3) {$\cDb_B(\cN_G)$};
\node (mu) at (2,3) {$\cDb_B(\cN_P)$};
\node (ru) at (3,3) {$\cDb_B(\cN_L)$};
\node (rru) at (4,3) {$\cDb_C(\cN_L)$};
\node (mm) at (2,2) {$\cDb_B(\cN_B)$};
\node (rm) at (3,2) {$\cDb_B(\cN_C)$};
\node (rrm) at (4,2) {$\cDb_C(\cN_C)$};
\node (rd) at (3,1) {$\cDb_B(\cN_T)$};
\node (rrd) at (4,1) {$\cDb_C(\cN_T)$};
\node (rrdd) at (4,0) {$\cDb_T(\cN_T)$};
\draw[<-] (lluu) -- node[arl] {{\tiny $\hamma^G_P$}} (luu);
\draw[<-] (luu) -- node[arl] {{\tiny $(\cdot)_!$}} (muu);
\draw[<-] (muu) -- node[arl] {{\tiny $(\cdot)^*$}} (ruu);
\draw[<-] (ruu) -- node[arl] {{\tiny $\hamma^P_L$}} (rruu);
\draw[<-] (lu) -- node[arl] {{\tiny $(\cdot)_!$}} (mu);
\draw[<-] (mu) -- node[arl] {{\tiny $(\cdot)^*$}} (ru);
\draw[<-] (ru) -- node[arl] {{\tiny $\hamma^B_C$}} (rru);
\draw[<-] (mm) -- node[arl] {{\tiny $(\cdot)^*$}} (rm);
\draw[<-] (rm) -- node[arl] {{\tiny $\hamma^B_C$}} (rrm);
\draw[<-] (rd) -- node[arl] {{\tiny $\hamma^B_C$}} (rrd);
\draw[<-] (luu) -- node[arl] {{\tiny $\hamma^P_B$}} (lu);
\draw[<-] (muu) -- node[arl] {{\tiny $\hamma^P_B$}} (mu);
\draw[<-] (ruu) -- node[arl] {{\tiny $\hamma^P_B$}} (ru);
\draw[<-] (rruu) -- node[arl] {{\tiny $\hamma^L_C$}} (rru);
\draw[<-] (mu) -- node[arl] {{\tiny $(\cdot)_!$}} (mm);
\draw[<-] (ru) -- node[arl] {{\tiny $(\cdot)_!$}} (rm);
\draw[<-] (rru) -- node[arl] {{\tiny $(\cdot)_!$}} (rrm);
\draw[<-] (rm) -- node[arl] {{\tiny $(\cdot)^*$}} (rd);
\draw[<-] (rrm) -- node[arl] {{\tiny $(\cdot)^*$}} (rrd);
\draw[<-] (rrd) -- node[arl] {{\tiny $\hamma^C_T$}} (rrdd);
\draw[<-] (lluu) -- node[arr] {{\tiny $\hamma^G_B$}} (lu);
\draw[<-] (lu) -- node[arr] {{\tiny $(\cdot)_!$}} (mm);
\draw[<-] (mm) -- node[arr] {{\tiny $(\cdot)^*$}} (rd);
\draw[<-] (rd) -- node[arr] {{\tiny $\hamma^B_T$}} (rrdd);
\draw[<-] (ruu) -- node[arl] {{\tiny $\hamma^P_C$}} (rru);
\end{tikzpicture}}
\end{equation}

We can express the functor $\Springer{G}'$ as the following composition:
\begin{equation*}
\cDb_G(\cN_G) \xrightarrow{\Yon} \Vect(\bk)^{\cDb_G(\cN_G)^\op} \xrightarrow{-(\Spr_G)} \Rep(\W_G)
\end{equation*}
where $\Yon$ is the Yoneda embedding for $\cDb_G(\cN_G)$ (see \S\ref{sss:adjunction}) and $-(\Spr_G)$ is the evaluation on the object $\Spr_G$ of $\cDb_G(\cN_G)$ (on which $\W_G$ acts). Consider the diagram:
\begin{equation}\label{eqn:two-isoms}
\vcenter{
\xymatrix@C=2cm@R=15pt{
\cDb_G(\cN_G)\ar[d]_{\ResNderiv{G}{L}}\ar[r]^-{\Yon}&\Vect(\bk)^{\cDb_G(\cN_G)^\op}\ar[d]^{-\circ(\IndNderiv{G}{L})^\op}\ar[r]^-{-(\Spr_G)}&\Rep(\W_G)\ar[d]^{\ResW{\W_G}{\W_L}}\\
\cDb_L(\cN_L)\ar[r]^-{\Yon}&\Vect(\bk)^{\cDb_L(\cN_L)^\op}\ar[r]^-{-(\Spr_L)}&\Rep(\W_L)
}
}
\end{equation}
Note that $-\circ(\IndNderiv{G}{L})^\op$ has its own transitivity isomorphism, defined by the pasting diagram obtained from \eqref{eqn:induction-transitivity} by replacing every $\mathsf{C}$ with $\Vect(\bk)^{\mathsf{C}^\op}$ and every $\alpha$ with $-\circ\alpha^\op$, reversing all arrows. We will refer to this isomorphism as \eqref{eqn:composition-induction-Springerderiv}.

We have an isomorphism for the left-hand square in \eqref{eqn:two-isoms}, namely the following composition of adjunction isomorphisms (where we write $\sV$ for $\Vect(\bk)$.):
\begin{equation}
\label{eqn:adjunction-strip}
\vc{\begin{tikzpicture}[smallcube]
\compuinit
\node[cubef] at (0.55,0.5) {$\Adj$};
\node[cubef] at (1.65,0.5) {$\Adj$};
\node[cubef] at (2.75,0.5) {$\Adj$};
\node[cubef] at (3.85,0.5) {$\Adj$};
\node (luu) at (0,1) {{\tiny $\cDb_G(\cN_G)$}};
\node (lu) at (1.1,1) {{\tiny $\cDb_P(\cN_G)$}};
\node (lm) at (2.2,1) {{\tiny $\cDb_P(\cN_P)$}};
\node (ld) at (3.3,1) {{\tiny $\cDb_P(\cN_L)$}};
\node (ldd) at (4.4,1) {{\tiny $\cDb_L(\cN_L)$}};
\node (ruu) at (0,0) {{\tiny $\sV^{\cDb_G(\cN_G)^\op}$}};
\node (ru) at (1.1,0) {{\tiny $\sV^{\cDb_P(\cN_G)^\op}$}};
\node (rm) at (2.2,0) {{\tiny $\sV^{\cDb_P(\cN_P)^\op}$}};
\node (rd) at (3.3,0) {{\tiny $\sV^{\cDb_P(\cN_L)^\op}$}};
\node (rdd) at (4.4,0) {{\tiny $\sV^{\cDb_L(\cN_L)^\op}$}};
\draw[->] (luu) -- node[arl] {{\tiny $\For^G_P$}} (lu);
\draw[->] (lu) -- node[arl] {{\tiny $(m_P)^!$}} (lm);
\draw[->] (lm) -- node[arl] {{\tiny $(p_P)_*$}} (ld);
\draw[->] (ld) -- node[arl] {{\tiny $\For^P_L$}} (ldd);
\draw[->] (luu) -- node[arr] {{\tiny $\Yon$}} (ruu);
\draw[->] (lu) -- node[arl] {{\tiny $\Yon$}} (ru);
\draw[->] (lm) -- node[arl] {{\tiny $\Yon$}} (rm);
\draw[->] (ld) -- node[arl] {{\tiny $\Yon$}} (rd);
\draw[->] (ldd) -- node[arl] {{\tiny $\Yon$}} (rdd);
\draw[->] (ruu) -- node[arl] {{\tiny $- \circ (\hamma^G_P)^\op$}} (ru);
\draw[->] (ru) -- node[arl] {{\tiny $- \circ (m_P)_!^\op$}} (rm);
\draw[->] (rm) -- node[arl] {{\tiny $- \circ (p_P)^{*,\op}$}} (rd);
\draw[->] (rd) -- node[arl] {{\tiny $- \circ (\hamma^P_L)^\op$}} (rdd);
\end{tikzpicture}}
\end{equation}

\begin{lem}
\label{lem:transitivity-ResN-IndN}

The following prism is commutative:
\[
\vc{\begin{tikzpicture}[longprism]
\compuinit
\node[cuber] at (0.5,0.5,0) {\eqrefh{eqn:adjunction-strip}};
\node[prismdf] at (1,0.5,\tric) {\eqrefh{eqn:composition-induction-Springerderiv}};
\node (rlu) at (0,1,0) {$\cDb_G(\cN_G)$};
\node (rru) at (1,1,0) {$\Vect(\bk)^{\cDb_G(\cN_G)^\op}$};
\node (fr) at (1,0.5,1) {$\Vect(\bk)^{\cDb_L(\cN_L)^\op}$};
\node (rld) at (0,0,0) {$\cDb_T(\cN_T)$};
\node (rrd) at (1,0,0) {$\Vect(\bk)^{\cDb_T(\cN_T)^\op}$};
\draw[liner,->] (rru) -- node[arr,pos=.3] {$\al - \circ (\IndNderiv{G}{T})^\op$} (rrd);
\draw[->] (rlu) -- node[arl] {$\al \Yon$} (rru);
\draw[->] (rlu) -- node[arr] {$\al \ResNderiv{G}{T}$} (rld);
\draw[->] (rru) -- node[arl] {$\al - \circ (\IndNderiv{G}{L})^\op$} (fr);
\draw[->] (rld) -- node[arr] {$\al \Yon$} (rrd);
\draw[->] (fr) -- node[arl] {$\al - \circ (\IndNderiv{L}{T})^\op$} (rrd);
\node[prismtf] at (0.5,0.75,0.5) {\eqrefh{eqn:adjunction-strip}};
\node[prismbf] at (0.5,0.25,0.5) {\eqrefh{eqn:adjunction-strip}};
\node[prismlf] at (0,0.5,\tric) {\eqrefh{eqn:composition-restriction-Springerderiv}};
\node (fl) at (0,0.5,1) {$\cDb_L(\cN_L)$};
\draw[->] (rlu) -- node[arl] {$\al \ResNderiv{G}{L}$} (fl);
\draw[->] (fl) -- node[arl,pos=.3] {$\al \ResNderiv{L}{T}$} (rld);
\draw[->] (fl)  -- node[arr,pos=.3] {$\al \Yon$} (fr);
\end{tikzpicture}}
\]

\end{lem}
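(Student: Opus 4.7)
The plan is to expand the prism into a three-dimensional scaffold of small commutative cubes by inserting Yoneda embeddings at every intermediate vertex of the pasting diagrams \eqref{eqn:transitivity-elaborate}, \eqref{eqn:induction-transitivity}, and \eqref{eqn:adjunction-strip} that define its faces, and then to invoke the gluing principle of \S\ref{subsect:gluing}.

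The key observation is that the three relevant pasting diagrams share a common combinatorial shape. The transitivity isomorphism \eqref{eqn:composition-restriction-Springerderiv} is built by \eqref{eqn:transitivity-elaborate} from elementary 2-cells of type $\Co$, $\BC$, $\mFor$, and $\mTr$. The induction transitivity isomorphism \eqref{eqn:composition-induction-Springerderiv} is obtained by \eqref{eqn:induction-transitivity} from the left-adjoint counterparts (with $\mInt$ replacing $\mFor$, and each $\BC$ cell replaced by its mate), but with the same underlying shape. Finally, each rectangular face \eqref{eqn:adjunction-strip} of the prism is a strip of $\Adj$ cells attached to the common four-step decomposition of $\ResNderiv{G}{L}$ into forgetful functors together with $(\cdot)^!$ and $(\cdot)_*$. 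Inserting Yoneda embeddings for every intermediate equivariant derived category appearing in \eqref{eqn:transitivity-elaborate} therefore refines the prism into a scaffold whose atomic pieces are cubes, one for each 2-cell of \eqref{eqn:transitivity-elaborate}.

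Each atomic cube has Yoneda embeddings on its front and back, a single $\Adj$ cell on its top and bottom (for one elementary functor), and a dual pair on its left and right: a $\Co$-$\Co$ pair, a $\BC$ paired with its mate, an $\mFor$-$\mInt$ pair, or a $\mTr$-$\mTr$ pair. The commutativity of each such cube is an instance of the mate correspondence for adjunctions, and is precisely one of the compatibility lemmas collected in Appendix~\ref{sect:lemmas} (of the type that relates $\Adj$ to composition, base change, or the $\For^H_K$-$\hamma^H_K$ adjunction). The gluing principle then yields the commutativity of the full prism.

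The main obstacle is combinatorial rather than conceptual: matching each atomic cube to the correct appendix lemma and verifying that all orientations and gluings are consistent with the geometry of \eqref{eqn:transitivity-elaborate} and \eqref{eqn:induction-transitivity} is a significant bookkeeping exercise, but involves no essentially new ingredient beyond the functoriality of the mate construction for adjunctions. A moderate simplification is available by first proving the statement for each one-step factor of $\ResNderiv{G}{L}$ in isolation (so only a single $\Adj$ cell is involved per cube face) and then reassembling; this reduces the verification to a short list of elementary compatibilities already catalogued in Appendix~\ref{sect:lemmas}.
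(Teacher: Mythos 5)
Your proposal is correct and follows essentially the same route as the paper: the prism is refined, via Yoneda embeddings at every intermediate equivariant derived category of \eqref{eqn:transitivity-elaborate}, into constituent cubes and prisms whose two outer faces are a $2$-cell of \eqref{eqn:transitivity-elaborate} and its mate in \eqref{eqn:induction-transitivity}, and whose remaining faces are $\Adj$ cells; each piece commutes by one of Lemmas~\ref{lem:^*_*compositionadjunction}, \ref{lem:_!^!compositionadjunction}, \ref{lem:basechangeadjunction}, \ref{lem:for_*gamma^*adjunction}, \ref{lem:for^!gamma_!adjunction}, \ref{lem:transitivityadjunction}, and the gluing principle concludes. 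The only cosmetic difference is the orientation in which you label the faces of the atomic pieces (and note that the $\mTr$ cells yield prisms rather than cubes).
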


\begin{proof}
This prism is obtained by gluing together cubes and prisms whose left faces are the squares and triangles in \eqref{eqn:transitivity-elaborate}, and whose left-to-right edges are all $\Yon$. These are commutative by Lemmas \ref{lem:^*_*compositionadjunction}, \ref{lem:_!^!compositionadjunction}, \ref{lem:basechangeadjunction}, \ref{lem:for_*gamma^*adjunction}, \ref{lem:for^!gamma_!adjunction}, \ref{lem:transitivityadjunction}.
\end{proof}

By Lemma \ref{lem:transitivity-ResN-IndN} and the gluing principle, what remains in order to construct isomorphism \eqref{eqn:springer'-intertwine} and prove its compatibility with transitivity is to define an isomorphism for the right-hand square in \eqref{eqn:two-isoms} and prove its compatibility with transitivity. Note that we can think of $\W_G$ and $\W_L$ as one-object categories, and then $\Rep(\W_G)=\Vect(\bk)^{\W_G}$, $\Rep(\W_L)=\Vect(\bk)^{\W_L}$. So it suffices to define an isomorphism
\begin{equation}
\label{eqn:induction-Springer}
\vc{\begin{tikzpicture}[smallcube2]
\compuinit
\node[cubef] at (0.5,0.5) {$?$};
\node (lu) at (0,1) {$\W_G$};
\node (ld) at (0,0) {$\W_L$};
\node (ru) at (1,1) {$\cDb_G(\cN_G)$};
\node (rd) at (1,0) {$\cDb_L(\cN_L)$};
\draw[->] (lu) -- node[arl] {$\al \Spr_G$} (ru);
\draw[<-] (lu) -- node[arr] {} (ld);
\draw[->] (ld) -- node[arr] {$\al \Spr_L$} (rd);
\draw[<-] (ru) -- node[arl] {$\al \IndNderiv{G}{L}$} (rd);
\end{tikzpicture}}
\end{equation}
and to prove that this isomorphism is compatible with transitivity in the sense that the prism
\[
\vc{\begin{tikzpicture}[longprism2]
\compuinit
\node[cuber] at (0.5,0.5,0) {\eqrefh{eqn:induction-Springer}};
\node[prismdf] at (1,0.5,\tric) {\eqrefh{eqn:composition-induction-Springerderiv}};
\node (rlu) at (0,1,0) {$\W_G$};
\node (rru) at (1,1,0) {$\cDb_G(\cN_G)$};
\node (fr) at (1,0.5,1) {$\cDb_L(\cN_L)$};
\node (rld) at (0,0,0) {$\W_T$};
\node (rrd) at (1,0,0) {$\cDb_T(\cN_T)$};
\draw[liner,<-] (rru) -- node[arr,pos=.3] {$\al \IndNderiv{G}{T}$} (rrd);
\draw[->] (rlu) -- node[arl] {$\al \Spr_G$} (rru);
\draw[<-] (rlu) -- node[arr] {} (rld);
\draw[<-] (rru) -- node[arl] {$\al \IndNderiv{G}{L}$} (fr);
\draw[->] (rld) -- node[arr] {$\al \Spr_T$} (rrd);
\draw[<-] (fr) -- node[arl] {$\al \IndNderiv{L}{T}$} (rrd);
\node[prismtf] at (0.5,0.75,0.5) {\eqrefh{eqn:induction-Springer}};
\node[prismbf] at (0.5,0.25,0.5) {\eqrefh{eqn:induction-Springer}};
\node[prismlf] at (0,0.5,\tric) {$=$};
\node (fl) at (0,0.5,1) {$\W_L$};
\draw[<-] (rlu) -- node[arl] {} (fl);
\draw[<-] (fl) -- node[arl,pos=.3] {} (rld);
\draw[->] (fl)  -- node[arr,pos=.3] {$\al \Spr_L$} (fr);
\end{tikzpicture}}
\]
is commutative. In plain terms, this amounts to defining a $\W_L$-equivariant isomorphism $\IndNderiv{G}{L}(\Spr_L)\simto\Spr_G$ such that the following square 
commutes:
\begin{equation}
\vcenter{
\xymatrix@C=8pt@R=12pt{
\IndNderiv{G}{L}(\Spr_L)\ar[rrrr]^-{\sim}\ar@{<-}[d]_-{\wr}&&&&\Spr_G\ar@{<-}[d]^-{\wr}\\
\IndNderiv{G}{L}(\IndNderiv{L}{T}(\Spr_T))\ar[rrrr]^-{\sim}&&&&\IndNderiv{G}{T}(\Spr_T)
}
}
\end{equation}

\begin{rmk}
In the setting of $\Qlb$-sheaves, the existence of a $\W_L$-equivariant isomorphism $\IndNderiv{G}{L}(\Spr_L)\simto\Spr_G$ is a special case of \cite[Theorem 8.3]{lus:icc}.
\end{rmk}

\subsection{From $\Spr$ to $\Groth$}

By definition 
we have a $\W_G$-equivariant isomorphism $\Spr_G\cong (i_\fg)^*\Groth_G[-r]$ where $i_\fg:\cN_G\hookrightarrow\fg$ is the inclusion and $r=\rank(G)$. So the functor $\Spr_G:\W_G\to\cDb_G(\cN_G)$ is isomorphic to the composition
\begin{equation*}
\W_G \xrightarrow{\Groth_G} \cDb_G(\fg) \xrightarrow{(i_\fg)^{\diamondsuit}} \cDb_G(\cN_G).
\end{equation*}
(Here and below we use the notation $(\cdot)^{\diamondsuit}:=(\cdot)^* [-r]$.)
Using the same principle as in Lemma \ref{lem:whisker}, it suffices to define an isomorphism
\begin{equation*}
\vc{\begin{tikzpicture}[smallcube2]
\compuinit
\node[cubef] at (0.75,0.5) {$?$};
\node (lu) at (0,1) {$\W_G$};
\node (ld) at (0,0) {$\W_L$};
\node (ru) at (1.5,1) {$\cDb_G(\cN_G)$};
\node (rd) at (1.5,0) {$\cDb_L(\cN_L)$};
\draw[->] (lu) -- node[arl] {$\al (i_\fg)^{\diamondsuit} \Groth_G$} (ru);
\draw[<-] (lu) -- node[arr] {} (ld);
\draw[->] (ld) -- node[arr] {$\al (i_{\fl})^{\diamondsuit} \Groth_L$} (rd);
\draw[<-] (ru) -- node[arl] {$\al \IndNderiv{G}{L}$} (rd);
\end{tikzpicture}}
\end{equation*}
and show that it is compatible with transitivity. Consider the diagram:
\begin{equation} \label{eqn:spr-groth}
\vcenter{
\xymatrix@C=2cm@R=14pt{
\W_G  \ar[r]^-{\Groth_G} &\cDb_G(\fg)\ar[r]^-{(i_\fg)^{\diamondsuit}} &\cDb_G(\cN_G) \\
\W_L \ar[u]\ar[r]^-{\Groth_L} & \cDb_L(\fl)\ar[r]^-{(i_\fl)^{\diamondsuit}}\ar[u]_{\Indg{G}{L}} &\cDb_L(\cN_L)\ar[u]_{\IndNderiv{G}{L}}
}
}
\end{equation}
where $\Indg{G}{L}$ is defined as the composition
\begin{equation*}
\xymatrix{
\cDb_L(\fl)\ar[r]^-{\hamma^P_L}&\cDb_P(\fl)\ar[r]^-{(\cdot)^*}&\cDb_P(\fp)\ar[r]^-{(\cdot)_!}&\cDb_P(\fg)\ar[r]^-{\hamma^G_P}&\cDb_G(\fg).
}
\end{equation*}
(Here the morphism $\fp \to \fg$, resp.~$\fp \to \fl$, is the inclusion, resp.~ the projection.)
Note that $\Indg{G}{L}$ has its own transitivity isomorphism
\begin{equation}
\label{eqn:transitivity-Indg}
\Indg{G}{T} \natisom \Indg{G}{L} \circ \Indg{L}{T}
\end{equation}
defined by a diagram analogous to \eqref{eqn:induction-transitivity} where $\cN_H$ is replaced by $\fh$ throughout.

We have an isomorphism for the right-hand square in \eqref{eqn:spr-groth}, given by the following pasting diagram (where $i_{\fp} : \cN_P \to \fp$ is the inclusion):
\begin{equation}
\label{eqn:spr-groth-strip}
\vc{\begin{tikzpicture}[smallcube]
\compuinit
\node[cubef] at (0.55,0.5) {$\mInt$};
\node[cubef] at (1.65,0.5) {$\Comp$};
\node[cubef] at (2.75,0.5) {$\BC$};
\node[cubef] at (3.85,0.5) {$\mInt$};
\node (luu) at (4.4,1) {$\cDb_G(\fg)$};
\node (lu) at (3.3,1) {$\cDb_P(\fg)$};
\node (lm) at (2.2,1) {$\cDb_P(\fp)$};
\node (ld) at (1.1,1) {$\cDb_P(\fl)$};
\node (ldd) at (0,1) {$\cDb_L(\fl)$};
\node (ruu) at (4.4,0) {$\cDb_G(\cN_G)$};
\node (ru) at (3.3,0) {$\cDb_P(\cN_G)$};
\node (rm) at (2.2,0) {$\cDb_P(\cN_P)$};
\node (rd) at (1.1,0) {$\cDb_P(\cN_L)$};
\node (rdd) at (0,0) {$\cDb_L(\cN_L)$};
\draw[<-] (luu) -- node[arr] {{\tiny $\hamma^G_P$}} (lu);
\draw[<-] (lu) -- node[arr] {{\tiny $(\cdot)_!$}} (lm);
\draw[<-] (lm) -- node[arr] {{\tiny $(\cdot)^*$}} (ld);
\draw[<-] (ld) -- node[arr] {{\tiny $\hamma^P_L$}} (ldd);
\draw[->] (luu) -- node[arl] {{\tiny $(i_\fg)^{\diamondsuit}$}} (ruu);
\draw[->] (lu) -- node[arr] {{\tiny $(i_\fg)^{\diamondsuit}$}} (ru);
\draw[->] (lm) -- node[arr] {{\tiny $(i_\fp)^{\diamondsuit}$}} (rm);
\draw[->] (ld) -- node[arr] {{\tiny $(i_\fl)^{\diamondsuit}$}} (rd);
\draw[->] (ldd) -- node[arr] {{\tiny $(i_\fl)^{\diamondsuit}$}} (rdd);
\draw[<-] (ruu) -- node[arl] {{\tiny $\hamma^G_P$}} (ru);
\draw[<-] (ru) -- node[arl] {{\tiny $(m_P)_!$}} (rm);
\draw[<-] (rm) -- node[arl] {{\tiny $(p_P)^*$}} (rd);
\draw[<-] (rd) -- node[arl] {{\tiny $\hamma^P_L$}} (rdd);
\end{tikzpicture}}
\end{equation}
\begin{lem}

Isomorphism \eqref{eqn:spr-groth-strip} is compatible with transitivity in the sense that the following prism is commutative:
\[
\vc{\begin{tikzpicture}[longprism2]
\compuinit
\node[cuber] at (0.5,0.5,0) {\eqrefh{eqn:spr-groth-strip}};
\node[prismdf] at (1,0.5,\tric) {\eqrefh{eqn:composition-induction-Springerderiv}};
\node (rlu) at (0,1,0) {$\cDb_G(\fg)$};
\node (rru) at (1,1,0) {$\cDb_G(\cN_G)$};
\node (fr) at (1,0.5,1) {$\cDb_L(\cN_L)$};
\node (rld) at (0,0,0) {$\cDb_T(\ft)$};
\node (rrd) at (1,0,0) {$\cDb_T(\cN_T)$};
\draw[liner,<-] (rru) -- node[arr,pos=.3] {$\al \IndNderiv{G}{T}$} (rrd);
\draw[->] (rlu) -- node[arl] {$\al (i_\fg)^{\diamondsuit}$} (rru);
\draw[<-] (rlu) -- node[arr] {$\al \Indg{G}{T}$} (rld);
\draw[<-] (rru) -- node[arl] {$\al \IndNderiv{G}{L}$} (fr);
\draw[->] (rld) -- node[arr] {$\al (i_\ft)^{\diamondsuit}$} (rrd);
\draw[<-] (fr) -- node[arl] {$\al \IndNderiv{L}{T}$} (rrd);
\node[prismtf] at (0.5,0.75,0.5) {\eqrefh{eqn:spr-groth-strip}};
\node[prismbf] at (0.5,0.25,0.5) {\eqrefh{eqn:spr-groth-strip}};
\node[prismlf] at (0,0.5,\tric) {\eqrefh{eqn:transitivity-Indg}};
\node (fl) at (0,0.5,1) {$\cDb_L(\fl)$};
\draw[<-] (rlu) -- node[arl] {$\al \Indg{G}{L}$} (fl);
\draw[<-] (fl) -- node[arl,pos=.3] {$\al \Indg{L}{T}$} (rld);
\draw[->] (fl)  -- node[arr,pos=.3] {$\al (i_\fl)^{\diamondsuit}$} (fr);
\end{tikzpicture}}
\]

\end{lem}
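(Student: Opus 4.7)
The plan is to mimic the strategy used in the proofs of Lemma~\ref{lem:transitivity-ResN-IndN} and Lemma~\ref{lem:forgetfulness}, namely to decompose the prism into small cubes and prisms whose commutativity follows from the elementary lemmas in Appendix~\ref{sect:lemmas}, and then invoke the gluing principle of \S\ref{subsect:gluing}.

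First, I would recall that the pasting diagram \eqref{eqn:spr-groth-strip} is a strip of four cells of types $\mInt,\,\Comp,\,\BC,\,\mInt$, corresponding to the four steps of the factorizations
\[
\Indg{G}{L}=\hamma^G_P\circ(\cdot)_!\circ(\cdot)^*\circ\hamma^P_L,\qquad
\IndNderiv{G}{L}=\hamma^G_P\circ(m_P)_!\circ(p_P)^*\circ\hamma^P_L,
\]
connected vertically by the functors $(i_\fg)^\diamondsuit, (i_\fp)^\diamondsuit, (i_\fl)^\diamondsuit$, using the cartesian squares
\[
\vc{\begin{tikzpicture}[vsmallcube]
\compuinit
\node[cart] at (0.5,0.5) {};
\node (lu) at (0,1) {$\cN_P$}; \node (ru) at (1,1) {$\fp$};
\node (ld) at (0,0) {$\cN_G$}; \node (rd) at (1,0) {$\fg$};
\draw[->] (lu) -- (ru); \draw[->] (lu) -- (ld);
\draw[->] (ru) -- (rd); \draw[->] (ld) -- (rd);
\end{tikzpicture}}
\qquad\text{and}\qquad
\vc{\begin{tikzpicture}[vsmallcube]
\compuinit
\node[cart] at (0.5,0.5) {};
\node (lu) at (0,1) {$\cN_P$}; \node (ru) at (1,1) {$\fp$};
\node (ld) at (0,0) {$\cN_L$}; \node (rd) at (1,0) {$\fl$};
\draw[->] (lu) -- (ru); \draw[->] (lu) -- (ld);
\draw[->] (ru) -- (rd); \draw[->] (ld) -- (rd);
\end{tikzpicture}}
\]
(and the analogues with $B,C,T$ in place of $P,L$). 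Similarly, \eqref{eqn:transitivity-Indg} and \eqref{eqn:composition-induction-Springerderiv} are built from the same triangular layout as \eqref{eqn:induction-transitivity}, each cell being one of $\mTr$, $\mInt$, $\Comp$, or $\BC$.

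The idea is then that the prism to be shown commutative decomposes, along the internal structure of \eqref{eqn:spr-groth-strip} and \eqref{eqn:induction-transitivity}, into a collection of elementary cubes and prisms: for each cell of \eqref{eqn:induction-transitivity} (drawn twice, once for the $\fg/\fl/\fp$-layer and once for the $\cN_G/\cN_L/\cN_P$-layer), there is one cube of the big diagram connecting the two copies via the appropriate $(i_?)^\diamondsuit$ functor, while the "folding" cells of type $\mTr$ in \eqref{eqn:induction-transitivity} contribute prisms rather than cubes. Each of these cubes and prisms is built from only two basic ingredients glued together through a vertical $(i_?)^\diamondsuit$, and hence its commutativity is one of the standard compatibilities stated in Appendix~\ref{sect:lemmas}: those involving $(\cdot)^*$ and $\hamma$ (for the $\mInt$ cells), those involving $(\cdot)^*$ and $(\cdot)_!$ (for the $\Comp$ cells), those expressing associativity of base change (for the $\BC$ cells), and those expressing transitivity of $\hamma$ composition (for the $\mTr$ cells). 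In each case the shift $[-r]$ hidden in $(\cdot)^\diamondsuit$ is harmless because $\rank G=\rank L=\rank T=r$, so all three vertical legs carry the same shift.

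The main obstacle I expect is purely combinatorial: unlike the proof of Lemma~\ref{lem:transitivity-ResN-IndN}, where the vertical edges were all given by a single functor $\Yon$, here the vertical edges involve three different functors $(i_\fg)^\diamondsuit,(i_\fp)^\diamondsuit,(i_\fl)^\diamondsuit$ (plus their $\fb,\fc,\ft$ analogues), so bookkeeping of which cartesian squares appear where is delicate. Once the correct decomposition is written down, however, every constituent piece is a direct application of one of Lemmas \ref{lem:^!composition^!}--\ref{lem:basechangefor} and the ones dealing with $\hamma^H_K$ and $\gamma$-adjunction cited in the proof of Lemma~\ref{lem:transitivity-ResN-IndN}; the conclusion then follows by the gluing principle, exactly as in the proofs given earlier in the paper.
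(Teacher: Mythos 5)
Your proposal is correct and follows essentially the same route as the paper: the printed proof is precisely to decompose the prism along the cell structure of \eqref{eqn:spr-groth-strip} and \eqref{eqn:transitivity-Indg}/\eqref{eqn:composition-induction-Springerderiv} into constituent cubes and prisms, each commutative by Lemmas \ref{lem:^*composition^*}, \ref{lem:_!composition^*}, \ref{lem:_!^*basechange^*}, \ref{lem:Gamma^*Gamma}, \ref{lem:^*compositionGamma} and \ref{lem:basechangeGamma}, and to conclude by the gluing principle. The only blemish is your closing citation, which points at the adjunction lemmas used for Lemma~\ref{lem:transitivity-ResN-IndN}; the relevant $\hamma$-compatibilities here are instead the $(\cdot)^*$ and base-change ones (\ref{lem:Gamma^*Gamma}, \ref{lem:^*compositionGamma}, \ref{lem:basechangeGamma}), which you do identify correctly in the body of your argument.
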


\begin{proof}
This prism is obtained by gluing together cubes and prisms that are commutative by Lemmas \ref{lem:^*composition^*}, \ref{lem:_!composition^*}, \ref{lem:_!^*basechange^*}, \ref{lem:Gamma^*Gamma}, \ref{lem:^*compositionGamma} and \ref{lem:basechangeGamma}.
\end{proof}

All that remains is to define an isomorphism for the left-hand square in \eqref{eqn:spr-groth} and show its compatibility with transitivity, i.e.~to define a $\W_L$-equivariant isomorphism 
\begin{equation}
\label{eqn:desired-isom}
\Indg{G}{L}(\Groth_L)\simto\Groth_G
\end{equation}
such that the following square of isomorphisms in $\cDb_G(\fg)$ commutes:
\begin{equation} \label{eqn:desired-square}
\vcenter{
\xymatrix@C=8pt@R=12pt{
\Indg{G}{L}(\Groth_L)\ar[rrrr]^-{\sim}\ar@{<-}[d]_-{\wr}&&&&\Groth_G\ar@{<-}[d]^-{\wr}\\
\Indg{G}{L}(\Indg{L}{T}(\Groth_T))\ar[rrrr]^-{\sim}&&&&\Indg{G}{T}(\Groth_T)
}
}
\end{equation}

\subsection{Another induction functor}
 
Let $\Indgtilde{G}{L}$ be the composition:
{\small
\[
\cDb_L(L\times^C\fc) \xrightarrow{\hamma^P_L} \cDb_P(L\times^C\fc) \xrightarrow{(\cdot)^*} \cDb_P(P\times^B\fb) \xrightarrow{(\cdot)_!} \cDb_P(G\times^B\fb) \\
\xrightarrow{\hamma^G_P} \cDb_G(G\times^B\fb).
\]
}(Here, the morphism $P \times^B \fb \to L \times^C \fc \cong P \times^B \fc$ is induced by the projection $\fb \to \fc$, the morphism $P \times^B \fb \to G \times^B \fb$ is the natural inclusion, and $P$ acts on $L\times^C\fc$ via the projection $P\to L$.) This functor has its own transitivity isomorphism, defined by the following pasting diagram (where all morphisms are the natural ones):
\begin{equation}
\label{eqn:indgtilde-transitivity}
\vc{\begin{tikzpicture}[stdtriangle]
\compuinit
\node[lttricelld] at (0.5+0.5*\tric, 3.5+0.5*\tric) {\tiny$\mTr$};
\node[lttricelld] at (1.5+0.5*\tric, 2.5+0.5*\tric) {\tiny$\Comp$};
\node[lttricelld] at (2.5+0.5*\tric, 1.5+0.5*\tric) {\tiny$\Comp$};
\node[lttricelld] at (3.5+0.5*\tric, 0.5+0.5*\tric) {\tiny$\mTr$};
\node[lttricelld] at (3.5+0.5*\tric, 3.5+0.5*\tric) {\tiny$\mTr$};
\node[rttricellu] at (3.5-0.5*\tric, 3.5-0.5*\tric) {\tiny$\mTr$};
\node[cubef] at (1.5,3.5) {$\mInt$};
\node[cubef] at (2.5,3.5) {$\mInt$};
\node[cubef] at (2.5,2.5) {$\BC$};
\node[cubef] at (3.5,2.5) {$\mInt$};
\node[cubef] at (3.5,1.5) {$\mInt$};
\node (lluu) at (0,4) {{\tiny $\cDb_G(G \times^B \fb)$}};
\node (luu) at (1,4) {{\tiny $\cDb_P(G \times^B \fb)$}};
\node (muu) at (2,4) {{\tiny $\cDb_P(P \times^B \fb)$}};
\node (ruu) at (3,4) {{\tiny $\cDb_P(L \times^C \fc)$}};
\node (rruu) at (4,4) {{\tiny $\cDb_L(L \times^C \fc)$}};
\node (lu) at (1,3) {{\tiny $\cDb_B(G \times^B \fb)$}};
\node (mu) at (2,3) {{\tiny $\cDb_B(P \times^B \fb)$}};
\node (ru) at (3,3) {{\tiny $\cDb_B(L \times^C \fc)$}};
\node (rru) at (4,3) {{\tiny $\cDb_C(L \times^C \fc)$}};
\node (mm) at (2,2) {{\tiny $\cDb_B(B \times^B \fb)$}};
\node (rm) at (3,2) {{\tiny $\cDb_B(C \times^C \fc)$}};
\node (rrm) at (4,2) {{\tiny $\cDb_C(C \times^C \fc)$}};
\node (rd) at (3,1) {{\tiny $\cDb_B(T \times^T \ft)$}};
\node (rrd) at (4,1) {{\tiny $\cDb_C(T \times^T \ft)$}};
\node (rrdd) at (4,0) {{\tiny $\cDb_T(T \times^T \ft)$}};
\draw[<-] (lluu) -- node[arl] {{\tiny $\hamma^G_P$}} (luu);
\draw[<-] (luu) -- node[arl] {{\tiny $(\cdot)_!$}} (muu);
\draw[<-] (muu) -- node[arl] {{\tiny $(\cdot)^*$}} (ruu);
\draw[<-] (ruu) -- node[arl] {{\tiny $\hamma^P_L$}} (rruu);
\draw[<-] (lu) -- node[arl] {{\tiny $(\cdot)_!$}} (mu);
\draw[<-] (mu) -- node[arl] {{\tiny $(\cdot)^*$}} (ru);
\draw[<-] (ru) -- node[arl] {{\tiny $\hamma^B_C$}} (rru);
\draw[<-] (mm) -- node[arl] {{\tiny $(\cdot)^*$}} (rm);
\draw[<-] (rm) -- node[arl] {{\tiny $\hamma^B_C$}} (rrm);
\draw[<-] (rd) -- node[arl] {{\tiny $\hamma^B_C$}} (rrd);
\draw[<-] (luu) -- node[arl] {{\tiny $\hamma^P_B$}} (lu);
\draw[<-] (muu) -- node[arl] {{\tiny $\hamma^P_B$}} (mu);
\draw[<-] (ruu) -- node[arl] {{\tiny $\hamma^P_B$}} (ru);
\draw[<-] (rruu) -- node[arl] {{\tiny $\hamma^L_C$}} (rru);
\draw[<-] (mu) -- node[arl] {{\tiny $(\cdot)_!$}} (mm);
\draw[<-] (ru) -- node[arl] {{\tiny $(\cdot)_!$}} (rm);
\draw[<-] (rru) -- node[arl] {{\tiny $(\cdot)_!$}} (rrm);
\draw[<-] (rm) -- node[arl] {{\tiny $(\cdot)^*$}} (rd);
\draw[<-] (rrm) -- node[arl] {{\tiny $(\cdot)^*$}} (rrd);
\draw[<-] (rrd) -- node[arl] {{\tiny $\hamma^C_T$}} (rrdd);
\draw[<-] (lluu) -- node[arr] {{\tiny $\hamma^G_B$}} (lu);
\draw[<-] (lu) -- node[arr] {{\tiny $(\cdot)_!$}} (mm);
\draw[<-] (mm) -- node[arr] {{\tiny $(\cdot)^*$}} (rd);
\draw[<-] (rd) -- node[arr] {{\tiny $\hamma^B_T$}} (rrdd);
\draw[<-] (ruu) -- node[arl] {{\tiny $\hamma^P_C$}} (rru);
\end{tikzpicture}}
\end{equation}

We have an isomorphism $(\mu_\fg)_!\circ\Indgtilde{G}{L}\natisom\Indg{G}{L}\circ(\mu_\fl)_!$, defined by 
\begin{equation}
\label{eqn:indgtilde-strip}
\vc{\begin{tikzpicture}[smallcube]
\compuinit
\node[cubef] at (0.55,0.5) {$\mInt$};
\node[cubef] at (1.65,0.5) {$\BC$};
\node[cubef] at (2.75,0.5) {$\Comp$};
\node[cubef] at (3.85,0.5) {$\mInt$};
\node (luu) at (4.4,1) {{\tiny $\cDb_G(G \times^B \fb)$}};
\node (lu) at (3.3,1) {{\tiny $\cDb_P(G \times^B \fb)$}};
\node (lm) at (2.2,1) {{\tiny $\cDb_P(P \times^B \fb)$}};
\node (ld) at (1.1,1) {{\tiny $\cDb_P(L \times^C \fc)$}};
\node (ldd) at (0,1) {{\tiny $\cDb_L(L \times^C \fc)$}};
\node (ruu) at (4.4,0) {{\tiny $\cDb_G(\fg)$}};
\node (ru) at (3.3,0) {{\tiny $\cDb_P(\fg)$}};
\node (rm) at (2.2,0) {{\tiny $\cDb_P(\fp)$}};
\node (rd) at (1.1,0) {{\tiny $\cDb_P(\fl)$}};
\node (rdd) at (0,0) {{\tiny $\cDb_L(\fl)$}};
\draw[<-] (luu) -- node[arr] {{\tiny $\hamma^G_P$}} (lu);
\draw[<-] (lu) -- node[arr] {{\tiny $(\cdot)_!$}} (lm);
\draw[<-] (lm) -- node[arr] {{\tiny $(\cdot)^*$}} (ld);
\draw[<-] (ld) -- node[arr] {{\tiny $\hamma^P_L$}} (ldd);
\draw[->] (luu) -- node[arl] {{\tiny $(\mu_\fg)_!$}} (ruu);
\draw[->] (lu) -- node[arl] {{\tiny $(\mu_\fg)_!$}} (ru);
\draw[->] (lm) -- node[arl] {{\tiny $(\mu_\fp)_!$}} (rm);
\draw[->] (ld) -- node[arl] {{\tiny $(\mu_\fl)_!$}} (rd);
\draw[->] (ldd) -- node[arr] {{\tiny $(\mu_\fl)_!$}} (rdd);
\draw[<-] (ruu) -- node[arl] {{\tiny $\hamma^G_P$}} (ru);
\draw[<-] (ru) -- node[arl] {{\tiny $(\cdot)_!$}} (rm);
\draw[<-] (rm) -- node[arl] {{\tiny $(\cdot)^*$}} (rd);
\draw[<-] (rd) -- node[arl] {{\tiny $\hamma^P_L$}} (rdd);
\end{tikzpicture}}
\end{equation}
(Here, $\mu_\fp : P \times^B \fb \to \fp$ is the morphism induced by the adjoint action of $P$ on $\fp$.)

\begin{lem}
\label{lem:indgtilde-prism}
Isomorphism \eqref{eqn:indgtilde-strip} is compatible with transitivity in the sense that the following prism is commutative:
\[
\vc{\begin{tikzpicture}[longprism2]
\compuinit
\node[cuber] at (0.5,0.5,0) {\eqrefh{eqn:indgtilde-strip}};
\node[prismdf] at (1,0.5,\tric) {\eqrefh{eqn:transitivity-Indg}};
\node (rlu) at (0,1,0) {$\cDb_G(G \times^B \fb)$};
\node (rru) at (1,1,0) {$\cDb_G(\fg)$};
\node (fr) at (1,0.5,1) {$\cDb_L(\fl)$};
\node (rld) at (0,0,0) {$\cDb_T(T \times^T \ft)$};
\node (rrd) at (1,0,0) {$\cDb_T(\ft)$};
\draw[liner,<-] (rru) -- node[arr,pos=.3] {$\al \Indg{G}{T}$} (rrd);
\draw[->] (rlu) -- node[arl] {$\al (\mu_\fg)_!$} (rru);
\draw[<-] (rlu) -- node[arr] {$\al \Indgtilde{G}{T}$} (rld);
\draw[<-] (rru) -- node[arl] {$\al \Indg{G}{L}$} (fr);
\draw[->] (rld) -- node[arr] {$\al (\mu_\ft)_!$} (rrd);
\draw[<-] (fr) -- node[arl] {$\al \Indg{L}{T}$} (rrd);
\node[prismtf] at (0.5,0.75,0.5) {\eqrefh{eqn:indgtilde-strip}};
\node[prismbf] at (0.5,0.25,0.5) {\eqrefh{eqn:indgtilde-strip}};
\node[prismlf] at (0,0.5,\tric) {\eqrefh{eqn:indgtilde-transitivity}};
\node (fl) at (0,0.5,1) {$\cDb_L(L \times^C \fc)$};
\draw[<-] (rlu) -- node[arl] {$\al \Indgtilde{G}{L}$} (fl);
\draw[<-] (fl) -- node[arl,pos=.3] {$\al \Indgtilde{L}{T}$} (rld);
\draw[->] (fl)  -- node[arr,pos=.3] {$\al (\mu_\fl)_!$} (fr);
\end{tikzpicture}}
\]
\end{lem}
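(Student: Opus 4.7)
The plan is to follow the template established in the earlier proofs of this section, especially the proof of the preceding lemma (compatibility of \eqref{eqn:spr-groth-strip} with transitivity). I will realize the prism in question as the boundary of a three-dimensional labelled 2-computad whose constituent cubes and prisms are each commutative by one of the basic commutativity lemmas in Appendix \ref{sect:lemmas}, and then appeal to the gluing principle of \S\ref{subsect:gluing}.

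First I would unpack the combinatorial structure. Both $\Indgtilde{G}{L}$ and $\Indg{G}{L}$ are four-fold compositions of functors of the types $\hamma^H_K$, $(\cdot)_!$, $(\cdot)^*$, and their transitivity isomorphisms \eqref{eqn:indgtilde-transitivity} and \eqref{eqn:transitivity-Indg} are constructed from combinatorially identical pasting diagrams on $5\times 5$ arrays of equivariant derived categories, indexed by the chain $T\subset B\subset P\subset G$ on one axis and $T\subset C\subset L$ on the other. The isomorphism \eqref{eqn:indgtilde-strip} is itself a horizontal composite of four 2-cells of types $\mInt$, $\BC$, $\Comp$, $\mInt$, each connecting one stage of $\Indgtilde{G}{L}$ to the corresponding stage of $\Indg{G}{L}$ via the appropriate proper pushforward $(\mu_?)_!$.

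Next I would construct the 3-dimensional computad. Its shape is obtained by placing two parallel copies of the pasting diagram \eqref{eqn:indgtilde-transitivity} --- one for $\Indgtilde{}{}$ (target categories of the form $\cDb_{?}(H\times^K \fk)$) and one for $\Indg{}{}$ (target categories of the form $\cDb_{?}(\fk)$) --- and joining corresponding vertices by the appropriate $(\mu_?)_!$ arrows, where $?$ ranges over the relevant parabolic/Borel Lie algebras $\fg$, $\fp$, $\fl$, $\fb$, $\fc$, $\ft$. Each elementary 2-dimensional face of the resulting grid is one of the following types: a square relating $(\mu_?)_!$ to another $(\cdot)_!$ (commutative by Lemma \ref{lem:_!composition^*} or its composition analogue); a base change square between $(\mu_?)_!$ and $(\cdot)^*$ (Lemma \ref{lem:_!^*basechange^*}); a square relating $(\mu_?)_!$ to an intertwining functor $\hamma^H_K$ (the $(\cdot)_!$ analogue of Lemma \ref{lem:^*compositionGamma}, together with the base change analogue of Lemma \ref{lem:basechangeGamma}); or a triangular face expressing transitivity of one of $(\cdot)_!$, $(\cdot)^*$, or $\hamma$ (Lemma \ref{lem:^*composition^*} and the analogues). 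The required cartesian squares are forced: they are either squares of inclusions of parabolic/Borel subalgebras inside $\fg$, or the obvious base-change squares for Grothendieck--Springer-type moment maps $\mu_?:H\times^K \fk\to\fk$, whose equivariance and base change properties are standard.

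The main obstacle is purely combinatorial bookkeeping. The transitivity diagram \eqref{eqn:indgtilde-transitivity} already contains on the order of fifteen elementary 2-cells, and doubling it to build the full prism produces a 3d computad with many dozens of cubes and triangular prisms; identifying the correct elementary lemma for each face, and verifying that the boundary of the completed computad is precisely the prism in the statement, requires careful indexing. However, there is no conceptual difficulty: all interior 2-cells are instances of lemmas already present in Appendix \ref{sect:lemmas}, and the overall strategy is entirely parallel to that of the lemma immediately preceding.
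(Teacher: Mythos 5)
Your approach is the same as the paper's: the prism is, by definition, the product of the transitivity pasting diagram \eqref{eqn:indgtilde-transitivity} with the $(\mu_\bullet)_!$ direction, decomposed into elementary cubes and prisms each covered by an appendix lemma, and then assembled by the gluing principle. The only correction needed is in which variants you invoke: since the transverse arrows $(\mu_\fg)_!,(\mu_\fp)_!,(\mu_\fl)_!,\dots$ are \emph{pushforwards}, the relevant cells are the $(\cdot)_!$ versions --- Lemmas \ref{lem:_!composition_!}, \ref{lem:^*composition_!}, \ref{lem:_!^*basechange_!}, \ref{lem:Gamma_!Gamma}, \ref{lem:_!compositionGamma} and \ref{lem:basechangeGamma} --- rather than Lemmas \ref{lem:_!composition^*} and \ref{lem:_!^*basechange^*}, which apply when the transverse direction is a pullback.
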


\begin{proof}
By definition, this prism is obtained by gluing together cubes and prisms that are commutative by Lemmas \ref{lem:_!composition_!}, \ref{lem:^*composition_!},  \ref{lem:_!^*basechange_!}, \ref{lem:Gamma_!Gamma}, \ref{lem:_!compositionGamma} and \ref{lem:basechangeGamma}. All the required cartesian squares are easy.
\end{proof}

\subsection{Definition of \eqref{eqn:desired-isom} and commutativity of \eqref{eqn:desired-square}}

Neglecting the $\W_G$-action for now, we may think of $\Groth_G$ as the composition
\[
\bb \xrightarrow{\ubk[\dim\fg]} \cDb_G(G\times^B\fb) \xrightarrow{(\mu_\fg)_!} \cDb_G(\fg)
\]
where $\bb$ is the trivial group regarded as a one-object category. So to define an isomorphism $\Indg{G}{L}(\Groth_L)\simto\Groth_G$, we need to consider the diagram:
\begin{equation} \label{eqn:ind-groth}
\vcenter{
\xymatrix@C=2cm@R=15pt{
\bb\ar[r]^-{\ubk[\dim\fg]}\ar[dr]_-{\ubk[\dim\fl]}&\cDb_G(G\times^B\fb)\ar[r]^-{(\mu_\fg)_!}&\cDb_G(\fg)\\
&\cDb_L(L\times^C\fc)\ar[r]^-{(\mu_\fl)_!}\ar[u]^-{\Indgtilde{G}{L}}&\cDb_L(\fl)\ar[u]^-{\Indg{G}{L}}
}
}
\end{equation}
We have just defined an isomorphism for the square in \eqref{eqn:ind-groth}. An isomorphism for the triangle
may be defined by the following pasting diagram (see \S\ref{sss:constantsheaf-res}, \S\ref{sss:constantsheaf-fi} and \S\ref{sss:const-induction-equiv} for the notation):
\begin{equation} \label{eqn:bb-triangle}
\vc{\begin{tikzpicture}[stdtriangle2]
\compuinit
\node[lttricelld] at (1,0.7) {{\tiny $\CInt$}};
\node[rttricelld] at (0.3,1.7) {{\tiny $\Rel$}};
\node[lttricellu] at (1,1.3) {{\tiny $\Cnst$}};
\node (ldd) at (1.4,0) {{\tiny $\cDb_L(L \times^C \fc)$}};
\node (rd) at (1.4,1) {{\tiny $\cDb_P(L \times^C \fc)$}};
\node (lm) at (0,1) {{\tiny $\bb$}};
\node (ru) at (1.4,2) {{\tiny $\cDb_P(P \times^B \fb)$}};
\node (luu) at (0,2) {{\tiny $\cDb_G(G \times^B \fb)$}};
\draw[->] (ldd) -- node[arr] {$\al \hamma_L^P$} (rd);
\draw[->] (rd) -- node[arr] {$\al (\cdot)^*$} (ru);
\draw[->] (ru) -- node[arr] {$\al \hamma_P^G (\cdot)_!$} (luu);
\draw[->] (lm) -- node[arr] {$\al \ubk[\dim \fl]$} (ldd);
\draw[->] (lm) -- node[arl] {$\al \ubk[\dim \fl]$} (rd);
\draw[->] (lm) -- node[arl] {$\al \ubk[\dim \fl]$} (ru);
\draw[->] (lm) -- node[arl] {$\al \ubk[\dim \fg]$} (luu);
\end{tikzpicture}}
\end{equation}

\begin{lem}
\label{lem:constant-tetrahedron}
Isomorphism \eqref{eqn:bb-triangle} is compatible with transitivity in the sense that the following tetrahedron is commutative:
\[
\vc{\begin{tikzpicture}[stdtetr]
\compuinit
\node[tetrlr] at ({-\tric/2},0.5,\tric) {\eqrefh{eqn:bb-triangle}};
\node[tetrdr] at ({\tric/2},0.5,\tric) {\eqrefh{eqn:indgtilde-transitivity}};
\node (ru) at (0,1,0) {{\tiny $\cDb_G(G \times^B \fb)$}};
\node (fl) at (-0.5,0.5,1) {{\tiny $\bb$}};
\node (fr) at (0.5,0.5,1) {{\tiny $\cDb_L(L \times^C \fc)$}};
\node (rd) at (0,0,0) {{\tiny $\cDb_T(T \times^T \ft)$}};
\draw[liner,<-] (ru) -- node[arl,pos=.6] {$\al \Indgtilde{G}{T}$} (rd);
\node[tetrtf] at (0, {(1+\tric)/2}, {1-\tric}) {\eqrefh{eqn:bb-triangle}};
\node[tetrbf] at (0, {(1-\tric)/2}, {1-\tric}) {\eqrefh{eqn:bb-triangle}};
\draw[->] (fl) -- node[arl] {$\al \ubk[\dim\fg]$} (ru);
\draw[->] (fl) -- node[arr] {$\al \ubk[\dim\ft]$} (rd);
\draw[<-] (ru) -- node[arl] {$\al \Indgtilde{G}{L}$} (fr);
\draw[->] (rd) -- node[arr] {$\al \Indgtilde{L}{T}$} (fr);
\draw[->] (fl) -- node[arl] {$\al \ubk[\dim \fl]$} (fr);
\end{tikzpicture}}
\]
\end{lem}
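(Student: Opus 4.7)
The plan is to expand each face of the tetrahedron into its defining pasting diagram and then apply the gluing principle of \S\ref{subsect:gluing} to decompose the claim into a collection of elementary commutative 2-cells.

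Each of the three copies of \eqref{eqn:bb-triangle} (for the pairs $(G,L)$, $(L,T)$, and $(G,T)$) is itself a pasting of three triangular cells labelled $\CInt$, $\Cnst$, and $\Rel$; these express respectively the compatibility of the constant-sheaf functor $\ubk[d]$ with $\hamma$, with $(\cdot)^*$, and with $(\cdot)_!$. Meanwhile \eqref{eqn:indgtilde-transitivity} decomposes as the large grid built from small $\mInt$, $\BC$, and $\mTr$ cells, exactly parallel to \eqref{eqn:induction-transitivity} but one step up the tower (replacing $\fg,\fp,\fb$ etc.\ by $G\times^B\fb$, $P\times^B\fb$, etc.).

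After these expansions, the tetrahedron becomes a three-dimensional labelled $2$-computad whose interior can be tiled by two types of elementary pieces: (i) small prisms and sub-tetrahedra that involve only $\ubk[d]$ together with two of the three compatibilities $\CInt$, $\Cnst$, $\Rel$, which commute by the transitivity lemmas for the constant sheaf from \S\ref{sss:constantsheaf-res}, \S\ref{sss:constantsheaf-fi}, \S\ref{sss:const-induction-equiv}; and (ii) small cubes in which one face is a $\CInt$, $\Cnst$, or $\Rel$ triangle and the opposite face is an $\mInt$, $\BC$, or $\mTr$ cell, which commute by the standard compatibilities between the constant sheaf and base change/composition proved in Appendix~\ref{sect:lemmas}. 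All the cartesian squares required for these elementary pieces arise from the obvious maps among $H'\times^{K'}\fh''$ for nested $G\supset P\supset B$ and $L\supset C$ and their Lie algebras, and are straightforward to check.

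The main obstacle is purely combinatorial: organising the three-dimensional expansion so that the elementary tiles fit together without gaps or overlaps. No genuinely new input is required once this is done, since the content of the statement is the formal fact that $\ubk[d]$ is ``functorial in all directions at once'' with respect to $\hamma$, $(\cdot)_!$, and $(\cdot)^*$; this is precisely the kind of coherence the $2$-computad formalism of Appendix~\ref{sect:cubes} is designed to handle, and the argument is of the same character as the proofs of Lemmas~\ref{lem:transitivity-ResN-IndN} and~\ref{lem:indgtilde-prism} above.
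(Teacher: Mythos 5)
Your proposal is correct and is essentially the paper's own argument: the paper's proof is precisely to observe that the tetrahedron is obtained by gluing together elementary polyhedra each of which pairs a constant-sheaf compatibility cell ($\CInt$, $\Cnst$, or $\Rel$) with a cell of \eqref{eqn:indgtilde-transitivity}, and to cite the corresponding appendix lemmas (Lemmas~\ref{lem:^*compositionconstant}, \ref{lem:Gammacompositionconstant}, \ref{lem:Gamma^*constant}, \ref{lem:Gamma_!compositionconstant}, \ref{lem:Gamma_!Gammaconstant}, \ref{lem:Gamma_!^*constant}). The only cosmetic difference is that the elementary pieces are tetrahedra and pyramids rather than ``cubes with a triangular face,'' and $\Rel$ expresses compatibility with the induction equivalence $\hamma^G_P(\cdot)_!$ rather than with $(\cdot)_!$ alone.
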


\begin{proof}
By definition, this tetrahedron is obtained by gluing together things that are commutative by Lemmas \ref{lem:^*compositionconstant}, \ref{lem:Gammacompositionconstant}, \ref{lem:Gamma^*constant}, \ref{lem:Gamma_!compositionconstant},  \ref{lem:Gamma_!Gammaconstant} and \ref{lem:Gamma_!^*constant}.
\end{proof}

The diagram \eqref{eqn:ind-groth} is now complete, so we have our isomorphism $\Indg{G}{L}(\Groth_L)\simto\Groth_G$. Gluing together the prism in Lemma \ref{lem:indgtilde-prism} and the tetrahedron in Lemma \ref{lem:constant-tetrahedron}, we obtain a tetrahedron whose commutativity means exactly that diagram \eqref{eqn:desired-square} commutes. At this point, 
all that remains is to prove that our isomorphism is $\W_L$-equivariant.

\subsection{$\W_L$-equivariance}

Recall that $j_\fg^*:\End(\Groth_G)\to\End(j_\fg^*\Groth_G)$ is injective (and even an isomorphism). So it suffices to prove that the induced isomorphism $j_\fg^*\Indg{G}{L}(\Groth_L)\simto j_\fg^*\Groth_G$ is $\W_L$-equivariant. By base change, we have 
\begin{equation}
\label{eqn:Groth-rs}
j_\fg^*\Groth_G \simto (\mu_\fg^\rs)_!\ubk[\dim\fg],
\end{equation}
where $\mu_\fg^\rs:G\times^B(\fb\cap\fg^\rs)\to\fg^\rs$ denotes the restriction of $\mu_\fg$ to $\mu_\fg^{-1}(\fg^\rs)$. It is well known that $\mu_\fg^\rs$ is a Galois covering with group $\W_G$, so $(\mu_\fg^\rs)_!\ubk$ is isomorphic to a rank-$|\W_G|$ local system on $\fg^\rs$, and carries a natural $\W_G$-action (see e.g.~\S\ref{ss:equivariance-finite}). By definition of the $\W_G$-action on $\Groth_G$, isomorphism \eqref{eqn:Groth-rs} is $\W_G$-equivariant. 

Define a functor ${}^\rs\Indg{G}{L}:\cDb_L(\fl\cap\fg^\rs)\to\cDb_G(\fg^\rs)$ as the composition
\begin{equation*}
\xymatrix{
\cDb_L(\fl\cap\fg^\rs)\ar[r]^-{\hamma^P_L}&\cDb_P(\fl\cap\fg^\rs)\ar[r]^-{(\cdot)^*}&\cDb_P(\fp\cap\fg^\rs)\ar[r]^-{(\cdot)_!}&\cDb_P(\fg^\rs)\ar[r]^-{\hamma^G_P}&\cDb_G(\fg^\rs).
}
\end{equation*}
Note that $\fl\cap\fg^\rs$ is an open subset of $\fl^\rs$. Let $j_\fl'$ denote the inclusion of $\fl\cap\fg^\rs$ in $\fl$, and $\mu_\fl^{\rs,\prime}$ the restriction of $\mu_\fl$ to $\mu_\fl^{-1}(\fl\cap\fg^\rs)$. 
We have an isomorphism $j_\fg^*\circ\Indg{G}{L}\natisom{}^\rs\Indg{G}{L}\circ(j_\fl')^*$, defined by the following pasting diagram (where, $j_{\fp}' : \fp \cap \fg^{\rs} \hookrightarrow \fp$ is the inclusion):
\begin{equation}
\label{eqn:indgrs-strip}
\vc{\begin{tikzpicture}[smallcube]
\compuinit
\node[cubef] at (0.55,0.5) {$\mInt$};
\node[cubef] at (1.65,0.5) {$\Comp$};
\node[cubef] at (2.75,0.5) {$\BC$};
\node[cubef] at (3.85,0.5) {$\mInt$};
\node (luu) at (4.4,1) {$\cDb_G(\fg)$};
\node (lu) at (3.3,1) {$\cDb_P(\fg)$};
\node (lm) at (2.2,1) {$\cDb_P(\fp)$};
\node (ld) at (1.1,1) {$\cDb_P(\fl)$};
\node (ldd) at (0,1) {$\cDb_L(\fl)$};
\node (ruu) at (4.4,0) {$\cDb_G(\fg^{\rs})$};
\node (ru) at (3.3,0) {$\cDb_P(\fg^{\rs})$};
\node (rm) at (2.2,0) {$\cDb_P(\fp \cap \fg^{\rs})$};
\node (rd) at (1.1,0) {$\cDb_P(\fl \cap \fg^{\rs})$};
\node (rdd) at (0,0) {$\cDb_L(\fl \cap \fg^{\rs})$};
\draw[<-] (luu) -- node[arr] {{\tiny $\hamma^G_P$}} (lu);
\draw[<-] (lu) -- node[arr] {{\tiny $(\cdot)_!$}} (lm);
\draw[<-] (lm) -- node[arr] {{\tiny $(\cdot)^*$}} (ld);
\draw[<-] (ld) -- node[arr] {{\tiny $\hamma^P_L$}} (ldd);
\draw[->] (luu) -- node[arl] {{\tiny $(j_\fg)^*$}} (ruu);
\draw[->] (lu) -- node[arl] {{\tiny $(j_\fg)^*$}} (ru);
\draw[->] (lm) -- node[arl] {{\tiny $(j_\fp')^*$}} (rm);
\draw[->] (ld) -- node[arl] {{\tiny $(j_\fl')^*$}} (rd);
\draw[->] (ldd) -- node[arr] {{\tiny $(j_\fl')^*$}} (rdd);
\draw[<-] (ruu) -- node[arl] {{\tiny $\hamma^G_P$}} (ru);
\draw[<-] (ru) -- node[arl] {{\tiny $(\cdot)_!$}} (rm);
\draw[<-] (rm) -- node[arl] {{\tiny $(\cdot)^*$}} (rd);
\draw[<-] (rd) -- node[arl] {{\tiny $\hamma^P_L$}} (rdd);
\end{tikzpicture}}
\end{equation}

We can modify the definition of $\Indgtilde{G}{L}$ in exactly the same way to obtain a functor ${}^\rs\Indgtilde{G}{L}:\cDb_L(L\times^C(\fc\cap\fg^\rs))\to\cDb_G(G\times^B(\fb\cap\fg^\rs))$. This functor is related to $\Indgtilde{G}{L}$ by a diagram analogous to \eqref{eqn:indgrs-strip}, namely we have an isomorphism
\begin{equation}
\label{eqn:Indgtilde-restriction}
(k_\fg)^* \circ \Indgtilde{G}{L} \natisom {}^\rs\Indgtilde{G}{L} \circ (k_{\fl}')^*
\end{equation}
where $k_{\fg} : G \times^B (\fb \cap \fg^{\rs}) \hookrightarrow G \times^B \fb$ and $k_{\fl}' : L \times^C (\fc \cap \fg^{\rs}) \hookrightarrow L \times^C \fc$ are the inclusions. The functor ${}^\rs\Indgtilde{G}{L}$ is also related to ${}^\rs\Indg{G}{L}$ by a diagram analogous to \eqref{eqn:indgtilde-strip}, namely we have an isomorphism
\begin{equation}
\label{eqn:Indgtilde-rs}
(\mu_{\fg}^{\rs})_! \circ {}^\rs\Indgtilde{G}{L} \natisom {}^\rs\Indg{G}{L} \circ (\mu_{\fl}^{\rs,\prime})_!.
\end{equation}

\begin{lem} 
\label{lem:indgrs-cube}

The following cube is commutative: 
\begin{equation}
\vc{\begin{tikzpicture}[longcube2]
\compuinit
\node (rrd) at (1,0,0) {$\cDb_L(\fl)$};
\node[cuber] at (0.5,0.5,0) {$\eqrefh{eqn:indgtilde-strip}$};
\node[cubed] at (1,0.5,0.5) {$\eqrefh{eqn:indgrs-strip}$};
\node[cubeb] at (0.5,0,0.5) {$\BC$};
\node (rlu) at (0,1,0) {$\cDb_G(G \times^B \fb)$};
\node (rru) at (1,1,0) {$\cDb_G(\fg)$};
\node (fru) at (1,1,1) {$\cDb_G(\fg^{\rs})$};
\node (frd) at (1,0,1) {$\cDb_L(\fl \cap \fg^{\rs})$};
\node (fld) at (0,0,1) {$\cDb_L(L \times^C (\fc \cap \fg^{\rs}))$};
\node (rld) at (0,0,0) {$\cDb_L(L \times^C \fc)$};
\draw[liner,<-] (rru) -- node[arl,pos=.7] {$\al \Indg{G}{L}$} (rrd);
\draw[liner,->] (rld) -- node[arl] {$\al (\mu_{\fl})_!$} (rrd);
\draw[liner,->] (rrd) -- node[arl] {$\al (j_{\fl}')^*$} (frd);
\draw[->] (rlu) -- node[arl] {$\al (\mu_{\fg})_!$} (rru);
\draw[->] (rru) -- node[arl] {$\al (j_{\fg})^*$} (fru);
\draw[<-] (fru) -- node[arl] {$\al {}^{\rs} \Indg{G}{L}$}(frd);
\draw[->] (fld) -- node[arr] {$\al (\mu_{\fl}^{\rs,\prime})_!$} (frd);
\draw[->] (rld) -- node[arr] {$\al (k_{\fl}')^*$} (fld);
\draw[<-] (rlu) -- node[arr] {$\al \Indgtilde{G}{L}$} (rld);
\node[cubel] at (0,0.5,0.5) {$\eqrefh{eqn:Indgtilde-restriction}$};
\node[cubet] at (0.5,1,0.5) {$\BC$};
\node[cubef] at (0.5,0.5,1) {$\eqrefh{eqn:Indgtilde-rs}$};
\node (flu) at (0,1,1) {$\cDb_G(G \times^B (\fb \cap \fg^{\rs}))$};
\draw[->] (rlu) -- node[arl,pos=.7] {$\al (k_{\fg})^*$} (flu);
\draw[->] (flu) -- node[arr,pos=.3] {$\al (\mu_{\fg}^{\rs})_!$} (fru);
\draw[<-] (flu) -- node[arl,pos=.3] {$\al {}^\rs\Indgtilde{G}{L}$} (fld);
\end{tikzpicture}}
\end{equation}

\end{lem}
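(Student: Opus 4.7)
The plan is to decompose the cube into a stack of four sub-cubes arranged along the layers of intermediate categories that appear in the definitions of the four induction functors involved. Recall that $\Indgtilde{G}{L}$, $\Indg{G}{L}$, and their $\rs$-analogues ${}^\rs\Indgtilde{G}{L}$ and ${}^\rs\Indg{G}{L}$ (implicit on the right-hand side of~$\eqref{eqn:Indgtilde-rs}$ and in~$\eqref{eqn:indgrs-strip}$) are each defined as a four-step composition of the form
\[
\hamma^G_P \circ (\cdot)_! \circ (\cdot)^* \circ \hamma^P_L.
\]
Correspondingly, each of the four non-BC faces of the cube---namely $\eqref{eqn:indgtilde-strip}$, $\eqref{eqn:Indgtilde-restriction}$, $\eqref{eqn:indgrs-strip}$ and $\eqref{eqn:Indgtilde-rs}$---is constructed as a horizontal pasting of four $2$-cells. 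These four faces play the role of \emph{walls} in the big cube, while the two BC faces (back and bottom) play the role of \emph{caps}.

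To assemble the sub-cubes, I would first extend the two cap faces into horizontal strips of four base-change squares, one for each intermediate category (ranging through $\fp$, $\fl$ and the corresponding equivariant product spaces, together with their $\fg^\rs$-sliced analogues). Each of the required intermediate squares is cartesian: for the bottom cap this amounts to $\mu_\fl^{-1}(\fl\cap\fg^\rs)=L\times^C(\fc\cap\fg^\rs)$ and its $L$-replaced-by-$P$ variant, while for the back cap it is $\mu_\fg^{-1}(\fg^\rs)=G\times^B(\fb\cap\fg^\rs)$ together with its analogues for $\fp$ and $\fl$. After this extension, the big cube is realized as a $1\times 1\times 4$ stack of sub-cubes: the walls of each sub-cube are the elementary cells of the four strips sitting over the corresponding column, and the caps are the auxiliary base-change squares just introduced.

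Each sub-cube is then commutative by one of the basic compatibility lemmas of Appendix~\ref{sect:lemmas}. The two sub-cubes associated with the outer $\hamma$-steps reduce to the compatibility of $\hamma^G_P$ (respectively $\hamma^P_L$) with $*$-pullback and $!$-push-forward along cartesian squares; the sub-cube in the $(\cdot)^*$-layer reduces to the compatibility of the composition of $*$-pullbacks with base change; and the sub-cube in the $(\cdot)_!$-layer reduces to the compatibility of $!$-push-forward with two nested base-change squares. Invoking the gluing principle of~\S\ref{subsect:gluing}, the commutativities of the four sub-cubes assemble to that of the original cube. I expect the main obstacle to be purely combinatorial: keeping careful track of orientations and of which base-change square is invoked at each stage; no new geometric input beyond the cartesian-square identities noted above is required.
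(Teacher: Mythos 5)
Your proposal is correct and matches the paper's own argument: the cube is decomposed into a $1\times1\times4$ stack of sub-cubes along the four-step definitions of the induction functors, the caps are filled in with the easy intermediate cartesian squares (e.g.\ $\mu_\fp^{-1}(\fp\cap\fg^{\rs})=P\times^B(\fb\cap\fg^{\rs})$), and the sub-cubes are commutative by Lemmas~\ref{lem:basechangeGamma} (twice, for the two $\hamma$-layers), \ref{lem:_!^*basechange^*} (for the $(\cdot)^*$-layer) and \ref{lem:_!^*basechange_!} (for the $(\cdot)_!$-layer), after which the gluing principle finishes the proof.
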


\begin{proof}
By definition, this cube is obtained by gluing together cubes that are commutative by Lemmas \ref{lem:_!^*basechange_!}, \ref{lem:_!^*basechange^*} and \ref{lem:basechangeGamma} (used twice).
\end{proof}

We also have an isomorphism 
\begin{equation}
\label{eqn:bb-triangle-rs}
{}^\rs\Indgtilde{G}{L}(\ubk[\dim\fl]) \simto \ubk[\dim\fg], 
\end{equation}
defined by the obvious analogue of \eqref{eqn:bb-triangle}.

\begin{lem}
\label{lem:constant-pyramid}

The following pyramid is commutative:
\[
\vc{\begin{tikzpicture}[stdpyra]
\compuinit
\node (rrd) at (1,0,0) {$\cDb_L(L \times^C \fc)$};
\node[pyrar] at (1-\tric,0.5,0.5*\tric) {$\eqrefh{eqn:bb-triangle}$};
\node[pyrab] at (1-\tric,0.5*\tric,0.5) {$\Cnst$};
\node (l) at (0,0.5,0.5) {$\bb$};
\node (rru) at (1,1,0) {$\cDb_G(G \times^B \fb)$};
\node (frd) at (1,0,1) {$\cDb_L(L \times^C (\fc \cap \fg^{\rs}))$};
\draw[liner,->] (l) -- node[arl] {$\al \ubk[\dim \fl]$} (rrd);
\draw[<-] (rru) -- node[arl,pos=.7] {$\al \Indgtilde{G}{L}$} (rrd);
\draw[->] (rrd) -- node[arl] {$\al (k_{\fl}')^*$} (frd);
\draw[->] (l) -- node[arl] {$\al \ubk[\dim \fg]$} (rru);
\draw[->] (l) -- node[arr] {$\al \ubk[\dim \fl]$} (frd);
\node[cubel,xscale=-1] at (1,0.5,0.5) {$\eqrefh{eqn:Indgtilde-restriction}$};
\node[pyraf] at (1-\tric,0.5,1-0.5*\tric) {$\eqrefh{eqn:bb-triangle-rs}$};
\node[pyrat] at (1-\tric,1-0.5*\tric,0.5) {$\Cnst$};
\node (fru) at (1,1,1) {$\cDb_G(G \times^B (\fb \cap \fg^{\rs}))$};
\draw[->] (rru) -- node[arl] {$\al (k_\fg)^*$} (fru);
\draw[<-] (fru) -- node[arl] {$\al {}^\rs\Indgtilde{G}{L}$}(frd);
\draw[->] (l) -- node[arl,pos=.7] {$\al \ubk[\dim \fg]$} (fru);
\end{tikzpicture}}
\]

\end{lem}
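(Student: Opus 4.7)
The plan is to reduce the commutativity of this pyramid, by repeated gluing, to a sequence of smaller tetrahedra each of which is commutative by one of the compatibilities-with-constant-sheaves lemmas in Appendix~\ref{sect:lemmas}. The key idea is that both $\Indgtilde{G}{L}$ and ${}^\rs\Indgtilde{G}{L}$ are by definition the four-fold compositions
\[
\hamma^G_P \circ (\cdot)_! \circ (\cdot)^* \circ \hamma^P_L
\]
(the first applied to $\fb$, $\fp$, $\fc$; the second applied to their intersections with $\fg^{\rs}$), the strip~\eqrefh{eqn:Indgtilde-restriction} is built exactly by inserting base-change squares between each pair of adjacent functors, and the triangles~\eqrefh{eqn:bb-triangle} and~\eqrefh{eqn:bb-triangle-rs} are built by inserting the constant-sheaf transport isomorphisms $\ubk[\dim\fl] \to \ubk[\dim\fl]\to\cdots\to\ubk[\dim\fg]$ through the same four-step ladder.

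First I would slice the pyramid along the three intermediate categories $\cDb_P(L\times^C\fc)$, $\cDb_P(P\times^B\fb)$, $\cDb_P(G\times^B\fb)$ (and their $\fg^{\rs}$-counterparts), producing four sub-pyramids stacked on top of one another: one for each of the four basic functors in the composition. Each sub-pyramid is again a cone with apex $\bb$ over a small base-change square, so it is a labelled $2$-computad of the shape treated in Lemmas~\ref{lem:Gamma_!Gammaconstant}, \ref{lem:Gamma_!^*constant}, \ref{lem:^*compositionconstant}, \ref{lem:Gamma^*constant}, \ref{lem:Gamma_!compositionconstant}, \ref{lem:Gammacompositionconstant} (the precise lemma depending on whether the functor is $(\cdot)^*$, $(\cdot)_!$, or $\hamma^H_K$, and on whether the base change involves a pullback along an open immersion). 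By the gluing principle of \S\ref{subsect:gluing}, once each sub-pyramid is commutative, their union recovers the full pyramid with the correct labels on the composite faces, because the $2$-cells labelling~\eqrefh{eqn:Indgtilde-restriction}, \eqrefh{eqn:bb-triangle}, \eqrefh{eqn:bb-triangle-rs} are precisely the pastings of the ladder of base-change and constant-sheaf $2$-cells.

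Concretely, I would start from the top layer at $\hamma^P_L$: the cone is a tetrahedron whose faces pair the constant-sheaf isomorphism $\hamma^P_L(\ubk[\dim\fl])\cong\ubk[\dim\fl]$ with the base-change square for $\hamma^P_L$ and open pullback; this is Lemma~\ref{lem:Gammacompositionconstant} combined with Lemma~\ref{lem:basechangeGamma}, and the required tetrahedron collapses to an instance of Lemma~\ref{lem:Gamma_!Gammaconstant}-type commutativity. I would then treat the $(\cdot)^*$ layer by Lemma~\ref{lem:^*compositionconstant} and the $(\cdot)_!$ layer by Lemma~\ref{lem:Gamma_!^*constant} (or its direct analogue), since open pullback commutes strictly with $(\cdot)_!$ via smooth base change. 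The final $\hamma^G_P$ layer is handled symmetrically to the first.

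The main obstacle will be bookkeeping: the face~\eqrefh{eqn:Indgtilde-restriction} is a four-square strip and the faces~\eqrefh{eqn:bb-triangle}, \eqrefh{eqn:bb-triangle-rs} are four-triangle strips, so I must be careful that when I stack the four sub-pyramids, the labels along the shared $3$-valent edges (the three intermediate constant-sheaf $2$-cells $\ubk[\dim\fl]\to\ubk[\dim\fl]$ coming from each intermediate category) cancel compatibly between adjacent layers. This is a purely formal check, but it is exactly the bookkeeping that the $2$-categorical formalism of Appendix~\ref{sect:cubes} is designed for. Once the stacking matches, commutativity of the pyramid follows from the gluing principle applied to the four commutative sub-tetrahedra.
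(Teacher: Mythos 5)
Your overall strategy---slice the pyramid along the intermediate categories appearing in the factorization of $\Indgtilde{G}{L}$ and glue commutative cones with apex $\bb$---is the right one, and is the paper's strategy. But your decomposition into \emph{four} layers, one for each of $\hamma^P_L$, $(\cdot)^*$, $(\cdot)_!$, $\hamma^G_P$, has a genuine gap at the $(\cdot)_!$ step. To slice between the $(\cdot)_!$ layer and the $\hamma^G_P$ layer you would need a constant-sheaf edge $\bb \to \cDb_P(G\times^B\fb)$ and an isomorphism identifying $(\cdot)_!\,\ubk_{P\times^B\fb}$ with a shift of $\ubk_{G\times^B\fb}$ in $\cDb_P(G\times^B\fb)$. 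No such isomorphism exists: extension by zero of a constant sheaf from the proper subvariety $P\times^B\fb$ is not constant, and accordingly the appendix contains no ``$\Cnst$ for $(\cdot)_!$'' lemma. This is also why the triangle \eqref{eqn:bb-triangle} consists of \emph{three} $2$-cells ($\CnstInt$, $\CnstRes$, $\CnstIE$), not four as you assert: the edge $\hamma^G_P(\cdot)_!$ is a single $1$-cell there. The ``purely formal bookkeeping'' you defer to the end is exactly where this would surface.

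The repair is to treat $\hamma^G_P\circ(\cdot)_!$ as a single unit. Since $G\times^P(P\times^B\fb)=G\times^B\fb$, this composite is the induction equivalence of \S\ref{subsect:induction-equiv}, which \emph{does} carry $\ubk$ to a shift of $\ubk$ (the $\mIEI$-type isomorphism of \S\ref{sss:const-induction-equiv}), and whose compatibility with the open restrictions $(k_\fg)^*$, $(k_\fl')^*$ and with these constant-sheaf isomorphisms is precisely Lemma~\ref{lem:Gamma_!^*constant} (note that the last two squares of the strip \eqref{eqn:Indgtilde-restriction} paste to the $\mIBC$ isomorphism appearing there). The pyramid then splits into exactly three commutative pieces, matching the three $2$-cells of \eqref{eqn:bb-triangle}: the $\hamma^P_L$ cone (Lemma~\ref{lem:Gamma^*constant}, using that $P/L$ is contractible), the $(\cdot)^*$ cone (Lemma~\ref{lem:^*compositionconstant}), and the induction-equivalence cone (Lemma~\ref{lem:Gamma_!^*constant}). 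This is the paper's proof; your citations of Lemmas~\ref{lem:Gammacompositionconstant}, \ref{lem:Gamma_!compositionconstant}, \ref{lem:Gamma_!Gammaconstant} and \ref{lem:basechangeGamma} are not needed here (they belong to the transitivity tetrahedron of Lemma~\ref{lem:constant-tetrahedron} and elsewhere).
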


\begin{proof}
By definition, this pyramid is obtained by gluing together things that are commutative by Lemmas \ref{lem:^*compositionconstant}, \ref{lem:Gamma^*constant} and \ref{lem:Gamma_!^*constant}.
\end{proof}

Combining isomorphisms \eqref{eqn:Indgtilde-rs} and \eqref{eqn:bb-triangle-rs} we obtain an isomorphism
\begin{equation}
\label{eqn:bb-triangle-rs-Groth}
{}^\rs\Indgtilde{G}{L}\bigl( (\mu_{\fl}^{\rs,\prime})_! \ubk[\dim\fl] \bigr) \simto (\mu_{\fg}^{\rs})_! \ubk[\dim\fg].
\end{equation}
Gluing together the cube in Lemma \ref{lem:indgrs-cube} and the pyramid in Lemma \ref{lem:constant-pyramid}, we obtain the following commutative pyramid:
\begin{equation}
\label{eqn:final-pyramid}
\vc{\begin{tikzpicture}[stdpyra]
\compuinit
\node (rrd) at (1,0,0) {$\cDb_L(\fl)$};
\node[pyrar] at (1-\tric,0.5,0.5*\tric) {$\eqrefh{eqn:desired-isom}$};
\node[pyrab] at (1-\tric,0.5*\tric,0.5) {};
\node (l) at (0,0.5,0.5) {$\bb$};
\node (rru) at (1,1,0) {$\cDb_G(\fg)$};
\node (frd) at (1,0,1) {$\cDb_L(\fl \cap \fg^{\rs})$};
\draw[liner,->] (l) -- node[arl] {$\al \Groth_L$} (rrd);
\draw[<-] (rru) -- node[arl,pos=.7] {$\al \Indgtilde{G}{L}$} (rrd);
\draw[->] (rrd) -- node[arl] {$\al (j_{\fl}')^*$} (frd);
\draw[->] (l) -- node[arl] {$\al \Groth_G$} (rru);
\draw[->] (l) -- node[arr] {$\al (\mu_{\fl}^{\rs,\prime})_! \ubk[\dim \fl]$} (frd);
\node[cubel,xscale=-1] at (1,0.5,0.5) {$\eqrefh{eqn:Indgtilde-restriction}$};
\node[pyraf] at (1-\tric,0.5,1-0.5*\tric) {$\eqrefh{eqn:bb-triangle-rs-Groth}$};
\node[pyrat] at (1-\tric,1-0.5*\tric,0.5) {$\eqrefh{eqn:Groth-rs}$};
\node (fru) at (1,1,1) {$\cDb_G(\fg^{\rs})$};
\draw[->] (rru) -- node[arl] {$\al (j_\fg)^*$} (fru);
\draw[<-] (fru) -- node[arl] {$\al {}^\rs\Indgtilde{G}{L}$}(frd);
\draw[->] (l) -- node[arl,pos=.7] {$\al (\mu_{\fg}^{\rs})_! \ubk[\dim \fg]$} (fru);
\end{tikzpicture}}
\end{equation}
where the hidden face on the bottom is labelled by the obvious analogue of \eqref{eqn:Groth-rs}. This means that the following diagram of isomorphisms in $\cDb_G(\fg)$ commutes:
\begin{equation} \label{eqn:final-pentagon}
\vcenter{
\xymatrix@C=8pt@R=12pt{
j_\fg^*\Indg{G}{L}(\Groth_L) \ar[d]_-{{\scriptscriptstyle(\text{II})}\wr} \ar[rrrr]^-{\overset{(\text{I})}{\sim}} &&&& j_\fg^*\Groth_G \ar[d]^-{\wr{\scriptscriptstyle(\text{III})}} \\
{}^\rs\Indg{G}{L} \bigl( (j_\fl')^*\Groth_L \bigr) \ar[rr]^-{\overset{(\text{IV})}{\sim}} && {}^\rs\Indg{G}{L} \bigl( (\mu_\fl^{\rs,\prime})_!\ubk[\dim\fl] \bigr) \ar[rr]^-{\overset{(\text{V})}{\sim}} && (\mu_\fg^\rs)_!\ubk[\dim\fg]
}
}
\end{equation}

All the objects in this diagram are endowed with an action of $\W_L$. 
We want to prove that isomorphism (I) in \eqref{eqn:final-pentagon} is $\W_L$-equivariant. Isomorphism (II) is clearly $\W_L$-equivariant, because it arises from an isomorphism of functors applied to $\Groth_L$.
As remarked above, isomorphism (III) is $\W_G$-equivariant by definition of the $\W_G$-action on $\Groth_G$, and isomorphism (IV) is $\W_L$-equivariant for the same reason. So it suffices to prove that isomorphism (V), namely \eqref{eqn:bb-triangle-rs-Groth}, is $\W_L$-equivariant.

Now \eqref{eqn:bb-triangle-rs-Groth} is by definition the composition
\[
\xymatrix{
{}^\rs\Indg{G}{L} \bigl( (\mu_\fl^{\rs,\prime})_!\ubk[\dim\fl] \bigr) \ar[r]^-{\sim} & (\mu_\fg^\rs)_!{}^\rs\Indgtilde{G}{L}(\bk[\dim\fl]) \ar[r]^-{\sim} & (\mu_\fg^\rs)_!\bk[\dim\fg],
}
\]
where the first isomorphism comes from \eqref{eqn:Indgtilde-rs}, and the second comes from \eqref{eqn:bb-triangle-rs}. The second isomorphism is obviously $\W_G$-equivariant, because the $\W_G$-actions on its domain and codomain come about purely because $\mu_\fg^\rs$ is a Galois covering with group $\W_G$. So it suffices to show that the first isomorphism is $\W_L$-equivariant. Unravelling the definition of this isomorphism similarly, we see that it suffices to prove the $\W_L$-equivariance of the isomorphism $\hamma^G_Pu_!(\mu_\fp^\rs)'_!\ubk[\dim\fl]\simto(\mu_\fg^\rs)_!\hamma^G_Pv_!\ubk[\dim\fl]$ coming from the following pasting diagram:
\[
\vc{\begin{tikzpicture}[smallcube]
\compuinit
\node[cubef] at (0.75,0.5) {$\Comp$};
\node[cubef] at (2.25,0.5) {$\mInt$};
\node (lm) at (3,1) {{\tiny $\cDb_G(G \times^B (\fb \cap \fg^{\rs}))$}};
\node (ld) at (1.5,1) {{\tiny $\cDb_P(G \times^B (\fb \cap \fg^{\rs}))$}};
\node (ldd) at (0,1) {{\tiny $\cDb_P(P \times^B (\fb \cap \fg^{\rs}))$}};
\node (rm) at (3,0) {{\tiny $\cDb_G(\fg^{\rs})$}};
\node (rd) at (1.5,0) {{\tiny $\cDb_P(\fg^{\rs})$}};
\node (rdd) at (0,0) {{\tiny $\cDb_P(\fp \cap \fg^{\rs})$}};
\draw[<-] (lm) -- node[arr] {{\tiny $\hamma_P^G$}} (ld);
\draw[<-] (ld) -- node[arr] {{\tiny $v_!$}} (ldd);
\draw[->] (lm) -- node[arl] {{\tiny $(\mu_{\fg}^{\rs})_!$}} (rm);
\draw[->] (ld) -- node[arl] {{\tiny $(\mu_{\fg}^{\rs})_!$}} (rd);
\draw[->] (ldd) -- node[arr] {{\tiny $(\mu_{\fp}^{\rs,\prime})_!$}} (rdd);
\draw[<-] (rm) -- node[arl] {{\tiny $\hamma_P^G$}} (rd);
\draw[<-] (rd) -- node[arl] {{\tiny $u_!$}} (rdd);
\end{tikzpicture}}
\]
Here, $u$ and $v$ are the inclusions and $\mu_\fp^{\rs,\prime}$ is the obvious restriction of $\mu_{\fp}$, which is a Galois covering with group $\W_L$. This is a special case of Lemma \ref{lem:_!action}.

\subsection{Exactness of $\Springer{G}$} 

As a consequence of our intertwining isomorphism (using the fact that $\ResW{\W_G}{\W_T}$ is exact and faithful and Proposition~\ref{prop:ResN-exact}) we obtain:

\begin{prop} \label{prop:springer-exactness}
The functor $\Springer{G}:\Perv_G(\cN_G,\bk)\to\Rep(\W_G,\bk)$ is exact. 
\end{prop}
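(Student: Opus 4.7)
The plan is to deduce exactness of $\Springer{G}$ by transporting the known exactness of $\ResN{G}{T}$ across the intertwining isomorphism constructed in this section, taking $L = T$. Concretely, that isomorphism yields
\[
\ResW{\W_G}{\W_T}\circ\Springer{G} \;\natisom\; \Springer{T}\circ\ResN{G}{T}.
\]
I would first observe that the right-hand side is an exact functor $\Perv_G(\cN_G,\bk)\to\Rep(\W_T,\bk)$: indeed, $\ResN{G}{T}$ is exact by Proposition~\ref{prop:ResN-exact}, and $\Springer{T}$ is (canonically isomorphic to) the equivalence $H^0:\Perv_T(\cN_T,\bk)\simto\Vect(\bk)$, in particular exact.

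Next, recall that $\W_T$ is trivial, so $\ResW{\W_G}{\W_T}=\For^{\W_G}:\Rep(\W_G,\bk)\to\Vect(\bk)$. The forgetful functor $\For^{\W_G}$ is exact and faithful; in fact, a sequence in $\Rep(\W_G,\bk)$ is short exact if and only if its image in $\Vect(\bk)$ is short exact. Given any short exact sequence $0\to A\to B\to C\to 0$ in $\Perv_G(\cN_G,\bk)$, applying $\Springer{G}$ produces a complex $0\to \Springer{G}(A)\to \Springer{G}(B)\to \Springer{G}(C)\to 0$ in $\Rep(\W_G,\bk)$, and it suffices to check exactness after applying $\For^{\W_G}$. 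But the resulting sequence of $\bk$-modules is identified, via the intertwining isomorphism, with $0\to \Springer{T}\ResN{G}{T}(A)\to \Springer{T}\ResN{G}{T}(B)\to \Springer{T}\ResN{G}{T}(C)\to 0$, which is exact by the previous paragraph. Hence $\Springer{G}$ is exact, as claimed. No step presents a real obstacle here, since all the substantive work has already been done in constructing the intertwining isomorphism and in proving Proposition~\ref{prop:ResN-exact}.
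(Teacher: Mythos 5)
Your proposal is correct and is essentially the paper's own argument: the paper derives exactness as an immediate consequence of the intertwining isomorphism $\ResW{\W_G}{\W_T}\circ\Springer{G}\natisom\Springer{T}\circ\ResN{G}{T}$, the exactness and faithfulness of $\ResW{\W_G}{\W_T}=\For^{\W_G}$, and Proposition~\ref{prop:ResN-exact}. Your write-up merely makes explicit the standard fact that the forgetful functor reflects exactness, which is exactly what the paper's terse statement relies on.
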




\section{Computations in rank $1$}
\label{sect:rankone}


What remains is to prove Theorem~\ref{thm:main-result-again} in the special case where $G$ has semisimple rank~$1$. Since all the functors involved in the statement of Theorem~\ref{thm:main-result-again} are invariant under the replacement of $G$ by $G/Z(G)$, it suffices to consider the case where $G=\PGL(2)$, and we assume this throughout Section~\ref{sect:rankone}.

\subsection{Notation and preliminaries on $\cT_2$}
\label{ss:r1-notation}

For brevity, we will write $\Gr$ for $\Gr_{G}$, $\W$ for $\W_G$, etc.
The nontrivial element of $\W$ is denoted $s$.  Choose $T \subset G$ consisting of images of diagonal matrices, and $B \subset G$ consisting of images of upper-triangular matrices. The coweights (resp.~dominant coweights) of $G$ are naturally identified with $\Z$ (resp.~the nonnegative integers).

For $i=0,1,2$, let $j_i : \Gr^i \hookrightarrow \Gr$ be the inclusion map.  For a finitely-generated $\bk$-module $E$, we write
\[
\IC_i(E) = (j_i)_{!*}(\underline{E}[i]),
\qquad
\Delta_i(E) = \p(j_i)_!(\underline{E}[i]),
\qquad
\nabla_i(E) = \p(j_i)_*(\underline{E}[i]).
\]
These are perverse sheaves supported on $\overline{\Gr^i}$.  Because $\Gr^1 \subset \Gr$ is closed and isomorphic to $\mathbb{P}^1$, there is a canonical isomorphism
\begin{equation}\label{eqn:ic1-constant}
\IC_1(\bk) \cong \ubk_{\Gr^1}[1].
\end{equation}

Set $V:=H^{\bullet}(\IC_1(\bk))$. This is a free $\bk$-module of rank $2$. Moreover, the action of $\Gv$ on $V$ defines a canonical isomorphism
\[
\Gv \xrightarrow{\sim} \SL(V).
\]
The torus $\Tv$ is the subgroup of $\Gv$ consisting of elements acting on $V$ compatibly with the grading.
By definition, the category $\Rep(\Gv,\bk)_{\sm}$ is the category of $\Gv$-modules whose $\Tv$-weights belong to $\{-2,0,2\}$, and $\Gr^{\sm}=\Gr^0 \sqcup \Gr^2$.

The following object will play a key role throughout this section:
\[
\cT_2 := \IC_1(\bk) \star \IC_1(\bk).
\]
Since $\Satake{G}$ is a tensor functor, we have $\Satake{G}(\cT_2)\cong V\otimes V$, which clearly belongs to $\Rep(\Gv,\bk)_{\sm}$. Let
$\eta: \cT_2 \to \cT_2$
be the involution induced by the commutativity constraint on $\Perv_{\GO}(\Gr)$, i.e.~the unique endomorphism of $\cT_2$ such that
 \[
\Satake{G}(\eta): V \otimes V \to V \otimes V
\qquad\text{is given by} \qquad
x \otimes y \mapsto y \otimes x.
\]
The involution $\eta$ defines a $\W$-action on $\cT_2$, and hence a functor $\bT = \Hom(\cT_2,-): \Perv_{\GO}(\Gr^\sm,\bk) \to \Rep(\W,\bk)$.

We now recall the definition of $\eta$ (see~\cite{mv}). The construction involves global versions of the affine Grassmannian.
Consider the diagonal embedding $\bA^1 \to \bA^2$, and let $U \subset \bA^2$ be its complement.  Let $\W$ act on $\bA^2$ by exchanging the two copies of $\bA^1$, and let $\bA^{(2)} = \bA^2/\W$.  Finally, let $U' = U/\W \subset \bA^{(2)}$.  We have the following commutative diagram in which every square is cartesian.
\begin{equation}\label{eqn:commconstraint-diag}
\vc{\xymatrix@R=13pt{
\Gr^1 \wtimes \Gr^1 \ar[r]^-{\tilde e}\ar[d]_-{m} &
  \Gr^1_{\bA^1} \wtimes \Gr^1_{\bA^1} \ar[d]_-{m'} &
  (\Gr^1_{\bA^1} \times \Gr^1_{\bA^1})|_U \ar[l]_-{\tilde u} \ar@{=}[d] \\
\Gr^\sm \ar[r]^-{e'}\ar@{=}[d] &
  \Gr^\sm_{\bA^2} \ar[d]_-{\varpi'} &
  (\Gr^1_{\bA^1} \times \Gr^1_{\bA^1})|_U \ar[l]_-{u'} \ar[d]^-{\varpi} \\
\Gr^\sm \ar[r]_-{e} &
  \Gr^\sm_{\bA^{(2)}} &
  \Gr^\sm_{U'} \ar[l]^{u} }}
\end{equation}
Here, $(\Gr^1_{\bA^1} \times \Gr^1_{\bA^1})|_U$ denotes the preimage of $U \subset \bA^2$ under the natural map $\Gr^1_{\bA^1} \times \Gr^1_{\bA^1} \to \bA^2$.  This diagram is explained in a general setting in~\cite[\S5]{mv}.  For a concrete description in the case of $\PGL(2)$, see the proof of Lemma~\ref{lem:rankone-comm} below.  

Let $\sigma:(\Gr^1_{\bA^1} \times \Gr^1_{\bA^1})|_U\to (\Gr^1_{\bA^1} \times \Gr^1_{\bA^1})|_U$ be the involution of swapping the factors, and let $\sigma': \Gr^\sm_{\bA^2} \to \Gr^\sm_{\bA^2}$ be the involution induced by the $\W$-action on $\bA^2$. We have $\sigma'e'=e'$ and $\sigma'u'=u'\sigma$. By definition, $\cT_2 = m_!(\IC_1(\bk) \tboxtimes \IC_1(\bk)) \cong m_!(\ubk_{\Gr^1\wtimes\Gr^1})[2]$.
By base change, we obtain an isomorphism 
\[
\cT_2 \cong (e')^*(m')_!(\ubk_{\Gr^1_{\bA^1}\wtimes\Gr^1_{\bA^1}})[2].
\]
Since $m'$ is small and proper, this gives rise to an isomorphism
\begin{equation} \label{eqn:e'pu'}
\cT_2 \cong (e')^* u'_{!*}\bigl(\ubk_{(\Gr^1_{\bA^1} \times \Gr^1_{\bA^1})|_U}[4]\bigr)[-2].
\end{equation} 
The natural isomorphism $\ubk_{(\Gr^1_{\bA^1} \times \Gr^1_{\bA^1})|_U} \cong \sigma^*\ubk_{(\Gr^1_{\bA^1} \times \Gr^1_{\bA^1})|_U}$ induces an isomorphism 
\begin{equation} \label{eqn:sigma'pu'}
u'_{!*}(\ubk_{(\Gr^1_{\bA^1} \times \Gr^1_{\bA^1})|_U}[4])\cong (\sigma')^*u'_{!*}(\ubk_{(\Gr^1_{\bA^1} \times \Gr^1_{\bA^1})|_U}[4]).
\end{equation}
Then the involution $\eta$ is the composition
\begin{multline*}
\cT_2\;\overset{\eqref{eqn:e'pu'}}{\cong}\; (e')^*u'_{!*}\bigl(\ubk_{(\Gr^1_{\bA^1} \times \Gr^1_{\bA^1})|_U}[4]\bigr)[-2] \;\overset{\eqref{eqn:sigma'pu'}}{\cong} \;(e')^*(\sigma')^*u'_{!*}\bigl(\ubk_{(\Gr^1_{\bA^1} \times \Gr^1_{\bA^1})|_U}[4]\bigr)[-2]\\
\overset{\Co}{\cong} \;(e')^*u'_{!*}\bigl(\ubk_{(\Gr^1_{\bA^1} \times \Gr^1_{\bA^1})|_U}[4]\bigr)[-2] \;
\overset{\eqref{eqn:e'pu'}}{\cong} \;\cT_2.
\end{multline*}

It is convenient to have an alternative description of $\eta$. By base change and using the fact that $\varpi'$ is a finite morphism, \eqref{eqn:e'pu'} can be rewritten as
\[
\cT_2 \cong e^*(\varpi')_!u'_{!*}\bigl(\ubk_{(\Gr^1_{\bA^1} \times \Gr^1_{\bA^1})|_U}[4]\bigr)[-2] \cong e^* u_{!*}\bigl(\varpi_!\ubk_{(\Gr^1_{\bA^1} \times \Gr^1_{\bA^1})|_U}[4]\bigr)[-2].
\]

\begin{lem}\label{lem:T2-W-action}
Consider the involution of $\varpi_!\ubk_{(\Gr^1_{\bA^1} \times \Gr^1_{\bA^1})|_U}$ resulting from the natural isomorphism $\ubk_{(\Gr^1_{\bA^1} \times \Gr^1_{\bA^1})|_U} \cong \sigma^*\ubk_{(\Gr^1_{\bA^1} \times \Gr^1_{\bA^1})|_U}$. The induced involution of $\cT_2$ is $\eta$.
\end{lem}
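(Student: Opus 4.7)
The plan is to identify both the involution produced by the lemma and the involution $\eta$ as descents, via different routes, of the tautological involution on $\ubk_X$ (where $X:=(\Gr^1_{\bA^1}\times\Gr^1_{\bA^1})|_U$) coming from $\sigma^*\ubk_X\cong\ubk_X$, and then to show that these routes agree under the base change isomorphisms relating the two descriptions of $\cT_2$ given just before the lemma.

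First I would make explicit the chain of isomorphisms connecting the two descriptions: starting from $\cT_2\cong(e')^*u'_{!*}(\ubk_X[4])[-2]$ in \eqref{eqn:e'pu'}, apply base change for the cartesian square in \eqref{eqn:commconstraint-diag} with vertices $\Gr^\sm,\Gr^\sm_{\bA^2},\Gr^\sm,\Gr^\sm_{\bA^{(2)}}$ (whose left vertical arrow is the identity) to get $(e')^*\cong e^*(\varpi')_!$, and then use the finiteness of $\varpi'$ to commute $(\varpi')_!$ past $u'_{!*}$, yielding $(\varpi')_!u'_{!*}\cong u_{!*}\varpi_!$. The composite is the isomorphism $\cT_2\cong e^*u_{!*}(\varpi_!\ubk_X[4])[-2]$ featured in the lemma. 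The original $\eta$ is obtained by applying $(e')^*u'_{!*}$ to the symmetry $\ubk_X\cong\sigma^*\ubk_X$, then using $(\sigma')^*u'_{!*}\cong u'_{!*}\sigma^*$ (from $\sigma'u'=u'\sigma$) and $(e')^*(\sigma')^*\cong(e')^*$ (from $\sigma'e'=e'$) to convert the result into an endomorphism. The lemma's involution is $e^*u_{!*}$ applied to the $\sigma$-descent on $\varpi_!\ubk_X$, itself built from $\ubk_X\cong\sigma^*\ubk_X$ combined with $\varpi_!\sigma^*\cong\varpi_!$ (from $\varpi\sigma=\varpi$).

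The essential verification is that these two constructions coincide under the identifications above. This is an instance of a general naturality principle: base change and composition isomorphisms are compatible with the ``descend-$\sigma$-to-an-endomorphism'' operation for any cartesian square involving a map $f$ with $f\sigma=f$. Concretely, I would draw a cube whose front and back faces display the two descents on the objects $(e')^*u'_{!*}\ubk_X[4]$ and $e^*u_{!*}\varpi_!\ubk_X[4]$ respectively, and whose side faces are built from the base change and composition isomorphisms already named; each face is then a commutative instance of one of the lemmas from Appendix~\ref{sect:lemmas}. The main obstacle will be bookkeeping: isolating the small cells, and in particular tracking how the ``turn $\sigma$ into an endomorphism'' steps are transported by the finiteness isomorphism $(\varpi')_!u'_{!*}\cong u_{!*}\varpi_!$. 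Once the cube is set up, the commutativity of each face is routine.
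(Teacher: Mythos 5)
Your proposal is correct and takes essentially the same route as the paper: the paper's proof likewise assembles the maps $e,e',u,u',\varpi,\varpi',\sigma,\sigma'$ (and identities) into a three-dimensional diagram in which every square is cartesian and invokes the base-change/composition compatibilities of Lemmas~\ref{lem:basechangecomposition} and~\ref{lem:compbasechange}. The one step you flag as delicate --- transporting the descent through the finiteness isomorphism $(\varpi')_!u'_{!*}\cong u_{!*}\varpi_!$ --- is handled with the same brevity in the paper, so no gap relative to the published argument.
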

\begin{proof}
This follows by applying 
Lemmas~\ref{lem:basechangecomposition} and~\ref{lem:compbasechange} to the diagram
\[
\vc{\begin{tikzpicture}[xscale=1.9,smallcube]
\compuinit
\node (rmd) at (1,0,0) {$\Gr^\sm_{\bA^{(2)}}$};
\node (rrd) at (2,0,0) {$\Gr^\sm_{U'}$};
\node (rlu) at (0,1,0) {$\Gr^\sm$};
\node (rmu) at (1,1,0) {$\Gr^\sm_{\bA^2}$};
\node (rru) at (2,1,0) {$(\Gr^1_{\bA^1} \times \Gr^1_{\bA^1})|_U$};

\node (rld) at (0,0,0) {$\Gr^\sm$};
\node (fru) at (2,1,1) {$(\Gr^1_{\bA^1} \times \Gr^1_{\bA^1})|_U$};

\node (fld) at (0,0,1) {$\Gr^\sm$};
\node (fmd) at (1,0,1) {$\Gr^\sm_{\bA^{(2)}}$};
\node (frd) at (2,0,1) {$\Gr^\sm_{U'}$};
\draw[->] (rld) -- node[arl,pos=.3] {$\al e$} (rmd);
\draw[->] (rmu) -- node[arl,pos=.7] {$\al \varpi'$} (rmd);
\draw[-,double distance=1.5pt] (rmd) -- node[arl] {} (fmd);
\draw[<-] (rmd) -- node[arr,pos=.8] {$\al u$} (rrd);
\draw[->] (rru) -- node[arl,pos=.7] {$\al \varpi$} (rrd);
\draw[-,double distance=1.5pt] (rrd) -- node[arl] {} (frd);
\draw[->] (rlu) -- node[arl,pos=.3] {$\al e'$} (rmu);
\draw[<-] (rmu) -- node[arl] {$\al u'$} (rru);

\draw[->] (rru) -- node[arr] {$\al \sigma$} (fru);
\draw[->] (fru) -- node[arl] {$\al \varpi$} (frd);

\draw[<-] (fmd) -- node[arr] {$\al u$} (frd);
\draw[->] (fld) -- node[arr] {$\al e$} (fmd);
\draw[-,double distance=1.5pt] (rld) -- (fld);
\draw[-,double distance=1.5pt] (rlu) -- (rld);
\node (flu) at (0,1,1) {$\Gr^\sm$};
\node (fmu) at (1,1,1) {$\Gr^\sm_{\bA^2}$};
\draw[-,double distance=1.5pt] (rlu) -- (flu);
\draw[linef,-,double distance=1.5pt] (flu) -- (fld);
\draw[linef,->] (rmu) -- node[arl] {$\al \sigma'$} (fmu);
\draw[linef,->] (fmu) -- node[arl,pos=.3] {$\al \varpi'$} (fmd);
\draw[linef,->] (flu) -- node[arl,pos=.3] {$\al e'$} (fmu);
\draw[linef,<-] (fmu) -- node[arl,pos=.3] {$\al u'$} (fru);
\end{tikzpicture}}
\]
in which every square is cartesian.
\end{proof}

\subsection{Geometric properties of $\cT_2$}
\label{ss:r1-geometric}

For $\PGL(2)$, the map $\pi: \cM \to \cN$ is an isomorphism of varieties.  In this section, we will identify $\cM$ with $\cN$ via this map.  Then we can extend the embedding $j: \cN \to \Gr^\sm$ to a `global' version.  Note that $\cN$ can also be identified with the nilpotent cone in the Lie algebra $\fgl(2)$ of the $\GL(2)$, and that $\PGL(2)$ acts on $\fgl(2)$. In the following lemma we denote by $\tglt$ the Grothendieck--Springer resolution (see \S\ref{ss:functor-Springer}) for the group $\GL(2)$.

\begin{lem}\label{lem:rankone-comm}
There is a commutative diagram of $\PGL(2)$-equivariant maps
\[
\vc{\begin{tikzpicture}[xscale=1.9,smallcube]
\compuinit
\node (rmd) at (1,0,0) {$\fgl(2)$};
\node (rrd) at (2,0,0) {$\fgl(2)^{\rs}$};
\node (rlu) at (0,1,0) {$\tcN$};
\node (rmu) at (1,1,0) {$\tglt$};
\node (rru) at (2,1,0) {$\tglt^{\rs}$};

\node (rld) at (0,0,0) {$\cN$};
\node (fru) at (2,1,1) {$(\Gr^1_{\bA^1} \times \Gr^1_{\bA^1})|_U$};

\node (fld) at (0,0,1) {$\Gr^\sm$};
\node (fmd) at (1,0,1) {$\Gr^\sm_{\bA^{(2)}}$};
\node (frd) at (2,0,1) {$\Gr^\sm_{U'}$};
\draw[->] (rld) -- node[arl,pos=.3] {$\al i_{\fgl(2)}$} (rmd);
\draw[->] (rmu) -- node[arl,pos=.7] {$\al \mu_{\fgl(2)}$} (rmd);
\draw[->] (rmd) -- node[arl] {$\al j'$} (fmd);
\draw[<-] (rmd) -- node[arr,pos=.8] {$\al h$} (rrd);
\draw[->] (rru) -- node[arl,pos=.7] {$\al \mu_{\fgl(2)}^\rs$} (rrd);
\draw[->] (rrd) -- node[arl] {$\al j'$} (frd);
\draw[->] (rlu) -- node[arl,pos=.3] {$\al i_{\tglt}$} (rmu);
\draw[<-] (rmu) -- node[arl] {$\al \tilde h$} (rru);

\draw[->] (rru) -- node[arl] {$\al \tilde\jmath$} (fru);
\draw[->] (fru) -- node[arl] {$\al \varpi$} (frd);

\draw[<-] (fmd) -- node[arr] {$\al u$} (frd);
\draw[->] (fld) -- node[arr] {$\al e$} (fmd);
\draw[->] (rld) -- node[arr] {$\al j$} (fld);
\draw[->] (rlu) -- node[arr] {$\al \mu_{\cN}$} (rld);
\node (flu) at (0,1,1) {$\Gr^1 \wtimes \Gr^1$};
\node (fmu) at (1,1,1) {$\Gr^1_{\bA^1} \wtimes \Gr^1_{\bA^1}$};
\draw[->] (rlu) -- node[arr] {$\al \tilde\jmath$} (flu);
\draw[linef,->] (flu) -- node[arl,pos=.3] {$\al m$} (fld);
\draw[linef,->] (rmu) -- node[arl] {$\al \tilde\jmath$} (fmu);
\draw[linef,->] (fmu) -- node[arl,pos=.3] {$\al \varpi' m'$} (fmd);
\draw[linef,->] (flu) -- node[arl,pos=.3] {$\al \tilde e$} (fmu);
\draw[linef,<-] (fmu) -- node[arl,pos=.3] {$\al \tilde u$} (fru);
\end{tikzpicture}}
\]
Every square in this diagram is cartesian.  Moreover, the isomorphism
\begin{equation}\label{eqn:spr-T2-isom}
\Spr \cong \Psi_G(\cT_2)
\end{equation}
defined using base change for the left-most square is $\W$-equivariant.
\end{lem}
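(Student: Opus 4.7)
The plan is to build the diagram in three horizontal slabs (nilpotent, generic-stratum, global) and then to reduce the $\W$-equivariance statement to the identification of two Galois $\W$-covers. First I would give an explicit $\PGL(2)$ description of $\Gr^\sm_{\bA^{(2)}}$ via the Beilinson--Drinfeld factorization picture: above $(a,b) \in \bA^2$ with $a \ne b$ the fiber of $\Gr^\sm_{\bA^2}$ is $\Gr^1 \times \Gr^1 \cong \mathbb{P}^1 \times \mathbb{P}^1$ (two minuscule $\GO$-orbits at distinct points), and this family degenerates along the diagonal to $\Gr^\sm$. Passing to the $\W$-quotient, $\Gr^\sm_{\bA^{(2)}}$ is parameterized by unordered pairs of coordinates; in this rank-$1$ case it can be identified with a model in which the fiber over $0$ is $\Gr^\sm$ while the global geometry tracks eigenvalue pairs, matching $\fgl(2)/\W$. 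This identification simultaneously produces $j$, $j'$, $\tilde\jmath$ (the embeddings of the zero-eigenvalue fibers) and their open-stratum analogues over $\fgl(2)^\rs$ and $U'$, with $\tilde h$ an isomorphism of \'etale $\W$-covers over a common base.

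Next I would verify cartesianness of each unit square. The top-left face $(\tcN,\,\Gr^1\wtimes\Gr^1,\,\cN,\,\Gr^\sm)$ is cartesian because $\tilde\jmath$ realizes $\tcN$ as the open subset of the convolution resolution $\Gr^1\wtimes\Gr^1$ mapping to $\cM = \cN$ (the complement of the section contracted by $m$). The middle slab of squares is cartesian by the very construction of the global affine Grassmannian over $\bA^{(2)}$: both $\cN \hookrightarrow \fgl(2)$ and $\Gr^\sm \hookrightarrow \Gr^\sm_{\bA^{(2)}}$ are base changes along $\{0\} \hookrightarrow \bA^{(2)}$, and likewise $\fgl(2)^\rs \hookrightarrow \fgl(2)$ and $\Gr^\sm_{U'} \hookrightarrow \Gr^\sm_{\bA^{(2)}}$ are base changes along $U' \hookrightarrow \bA^{(2)}$. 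The right-hand face reduces to the identification under $\tilde h$ of the two $\W$-covers $\mu_{\fgl(2)}^\rs$ and $\varpi$. The remaining unit squares are cartesian by transitivity of base change across the slabs.

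The substantive content is the $\W$-equivariance of~\eqref{eqn:spr-T2-isom}. Both $\W$-actions descend from deck transformations on the generic stratum: on the nilpotent side, the $\W$-action on $\Groth_G$, and hence on $\Spr \cong i_{\fgl(2)}^* \Groth_G[-1]$, is by construction induced from the Galois action on $\mu_{\fgl(2)}^\rs$; on the Grassmannian side, by Lemma~\ref{lem:T2-W-action}, the involution $\eta$ on $\cT_2$ is induced from the swap involution $\sigma$ on $(\Gr^1_{\bA^1}\times\Gr^1_{\bA^1})|_U$. It therefore suffices to check that $\tilde h$ identifies $\mu_{\fgl(2)}^\rs$ and $\varpi$ equivariantly, where $\W$ acts on $\tglt^\rs$ by the Weyl reflection permuting eigenvalues and on $(\Gr^1_{\bA^1}\times\Gr^1_{\bA^1})|_U$ by $\sigma$. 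Once this is in hand, the two nested cartesian squares propagating back from $\fgl(2)^\rs$ along $i_{\fgl(2)}$ and $j$ automatically yield the $\W$-equivariance of the composed isomorphism $\Spr \simto \Psi_G(\cT_2)$ by naturality of base change.

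The main obstacle will be this last identification: making the $\PGL(2)$ local model of $\Gr^\sm_{\bA^{(2)}}$ precise enough to recognize $\varpi$ literally as the eigenvalue-swap $\W$-cover, so that it matches $\mu_{\fgl(2)}^\rs$ under $\tilde h$. Everything else is bookkeeping using the standard base-change, composition, and factorization compatibilities collected in Appendix~\ref{sect:lemmas}.
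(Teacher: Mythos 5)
Your proposal is correct and takes essentially the same approach as the paper: an explicit model of $\Gr^{\sm}_{\bA^{(2)}}$ (the paper uses $\C[t]$-lattices where you use the factorization/eigenvalue picture, which encode the same data), cartesianness of the squares read off from that model and from pullback along $U'\hookrightarrow\bA^{(2)}$, and $\W$-equivariance by tracing both actions back to the Galois $\W$-covers over the generic stratum and propagating through the cartesian squares via the base-change compatibility lemmas. The only slip is notational: the map identifying $\mu^{\rs}_{\fgl(2)}$ with the pullback of $\varpi$ is $\tilde\jmath$ (over $j'$), not $\tilde h$, which is merely the open inclusion $\tglt^{\rs}\hookrightarrow\tglt$.
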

\begin{proof}
We give only a brief sketch of the argument.  (A closely related result for $\GL(n)$ is proved in~\cite{mautner2} using earlier constructions in \cite{mvy}.)  We start by interpreting the various affine Grassmannians in terms of lattices.  Specifically, let $\cL_0 := \fO^2 \subset \fK^2$ be the standard  $\fO$-lattice in $\fK^2$ with natural basis $(e_1,e_2)$.  We have identifications
\begin{gather*}
\Gr^{\sm} = \overline{\Gr^2} = \{ \cL_2 \subset \fK^2 \mid \text{$\cL_2 \subset t^{-1} \cL_0$ and $\dim(t^{-1} \cL_0 / \cL_2)=2$} \}, \\
\Gr^1 \wtimes \Gr^1 = \{(\cL_1,\cL_2) \mid \text{$\cL_2 \subset \cL_1 \subset t^{-1} \cL_0$, $\dim(\cL_1/\cL_2)=\dim(t^{-1} \cL_0 / \cL_1)=1$} \}
\end{gather*}
(where the $\cL_i$'s are implicitly required to be $\fO$-lattices). The image of the embedding $j: \cN \to \Gr^\sm$ is given by
\[
\cN \cong \{ \cL_2 \in \Gr^{\sm} \mid \text{the images of $t^{-1} e_1$ and $t^{-1} e_2$ form a basis of $t^{-1} \cL_0 / \cL_2$} \}.
\]
The global versions can be described using $\C[t]$-lattices in $\C(t)^{2}$.  Let $\mathfrak{L}_0:=\C[t]^2$ be the standard lattice.  We have:
\begin{gather*}
\Gr^{\sm}_{\bA^{(2)}} = \{ \mathfrak{L}_2 \subset \C(t)^2 \mid \text{$\mathfrak{L}_2 \subset t^{-1} \mathfrak{L}_0$ and $\dim(t^{-1} \mathfrak{L}_0 / \mathfrak{L}_2)=2$} \}, \\
\Gr^1_{\bA^1} \wtimes \Gr^1_{\bA^1} = \{(\mathfrak{L}_1, \mathfrak{L}_2) \mid \text{$\mathfrak{L}_2 \subset \mathfrak{L}_1 \subset t^{-1} \mathfrak{L}_0$, $\dim(\mathfrak{L}_1 / \mathfrak{L}_2)=\dim(t^{-1} \mathfrak{L}_0 / \mathfrak{L}_1)=1$} \}
\end{gather*}
(where $\mathfrak{L}_i$'s are required to be $\C[t]$-lattices). It is left to the reader to supply explicit descriptions for the images of $j'$ and $\tilde\jmath$ and for the maps $e$ and $\tilde e$.  It follows from those descriptions that the left-hand cube is commutative and that each square in it is cartesian.  The same holds for the right-hand cube because it is obtained by forming pullbacks with respect to the inclusion $U' \to \bA^{(2)}$.

Finally, recall that the $\W$-action on $\Spr$ is defined using its action on $\tglt^{\rs}$.  Since this is just the restriction of the $\W$-action on $(\Gr^1_{\bA^1} \times \Gr^1_{\bA^1})|_U$, it can be seen from Lemma~\ref{lem:T2-W-action} and several applications of Lemmas~\ref{lem:basechangecomposition} and~\ref{lem:compbasechange} that the isomorphism~\eqref{eqn:spr-T2-isom} is $\W$-equivariant.
\end{proof}

\begin{lem}\label{lem:psi-ffaithful}
The functor $\Psi_G: \Perv_{\GO}(\Gr^\sm,\bk) \to \Perv_G(\cN,\bk)$ is fully faithful.
\end{lem}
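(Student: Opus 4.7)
The strategy is to exploit the identification $\Spr \cong \Psi_G(\cT_2)$ supplied by Lemma~\ref{lem:rankone-comm} and a projective-generator argument based on $\cT_2 = \IC_1(\bk) \star \IC_1(\bk)$. Since $\pi_G\colon \cM \to \cN$ is an isomorphism for $\PGL(2)$, the functor $(\pi_G)_*$ is an equivalence, and it suffices to prove that $(j_G)^!\colon \Perv_{\GO}(\Gr^\sm,\bk) \to \Perv_G(\cM,\bk)$ is fully faithful. Since $\GO$ and $G$ are connected, the relevant Hom spaces agree with those computed in the non-equivariant perverse categories, so one may work directly with $j^*$ on the image of $\GO$-equivariant objects.

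The crux is to establish that $\cT_2$ is a projective generator of $\Perv_{\GO}(\Gr^\sm,\bk)$ with endomorphism algebra canonically $\bk[\W]$. Via the Satake equivalence, $\cT_2$ corresponds to $V \otimes V \in \Rep(\Gv,\bk)_\sm$ with $\Gv \cong \SL(V)$; direct analysis of the small subcategory (representations with $\Tv$-weights in $\{-2,0,2\}$) shows that every small representation is a quotient of a direct sum of copies of $V\otimes V$, and that $\End_{\SL(V)}(V\otimes V)$ is free of rank $2$ over $\bk$ with basis $\{\id,\eta\}$, identified with $\bk[\W]$ via the commutativity involution described in \S\ref{ss:r1-notation} and Lemma~\ref{lem:T2-W-action}. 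On the target side, $\Spr$ is already known to be projective by Proposition~\ref{prop:springer-exactness}, and $\End(\Spr) \cong \bk[\W]$ can be computed directly from the Springer resolution $\mu_{\cN}$ via adjunction as the top Borel--Moore homology of the Steinberg variety $\tcN \times_\cN \tcN$, which for $\PGL(2)$ is two-dimensional and canonically $\bk[\W]$.

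The $\W$-equivariance of $\Psi_G(\cT_2) \cong \Spr$ provided by Lemma~\ref{lem:rankone-comm} then forces the natural map $\Psi_G\colon \End(\cT_2) \to \End(\Spr)$ to intertwine the two canonical identifications with $\bk[\W]$, hence to be an isomorphism. Combined with projectivity of both $\cT_2$ and $\Spr$, exactness of $\Psi_G$, and the standard argument that presents any $A \in \Perv_{\GO}(\Gr^\sm,\bk)$ as the cokernel of a map between copies of $\cT_2$ (with $\Psi_G A$ presented analogously by copies of $\Spr$), this implies that the natural map $\Hom(A,B) \to \Hom(\Psi_G A, \Psi_G B)$ is an isomorphism for all $A,B$. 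The main obstacle I anticipate is the verification that $\cT_2$ is a projective generator over arbitrary Noetherian commutative $\bk$ of finite global dimension, particularly when $\mathrm{char}\,\bk = 2$: then $\Rep(\SL(V),\bk)_\sm$ fails to be semisimple, and one must show by direct Ext-computations in $\Rep(\SL(V),\bk)_\sm$ that $V \otimes V$ still maps onto both simple objects and that the relevant higher Ext groups behave well enough for the projective-generator machinery to apply.
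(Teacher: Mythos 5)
Your route is genuinely different from the paper's, and as written it does not close. The paper's proof is a two-line formal argument that never mentions $\cT_2$: writing $Z \subset \Gr^2$ for the closed complement of the open set $j(\cN) \subset \Gr^\sm$, it invokes the standard facts that $j_{!*}:\Perv_G(\cN,\bk)\to\Perv_G(\Gr^\sm,\bk)$ is fully faithful with essential image the subcategory $\sP^Z$ of perverse sheaves having no subobject or quotient supported on $Z$, and that $j^!$ is left inverse to $j_{!*}$, hence fully faithful on $\sP^Z$. Since $Z$ is a proper closed ($G$-stable but not $\GO$-stable) subset of the orbit $\Gr^2$, every $\GO$-equivariant perverse sheaf lies in $\sP^Z$, and composing with the equivalence $(\pi_G)_*$ gives the lemma. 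Note the logical order: in the paper the projectivity of $\cT_2$ and the isomorphism $\bk\W \simto \End(\cT_2)$ are \emph{deduced from} this lemma (Lemmas~\ref{lem:proj-generator-SL2} and~\ref{lem:end-T2}), so your argument must re-establish them independently on the representation-theoretic side --- which is exactly the step you defer as "the main obstacle."

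The concrete gaps are these. First, the claim that every object of $\Rep(\Gv,\bk)_{\sm}$ is a quotient of copies of $V\otimes V$ is false when $2$ is not invertible in $\bk$: over a field of characteristic $2$, $V\otimes V$ is the tilting module of highest weight $2$, which coincides with the projective cover of the trivial module in the truncated category and has simple head $L(0)$, so it admits no surjection onto $L(2)=\Satakesm{G}(\IC_2(\bk))$. A projective generator must include $\Delta_2(\bk)$ (this is the content of Lemma~\ref{lem:proj-generator-SL2}), and then the Morita argument requires an isomorphism $\End(\cT_2\oplus\Delta_2(\bk))\simto\End(\Psi_G(\cT_2\oplus\Delta_2(\bk)))$ --- hence comparisons of $\Hom(\cT_2,\Delta_2(\bk))$, $\Hom(\Delta_2(\bk),\cT_2)$, $\End(\Delta_2(\bk))$ with their counterparts on $\cN$, plus projectivity of $\Psi_G(\Delta_2(\bk))$ in $\Perv_G(\cN,\bk)$ --- none of which your proposal addresses. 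Second, even on the piece you do treat, $\W$-equivariance of $\Spr\cong\Psi_G(\cT_2)$ only says the map $\End(\cT_2)\to\End(\Spr)$ intertwines the $\W$-actions; to conclude it is an isomorphism you need both algebras to be free of rank two \emph{with the group elements as a basis}, which on the source side is precisely the integral/characteristic-$2$ computation left open. The intermediate-extension argument is both shorter and avoids all of this.
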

\begin{proof}
Let $Z \subset \Gr^\sm$ be the complement of the open set $j(\cN) \subset \Gr^\sm$.  This is a closed, $G$-stable (but not $\GO$-stable) subset of $\Gr^2$.  It is well known that $j_{!*}: \Perv_G(\cN,\bk) \to \Perv_G(\Gr^\sm,\bk)$ is fully faithful, and that its essential image is the full subcategory $\sP^Z \subset \Perv_G(\Gr^\sm,\bk)$ of perverse sheaves with no quotient or subobject supported on $Z$.  Moreover, $j^!$ is left inverse to $j_{!*}$.  In particular, $j^!|_{\sP^Z}$ is fully faithful.  It is clear that $\Perv_{\GO}(\Gr^\sm,\bk) \subset \sP^Z$, so the result follows.
\end{proof}

In fact, $\Psi_G$ is an equivalence (see~\cite[Theorem~4.1]{mautner2}), but we will not need this stronger result.  Lemmas~\ref{lem:rankone-comm} and \ref{lem:psi-ffaithful} have the following immediate consequence.

\begin{cor}\label{cor:T-isom}
There is a natural isomorphism of functors $\bT \natisom \Springer{G} \circ \Psi_G$.
\end{cor}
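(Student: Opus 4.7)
The plan is to assemble the isomorphism directly from the two preceding lemmas. For any object $M$ in $\Perv_{\GO}(\Gr^{\sm},\bk)$, I would first use the full faithfulness of $\Psi_G$ established in Lemma~\ref{lem:psi-ffaithful} to produce the canonical isomorphism
\[
\Hom_{\Perv_{\GO}(\Gr^{\sm})}(\cT_2,M) \;\simto\; \Hom_{\Perv_G(\cN)}(\Psi_G(\cT_2), \Psi_G(M)).
\]
Then I would post-compose with the isomorphism obtained by pulling back along the $\W$-equivariant identification $\Spr \cong \Psi_G(\cT_2)$ of Lemma~\ref{lem:rankone-comm}, yielding
\[
\Hom_{\Perv_G(\cN)}(\Psi_G(\cT_2), \Psi_G(M)) \;\simto\; \Hom_{\Perv_G(\cN)}(\Spr, \Psi_G(M)) \;=\; \Springer{G}(\Psi_G(M)).
\]
Both steps are natural in $M$, so the composition gives a natural isomorphism of underlying $\bk$-modules $\bT(M) \simto \Springer{G}(\Psi_G(M))$.

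The remaining point is to check that this natural isomorphism intertwines the two $\W$-actions. The $\W$-action on the left-hand side comes, by definition, from the involution $\eta$ on $\cT_2$ (defined via the commutativity constraint, as recalled in \S\ref{ss:r1-notation}), acting by precomposition on $\Hom(\cT_2,M)$. The $\W$-action on the right-hand side comes from the $\W$-action on $\Spr$ (defined from the Galois covering $\mu_{\fg}^{\rs}$), acting by precomposition on $\Hom(\Spr, \Psi_G(M))$. The first step above is manifestly $\W$-equivariant because it is induced by the functor $\Psi_G$ applied to the $\W$-action on $\cT_2$. The second step is $\W$-equivariant precisely because Lemma~\ref{lem:rankone-comm} asserts that the isomorphism $\Spr \cong \Psi_G(\cT_2)$ is itself $\W$-equivariant.

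The only real work in the corollary is therefore already contained in the two lemmas it cites; I do not anticipate any genuine obstacle, since the $\W$-equivariance of $\Spr \cong \Psi_G(\cT_2)$ was already the nontrivial output of Lemma~\ref{lem:rankone-comm}, and the reduction of $\Hom(\Spr,\Psi_G M)$ to $\Hom(\cT_2,M)$ is a formal consequence of $\Psi_G$ being fully faithful on $\Perv_{\GO}(\Gr^{\sm},\bk)$.
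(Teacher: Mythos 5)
Your proof is correct and is exactly the argument the paper intends: the paper states the corollary as an ``immediate consequence'' of Lemmas~\ref{lem:rankone-comm} and~\ref{lem:psi-ffaithful}, and your write-up simply makes explicit the two-step composition (full faithfulness of $\Psi_G$, then transport along the $\W$-equivariant isomorphism $\Spr \cong \Psi_G(\cT_2)$) together with the routine check of $\W$-equivariance.
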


\subsection{Algebraic properties of $\cT_2$}
\label{ss:r1-algebraic}

It is well known that $\cT_2$ is a tilting object. In particular, we have two exact sequences of perverse sheaves
\begin{equation}
\label{eqn:exact-sequence-T2}
\Delta_2(\bk) \hookrightarrow \cT_2 \twoheadrightarrow \Delta_0(\bk)
\qquad\text{and}\qquad
\nabla_0(\bk) \hookrightarrow \cT_2 \twoheadrightarrow \nabla_2(\bk).
\end{equation}
The representations corresponding to these perverse sheaves under the Satake equivalence are described as follows. We have $\Satake{G}(\Delta_0(\bk))=\Satake{G}(\nabla_0(\bk))\cong\bk$ (the trivial representation), and $\Satake{G}(\cT_2)\cong V\otimes V$. The sub-representation $\Satake{G}(\Delta_2(\bk))$ of $\Satake{G}(\cT_2)$ consists of the symmetric tensors in $V\otimes V$, i.e.~the invariant submodule of $\Satake{G}(\eta)$. The quotient $\Satake{G}(\nabla_2(\bk))$ of $\Satake{G}(\cT_2)$ is the symmetric square $S^2(V)$.

\begin{lem}
\label{lem:proj-generator-SL2}
The object $\cT_2 \oplus \Delta_2(\bk)$ is a projective generator of $\Perv_{\GO}(\Gr^{\sm},\bk)$.
\end{lem}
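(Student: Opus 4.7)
The plan is to verify separately that each of $\cT_2$ and $\Delta_2(\bk)$ is projective, and then that their sum is a generator; together these imply the claim.

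Projectivity of $\cT_2$ follows from Corollary~\ref{cor:T-isom}, which gives a natural isomorphism $\bT = \Hom(\cT_2,-) \cong \Springer{G}\circ\Psi_G$ as functors to $\Rep(\W,\bk)$; after forgetting the $\W$-action this becomes an isomorphism of functors to $\Vect(\bk)$, so projectivity of $\cT_2$ is equivalent to exactness of $\Springer{G}\circ\Psi_G$. The functor $\Psi_G = (\pi_G)_*\circ(j_G)^!$ is exact because $j_G$ is an open immersion (so $(j_G)^!$ is $t$-exact on perverse sheaves) and $\pi_G$ is finite (so $(\pi_G)_*$ is $t$-exact), while $\Springer{G}$ is exact by Proposition~\ref{prop:springer-exactness}.

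For $\Delta_2(\bk)$, I use the geometry of the stratum: $\Gr^2$ is smooth and simply connected (it is an $\bA^1$-fibration over $G/B \cong \mathbb{P}^1$, hence homotopy equivalent to $\mathbb{P}^1$), so every perverse sheaf on it has the form $\underline{M}[\dim\Gr^2]$ for some finitely generated $\bk$-module $M$, and $H^1(\Gr^2,\underline{M}) = H^1(\mathbb{P}^1,\underline{M}) = 0$. For any $\mathcal{F}\in\Perv_{\GO}(\Gr^\sm,\bk)$, the $(j_2)_!\dashv(j_2)^*$ adjunction then yields
\[
\mathrm{Ext}^1_{\Perv}(\Delta_2(\bk),\mathcal{F}) \cong H^1(\Gr^2,\underline{M}) = 0.
\]

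For generation, I argue by cases on whether $(j_2)^*\mathcal{F}$ vanishes. If $(j_2)^*\mathcal{F}\neq 0$, the $H^0$ analogue of the previous computation identifies $\Hom(\Delta_2(\bk),\mathcal{F})$ with the nonzero $\bk$-module $M$. Otherwise $\mathcal{F}$ is supported on the closed stratum $\{\mathbf{t}_0\}=\Gr^0$, so $\mathcal{F}\cong(j_0)_*N$ for a nonzero finitely generated $\bk$-module $N$; since $\mathbf{t}_0\in\cM$ and $\pi_G(\mathbf{t}_0)=0$, base change yields $\Psi_G(\mathcal{F})\cong(i_0)_*N$ where $i_0:\{0\}\hookrightarrow\cN$. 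The stalk $i_0^*\Spr \cong H^\bullet(G/B,\bk)[\dim\cN]$ has $H^0 = \bk$, so adjunction gives $\Springer{G}((i_0)_*N) \cong \Hom_{\cDb(\bk)}(i_0^*\Spr,N) \cong N \neq 0$, and Corollary~\ref{cor:T-isom} then transfers this to $\Hom(\cT_2,\mathcal{F}) \neq 0$. The main non-routine input is the exactness of $\Springer{G}$ from Proposition~\ref{prop:springer-exactness}, which relies on the intertwining machinery of Section~\ref{sect:springer}; everything else reduces to standard facts about the rank-one affine Grassmannian and the Springer fiber at the origin.
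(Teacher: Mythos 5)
Your proof is correct, and the two projectivity arguments essentially match the paper's: for $\cT_2$ you invoke Corollary~\ref{cor:T-isom} together with the exactness of $\Springer{G}$ and $\Psi_G$, which is exactly how the paper combines Lemmas~\ref{lem:rankone-comm} and~\ref{lem:psi-ffaithful} with Proposition~\ref{prop:springer-exactness}; for $\Delta_2(\bk)$ your vanishing of $\mathrm{Ext}^1$ via $H^1(\Gr^2,\underline{M})=0$ is the degree-one instance of the same adjunction $\Hom(\Delta_2(\bk),M)\cong\Hom(\ubk_{\Gr^2}[2],\p(j_2)^!M)$ that the paper uses to see $\Hom(\Delta_2(\bk),-)$ is exact directly.

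Where you genuinely diverge is the generation step. The paper cites \cite[Lemma~2.1.4]{rsw} to write every object as a successive extension of objects $\IC_i(E)$ and then exhibits explicit surjections $\Delta_2(\bk)\twoheadrightarrow\IC_2(\bk)$ and $\cT_2\twoheadrightarrow\IC_0(\bk)$ coming from~\eqref{eqn:exact-sequence-T2}. You instead prove that $\Hom(\cT_2\oplus\Delta_2(\bk),\mathcal{F})\neq 0$ for every nonzero $\mathcal{F}$, by a case analysis on the support; both Hom computations (the identification with $M$ on the open stratum, and the identification with $N$ via $i_0^*\Spr\cong H^\bullet(\mathbb{P}^1,\bk)[2]$ on the closed one) are correct. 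What this buys is independence from the explicit tilting structure of $\cT_2$; what it costs is that ``$\Hom(P,-)$ is faithful for a projective $P$'' only yields ``every object is a quotient of a \emph{finite} direct sum of copies of $P$'' --- which is how the lemma is actually used in \S\ref{ss:r1-algebraic}, via two-step presentations --- after one knows the category is noetherian (take a maximal subobject realized as an image of some $P^{\oplus n}$ and use projectivity to enlarge it). That finiteness input is precisely what the citation of \cite[Lemma~2.1.4]{rsw} supplies in the paper, so you should either quote that result or justify noetherianity of $\Perv_{\GO}(\Gr^{\sm},\bk)$; with that one sentence added, your argument is complete.
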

\begin{proof}
By Proposition~\ref{prop:springer-exactness}, $\Springer{G} = \Hom(\Spr,{-})$ is exact, so $\Spr$ is a projective object in $\Perv_G(\cN,\bk)$.  It follows from Lemmas~\ref{lem:rankone-comm} and~\ref{lem:psi-ffaithful} that $\cT_2$ is a projective object in $\Perv_{\GO}(\Gr^{\sm},\bk)$. Next, for any object $M$ in $\Perv_{\GO}(\Gr^{\sm},\bk)$, we have
\[
\Hom(\Delta_2(\bk),M) \cong \Hom(\ubk_{\Gr^2}[2], \p (j_2)^! M)
\]
by adjunction.  
It easily follows that $\Delta_2(\bk)$ is projective.

Now by \cite[Lemma 2.1.4]{rsw}, any object of $\Perv_{\GO}(\Gr^{\sm},\bk)$ is a successive extension of objects of the form $\IC_i(E)$ for $i \in \{0,2\}$ and $E$ a finitely-generated $\bk$-module. Hence to finish the proof, it suffices
to prove the following claim: For any finitely-generated $\bk$-module $E$ and any $i \in \{0,2\}$, there exists $n \in \Z_{\geq 0}$ and a surjection $\bigl( \cT_2 \oplus \Delta_2(\bk) \bigr)^{\oplus n} \twoheadrightarrow \IC_i(E)$. As the functor $\IC_i(-)$ preserves surjections, it is enough to prove this when $E=\bk$. However, by definition we have a surjection $\Delta_2(\bk) \twoheadrightarrow \IC_2(\bk)$, and by \eqref{eqn:exact-sequence-T2} there is a surjection $\cT_2 \twoheadrightarrow \IC_0(\bk)$.
\end{proof}

\begin{lem}\label{lem:end-T2}
\begin{enumerate}
\item The action map $\bk \W \to \End(\cT_2)$ is an isomorphism.\label{it:springer}
\item The object $\bT(\Delta_2(\bk)) \in \Rep(\W,\bk)$ is a free $\bk$-module of rank one with trivial $\W$-action.\label{it:T2-delta}
\item The object $\bT(\cT_2) \in \Rep(\W,\bk)$ is a free $\bk$-module of rank two on which $s\in\W$ acts as $\bT(\eta)$.\label{it:T2-s-action}
\end{enumerate}
\end{lem}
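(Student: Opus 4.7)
The plan is to translate everything to the representation-theoretic side via the Satake equivalence and compute directly, using the fact that under $\Satake{G}$ we have $\cT_2 \cong V \otimes V$, the involution $\eta$ corresponds to the tensor swap $v \otimes w \mapsto w \otimes v$, and the short exact sequence $\Delta_2(\bk) \hookrightarrow \cT_2 \twoheadrightarrow \Delta_0(\bk)$ corresponds to $S \hookrightarrow V \otimes V \twoheadrightarrow \bk$, where $S \subset V \otimes V$ denotes the submodule of symmetric tensors (the $\eta$-invariants).

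For part (1), I would compute $\End_{\check G}(V \otimes V)$ by hand. Any $\check G$-equivariant $\phi \colon V \otimes V \to V \otimes V$ must be $\check T$-equivariant, hence preserves the weight space decomposition into weights $\{2, 0, -2\}$, which have $\bk$-ranks $\{1, 2, 1\}$ with the obvious basis. Writing the endomorphism matrix with unknown entries in terms of this basis and imposing equivariance under the standard one-parameter unipotent subgroups $u(a) \colon e_1 \mapsto e_1, e_2 \mapsto e_2 + ae_1$ and its opposite (this amounts to a handful of linear equations), one finds exactly a two-parameter family of $\bk$-solutions, with basis $\{\mathrm{id}, \Satake G(\eta)\}$. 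Since $\eta^2 = \mathrm{id}$, this identifies $\End(\cT_2)$ with $\bk W$ via the action map, giving~(1).

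For part (3), note that $\bT(\cT_2) = \End(\cT_2)$, which has been identified with $\bk W$ in~(1), with $\bk$-basis $\{\mathrm{id}, \eta\}$; the claim about rank is immediate. The $W$-action on $\bT(\cT_2)$ is by precomposition with $\eta$, so $s \cdot \mathrm{id} = \eta$ and $s \cdot \eta = \mathrm{id}$. On the other hand $\bT(\eta)$ is postcomposition with $\eta$, which gives the same swap. (These two agree because $\End(\cT_2) \cong \bk W$ is commutative.)

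For part~(2), inclusion into $\cT_2$ identifies $\bT(\Delta_2(\bk)) = \Hom(\cT_2, \Delta_2(\bk))$ with the $\bk$-submodule of $\End(\cT_2)$ consisting of those $\phi$ whose image lies in the $\eta$-invariants of $\cT_2$, i.e.\ those satisfying $\eta \circ \phi = \phi$. Writing $\phi = a\cdot \mathrm{id} + b\cdot \eta$ in the basis from~(1), this condition forces $(a - b)(\mathrm{id} - \eta) = 0$, hence $a = b$ (since $\mathrm{id} \neq \eta$ already on the element $e_1 \otimes e_2$). Thus $\bT(\Delta_2(\bk)) = \bk \cdot (\mathrm{id} + \eta)$ is free of rank one, and it carries the trivial $W$-action since $(\mathrm{id} + \eta) \circ \eta = \mathrm{id} + \eta$. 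The main (minor) obstacle is carrying out the weight-space computation of $\End_{\check G}(V \otimes V)$ carefully enough that it works over an arbitrary Noetherian ring $\bk$, without invoking the decomposition $V\otimes V \cong S^2 V \oplus \Lambda^2 V$ (which fails in characteristic $2$); but the unipotent-equivariance equations are integral, so this presents no real difficulty.
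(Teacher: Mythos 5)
Your proof is correct, but it takes a genuinely different route from the paper's for parts (1) and (2). The paper argues homologically: for (1) it uses the tilting filtrations \eqref{eqn:exact-sequence-T2}, the projectivity of $\cT_2$ (which it has just deduced from Proposition~\ref{prop:springer-exactness}), and adjunction to produce a short exact sequence $0 \to \Hom(\Delta_0(\bk),\nabla_0(\bk)) \to \End(\cT_2) \to \Hom(\Delta_2(\bk),\nabla_2(\bk)) \to 0$ with both ends free of rank one; for (2) it identifies $\Hom(\cT_2,\Delta_2(\bk))$ with the left annihilator of $1-s$ in $\bk\W$ via a similar exact sequence. You instead compute $\End_{\Gv}(V\otimes V)$ directly by weight spaces and unipotent equivariance, and identify $\Hom(\cT_2,\Delta_2(\bk))$ with $\{\phi : \eta\circ\phi=\phi\}$ using the description of $\Satake{G}(\Delta_2(\bk))$ as the symmetric tensors. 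What your approach buys: it does not need the projectivity of $\cT_2$ or any adjunction computations, only the explicit identification $\Gv\cong\SL(V)$; what it costs is the hands-on matrix computation and a heavier reliance on the explicit description of $\Delta_2(\bk)$ (which the paper also uses, but only to identify the map $c$ with $1-s$ up to a unit). Parts (3) agree essentially verbatim (left versus right multiplication by $s$ in the commutative ring $\bk\W$).

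Two small points to tighten. First, your unipotent-equivariance equations are a priori only \emph{necessary} conditions, and over a general base one should not casually assert that $\Tv$-, $U$- and $U^-$-equivariance imply $\Gv$-equivariance; but this is harmless here because the solution module of the necessary conditions is exactly $\bk\,\mathrm{id}\oplus\bk\,\Satake{G}(\eta)$ and both basis elements are honestly $\Gv$-equivariant, so the two containments squeeze $\End_{\Gv}(V\otimes V)$ to equal this module --- it is worth saying this explicitly. Second, in part (2) the implication $(a-b)(\mathrm{id}-\eta)=0 \Rightarrow a=b$ should be justified by the linear independence of $\mathrm{id}$ and $\eta$ established in (1) (compare coefficients in the free basis), not merely by $\mathrm{id}\neq\eta$: over a general ring $\bk$, $\lambda x=0$ with $x\neq 0$ does not force $\lambda=0$.
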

\begin{proof}
\eqref{it:springer}~Using the two exact sequences~\eqref{eqn:exact-sequence-T2} together with adjunction and the fact that $\cT_2$ is projective, we find an exact sequence
\[
0 \to \Hom(\Delta_0(\bk), \nabla_0(\bk)) \to \End(\cT_2) \to \Hom(\Delta_2(\bk), \nabla_2(\bk)) \to 0.
\]
We also have $\Hom(\Delta_0(\bk), \nabla_0(\bk)) \cong \Hom(\Delta_2(\bk), \nabla_2(\bk)) \cong \bk$ by adjunction, so it follows that $\End(\cT_2)$ is a free $\bk$-module of rank two.  It is spanned by the identity map together with the composition $c: \cT_2 \to \cT_2$ given by
\[
\cT_2 \twoheadrightarrow \Delta_0(\bk) = \nabla_0(\bk) \hookrightarrow \cT_2.
\]
It is easy to see from the description of the corresponding representations that $c$ is (up to multiplication by a unit) the action of $1-s \in \bk \W$. The result follows.

\eqref{it:T2-delta}~By adjunction, we have $\Hom(\Delta_2(\bk), \Delta_0(\bk)) = 0$.  It then follows from the first short exact sequence in~\eqref{eqn:exact-sequence-T2} that we have an isomorphism $\Hom(\Delta_0(\bk),\Delta_0(\bk)) \xrightarrow{\sim} \Hom(\cT_2, \Delta_0(\bk))$.  In particular, the last term in the following short exact sequence is a free $\bk$-module of rank one:
\[
0 \to \Hom(\cT_2,\Delta_2(\bk)) \to \End(\cT_2) \overset{p}{\to} \Hom(\cT_2,\Delta_0(\bk)) \to 0.
\]
Thus, $\Hom(\cT_2,\Delta_2(\bk))$ is identified with $\ker p$, or, equivalently, with $\ker i \circ p$, where $i$ is the injective map $\Hom(\cT_2,\Delta_0(\bk)) \to \Hom(\cT_2,\cT_2)$ induced by the inclusion $\Delta_0(\bk) = \nabla_0(\bk) \hookrightarrow \cT_2$.  Now, $i \circ p: \End(\cT_2) \to \End(\cT_2)$ is induced by composition with the map $c$ defined above.  It follows that
\[
\Hom(\cT_2,\Delta_2(\bk)) \cong \{ a \in \bk \W \mid (1-s)a = 0 \} = \bk \cdot (1+s) \subset \bk \W.
\]
Thus, $\Hom(\cT_2,\Delta_2(\bk))$ is free of rank one over $\bk$, and $\W$ acts on it trivially.

\eqref{it:T2-s-action}~By definition, $\bT(\cT_2) = \Hom(\cT_2,\cT_2)$ is isomorphic to $\bk \W$ by~\eqref{it:springer}. The action of $s$ on $\bT(\cT_2)$ comes from applying $\eta$ to the first copy of $\cT_2$ in $\Hom(\cT_2,\cT_2)$, so it corresponds to right multiplication by $s$ on $\bk \W$. The action of $\bT(\eta)$ on $\bT(\cT_2)$ comes from applying $\eta$ to the second copy of $\cT_2$ in $\Hom(\cT_2,\cT_2)$, so it corresponds to left multiplication by $s$ on $\bk \W$. Since $\bk\W$ is commutative, these are the same.
\end{proof}

An easy calculation yields the following fact.

\begin{lem}
\label{lem:action-s-SL2}
The restriction of $\Satakesm{G}(\eta):V \otimes V \to V \otimes V$ to $(V \otimes V)^{\Tv}$ is the action of $s\in\W$ on $\Phi_{\Gv}(V \otimes V)$.
\end{lem}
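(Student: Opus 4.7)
The plan is to reduce the lemma to a direct calculation in the standard two-dimensional representation of $\SL(V)$, exploiting the fact that everything in sight is explicit once we fix a basis of $V$ adapted to the $\Tv$-action. Concretely, I would pick a basis $(e_1,e_2)$ of $V$ with $e_1$ spanning the weight space for the positive coroot and $e_2$ spanning the opposite one, so that $(V\otimes V)^{\Tv}=\bk(e_1\otimes e_2)\oplus\bk(e_2\otimes e_1)$. By the characterization of $\Satakesm{G}(\eta)$ as the commutativity constraint (it acts on $V\otimes V\cong\Satake{G}(\cT_2)$ by $x\otimes y\mapsto y\otimes x$), its restriction to $(V\otimes V)^{\Tv}$ is simply the involution swapping $e_1\otimes e_2$ and $e_2\otimes e_1$.

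Next I would fix a lift $\dot s\in N_{\Gv}(\Tv)$ of the nontrivial element $s\in\W$. The standard choice in $\SL(V)$ satisfies $\dot se_1=e_2$ and $\dot se_2=-e_1$ (the key feature being $\dot s^{2}=-1$, which is forced by working in $\SL(V)$ rather than $\PGL(V)$). A direct computation then gives
\[
\dot s\cdot(e_1\otimes e_2)=-e_2\otimes e_1,\qquad \dot s\cdot(e_2\otimes e_1)=-e_1\otimes e_2,
\]
so the untwisted Weyl group action on the zero weight space is minus the swap. Tensoring with the sign character $\epsilon(s)=-1$, as built into the definition of $\Phi_{\Gv}$, cancels this sign: on $\Phi_{\Gv}(V\otimes V)$ the element $s$ acts by the honest swap, matching the restriction of $\Satakesm{G}(\eta)$ computed above.

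The only point that requires attention is ensuring that the sign $-1$ arising from $\dot s^{2}=-1$ really is canonical and independent of the chosen lift: any other lift differs by an element of $\Tv$, which acts trivially on the zero weight space, so the calculation above is unambiguous. There is no serious obstacle here; the argument is essentially the classical computation that identifies the $\W$-action (twisted by the sign) on the zero weight space of $V^{\otimes 2}$ with the tensor-swap, which is well known from the $\GL(n)$ picture of \S1.3 specialized to $n=2$. Once written out, the verification is a single line, so the whole proof can be presented as the explicit matrix computation above together with the observation that $\Satakesm{G}(\eta)$ acts as the swap on $V\otimes V$.
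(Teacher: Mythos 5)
Your computation is correct and is precisely the ``easy calculation'' that the paper omits: with $\dot s=\left(\begin{smallmatrix}0&-1\\1&0\end{smallmatrix}\right)\in\SL(V)$ one gets $\dot s\cdot(e_1\otimes e_2)=-e_2\otimes e_1$, the sign twist in the definition of $\Phi_{\Gv}$ cancels the $-1$, and independence of the lift follows because $\Tv$ acts trivially on the zero weight space. Nothing further is needed.
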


\subsection{Proof of Theorem~\ref{thm:main-result-again} for $G=\PGL(2)$}

As in \S\ref{subsect:main-proof}, we have an isomorphism
\[
\phi : \For^{\W} \circ \Phi_{\Gv} \circ \Satakesm{G} \directednatisom \For^{\W} \circ \Springer{G} \circ \Psi_G.
\]
All we need to show is that for each object $M \in \Perv_{\GO}(\Gr^\sm_G,\bk)$, the map of $\bk$-modules $\phi_M$ is actually $\W$-equivariant. Let $\psi_M: \Springer{G}(\Psi_G(M)) \to \bT(M)$ be the isomorphism deduced from Corollary~\ref{cor:T-isom}.  By definition $\psi_M$ is $\W$-equivariant, so it suffices to show that the composition 
\[
\phi'_M = \For^{\W}(\psi_M) \circ \phi_M: \For^{\W}(\Phi_{\Gv}(\Satakesm{G}(M))) \to \For^{\W}(\bT(M))
\]
is $\W$-equivariant.  The functors $\Phi_{\Gv} \circ \Satakesm{G}$ and $\bT$ are exact, so by Lemma~\ref{lem:proj-generator-SL2}, it is enough to prove this for $M = \cT_2$ and $M = \Delta_2(\bk)$.

Suppose first that $M = \Delta_2(\bk)$.  One can easily check that $\Phi_{\Gv}(\Satakesm{G}(\Delta_2(\bk)))$ is the trivial $\W$-module (free of rank one over $\bk$). The same description applies to $\bT(\Delta_2(\bk))$ by Lemma~\ref{lem:end-T2}\eqref{it:T2-delta}, so any morphism of $\bk$-modules $\Phi_{\Gv}(\Satakesm{G}(\Delta_2(\bk))) \to \bT(\Delta_2(\bk))$ is $\W$-equivariant.

Now suppose that $M = \cT_2$.  Since $\phi'$ is a morphism of functors, we have
\begin{equation}\label{eqn:phip-intertwine-s}
\phi'_{\cT_2} \circ \For^\W(\Phi_{\Gv}(\Satakesm{G}(\eta))) = \For^\W(\bT(\eta)) \circ \phi'_{\cT_2}.
\end{equation}
By Lemmas~\ref{lem:action-s-SL2} and~\ref{lem:end-T2}\eqref{it:T2-s-action}, the maps $\Phi_{\Gv}(\Satakesm{G}(\eta))$ and $\bT(\eta)$ each coincide with the action of $s$ on the appropriate object.  Thus,~\eqref{eqn:phip-intertwine-s} says that $\phi'_{\cT_2}$ commutes with the action of $s$, as desired. 


\appendix


\section{Commutative diagrams in $2$-categories}
\label{sect:cubes}

Many of the arguments in this paper require us to keep track of equalities of natural isomorphisms of functors, which means that we are effectively working in the $2$-category $\sCat$ (see \cite[\S XII.3]{maclane}, \cite{kellystreet}). To carry out computations in this setting, we need some basic facts about commutative diagrams in $2$-categories.  

We apologize to category theorists for the informality and narrowness of our exposition. The `correct' level of generality is that of Power's $n$-categorical pasting theorem~\cite{powern}, but the cases of that result that we need are so special that explaining them in their own right is easier than explaining how to see them as special cases.

\subsection{The definition of commutativity}
\label{subsect:commutativity}

Let us first review the definition of a commutative diagram in ordinary category theory. A diagram in a category $\cA$ 
is a pair $(\Gamma,f)$, where $\Gamma$ is a finite directed graph and $f$ is a \emph{labelling} of $\Gamma$ in $\cA$: to every vertex $v$ of $\Gamma$ we assign an object $f(v)$ of $\cA$, and to every arc $e$ with source $v$ and target $v'$ we assign a morphism $f(e):f(v)\to f(v')$.
If $\gamma$ is a directed path in $\Gamma$ with initial vertex $v_1$ and final vertex $v_2$, then the labelling $f$ 
defines a morphism $f(\gamma):f(v_1)\to f(v_2)$, namely the composite of the labels of all the arcs in the path. One says that the diagram $(\Gamma,f)$ is \emph{commutative} if, for any two directed paths $\gamma,\gamma'$ in $\Gamma$ with the same initial and final vertices, we have $f(\gamma)=f(\gamma')$.

The $2$-categorical analogues of these concepts are as follows. A diagram in a $2$-category $\cA$ is a triple $(\Gamma,\Delta,f)$, where $(\Gamma,\Delta)$ is a \emph{$2$-computad} and $f$ is a labelling of $(\Gamma,\Delta)$ in $\cA$. Here, following~\cite{powern}, a $2$-computad $(\Gamma,\Delta)$ is a pair of finite directed graphs where the vertex set of $\Delta$ is a subset of the set of directed paths of $\Gamma$, and every arc of $\Delta$ joins two directed paths with the same initial and final vertices. A labelling $f$ of $(\Gamma,\Delta)$ in $\cA$ is a labelling of $\Gamma$ in the underlying $1$-category of $\cA$,
together with a $2$-cell $f(\eta):f(\gamma)\Rightarrow f(\gamma')$ for every arc $\eta$ of $\Delta$ whose source is the directed path $\gamma$ of $\Gamma$ and whose target is $\gamma'$.

Among all $2$-computads, the \emph{$2$-pasting schemes} play the role that directed paths play among directed graphs, in that they describe the valid ways to define a composite of $2$-cells, allowing a mix of `horizontal' and `vertical' composition; see~\cite[Definition 2.2]{powern} for the precise definition. Up to isomorphism, any $2$-pasting scheme $(\Gamma,\Delta)$ arises from a polygonal decomposition of a convex polygon in $\mathbb{R}^2$, as follows:
\begin{itemize}
\item $\Gamma$ consists of the vertices and edges of the polygons, with every edge oriented in the direction of increasing $x$-coordinate (assume that no two vertices have the same $x$-coordinate);
\item there is one arc of $\Delta$ for every interior polygon, joining the two directed paths that make up the boundary of that polygon, and oriented in the direction of decreasing $y$-coordinate.
\end{itemize}
The boundary of the exterior polygon is the union of two directed paths with the same initial and final vertices. We call these paths the domain and codomain of $(\Gamma,\Delta)$, where the domain is the one with higher $y$-coordinates. (We are using $x$- and $y$-coordinates just to establish consistent orientations, and they do not always correlate with the horizontal and vertical directions in our pictures.)

\begin{ex} \label{ex:pasting-scheme}
The following is an example of a $2$-pasting scheme, where dots and single arrows represent $\Gamma$, and double arrows represent the arcs of $\Delta$:
\begin{equation}
\vcenter{\xymatrix@R=18pt{
&&\bullet\ar[dr]\ar@{|o|=>|o|}[dd]\\
&\bullet\ar[ur]\ar[dr]\ar@{| |=>| |}[d]&&\bullet\ar[dr]\ar@{| |=>| |}[d]\\
\bullet\ar[ur]\ar[rr]&&\bullet\ar[ur]\ar[rr]&&\bullet
}}
\end{equation}
\end{ex}

It is shown in~\cite[Theorem 3.3]{power} (see also~\cite[Theorem 2.7]{powern}) that any labelling $f$ of a $2$-pasting scheme $(\Gamma,\Delta)$
defines a unique composite $2$-cell $f(\Gamma,\Delta):f(\alpha)\Rightarrow f(\beta)$ where $\alpha$ and $\beta$ are the domain and codomain of $(\Gamma,\Delta)$. We refer to a diagram $(\Gamma,\Delta,f)$ where $(\Gamma,\Delta)$ is a $2$-pasting scheme simply as a \emph{pasting diagram}. 

In displaying pasting diagrams, we often indicate the arcs of $\Delta$ not by double arrows but by shaded polygons on which a label (or reference number) can be displayed more conveniently. This creates ambiguity about which is the domain and which is the codomain of the $2$-pasting scheme, but it does not matter since we use this method of display only when the $2$-cells under consideration are invertible. 
\begin{ex} \label{ex:pasting-diagram}
A labelling of the $2$-pasting scheme of Example~\ref{ex:pasting-scheme} in a $2$-category $\cA$ might be depicted as:
\begin{equation}
\vc{\begin{tikzpicture}[xscale=2,yscale=1.6]
\compuinit
\node[lttricelld] at (0.5+0.5*\tric, 1.5+0.5*\tric) {$\chi$};
\node[lttricelld] at (1.5+0.5*\tric, 0.5+0.5*\tric) {$\psi$};
\node[cubef] at (1.5,1.5) {$\omega$};
\node (lu) at (0,2) {$A$};
\node (mu) at (1,2) {$B$};
\node (ru) at (2,2) {$C$};
\node (mm) at (1,1) {$D$};
\node (rm) at (2,1) {$E$};
\node (rd) at (2,0) {$F$};
\draw[->] (ru) -- node[arl] {$\epsilon$} (rm);
\draw[->] (lu) -- node[arl] {$\alpha$} (mu);
\draw[->] (lu) -- node[arr] {$\gamma$} (mm);
\draw[->] (mu) -- node[arl] {$\beta$} (ru);
\draw[->] (mu) -- node[arr] {$\delta$} (mm);
\draw[->] (rm) -- node[arl] {$\theta$} (rd);
\draw[->] (mm) -- node[arl] {$\zeta$} (rm);
\draw[->] (mm) -- node[arr] {$\eta$} (rd);
\end{tikzpicture}}
\end{equation}
Here, 
$A, \dots, F$ denote $0$-cells of $\cA$, and 
$\alpha, \dots, \theta$ denote $1$-cells of $\cA$ with domains and codomains as indicated. In one of the two possible interpretations of the picture, the named $2$-cells are 
\[ 
\chi:\delta\circ\alpha\Rightarrow\gamma,\qquad 
\psi:\theta\circ\zeta\Rightarrow\eta\qquad\text{and}\qquad
\omega:\epsilon\circ\beta\Rightarrow\zeta\circ\delta,
\]
and the composite $2$-cell defined by the pasting diagram has domain $\theta\circ\epsilon\circ\beta\circ\alpha$ and codomain $\eta\circ\gamma$.  In the other interpretation, the domains and codomains of all $2$-cells are switched. If we replace each of $\chi$, $\psi$ and $\omega$ by a symbol indicating an inverse pair of $2$-cells, the two interpretations of the picture define an inverse pair of $2$-cells $\theta\circ\epsilon\circ\beta\circ\alpha\natisom\eta\circ\gamma$.
\end{ex}

We say that a diagram $(\Gamma,\Delta,f)$ in a $2$-category is \emph{commutative} if, for any two sub-$2$-computads $(\gamma,\delta)$ and $(\gamma',\delta')$ of $(\Gamma,\Delta)$, which are both $2$-pasting schemes and have the same domain and codomain, we have $f(\gamma,\delta)=f(\gamma',\delta')$. (The definition of sub-$2$-computad is the obvious one.) 

\subsection{Polyhedral $2$-computads}
\label{subsect:polyhedral}

Apart from $2$-pasting schemes, almost all the $2$-computads encountered in this paper are of a special polyhedral kind, for which the definition of commutativity can be rephrased in simpler terms. 

A convex polyhedron in $\mathbb{R}^3$ (or rather, its boundary) gives rise to a $2$-computad $(\Gamma,\Delta)$ as follows:
\begin{itemize}
\item $\Gamma$ consists of the vertices and edges, with every edge oriented in the direction of increasing $x$-coordinate; 
\item there are two arcs of $\Delta$ for every face, joining the two directed paths that make up the boundary of that face, one arc each way.
\end{itemize}
When considering labellings of this $2$-computad in a $2$-category $\cA$, we always impose the extra condition that, for each face of the polyhedron, the $2$-cells assigned to the two arcs on that face are inverse to each other, so that each determines the other. (Thus, we really have a `$2$-computad with relations'.) For instance, when $\cA=\sCat$, such a labelling assigns a category to each vertex, a functor to each edge, and a natural isomorphism of functors to each face. We simply refer to a \emph{cube}, \emph{tetrahedron}, etc., meaning a diagram in a $2$-category (specifically, $\sCat$) obtained by labelling the $2$-computad associated with a cube, tetrahedron, etc.~in $\mathbb{R}^3$.
 
\begin{ex} \label{ex:cube1}
Consider the case of a cube in a $2$-category $\cA$. The $1$-skeleton of this cube, obtained by forgetting $\Delta$, is a diagram in the underlying $1$-category of $\cA$, of the kind that one would ordinarily mean by a `cube':
\begin{equation} \label{eqn:cube}
\vcenter{
\xymatrix@R=8pt@C=6pt{
A\ar[rrrr]^\alpha\ar[dd]_(.3){\epsilon}\ar[drr]^(.6){\beta}&&&&B\ar'[d]^(.6){\zeta}[dd]\ar[drr]^\gamma\\
&&C\ar[rrrr]^(.4){\delta}\ar[dd]_(.3){\eta}&&&&D\ar[dd]^(.3){\theta}\\
E\ar'[rr][rrrr]^(.3){\iota}\ar[drr]_\kappa&&&&F\ar[drr]^\lambda\\
&&G\ar[rrrr]_\mu&&&&H
}
}
\end{equation}
That is, 
$A, \dots, H$ denote $0$-cells of $\cA$, and $\alpha, \dots, \mu$ denote $1$-cells of $\cA$ with domains and codomains as indicated. To specify the full cube 
we must also specify, for each face, an inverse pair of $2$-cells between the two compositions of $1$-cells around the edges of that face. For example, the face $ABCD$ in the above picture should be labelled by an inverse pair of $2$-cells $\delta\circ\beta\natisom\gamma\circ\alpha$. When we want to display the names of these $2$-cells 
we use a picture such as
\begin{equation}
\vc{\begin{tikzpicture}[smallcube]
\compuinit
\node (rrd) at (1,0,0) {$F$};
\node[cuber] at (0.5,0.5,0) {$\tau$};
\node[cubed] at (1,0.5,0.5) {$\upsilon$};
\node[cubeb] at (0.5,0,0.5) {$\phi$};
\node (rlu) at (0,1,0) {$A$};
\node (rru) at (1,1,0) {$B$};
\node (fru) at (1,1,1) {$D$};
\node (frd) at (1,0,1) {$H$};
\node (fld) at (0,0,1) {$G$};
\node (rld) at (0,0,0) {$E$};
\draw[liner,->] (rru) -- node[arl] {$\al \zeta$} (rrd);
\draw[liner,->] (rld) -- node[arl,pos=.4] {$\al \iota$} (rrd);
\draw[liner,->] (rrd) -- node[arl] {$\al \lambda$} (frd);
\draw[->] (rlu) -- node[arl] {$\al \alpha$} (rru);
\draw[->] (rru) -- node[arl] {$\al \gamma$} (fru);
\draw[->] (fru) -- node[arl] {$\al \theta$}(frd);
\draw[->] (fld) -- node[arr] {$\al \mu$} (frd);
\draw[->] (rld) -- node[arr] {$\al \kappa$} (fld);
\draw[->] (rlu) -- node[arr] {$\al \epsilon$} (rld);
\node[cubel] at (0,0.5,0.5) {$\chi$};
\node[cubet] at (0.5,1,0.5) {$\psi$};
\node[cubef] at (0.5,0.5,1) {$\omega$};
\node (flu) at (0,1,1) {$C$};
\draw[->] (rlu) -- node[arl,pos=.7] {$\al \beta$} (flu);
\draw[->] (flu) -- node[arr,pos=.3] {$\al \delta$} (fru);
\draw[->] (flu) -- node[arl,pos=.3] {$\al \eta$} (fld);
\end{tikzpicture}}
\end{equation}
To avoid clutter, we sometimes display just the $1$-skeleton, when the context makes clear which $2$-cells are meant.
\end{ex}

Many of our results
assert that a particular cube (or tetrahedron, etc.) is commutative. According to the definition of commutativity given in \S\ref{subsect:commutativity}, this appears to require a number of different equalities of $2$-cells, but in fact the equalities are all equivalent because of our assumption that the $2$-cells assigned to each face are inverse to each other.
\begin{ex} \label{ex:cube2}
Continue with the cube of Example~\ref{ex:cube1}. One of the equalities of $2$-cells entailed by saying that this cube is commutative is
\begin{equation} \label{eqn:hexagonaspasting}
\vcenter{
\xymatrix@R=7pt@C=7pt{
A\ar[rrrr]^\alpha\ar[dd]_\epsilon\ar[drr]^\beta&&&&B\ar[drr]^\gamma\\
&&C\ar[rrrr]^\delta\ar[dd]_\eta\ar@{| |=>| |}[urr]&&&&D\ar[dd]^\theta\\
E\ar[drr]_\kappa\ar@{| |=>| |}[urr]&&&&\\
&&G\ar[rrrr]_\mu\ar@{|o|=>|o|}[uurrrr]&&&&H
}
}
\quad=\quad
\vcenter{
\xymatrix@R=7pt@C=7pt{
A\ar[rrrr]^\alpha\ar[dd]_\epsilon&&&&B\ar[dd]^\zeta\ar[drr]^\gamma\\
&&&&&&D\ar[dd]^\theta\\
E\ar[rrrr]^\iota\ar[drr]_\kappa\ar@{|o|=>|o|}[uurrrr]&&&&F\ar[drr]^\lambda\ar@{| |=>| |}[urr]\\
&&G\ar[rrrr]_\mu\ar@{| |=>| |}[urr]&&&&H
}
}
\end{equation}
Here by abuse we let these pasting diagrams stand for their composite $2$-cells. These pasting diagrams appear on the `front' and `back' of the cube when viewed from the angle suggested in \eqref{eqn:cube}, with a particular choice of which of the two directed paths in the visual boundary is the domain and which is the codomain. Other equations could be obtained by making different choices of angles and orientations.  However, all of these equations are equivalent to the statement that the following hexagon 
commutes, in the 
sense of diagrams in the category of $1$-cells from $A$ to $H$:
\begin{equation} \label{eqn:hexagon}
\vcenter{
\xymatrix@C=8pt@R=16pt{
&&\theta\circ\gamma\circ\alpha\ar@{<=>}[dll]\ar@{<=>}[drr]\\
\theta\circ\delta\circ\beta\ar@{<=>}[d]&&&&\lambda\circ\zeta\circ\alpha\ar@{<=>}[d]\\
\mu\circ\eta\circ\beta\ar@{<=>}[drr]&&&&\lambda\circ\iota\circ\epsilon\ar@{<=>}[dll]\\
&&\mu\circ\kappa\circ\epsilon
}
}
\end{equation}
Here the vertices of the hexagon are the six $1$-cells $A\to H$ obtained by composing $1$-cells labelling the edges of the cube, and the edges of the hexagon correspond to the faces of the cube.
The particular equation \eqref{eqn:hexagonaspasting} is obtained by breaking the hexagon \eqref{eqn:hexagon} into its left and right halves.

This characterization of commutativity immediately implies statements of the following kind: if the $1$-skeleton of the cube has been specified, along with the $2$-cells labelling all faces other than the face $ABCD$, and if the $1$-cell $\theta$ is such that every $2$-cell $\theta\circ\varphi\Rightarrow\theta\circ\psi$ is induced by a unique $2$-cell $\varphi\Rightarrow\psi$ (for example, if $\theta$ is a full and faithful functor in $\sCat$), then there is a unique way to label the face $ABCD$ so that the cube is commutative. 

Similarly, if the missing labels are those of the face $EFGH$, and if the $1$-cell $\epsilon$ is such that every $2$-cell $\varphi\circ\epsilon\Rightarrow\psi\circ\epsilon$ is induced by a unique $2$-cell $\varphi\Rightarrow\psi$ (for example, if $\epsilon$ is a full and essentially surjective functor in $\sCat$), then there is a unique way to label the face $EFGH$ so that the cube is commutative.
\end{ex}

\begin{ex} \label{ex:prism}
Because it plays an important role in the proof of Theorem~\ref{thm:main-result}, let us examine also the case where the polyhedron is a triangular prism; we refer to a $2$-category diagram of this shape simply as a \emph{prism}. The $1$-skeleton of a prism has the form
\begin{equation} \label{eqn:prism}
\vcenter{
\xymatrix@R=8pt@C=6pt{
A\ar[rrrr]^\alpha\ar[dd]_{\epsilon}\ar[drr]^(.6){\beta}&&&&B\ar'[d]^(.6){\zeta}[dd]\ar[drr]^\gamma\\
&&C\ar[rrrr]^(.4){\delta}\ar[dll]^{\eta}&&&&D\ar[dll]^{\theta}\\
E\ar[rrrr]_{\iota}&&&&F
}
}
\end{equation}
The prism is commutative if and only if the following diagram commutes:
\begin{equation} \label{eqn:pentagon}
\vcenter{
\xymatrix@C=12pt@R=15pt{
\theta\circ\delta\circ\beta\ar@{<=>}[d]&&\theta\circ\gamma\circ\alpha\ar@{<=>}[ll]\ar@{<=>}[rr] &&\zeta\circ\alpha\ar@{<=>}[d]\\
\iota\circ\eta\circ\beta\ar@{<=>}[rrrr]&&&&\iota\circ\epsilon
}
}
\end{equation}
Notice that this condition uniquely determines the inverse $2$-cells labelling the face $ABEF$ in terms of the rest of the data.
\end{ex}

\subsection{The gluing principle}
\label{subsect:gluing}

An obvious yet important fact in ordinary category theory is that a diagram composed of commutative triangles and squares (say) joined together along their edges, in such a way that the result can be drawn in $\mathbb{R}^2$, is commutative as a whole. We now want to explain a $2$-categorical version of this fact, which we call the \emph{gluing principle}. We use this principle throughout the paper to construct new commutative cubes, prisms, etc.~from known ones.

\begin{ex} \label{ex:gluing-cubes}
Let us examine in detail the case of gluing two cubes along a common face. We suppose we have two consistently oriented cubes in our $2$-category $\cA$, 
\[
\vcenter{
\xymatrix@R=6pt@C=6pt{
A\ar[rrrr]\ar[dd]\ar[drr]&&&&B\ar'[d][dd]\ar[drr]\\
&&C\ar[rrrr]\ar[dd]&&&&D\ar[dd]\\
E\ar'[rr][rrrr]\ar[drr]&&&&F\ar[drr]\\
&&G\ar[rrrr]&&&&H
}
}
\quad\text{ and }\quad
\vcenter{
\xymatrix@R=6pt@C=6pt{
E\ar[rrrr]\ar[dd]\ar[drr]&&&&F\ar'[d][dd]\ar[drr]\\
&&G\ar[rrrr]\ar[dd]&&&&H\ar[dd]\\
I\ar'[rr][rrrr]\ar[drr]&&&&J\ar[drr]\\
&&K\ar[rrrr]&&&&L
}
}
\]
where the $1$-cells and $2$-cells labelling the face $EFGH$ are the same in both cubes. Then we can glue these together to obtain a cube
\begin{equation} \label{eqn:cube-gluing}
\vcenter{
\xymatrix@R=6pt@C=6pt{
A\ar[rrrr]\ar[dd]\ar[drr]&&&&B\ar'[d][dd]\ar[drr]\\
&&C\ar[rrrr]\ar[dd]&&&&D\ar[dd]\\
E\ar'[rr][rrrr]\ar[dd]\ar[drr]&&&&F\ar'[d][dd]\ar[drr]\\
&&G\ar[rrrr]\ar[dd]&&&&H\ar[dd]\\
I\ar'[rr][rrrr]\ar[drr]&&&&J\ar[drr]\\
&&K\ar[rrrr]&&&&L
}
}
\rightsquigarrow
\vcenter{
\xymatrix@R=6pt@C=6pt{
A\ar[rrrr]\ar[dd]\ar[drr]&&&&B\ar'[d][dd]\ar[drr]\\
&&C\ar[rrrr]\ar[dd]&&&&D\ar[dd]\\
I\ar'[rr][rrrr]\ar[drr]&&&&J\ar[drr]\\
&&K\ar[rrrr]&&&&L
}
}
\end{equation}
by appropriate compositions of $1$-cells and $2$-cells as suggested by the picture. Our claim is that if the original two cubes are commutative, so is the resulting cube.

One way to prove this (see~\cite[\S 4]{hkk}) is to write down the hexagon \eqref{eqn:hexagon} for the resulting cube, and show that it can be obtained by joining together two hexagons induced by those for the original two cubes, and two squares whose commutativity follows from the $2$-category axioms. A similar proof could be given for every case of the gluing principle that we need, but this would be tedious.

A better way to prove the claim is to use pasting diagrams: 
\[
\hbox{\scriptsize$
\vcenter{
\xymatrix@R=6pt@C=6pt{
A\ar[rrrr]\ar[dd]\ar[drr]&&&&B\ar[drr]\\
&&C\ar[rrrr]\ar[dd]\ar@{| |=>| |}[urr]&&&&D\ar[dd]\\
E\ar[drr]\ar[dd]\ar@{| |=>| |}[urr]&&&&\\
&&G\ar[rrrr]\ar[dd]\ar@{|o|=>|o|}[uurrrr]&&&&H\ar[dd]\\
I\ar[drr]\ar@{| |=>| |}[urr]&&&&\\
&&K\ar[rrrr]\ar@{|o|=>|o|}[uurrrr]&&&&L
}
}
=
\vcenter{
\xymatrix@R=6pt@C=6pt{
A\ar[rrrr]\ar[dd]&&&&B\ar[dd]\ar[drr]\\
&&&&&&D\ar[dd]\\
E\ar[rrrr]\ar[dd]\ar[drr]\ar@{|o|=>|o|}[uurrrr]&&&&F\ar[drr]\ar@{| |=>| |}[urr]\\
&&G\ar[rrrr]\ar[dd]\ar@{| |=>| |}[urr]&&&&H\ar[dd]\\
I\ar[drr]\ar@{| |=>| |}[urr]&&&&\\
&&K\ar[rrrr]\ar@{|o|=>|o|}[uurrrr]&&&&L
}
}
=
\vcenter{
\xymatrix@R=6pt@C=6pt{
A\ar[rrrr]\ar[dd]&&&&B\ar[dd]\ar[drr]\\
&&&&&&D\ar[dd]\\
E\ar[rrrr]\ar[dd]\ar@{|o|=>|o|}[uurrrr]&&&&F\ar[drr]\ar[dd]\ar@{| |=>| |}[urr]\\
&&&&&&H\ar[dd]\\
I\ar[rrrr]\ar[drr]\ar@{|o|=>|o|}[uurrrr]&&&&J\ar[drr]\ar@{| |=>| |}[urr]\\
&&K\ar[rrrr]\ar@{| |=>| |}[urr]&&&&L
}
}
$}
\]
\noindent
Here, each step uses the commutativity of one of the two cubes, expressed in the form \eqref{eqn:hexagonaspasting}. The conclusion that the composite $2$-cell of the first pasting diagram equals that of the third is equivalent to the commutativity of the resulting cube. Notice how this argument works visually: the first pasting diagram is what appears on the `front' of the gluing picture \eqref{eqn:cube-gluing}, and the third is what appears on the `back'. The intermediate stage is obtained by `passing through' one of the two original cubes but not the other.

This observation suggests a more sophisticated way to express the proof, using the formalism of $3$-categorical pasting~\cite{powern}. We can think of $\cA$ as a $3$-category where the only $3$-cells are identities. Then a commutative cube can be regarded as a $3$-computad labelled in $\cA$, where the $3$-arrow joins the two $2$-pasting schemes whose labellings are the two sides of \eqref{eqn:hexagonaspasting}, and is labelled by the $3$-cell that asserts the equality of those two sides. The gluing picture \eqref{eqn:cube-gluing} is a valid $3$-pasting diagram, so it does define a composite $3$-cell, 
and that $3$-cell asserts the commutativity of the glued cube.  
\end{ex}

The gluing principle we need is not much more general than Example \ref{ex:gluing-cubes}. An informal statement is: \textit{if we take a collection of commutative labelled $2$-computads of the polyhedral kind, and glue them along matching faces in such a way that the gluing can be depicted in $\mathbb{R}^3$, then the resulting labelled $2$-computad is commutative}.  We will not state the gluing principle more precisely, because we do not need to give a general proof. For every case of the principle that appears in this paper, it is evident that one could give a proof consisting of a chain of equalities of (the composite $2$-cells of) pasting diagrams along the above lines,
starting with the `front' of the picture and working through to the `back' by `passing through' one constituent polyhedron at a time. 
Representative examples of gluing pictures are Figure~\ref{fig:main-picture} and \eqref{eqn:j-prism}. 

On a handful of occasions, we use a sort of converse to the gluing principle, which allows us, under certain circumstances, to deduce the commutativity of one of the constituent polyhedra in the gluing. Again, we content ourselves here with the example of gluing two cubes.

\begin{ex} \label{ex:gluing-converse}
Continue with the notation of Example \ref{ex:gluing-cubes}. Suppose we know that the cubes $ABCDEFGH$ and $ABCDIJKL$ are commutative.
Under these assumptions we have the first equality of pasting diagrams, and the composition of the two equalities, so we can deduce the second equality. If the $1$-cell $\epsilon:A\to E$ has the property that a $2$-cell $\varphi\Rightarrow\psi$ is determined by the $2$-cell $\varphi\circ\epsilon\Rightarrow\psi\circ\epsilon$ it induces (when 
defined), then we can conclude that the cube $EFGHIJKL$ is commutative. (For example, an essentially surjective functor $\epsilon$ has this property in $\sCat$.) 

Similarly, if we know that the cubes $EFGHIJKL$ and $ABCDIJKL$ are commutative, and that the $1$-cell $\theta:H\to L$ has the property that a $2$-cell $\varphi\Rightarrow\psi$ is determined by the $2$-cell $\theta\circ\varphi\Rightarrow\theta\circ\psi$ it induces (when 
defined), then we can conclude that the cube $ABCDEFGH$ is commutative. (For example, a faithful functor $\theta$ has this property in $\sCat$.)
\end{ex}


\section{Commutativity lemmas for sheaf functors} 
\label{sect:lemmas}

This appendix contains a collection of results asserting the commutativity of various $2$-categorical diagrams. These diagrams are all labelled $2$-computads of the polyhedral kind described in \S\ref{subsect:polyhedral}, where the $2$-category is $\sCat$ and the categories involved are derived categories of sheaves on varieties. Thus, the results concern equalities of natural isomorphisms between sheaf functors. We use the same conventions as in \S\ref{ss:not}.
A few of the analogous statements in the context of {\'e}tale sheaves are proved in \cite[\S\S5.1--5.2]{de} (see also \cite[\S12]{rouquier}).

Some explanation on the use of this appendix is needed.  Because the results are so numerous, they are not stated in the usual `Lemma---Proof' format; instead, references such as `Lemma~\ref{lem:cocycle^!}', here and in the main body of the paper, should be understood as directing the reader to consult part~\subref{lem:cocycle^!} of \emph{Figure}~\ref{lem:cocycle}.  (The sole exception is Lemma~\ref{lem:_!action}.)  Each figure in the appendix mentions a `Setting', which is a certain commutative diagram of varieties and morphisms of varieties, giving context and notation for the accompanying polyhedral diagrams.  The proof that the diagrams in a given figure are commutative appears in the \emph{subsection} with the same number.
We will frequently use the gluing principle of \S\ref{subsect:gluing}. 

Some lemmas in this appendix show only ordinary (nonequivariant) derived categories, but are invoked in situations involving equivariant derived categories.  For a justification of this, see \S\ref{subsect:equivariant} below.

\begin{rmk}
\label{rmk:lz}
As explained in the introduction, at least some of the properties proved in this appendix are implicitly contained in \cite{lz}. However, making those implicit facts explicit takes some work.  An $\infty$-category is a special kind of simplicial set.  By examining certain $3$-simplices in the Liu--Zheng construction, one can see that the commutativity of most of the tetrahedral diagrams we consider are truly trivial consequences of~\cite{lz}.  But diagrams shaped like cubes or prisms must still be assembled from tetrahedral ones by the `pasting' operation explained in Appendix~\ref{sect:cubes}, so working in an $\infty$-categorical framework would not seem to significantly shorten our arguments.  
\end{rmk}

\subsection{Notation}


\subsubsection{Composition}

\label{subsect:composition-definitions}

Suppose we have variety morphisms
\[
\xymatrix{
X \ar[r]^-{f_1} & Y \ar[r]^-{f_2} & Z
}
\]
and set $f=f_2f_1$. Then we obtain 
isomorphisms 
which will be denoted as follows:
\[
\vc{\begin{tikzpicture}[stdtriangle]
\compuinit
\node (lu) at (0,1) {$\al \cDb(X)$};
\node (r) at (1,0.5) {$\al \cDb(Y)$};
\node (ld) at (0,0) {$\al \cDb(Z)$};
\draw[->] (lu) -- node[arl] {$\al (f_1)_*$} (r);
\draw[->] (lu) -- node[arr] {$\al f_*$} (ld);
\draw[->] (r) -- node[arl] {$\al (f_2)_*$} (ld);
\node[tricell] at (\tric,0.5) {\tiny$\Comp$};
\end{tikzpicture}}
\vc{\begin{tikzpicture}[stdtriangle]
\compuinit
\node (lu) at (0,1) {$\al \cDb(X)$};
\node (r) at (1,0.5) {$\al \cDb(Y)$};
\node (ld) at (0,0) {$\al \cDb(Z)$};
\draw[->] (lu) -- node[arl] {$\al (f_1)_!$} (r);
\draw[->] (lu) -- node[arr] {$\al f_!$} (ld);
\draw[->] (r) -- node[arl] {$\al (f_2)_!$} (ld);
\node[tricell] at (\tric,0.5) {\tiny$\Comp$};
\end{tikzpicture}}
\vc{\begin{tikzpicture}[stdtriangle]
\compuinit
\node (lu) at (0,1) {$\al \cDb(X)$};
\node (r) at (1,0.5) {$\al \cDb(Y)$};
\node (ld) at (0,0) {$\al \cDb(Z)$};
\draw[<-] (lu) -- node[arl] {$\al (f_1)^*$} (r);
\draw[<-] (lu) -- node[arr] {$\al f^*$} (ld);
\draw[<-] (r) -- node[arl] {$\al (f_2)^*$} (ld);
\node[tricell] at (\tric,0.5) {\tiny$\al \Comp$};
\end{tikzpicture}}
\vc{\begin{tikzpicture}[stdtriangle]
\compuinit
\node (lu) at (0,1) {$\al \cDb(X)$};
\node (r) at (1,0.5) {$\al \cDb(Y)$};
\node (ld) at (0,0) {$\al \cDb(Z)$};
\draw[<-] (lu) -- node[arl] {$\al (f_1)^!$} (r);
\draw[<-] (lu) -- node[arr] {$\al f^!$} (ld);
\draw[<-] (r) -- node[arl] {$\al (f_2)^!$} (ld);
\node[tricell] at (\tric,0.5) {\tiny$\al \Comp$};
\end{tikzpicture}}
\]
The first isomorphism is defined in \cite[Equation (2.6.5)]{kas}: to construct it, one uses the fact that, if $f_*^0$, $(f_1)^0_*$ and $(f_2)^0_*$ denote the non-derived direct image functors (between abelian categories of $\bk$-sheaves), the natural morphism of functors
\[
f_* = R(f^0_*) \directednatisom R\bigl( (f_2)^0_* \circ (f_1)^0_* \bigr) \Rightarrow R\bigl( (f_2)^0_* \bigr)\circ R\bigl( (f_1)^0_* \bigr) = (f_2)_* \circ (f_1)_*
\]
is an isomorphism. The second and third isomorphisms are defined similarly (see \cite[Equations (2.6.6) and (2.3.9)]{kas}. Finally, the fourth isomorphism is proved in \cite[Proposition 3.1.8]{kas}. Note that this fourth isomorphism is deduced from the second one by adjunction, in a sense that will be made precise in Lemma \ref{lem:_!^!compositionadjunction} below.

Consequently, given a commutative square of variety morphisms
\[
\vc{\begin{tikzpicture}[vsmallcube]
\compuinit
\node (lu) at (0,1) {$W$};
\node (ru) at (1,1) {$X$};
\node (ld) at (0,0) {$Y$};
\node (rd) at (1,0) {$Z$};
\draw[->] (lu) -- node[arl] {$\al f_1$} (ru);
\draw[->] (lu) -- node[arr] {$\al f_3$} (ld);
\draw[->] (ru) -- node[arl] {$\al f_2$} (rd);
\draw[->] (ld) -- node[arr] {$\al f_4$} (rd);
\end{tikzpicture}}
\]
we obtain natural isomorphisms $(f_2)_*\circ(f_1)_*\natisom(f_4)_*\circ(f_3)_*$ etc., by composing the composition isomorphisms $(f_2)_*\circ(f_1)_*\natisom f_*$ and $f_*\natisom (f_4)_*\circ(f_3)_*$ where $f=f_2f_1=f_4f_3$.  These isomorphisms will be labelled `$\Co$' as well.

\subsubsection{Base change}

Suppose we have a cartesian square of variety morphisms
\[
\vc{\begin{tikzpicture}[vsmallcube]
\compuinit
\node[cart] at (0.5,0.5) {};
\node (lu) at (0,1) {$W$};
\node (ru) at (1,1) {$X$};
\node (ld) at (0,0) {$Y$};
\node (rd) at (1,0) {$Z$};
\draw[->] (lu) -- node[arl] {$\al g'$} (ru);
\draw[->] (lu) -- node[arr] {$\al f'$} (ld);
\draw[->] (ru) -- node[arl] {$\al f$} (rd);
\draw[->] (ld) -- node[arr] {$\al g$} (rd);
\end{tikzpicture}}
\]
Then we obtain base change isomorphisms $g^*\circ f_!\natisom (f')_!\circ (g')^*$ and $g^!\circ f_*\natisom (f')_*\circ (g')^!$ which will be denoted as follows:
\[
\vc{\begin{tikzpicture}[smallcube]
\compuinit
\node[cubef] at (0.5,0.5) {$\BC$};
\node (lu) at (0,1) {$\cDb(X)$};
\node (ru) at (1,1) {$\cDb(W)$};
\node (ld) at (0,0) {$\cDb(Z)$};
\node (rd) at (1,0) {$\cDb(Y)$};
\draw[->] (lu) -- node[arl] {$\al (g')^*$} (ru);
\draw[->] (lu) -- node[arr] {$\al f_!$} (ld);
\draw[->] (ru) -- node[arl] {$\al (f')_!$} (rd);
\draw[->] (ld) -- node[arr] {$\al g^*$} (rd);
\end{tikzpicture}}
\qquad
\vc{\begin{tikzpicture}[smallcube]
\compuinit
\node[cubef] at (0.5,0.5) {$\BC$};
\node (lu) at (0,1) {$\cDb(X)$};
\node (ru) at (1,1) {$\cDb(W)$};
\node (ld) at (0,0) {$\cDb(Z)$};
\node (rd) at (1,0) {$\cDb(Y)$};
\draw[->] (lu) -- node[arl] {$\al (g')^!$} (ru);
\draw[->] (lu) -- node[arr] {$\al f_*$} (ld);
\draw[->] (ru) -- node[arl] {$\al (f')_*$} (rd);
\draw[->] (ld) -- node[arr] {$\al g^!$} (rd);
\end{tikzpicture}}
\]
The first isomorphism is proved in \cite[Proposition 2.6.7]{kas}. The second isomorphism is proved in \cite[Proposition 3.1.9]{kas}; in fact it is deduced from the first one by adjunction, in a sense that will be made precise in Lemma \ref{lem:basechangeadjunction} below.

\subsubsection{Adjunction}
\label{sss:adjunction}

For any morphism $f:X\to Y$, the adjunctions $f^*\dashv f_*$ and $f_!\dashv f^!$ give rise to (indeed, are equivalent to) adjunction isomorphisms
\[
\vc{\begin{tikzpicture}[smallcube]
\compuinit
\node[cubef] at (0.5,0.5) {$\Adj$};
\node (lu) at (0,1) {$\cDb(X)$};
\node (ru) at (1,1) {$\Vect^{\cDb(X)^\op}$};
\node (ld) at (0,0) {$\cDb(Y)$};
\node (rd) at (1,0) {$\Vect^{\cDb(Y)^\op}$};
\draw[->] (lu) -- node[arl] {$\al \Yon$} (ru);
\draw[->] (lu) -- node[arr] {$\al f_*$} (ld);
\draw[->] (ru) -- node[arl] {$\al -\circ f^{*,\op}$} (rd);
\draw[->] (ld) -- node[arr] {$\al \Yon$} (rd);
\end{tikzpicture}}
\qquad
\vc{\begin{tikzpicture}[smallcube]
\compuinit
\node[cubef] at (0.5,0.5) {$\Adj$};
\node (lu) at (0,1) {$\cDb(X)$};
\node (ru) at (1,1) {$\Vect^{\cDb(X)^\op}$};
\node (ld) at (0,0) {$\cDb(Y)$};
\node (rd) at (1,0) {$\Vect^{\cDb(Y)^\op}$};
\draw[->] (lu) -- node[arl] {$\al \Yon$} (ru);
\draw[<-] (lu) -- node[arr] {$\al f^!$} (ld);
\draw[<-] (ru) -- node[arl] {$\al -\circ(f_!)^\op$} (rd);
\draw[->] (ld) -- node[arr] {$\al \Yon$} (rd);
\end{tikzpicture}}
\]
Here $\Vect$ is short for $\Vect(\bk)$ where $\bk$ is the coefficient ring of the derived categories, and $\Yon:\mathsf{C}\to\Vect^{\mathsf{C}^\op}$ denotes the Yoneda embedding \cite[III.2(7)]{maclane}, defined on objects by $\Yon(c)=\Hom_{\mathsf{C}}(-,c)$. The second isomorphism is essentially the definition of the functor $f^!$: see \cite[Theorem 3.1.5]{kas}. The first isomorphism is proved in \cite[Proposition 2.6.4]{kas}. It is deduced from the following observation: if we denote by $f_*^0$ and $f^*_0$ the non-derived direct and inverse image functors (between abelian categories of $\bk$-sheaves), then for any complex $M$ of sheaves on $Y$, the natural morphism of functors
\[
R \bigl( \Hom(f^*_0 M, -) \bigr) \directednatisom R \bigl( \Hom(M, -) \circ f_*^0 \bigr) \Rightarrow R\Hom(M, -) \circ f_*
\]
is an isomorphism.

\subsubsection{Constant sheaf under inverse image}
\label{sss:constantsheaf-res}

Let $\bb$ denote the trivial group, regarded as a one-object category. The datum of the constant sheaf $\ubk_X$ on a variety $X$ defines a functor
\[
\ubk_X : \bb \to \cDb(X).
\]
We have a canonical isomorphism $\ubk_X\cong a_X^*\ubk_{\pt}$ where $a_X$ is the morphism $X\to\pt$. Hence for any morphism $f: X \to Y$ we obtain an isomorphism 
\[ f^*(\ubk_Y) \cong f^* \bigl( (a_Y)^*(\ubk_{\pt}) \bigr) \overset{\Co}{\cong} (a_X)^*(\ubk_{\pt})\cong \ubk_X. \] 
We can regard this as an isomorphism of functors:
\[
\vc{\begin{tikzpicture}[stdtriangle]
\compuinit
\node (lu) at (0,1) {$\bb$};
\node (r) at (1,0.5) {$\cDb(Y)$};
\node (ld) at (0,0) {$\cDb(X)$};
\draw[->] (lu) -- node[arl] {$\al \ubk_Y$} (r);
\draw[->] (lu) -- node[arr] {$\al \ubk_X$} (ld);
\draw[->] (r) -- node[arl] {$\al f^*$} (ld);
\node[tricell] at (\tric,0.5) {\tiny$\Cnst$};
\end{tikzpicture}}
\]


\addtocounter{figure}{1}

\subsection{Composition and adjunction}
\label{subsect:compositionadjunction}

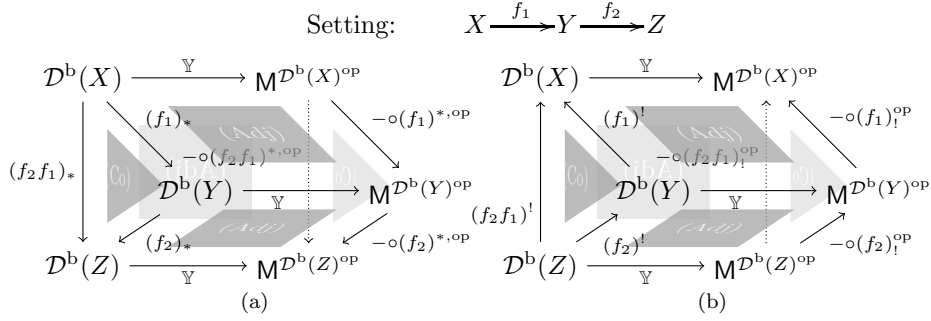
\begin{figure}
\begin{center}
Setting: \qquad $\xymatrix@1{X\ar[r]^{f_1}&Y\ar[r]^{f_2}&Z}$

\subfigure[][]{\qc{\begin{tikzpicture}[stdprism]
\compuinit
\node[cuber] at (0.5,0.5,0) {$\Adj$};
\node[prismdf] at (1,0.5,\tric) {$\Comp$};
\node (rlu) at (0,1,0) {$\cDb(X)$};
\node (rru) at (1,1,0) {$\Vect^{\cDb(X)^\op}$};
\node (fr) at (	1,0.5,1) {$\Vect^{\cDb(Y)^\op}$};
\node (rld) at (0,0,0) {$\cDb(Z)$};
\node (rrd) at (1,0,0) {$\Vect^{\cDb(Z)^\op}$};
\draw[liner,->] (rru) -- node[arr,pos=.4] {$\al -\circ(f_2f_1)^{*,\op}$} (rrd);
\draw[->] (rlu) -- node[arl] {$\al \Yon$} (rru);
\draw[->] (rlu) -- node[arr] {\llap{$\al (f_2f_1)_*$}} (rld);
\draw[->] (rru) -- node[arl] {$\al -\circ(f_1)^{*,\op}$} (fr);
\draw[->] (rld) -- node[arr] {$\al \Yon$} (rrd);
\draw[->] (fr) -- node[arl] {$\al -\circ(f_2)^{*,\op}$} (rrd);
\node[prismtf] at (0.5,0.75,0.5) {$\Adj$};
\node[prismbf] at (0.5,0.25,0.5) {$\Adj$};
\node[prismlf] at (0,0.5,\tric) {$\Comp$};
\node (fl) at (0,0.5,1) {$\cDb(Y)$};
\draw[->] (rlu) -- node[arl] {$\al (f_1)_*$} (fl);
\draw[->] (fl) -- node[arl] {$\al (f_2)_*$} (rld);
\draw[->] (fl)  -- node[arr,pos=.3] {$\al \Yon$} (fr);
\end{tikzpicture}}\label{lem:^*_*compositionadjunction}}~
\subfigure[][]{\qc{\begin{tikzpicture}[stdprism]
\compuinit
\node[cuber] at (0.5,0.5,0) {$\Adj$};
\node[prismdf] at (1,0.5,\tric) {$\Comp$};
\node (rlu) at (0,1,0) {$\cDb(X)$};
\node (rru) at (1,1,0) {$\Vect^{\cDb(X)^\op}$};
\node (fr) at (	1,0.5,1) {$\Vect^{\cDb(Y)^\op}$};
\node (rld) at (0,0,0) {$\cDb(Z)$};
\node (rrd) at (1,0,0) {$\Vect^{\cDb(Z)^\op}$};
\draw[liner,->] (rrd) -- node[arl,pos=.6] {$\al -\circ(f_2f_1)_!^\op$} (rru);
\draw[->] (rlu) -- node[arl] {$\al \Yon$} (rru);
\draw[->] (rld) -- node[arl,pos=.2] {\llap{$\al (f_2f_1)^!$}} (rlu);
\draw[->] (fr) -- node[arr] {$\al -\circ(f_1)_!^\op$} (rru);
\draw[->] (rld) -- node[arr] {$\al \Yon$} (rrd);
\draw[->] (rrd) -- node[arr] {$\al -\circ(f_2)_!^\op$} (fr);
\node[prismtf] at (0.5,0.75,0.5) {$\Adj$};
\node[prismbf] at (0.5,0.25,0.5) {$\Adj$};
\node[prismlf] at (0,0.5,\tric) {$\Comp$};
\node (fl) at (0,0.5,1) {$\cDb(Y)$};
\draw[->] (fl) -- node[arr] {$\al (f_1)^!$} (rlu);
\draw[->] (rld) -- node[arr] {$\al (f_2)^!$} (fl);
\draw[->] (fl)  -- node[arr,pos=.3] {$\al \Yon$} (fr);
\end{tikzpicture}}\label{lem:_!^!compositionadjunction}}
\end{center}
\caption{Composition and adjunction}\label{lem:compositionadjunction}
\end{figure}

For Part~\subref{lem:^*_*compositionadjunction}, one can easily check that the similar statement where derived categories are replaced by abelian categories of sheaves, and the derived functors by their non-derived variants, holds. Then our claim follows, by construction of the adjunction $f^*\dashv f_*$ (see \S\ref{sss:adjunction}), using usual properties of derived functors and morphisms between them.

For Figure~\ref{lem:_!^!compositionadjunction}, recall that in \cite[Proposition 3.1.8]{kas}, the isomorphism $(f_2 f_1)^! \natisom (f_1)^!\circ(f_2)^!$ is deduced from the isomorphism $(f_2 f_1)_! \natisom (f_2)_!\circ (f_1)_!$ by adjunction. In other words, it is defined precisely so as to make this prism commutative. (This makes sense because $\Yon:\cDb(X)\to\Vect^{\cDb(X)^\op}$ is full and faithful; see Example \ref{ex:cube2}.)

\subsection{Base change and adjunction}

\begin{figure}
\begin{center}
\begin{tabular}{@{}c@{}}
Setting: \\
\qc{\begin{tikzpicture}[vsmallcube]
\compuinit
\node[cart] at (0.5,0.5) {};
\node (lu) at (0,1) {$W$};
\node (ru) at (1,1) {$X$};
\node (ld) at (0,0) {$Y$};
\node (rd) at (1,0) {$Z$};
\draw[->] (lu) -- node[arl] {$\al g'$} (ru);
\draw[->] (lu) -- node[arr] {$\al f'$} (ld);
\draw[->] (ru) -- node[arl] {$\al f$} (rd);
\draw[->] (ld) -- node[arr] {$\al g$} (rd);
\end{tikzpicture}}
\end{tabular}
\qquad
\qc{\begin{tikzpicture}[stdcube]
\compuinit
\node (rrd) at (1,0,0) {$\Vect^{\cDb(Z)^\op}$};
\node[cuber] at (0.5,0.5,0) {$\Adj$};
\node[cubed] at (1,0.5,0.5) {$\BC$};
\node[cubeb] at (0.5,0,0.5) {$\Adj$};
\node (rlu) at (0,1,0) {$\cDb(X)$};
\node (rru) at (1,1,0) {$\Vect^{\cDb(X)^\op}$};
\node (fru) at (1,1,1) {$\Vect^{\cDb(W)^\op}$};
\node (frd) at (1,0,1) {$\Vect^{\cDb(Y)^\op}$};
\node (fld) at (0,0,1) {$\cDb(Y)$};
\node (rld) at (0,0,0) {$\cDb(Z)$};
\draw[liner,->] (rru) -- node[arl,pos=.7] {$\al -\circ f^{*,\op}$} (rrd);
\draw[liner,->] (rld) -- node[arl,pos=.3] {$\al \Yon$} (rrd);
\draw[liner,->] (rrd) -- node[arl] {$\al -\circ g_!^\op$} (frd);
\draw[->] (rlu) -- node[arl] {$\al \Yon$} (rru);
\draw[->] (rru) -- node[arl] {$\al -\circ (g')_!^\op$} (fru);
\draw[->] (fru) -- node[arl] {$\al -\circ(f')^{*,\op}$}(frd);
\draw[->] (fld) -- node[arr] {$\al \Yon$} (frd);
\draw[->] (rld) -- node[arr] {$\al g^!$} (fld);
\draw[->] (rlu) -- node[arr] {$\al f_*$} (rld);
\node[cubel] at (0,0.5,0.5) {$\BC$};
\node[cubet] at (0.5,1,0.5) {$\Adj$};
\node[cubef] at (0.5,0.5,1) {$\Adj$};
\node (flu) at (0,1,1) {$\cDb(W)$};
\draw[->] (rlu) -- node[arr,pos=.7] {$\al (g')^!$} (flu);
\draw[->] (flu) -- node[arr,pos=.3] {$\al \Yon$} (fru);
\draw[->] (flu) -- node[arl,pos=.3] {$\al (f')_*$} (fld);
\end{tikzpicture}}
\end{center}
\caption{Base change and adjunction}\label{lem:basechangeadjunction}
\end{figure}
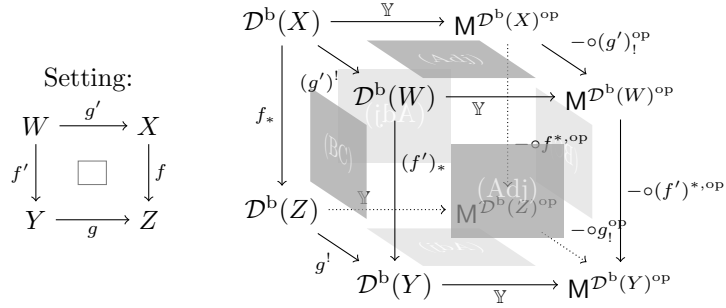

In \cite[Proposition 3.1.9]{kas}, the isomorphism $g^!\circ f_*\natisom (f')_*\circ(g')^!$ is deduced from the isomorphism $f^*\circ g_! \natisom (g')_!\circ(f')^*$ by adjunction. In other words, it is defined precisely so as to make this cube commutative.

\subsection{Cocycle property of composition}

\begin{figure}
\begin{center}
Setting: \qquad $\xymatrix@1{W\ar[r]^{f_1}&X\ar[r]^{f_2}&Y\ar[r]^{f_3}&Z}$ \quad and \quad $f=f_3f_2f_1$

\subfigure[][]{\qc{\begin{tikzpicture}[stdtetr]
\compuinit
\node[tetrlr] at ({-\tric/2},0.5,\tric) {$\Comp$};
\node[tetrdr] at ({\tric/2},0.5,\tric) {$\Comp$};
\node (ru) at (0,1,0) {$\cDb(X)$};
\node (fl) at (-0.5,0.5,1) {$\cDb(W)$};
\node (fr) at (0.5,0.5,1) {$\cDb(Z)$};
\node (rd) at (0,0,0) {$\cDb(Y)$};
\draw[liner,->] (ru) -- node[arl,pos=.6] {$\al (f_2)_*$} (rd);
\node[tetrtf] at (0, {(1+\tric)/2}, {1-\tric}) {$\Comp$};
\node[tetrbf] at (0, {(1-\tric)/2}, {1-\tric}) {$\Comp$};
\draw[->] (fl) -- node[arl] {$\al (f_1)_*$} (ru);
\draw[->] (fl) -- node[arr] {$\al (f_2f_1)_*$} (rd);
\draw[->] (ru) -- node[arl] {$\al (f_3f_2)_*$} (fr);
\draw[->] (rd) -- node[arr] {$\al (f_3)_*$} (fr);
\draw[->] (fl) -- node[arl,pos=.3] {$\al f_*$} (fr);
\end{tikzpicture}}\label{lem:cocycle_*}}
\quad
\subfigure[][]{\qc{\begin{tikzpicture}[stdtetr]
\compuinit
\node[tetrlr] at ({-\tric/2},0.5,\tric) {$\Comp$};
\node[tetrdr] at ({\tric/2},0.5,\tric) {$\Comp$};
\node (ru) at (0,1,0) {$\cDb(X)$};
\node (fl) at (-0.5,0.5,1) {$\cDb(W)$};
\node (fr) at (0.5,0.5,1) {$\cDb(Z)$};
\node (rd) at (0,0,0) {$\cDb(Y)$};
\draw[liner,->] (ru) -- node[arl,pos=.6] {$\al (f_2)_!$} (rd);
\node[tetrtf] at (0, {(1+\tric)/2}, {1-\tric}) {$\Comp$};
\node[tetrbf] at (0, {(1-\tric)/2}, {1-\tric}) {$\Comp$};
\draw[->] (fl) -- node[arl] {$\al (f_1)_!$} (ru);
\draw[->] (fl) -- node[arr] {$\al (f_2f_1)_!$} (rd);
\draw[->] (ru) -- node[arl] {$\al (f_3f_2)_!$} (fr);
\draw[->] (rd) -- node[arr] {$\al (f_3)_!$} (fr);
\draw[->] (fl) -- node[arl,pos=.3] {$\al f_!$} (fr);
\end{tikzpicture}}\label{lem:cocycle_!}}

\subfigure[][]{\qc{\begin{tikzpicture}[stdtetr]
\compuinit
\node[tetrlr] at ({-\tric/2},0.5,\tric) {$\Comp$};
\node[tetrdr] at ({\tric/2},0.5,\tric) {$\Comp$};
\node (ru) at (0,1,0) {$\cDb(X)$};
\node (fl) at (-0.5,0.5,1) {$\cDb(W)$};
\node (fr) at (0.5,0.5,1) {$\cDb(Z)$};
\node (rd) at (0,0,0) {$\cDb(Y)$};
\draw[liner,<-] (ru) -- node[arl,pos=.6] {$\al (f_2)^*$} (rd);
\node[tetrtf] at (0, {(1+\tric)/2}, {1-\tric}) {$\Comp$};
\node[tetrbf] at (0, {(1-\tric)/2}, {1-\tric}) {$\Comp$};
\draw[<-] (fl) -- node[arl] {$\al (f_1)^*$} (ru);
\draw[<-] (fl) -- node[arr] {$\al (f_2f_1)^*$} (rd);
\draw[<-] (ru) -- node[arl] {$\al (f_3f_2)^*$} (fr);
\draw[<-] (rd) -- node[arr] {$\al (f_3)^*$} (fr);
\draw[<-] (fl) -- node[arl,pos=.3] {$\al f^*$} (fr);
\end{tikzpicture}}\label{lem:cocycle^*}}
\quad
\subfigure[][]{\qc{\begin{tikzpicture}[stdtetr]
\compuinit
\node[tetrlr] at ({-\tric/2},0.5,\tric) {$\Comp$};
\node[tetrdr] at ({\tric/2},0.5,\tric) {$\Comp$};
\node (ru) at (0,1,0) {$\cDb(X)$};
\node (fl) at (-0.5,0.5,1) {$\cDb(W)$};
\node (fr) at (0.5,0.5,1) {$\cDb(Z)$};
\node (rd) at (0,0,0) {$\cDb(Y)$};
\draw[liner,<-] (ru) -- node[arl,pos=.6] {$\al (f_2)^!$} (rd);
\node[tetrtf] at (0, {(1+\tric)/2}, {1-\tric}) {$\Comp$};
\node[tetrbf] at (0, {(1-\tric)/2}, {1-\tric}) {$\Comp$};
\draw[<-] (fl) -- node[arl] {$\al (f_1)^!$} (ru);
\draw[<-] (fl) -- node[arr] {$\al (f_2f_1)^!$} (rd);
\draw[<-] (ru) -- node[arl] {$\al (f_3f_2)^!$} (fr);
\draw[<-] (rd) -- node[arr] {$\al (f_3)^!$} (fr);
\draw[<-] (fl) -- node[arl,pos=.3] {$\al f^!$} (fr);
\end{tikzpicture}}\label{lem:cocycle^!}}
\end{center}
\caption{Cocycle property of composition}\label{lem:cocycle}
\end{figure}
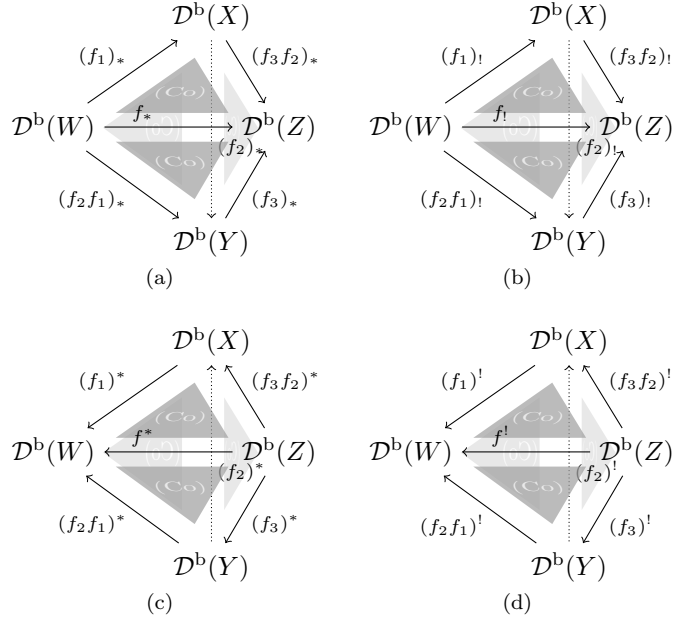

For part~\subref{lem:cocycle_*}, one easily checks the similar claim where derived categories are replaced by abelian categories of sheaves, and derived functors by their non-derived counterparts. Our claim follows, as in the proof of Lemma~\ref{lem:^*_*compositionadjunction}. The proofs of~\subref{lem:cocycle_!} and~\subref{lem:cocycle^*} are similar.

Finally, part~\subref{lem:cocycle^!} follows from part~\subref{lem:cocycle_!} by adjunction, using Lemma \ref{lem:_!^!compositionadjunction}.  To be more precise, what follows from part~\subref{lem:cocycle_!} is the commutativity of the following tetrahedron:
\begin{equation}\label{eqn:prismforcocycles}
\vc{\begin{tikzpicture}[stdtetr]
\compuinit
\node[tetrlr] at ({-\tric/2},0.5,\tric) {$\Comp$};
\node[tetrdr] at ({\tric/2},0.5,\tric) {$\Comp$};
\node (ru) at (0,1,0) {$\Vect^{\cDb(X)^\op}$};
\node (fl) at (-0.5,0.5,1) {$\Vect^{\cDb(W)^\op}$};
\node (fr) at (0.5,0.5,1) {$\Vect^{\cDb(Z)^\op}$};
\node (rd) at (0,0,0) {$\Vect^{\cDb(Y)^\op}$};
\draw[liner,<-] (ru) -- node[arl,pos=.6] {$\al -\circ(f_2)_!^\op$} (rd);
\node[tetrtf] at (0, {(1+\tric)/2}, {1-\tric}) {$\Comp$};
\node[tetrbf] at (0, {(1-\tric)/2}, {1-\tric}) {$\Comp$};
\draw[<-] (fl) -- node[arl] {$\al -\circ(f_1)_!^\op$} (ru);
\draw[<-] (fl) -- node[arr] {$\al -\circ(f_2f_1)_!^\op$} (rd);
\draw[<-] (ru) -- node[arl] {$\al -\circ(f_3f_2)_!^\op$} (fr);
\draw[<-] (rd) -- node[arr] {$\al -\circ(f_3)_!^\op$} (fr);
\draw[<-] (fl) -- node[arl,pos=.3] {$\al -\circ f_!^\op$} (fr);
\end{tikzpicture}}
\end{equation}
Another description of this tetrahedron is as follows: it is obtained from the (not yet known to be commutative) tetrahedron in part~\subref{lem:cocycle^!} by gluing on four instances of Lemma~\ref{lem:_!^!compositionadjunction}, one to each face.  Because the Yoneda embedding is faithful, this implies that Figure~\ref{lem:cocycle^!} commutes (see Example \ref{ex:gluing-converse}).

\subsection{Constant sheaf and composition}

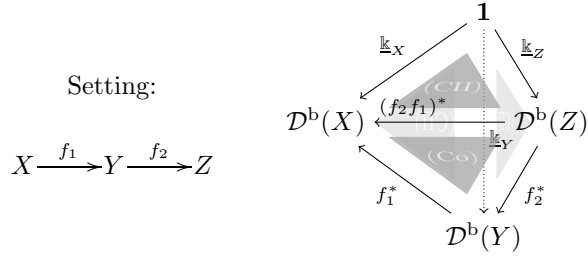
\begin{figure}
\begin{center}
\begin{tabular}{@{}c@{}}
Setting: \\
\ \\
$\xymatrix@1{X\ar[r]^{f_1}&Y\ar[r]^{f_2}&Z}$
\end{tabular}
\qquad
\qc{\begin{tikzpicture}[stdtetr]
\compuinit
\node[tetrlr] at ({-\tric/2},0.5,\tric) {$\Cnst$};
\node[tetrdr] at ({\tric/2},0.5,\tric) {$\Cnst$};
\node (ru) at (0,1,0) {$\bb$};
\node (fl) at (-0.5,0.5,1) {$\cDb(X)$};
\node (fr) at (0.5,0.5,1) {$\cDb(Z)$};
\node (rd) at (0,0,0) {$\cDb(Y)$};
\draw[liner,->] (ru) -- node[arl,pos=.6] {$\al \ubk_Y$} (rd);
\node[tetrtf] at (0, {(1+\tric)/2}, {1-\tric}) {$\Cnst$};
\node[tetrbf] at (0, {(1-\tric)/2}, {1-\tric}) {$\Comp$};
\draw[<-] (fl) -- node[arl] {$\al \ubk_X$} (ru);
\draw[<-] (fl) -- node[arr] {$\al f_1^*$} (rd);
\draw[->] (ru) -- node[arl] {$\al \ubk_Z$} (fr);
\draw[<-] (rd) -- node[arr] {$\al f_2^*$} (fr);
\draw[<-] (fl) -- node[arl,pos=.3] {$\al (f_2f_1)^*$} (fr);
\end{tikzpicture}}
\end{center}
\caption{Constant sheaf and composition}\label{lem:^*compositionconstant}
\end{figure}

Since the isomorphism $\CnstRes$ was defined using the isomorphism $\Co$ for $(\cdot)^*$, this follows easily from Lemma~\ref{lem:cocycle^*}.

\subsection{Iterated composition}

\begin{figure}
\begin{center}
Setting:\qquad
\qc{\begin{tikzpicture}[smallprism]
\compuinit
\node (rlu) at (0,1,0) {$X$};
\node (rru) at (1,1,0) {$X'$};
\node (fr) at (	1,0.5,1) {$Y'$};
\node (rld) at (0,0,0) {$Z$};
\node (rrd) at (1,0,0) {$Z'$};
\draw[->] (rru) -- node[arr,pos=.3] {$\al f'$} (rrd);
\draw[->] (rlu) -- node[arl] {$\al g_X$} (rru);
\draw[->] (rlu) -- node[arr] {$\al f$} (rld);
\draw[->] (rru) -- node[arl] {$\al f'_1$} (fr);
\draw[->] (rld) -- node[arr] {$\al g_Z$} (rrd);
\draw[->] (fr) -- node[arl] {$\al f'_2$} (rrd);
\node (fl) at (0,0.5,1) {$Y$};
\draw[->] (rlu) -- node[arl] {$\al f_1$} (fl);
\draw[->] (fl) -- node[arl,pos=.3] {$\al f_2$} (rld);
\draw[linef,->] (fl)  -- node[arr,pos=.3] {$\al g_Y$} (fr);
\end{tikzpicture}}

\subfigure[][]{\qc{\begin{tikzpicture}[stdprism]
\compuinit
\node[cuber] at (0.5,0.5,0) {$\Comp$};
\node[prismdf] at (1,0.5,\tric) {$\Comp$};
\node (rlu) at (0,1,0) {$\cDb(X)$};
\node (rru) at (1,1,0) {$\cDb(X')$};
\node (fr) at (	1,0.5,1) {$\cDb(Y')$};
\node (rld) at (0,0,0) {$\cDb(Z)$};
\node (rrd) at (1,0,0) {$\cDb(Z')$};
\draw[liner,->] (rru) -- node[arr,pos=.3] {$\al f'_*$} (rrd);
\draw[->] (rlu) -- node[arl] {$\al (g_X)_*$} (rru);
\draw[->] (rlu) -- node[arr] {$\al f_*$} (rld);
\draw[->] (rru) -- node[arl] {$\al (f'_1)_*$} (fr);
\draw[->] (rld) -- node[arr] {$\al (g_Z)_*$} (rrd);
\draw[->] (fr) -- node[arl] {$\al (f'_2)_*$} (rrd);
\node[prismtf] at (0.5,0.75,0.5) {$\Comp$};
\node[prismbf] at (0.5,0.25,0.5) {$\Comp$};
\node[prismlf] at (0,0.5,\tric) {$\Comp$};
\node (fl) at (0,0.5,1) {$\cDb(Y)$};
\draw[->] (rlu) -- node[arl] {$\al (f_1)_*$} (fl);
\draw[->] (fl) -- node[arl,pos=.3] {$\al (f_2)_*$} (rld);
\draw[->] (fl)  -- node[arr,pos=.3] {$\al (g_Y)_*$} (fr);
\end{tikzpicture}}\label{lem:_*composition_*}}~
\subfigure[][]{\qc{\begin{tikzpicture}[stdprism]
\compuinit
\node[cuber] at (0.5,0.5,0) {$\Comp$};
\node[prismdf] at (1,0.5,\tric) {$\Comp$};
\node (rlu) at (0,1,0) {$\cDb(X)$};
\node (rru) at (1,1,0) {$\cDb(X')$};
\node (fr) at (	1,0.5,1) {$\cDb(Y')$};
\node (rld) at (0,0,0) {$\cDb(Z)$};
\node (rrd) at (1,0,0) {$\cDb(Z')$};
\draw[liner,->] (rru) -- node[arr,pos=.3] {$\al f'_!$} (rrd);
\draw[->] (rlu) -- node[arl] {$\al (g_X)_!$} (rru);
\draw[->] (rlu) -- node[arr] {$\al f_!$} (rld);
\draw[->] (rru) -- node[arl] {$\al (f'_1)_!$} (fr);
\draw[->] (rld) -- node[arr] {$\al (g_Z)_!$} (rrd);
\draw[->] (fr) -- node[arl] {$\al (f'_2)_!$} (rrd);
\node[prismtf] at (0.5,0.75,0.5) {$\Comp$};
\node[prismbf] at (0.5,0.25,0.5) {$\Comp$};
\node[prismlf] at (0,0.5,\tric) {$\Comp$};
\node (fl) at (0,0.5,1) {$\cDb(Y)$};
\draw[->] (rlu) -- node[arl] {$\al (f_1)_!$} (fl);
\draw[->] (fl) -- node[arl,pos=.3] {$\al (f_2)_!$} (rld);
\draw[->] (fl)  -- node[arr,pos=.3] {$\al (g_Y)_!$} (fr);
\end{tikzpicture}}\label{lem:_!composition_!}}

\subfigure[][]{\qc{\begin{tikzpicture}[stdprism]
\compuinit
\node[cuber] at (0.5,0.5,0) {$\Comp$};
\node[prismdf] at (1,0.5,\tric) {$\Comp$};
\node (rlu) at (0,1,0) {$\cDb(X)$};
\node (rru) at (1,1,0) {$\cDb(X')$};
\node (fr) at (	1,0.5,1) {$\cDb(Y')$};
\node (rld) at (0,0,0) {$\cDb(Z)$};
\node (rrd) at (1,0,0) {$\cDb(Z')$};
\draw[liner,<-] (rru) -- node[arr,pos=.3] {$\al (f')^*$} (rrd);
\draw[<-] (rlu) -- node[arl] {$\al (g_X)^*$} (rru);
\draw[<-] (rlu) -- node[arr] {$\al f^*$} (rld);
\draw[<-] (rru) -- node[arl] {$\al (f'_1)^*$} (fr);
\draw[<-] (rld) -- node[arr] {$\al (g_Z)^*$} (rrd);
\draw[<-] (fr) -- node[arl] {$\al (f'_2)^*$} (rrd);
\node[prismtf] at (0.5,0.75,0.5) {$\Comp$};
\node[prismbf] at (0.5,0.25,0.5) {$\Comp$};
\node[prismlf] at (0,0.5,\tric) {$\Comp$};
\node (fl) at (0,0.5,1) {$\cDb(Y)$};
\draw[<-] (rlu) -- node[arl] {$\al (f_1)^*$} (fl);
\draw[<-] (fl) -- node[arl,pos=.3] {$\al (f_2)^*$} (rld);
\draw[<-] (fl)  -- node[arr,pos=.3] {$\al (g_Y)^*$} (fr);
\end{tikzpicture}}\label{lem:^*composition^*}}~
\subfigure[][]{\qc{\begin{tikzpicture}[stdprism]
\compuinit
\node[cuber] at (0.5,0.5,0) {$\Comp$};
\node[prismdf] at (1,0.5,\tric) {$\Comp$};
\node (rlu) at (0,1,0) {$\cDb(X)$};
\node (rru) at (1,1,0) {$\cDb(X')$};
\node (fr) at (	1,0.5,1) {$\cDb(Y')$};
\node (rld) at (0,0,0) {$\cDb(Z)$};
\node (rrd) at (1,0,0) {$\cDb(Z')$};
\draw[liner,<-] (rru) -- node[arr,pos=.3] {$\al (f')^!$} (rrd);
\draw[<-] (rlu) -- node[arl] {$\al (g_X)^!$} (rru);
\draw[<-] (rlu) -- node[arr] {$\al f^!$} (rld);
\draw[<-] (rru) -- node[arl] {$\al (f'_1)^!$} (fr);
\draw[<-] (rld) -- node[arr] {$\al (g_Z)^!$} (rrd);
\draw[<-] (fr) -- node[arl] {$\al (f'_2)^!$} (rrd);
\node[prismtf] at (0.5,0.75,0.5) {$\Comp$};
\node[prismbf] at (0.5,0.25,0.5) {$\Comp$};
\node[prismlf] at (0,0.5,\tric) {$\Comp$};
\node (fl) at (0,0.5,1) {$\cDb(Y)$};
\draw[<-] (rlu) -- node[arl] {$\al (f_1)^!$} (fl);
\draw[<-] (fl) -- node[arl,pos=.3] {$\al (f_2)^!$} (rld);
\draw[<-] (fl)  -- node[arr,pos=.3] {$\al (g_Y)^!$} (fr);
\end{tikzpicture}}\label{lem:^!composition^!}}
\end{center}
\caption{Iterated composition}\label{lem:composition}
\end{figure}

Part~\subref{lem:_*composition_*} follows from the gluing principle, since the prism can be obtained by gluing together three tetrahedra that are commutative by Lemma~\ref{lem:cocycle_*}, namely:
\[
\vc{\tiny\begin{tikzpicture}[stdtetr]
\compuinit
\node[tetrlr] at ({-\tric/2},0.5,\tric) {$\Comp$};
\node[tetrdr] at ({\tric/2},0.5,\tric) {$\Comp$};
\node (ru) at (0,1,0) {$\cDb(X)$};
\node (fl) at (-0.5,0.5,1) {$\cDb(Z)$};
\node (fr) at (0.5,0.5,1) {$\cDb(Y)$};
\node (rd) at (0,0,0) {$\cDb(Z')$};
\draw[liner,->] (ru) -- node[arl,pos=.6] {$\al (g_Zf)_*$} (rd);
\node[tetrtf] at (0, {(1+\tric)/2}, {1-\tric}) {$\Comp$};
\node[tetrbf] at (0, {(1-\tric)/2}, {1-\tric}) {$\Comp$};
\draw[<-] (fl) -- node[arl] {$\al f_*$} (ru);
\draw[->] (fl) -- node[arr] {$\al (g_Z)_*$} (rd);
\draw[->] (ru) -- node[arl] {$\al (f_1)_*$} (fr);
\draw[<-] (rd) -- node[arr] {$\al (g_Zf_2)_*$} (fr);
\draw[<-] (fl) -- node[arl,pos=.3] {$\al (f_2)_*$} (fr);
\end{tikzpicture}}
\vc{\tiny\begin{tikzpicture}[stdtetr]
\compuinit
\node[tetrlr] at ({-\tric/2},0.5,\tric) {$\Comp$};
\node[tetrdr] at ({\tric/2},0.5,\tric) {$\Comp$};
\node (ru) at (0,1,0) {$\cDb(X)$};
\node (fl) at (-0.5,0.5,1) {$\cDb(Y)$};
\node (fr) at (0.5,0.5,1) {$\cDb(Y')$};
\node (rd) at (0,0,0) {$\cDb(Z')$};
\draw[liner,->] (ru) -- node[arl,pos=.6] {$\al (g_Zf)_*$} (rd);
\node[tetrtf] at (0, {(1+\tric)/2}, {1-\tric}) {$\Comp$};
\node[tetrbf] at (0, {(1-\tric)/2}, {1-\tric}) {$\Comp$};
\draw[<-] (fl) -- node[arl] {$\al (f_1)_*$} (ru);
\draw[->] (fl) -- node[arr] {$\al (f'_2g_Y)_*$} (rd);
\draw[->] (ru) -- node[arl] {$\al (g_Yf_1)_*$} (fr);
\draw[<-] (rd) -- node[arr] {$\al (f'_2)_*$} (fr);
\draw[->] (fl) -- node[arl,pos=.3] {$\al (g_Y)_*$} (fr);
\end{tikzpicture}}
\vc{\tiny\begin{tikzpicture}[stdtetr]
\compuinit
\node[tetrlr] at ({-\tric/2},0.5,\tric) {$\Comp$};
\node[tetrdr] at ({\tric/2},0.5,\tric) {$\Comp$};
\node (ru) at (0,1,0) {$\cDb(X)$};
\node (fl) at (-0.5,0.5,1) {$\cDb(Y')$};
\node (fr) at (0.5,0.5,1) {$\cDb(X')$};
\node (rd) at (0,0,0) {$\cDb(Z')$};
\draw[liner,->] (ru) -- node[arl,pos=.6] {$\al (f'g_X)_*$} (rd);
\node[tetrtf] at (0, {(1+\tric)/2}, {1-\tric}) {$\Comp$};
\node[tetrbf] at (0, {(1-\tric)/2}, {1-\tric}) {$\Comp$};
\draw[<-] (fl) -- node[arl] {$\al (f'_1g_X)_*$} (ru);
\draw[->] (fl) -- node[arr] {$\al (f'_2)_*$} (rd);
\draw[->] (ru) -- node[arl] {$\al (g_X)_*$} (fr);
\draw[<-] (rd) -- node[arr] {$\al f'_*$} (fr);
\draw[<-] (fl) -- node[arl,pos=.3] {$\al (f'_1)_*$} (fr);
\end{tikzpicture}}
\]
The proofs of parts~\subref{lem:_!composition_!}--\subref{lem:^!composition^!} are similar, using the other parts of Lemma~\ref{lem:cocycle}.

\subsection{Base change and composition}

\begin{figure}
\begin{center}
Setting:\qquad
\qc{\begin{tikzpicture}[smallprism]
\compuinit
\node[cart] at (0.5,0.5,0) {};
\node (rlu) at (0,1,0) {$X$};
\node (rru) at (1,1,0) {$X'$};
\node (fr) at (	1,0.5,1) {$Y'$};
\node (rld) at (0,0,0) {$Z$};
\node (rrd) at (1,0,0) {$Z'$};
\draw[->] (rru) -- node[arr,pos=.3] {$\al f'$} (rrd);
\draw[->] (rlu) -- node[arl] {$\al g_X$} (rru);
\draw[->] (rlu) -- node[arr] {$\al f$} (rld);
\draw[->] (rru) -- node[arl] {$\al f'_1$} (fr);
\draw[->] (rld) -- node[arr] {$\al g_Z$} (rrd);
\draw[->] (fr) -- node[arl] {$\al f'_2$} (rrd);
\node[cart,diagup] at (0.5,0.75,0.5) {};
\node[cart,diagdp] at (0.5,0.25,0.5) {};
\node (fl) at (0,0.5,1) {$Y$};
\draw[->] (rlu) -- node[arl] {$\al f_1$} (fl);
\draw[->] (fl) -- node[arl,pos=.3] {$\al f_2$} (rld);
\draw[linef,->] (fl)  -- node[arr,pos=.3] {$\al g_Y$} (fr);
\end{tikzpicture}}

\subfigure[][]{\qc{\begin{tikzpicture}[stdprism]
\compuinit
\node[cuber] at (0.5,0.5,0) {$\BC$};
\node[prismdf] at (1,0.5,\tric) {$\Comp$};
\node (rlu) at (0,1,0) {$\cDb(X)$};
\node (rru) at (1,1,0) {$\cDb(X')$};
\node (fr) at (	1,0.5,1) {$\cDb(Y')$};
\node (rld) at (0,0,0) {$\cDb(Z)$};
\node (rrd) at (1,0,0) {$\cDb(Z')$};
\draw[liner,<-] (rru) -- node[arr,pos=.3] {$\al (f')^*$} (rrd);
\draw[->] (rlu) -- node[arl] {$\al (g_X)_!$} (rru);
\draw[<-] (rlu) -- node[arr] {$\al f^*$} (rld);
\draw[<-] (rru) -- node[arl] {$\al (f'_1)^*$} (fr);
\draw[->] (rld) -- node[arr] {$\al (g_Z)_!$} (rrd);
\draw[<-] (fr) -- node[arl] {$\al (f'_2)^*$} (rrd);
\node[prismtf] at (0.5,0.75,0.5) {$\BC$};
\node[prismbf] at (0.5,0.25,0.5) {$\BC$};
\node[prismlf] at (0,0.5,\tric) {$\Comp$};
\node (fl) at (0,0.5,1) {$\cDb(Y)$};
\draw[<-] (rlu) -- node[arl] {$\al (f_1)^*$} (fl);
\draw[<-] (fl) -- node[arl,pos=.3] {$\al (f_2)^*$} (rld);
\draw[->] (fl)  -- node[arr,pos=.3] {$\al (g_Y)_!$} (fr);
\end{tikzpicture}}\label{lem:^*composition_!}}~
\subfigure[][]{\qc{\begin{tikzpicture}[stdprism]
\compuinit
\node[cuber] at (0.5,0.5,0) {$\BC$};
\node[prismdf] at (1,0.5,\tric) {$\Comp$};
\node (rlu) at (0,1,0) {$\cDb(X)$};
\node (rru) at (1,1,0) {$\cDb(X')$};
\node (fr) at (	1,0.5,1) {$\cDb(Y')$};
\node (rld) at (0,0,0) {$\cDb(Z)$};
\node (rrd) at (1,0,0) {$\cDb(Z')$};
\draw[liner,->] (rru) -- node[arr,pos=.3] {$\al (f')_!$} (rrd);
\draw[<-] (rlu) -- node[arl] {$\al (g_X)^*$} (rru);
\draw[->] (rlu) -- node[arr] {$\al f_!$} (rld);
\draw[->] (rru) -- node[arl] {$\al (f'_1)_!$} (fr);
\draw[<-] (rld) -- node[arr] {$\al (g_Z)^*$} (rrd);
\draw[->] (fr) -- node[arl] {$\al (f'_2)_!$} (rrd);
\node[prismtf] at (0.5,0.75,0.5) {$\BC$};
\node[prismbf] at (0.5,0.25,0.5) {$\BC$};
\node[prismlf] at (0,0.5,\tric) {$\Comp$};
\node (fl) at (0,0.5,1) {$\cDb(Y)$};
\draw[->] (rlu) -- node[arl] {$\al (f_1)_!$} (fl);
\draw[->] (fl) -- node[arl,pos=.3] {$\al (f_2)_!$} (rld);
\draw[<-] (fl)  -- node[arr,pos=.3] {$\al (g_Y)^*$} (fr);
\end{tikzpicture}}\label{lem:_!composition^*}}

\subfigure[][]{\qc{\begin{tikzpicture}[stdprism]
\compuinit
\node[cuber] at (0.5,0.5,0) {$\BC$};
\node[prismdf] at (1,0.5,\tric) {$\Comp$};
\node (rlu) at (0,1,0) {$\cDb(X)$};
\node (rru) at (1,1,0) {$\cDb(X')$};
\node (fr) at (	1,0.5,1) {$\cDb(Y')$};
\node (rld) at (0,0,0) {$\cDb(Z)$};
\node (rrd) at (1,0,0) {$\cDb(Z')$};
\draw[liner,<-] (rru) -- node[arr,pos=.3] {$\al (f')^!$} (rrd);
\draw[->] (rlu) -- node[arl] {$\al (g_X)_*$} (rru);
\draw[<-] (rlu) -- node[arr] {$\al f^!$} (rld);
\draw[<-] (rru) -- node[arl] {$\al (f'_1)^!$} (fr);
\draw[->] (rld) -- node[arr] {$\al (g_Z)_*$} (rrd);
\draw[<-] (fr) -- node[arl] {$\al (f'_2)^!$} (rrd);
\node[prismtf] at (0.5,0.75,0.5) {$\BC$};
\node[prismbf] at (0.5,0.25,0.5) {$\BC$};
\node[prismlf] at (0,0.5,\tric) {$\Comp$};
\node (fl) at (0,0.5,1) {$\cDb(Y)$};
\draw[<-] (rlu) -- node[arl] {$\al (f_1)^!$} (fl);
\draw[<-] (fl) -- node[arl,pos=.3] {$\al (f_2)^!$} (rld);
\draw[->] (fl)  -- node[arr,pos=.3] {$\al (g_Y)_*$} (fr);
\end{tikzpicture}}\label{lem:^!composition_*}}~
\subfigure[][]{\qc{\begin{tikzpicture}[stdprism]
\compuinit
\node[cuber] at (0.5,0.5,0) {$\BC$};
\node[prismdf] at (1,0.5,\tric) {$\Comp$};
\node (rlu) at (0,1,0) {$\cDb(X)$};
\node (rru) at (1,1,0) {$\cDb(X')$};
\node (fr) at (	1,0.5,1) {$\cDb(Y')$};
\node (rld) at (0,0,0) {$\cDb(Z)$};
\node (rrd) at (1,0,0) {$\cDb(Z')$};
\draw[liner,->] (rru) -- node[arr,pos=.3] {$\al (f')_*$} (rrd);
\draw[<-] (rlu) -- node[arl] {$\al (g_X)^!$} (rru);
\draw[->] (rlu) -- node[arr] {$\al f_*$} (rld);
\draw[->] (rru) -- node[arl] {$\al (f'_1)_*$} (fr);
\draw[<-] (rld) -- node[arr] {$\al (g_Z)^!$} (rrd);
\draw[->] (fr) -- node[arl] {$\al (f'_2)_*$} (rrd);
\node[prismtf] at (0.5,0.75,0.5) {$\BC$};
\node[prismbf] at (0.5,0.25,0.5) {$\BC$};
\node[prismlf] at (0,0.5,\tric) {$\Comp$};
\node (fl) at (0,0.5,1) {$\cDb(Y)$};
\draw[->] (rlu) -- node[arl] {$\al (f_1)_*$} (fl);
\draw[->] (fl) -- node[arl,pos=.3] {$\al (f_2)_*$} (rld);
\draw[<-] (fl)  -- node[arr,pos=.3] {$\al (g_Y)^!$} (fr);
\end{tikzpicture}}\label{lem:_*composition^!}}
\end{center}
\caption{Base change and composition}\label{lem:basechangecomposition}
\end{figure}
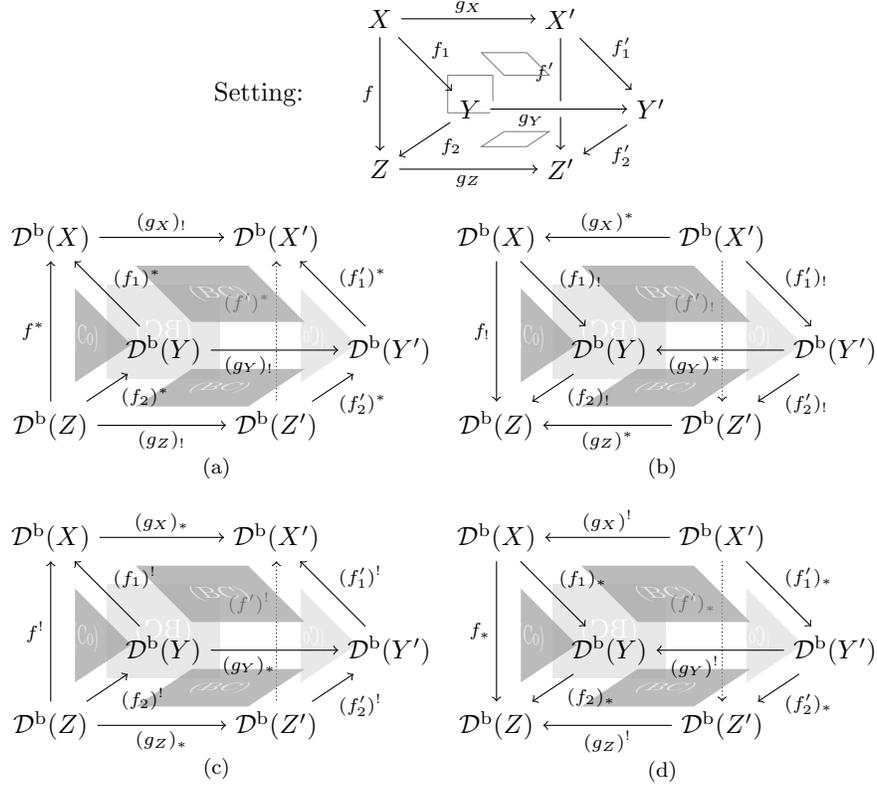

We begin with part~\subref{lem:^*composition_!}.  By construction, the base change isomorphism is deduced from a similar isomorphism between non-derived functors (which we denote with a sub- or superscript ``$0$''). As for Lemma \ref{lem:^*_*compositionadjunction}, one can check that it is enough to prove the corresponding statement for the non-derived functors. In concrete terms, to prove the latter statement we have to prove that the following diagram of isomorphisms of functors commutes:
\begin{equation}
\label{eqn:diag-^*composition_!}
\vcenter{
\xymatrix@R=12pt{
(f')^*_0 (g_Z)^0_! \ar@{<=>}[r] \ar@{<=>}[d] & (f'_1)^*_0 (f'_2)^*_0 (g_Z)_!^0 \ar@{<=>}[r] & (f'_1)^*_0 (g_Y)_!^0 (f_2)^*_0 \ar@{<=>}[d] \\
(g_X)_!^0 f^*_0 \ar@{<=>}[rr] & & (g_X)_!^0 (f_1)^*_0 (f_2)^*_0
}
}
\end{equation}
Now recall that the isomorphism $(f')^*_0 (g_Z)^0_! \natisom (g_X)_!^0 f^*_0$ is obtained by adjunction from the morphism of functors $(g_Z)^0_! f_*^0 \Rightarrow (f')_*^0 (g_X)_!^0$ induced by the composition isomorphism $(g_Z)^0_* f_*^0 \natisom (f')_*^0 (g_X)_*^0$, and similarly for the other base change isomorphisms (see \cite[Proposition 2.5.11]{kas}). One can check (using in particular the non-derived version of Lemma \ref{lem:^*_*compositionadjunction}) that the commutativity of diagram \eqref{eqn:diag-^*composition_!} follows from the commutativity of the following diagram:
\[
\xymatrix@R=12pt{
(g_Z)_!^0 f_*^0 \ar@{<=>}[r] \ar@{=>}[d] & (g_Z)_!^0 (f_2)_*^0 (f_1)_*^0 \ar@{=>}[r] & (f_2')_*^0 (g_Y)_!^0 (f_1)_*^0 \ar@{=>}[d] \\
(f')_*^0 (g_X)_!^0 \ar@{<=>}[rr] & & (f'_2)_*^0 (f'_1)_*^0 (g_X)_!^0
}
\]
which itself follows easily from the non-derived version of Lemma~\ref{lem:cocycle_*}. The proof of part~\subref{lem:_!composition^*} is similar.

The proof of part~\subref{lem:^!composition_*} is similar to that of Lemma~\ref{lem:cocycle^!}: the claim follows from part~\subref{lem:_!composition^*}, using Lemma~\ref{lem:basechangeadjunction} and Lemma~\ref{lem:_!^!compositionadjunction}. Similarly, part~\subref{lem:_*composition^!} follows from part~\subref{lem:^*composition_!}, using Lemma \ref{lem:basechangeadjunction} and Lemma \ref{lem:^*_*compositionadjunction}.

\subsection{Base change and iterated composition}

\begin{figure}
\begin{center}
Setting:\qquad
\qc{\begin{tikzpicture}[smallcube]
\compuinit
\node (rrd) at (1,0,0) {$X'$};
\node[cart] at (0.5,0.5,0) {};
\node[cart,horizc] at (0.5,0,0.5) {};
\node (rlu) at (0,1,0) {$W$};
\node (rru) at (1,1,0) {$X$};
\node (fru) at (1,1,1) {$Z$};
\node (frd) at (1,0,1) {$Z'$};
\node (fld) at (0,0,1) {$Y'$};
\node (rld) at (0,0,0) {$W'$};
\draw[->] (rru) -- node[arl,pos=.7] {$\al h_X$} (rrd);
\draw[->] (rld) -- node[arl,pos=.3] {$\al g_1'$} (rrd);
\draw[->] (rrd) -- node[arl] {$\al f_1$} (frd);
\draw[->] (rlu) -- node[arl] {$\al g'$} (rru);
\draw[->] (rru) -- node[arl] {$\al f$} (fru);
\draw[->] (fru) -- node[arl] {$\al h_Z$}(frd);
\draw[->] (fld) -- node[arr] {$\al g_1$} (frd);
\draw[->] (rld) -- node[arr] {$\al f'_1$} (fld);
\draw[->] (rlu) -- node[arr] {$\al h_W$} (rld);
\node[cart,horizc] at (0.5,1,0.5) {};
\node[cart] at (0.5,0.5,1) {};
\node (flu) at (0,1,1) {$Y$};
\draw[linef,->] (rlu) -- node[arr,pos=.7] {$\al f'$} (flu);
\draw[linef,->] (flu) -- node[arr,pos=.3] {$\al g$} (fru);
\draw[linef,->] (flu) -- node[arl,pos=.3] {$\al h_Y$} (fld);
\end{tikzpicture}}

\subfigure[][]{\qc{\begin{tikzpicture}[stdcube]
\compuinit
\node (rrd) at (1,0,0) {$\cDb(X')$};
\node[cuber] at (0.5,0.5,0) {$\BC$};
\node[cubed] at (1,0.5,0.5) {$\Comp$};
\node[cubeb] at (0.5,0,0.5) {$\BC$};
\node (rlu) at (0,1,0) {$\cDb(W)$};
\node (rru) at (1,1,0) {$\cDb(X)$};
\node (fru) at (1,1,1) {$\cDb(Z)$};
\node (frd) at (1,0,1) {$\cDb(Z')$};
\node (fld) at (0,0,1) {$\cDb(Y')$};
\node (rld) at (0,0,0) {$\cDb(W')$};
\draw[liner,->] (rru) -- node[arl,pos=.7] {$\al (h_X)_!$} (rrd);
\draw[liner,<-] (rld) -- node[arl,pos=.3] {$\al (g_1')^*$} (rrd);
\draw[liner,->] (rrd) -- node[arl] {$\al (f_1)_!$} (frd);
\draw[<-] (rlu) -- node[arl] {$\al (g')^*$} (rru);
\draw[->] (rru) -- node[arl] {$\al f_!$} (fru);
\draw[->] (fru) -- node[arl] {$\al (h_Z)_!$}(frd);
\draw[<-] (fld) -- node[arr] {$\al (g_1)^*$} (frd);
\draw[->] (rld) -- node[arr] {$\al (f_1')_!$} (fld);
\draw[->] (rlu) -- node[arr] {$\al (h_W)_!$} (rld);
\node[cubel] at (0,0.5,0.5) {$\Comp$};
\node[cubet] at (0.5,1,0.5) {$\BC$};
\node[cubef] at (0.5,0.5,1) {$\BC$};
\node (flu) at (0,1,1) {$\cDb(Y)$};
\draw[->] (rlu) -- node[arr,pos=.7] {$\al (f')_!$} (flu);
\draw[<-] (flu) -- node[arr,pos=.3] {$\al g^*$} (fru);
\draw[->] (flu) -- node[arl,pos=.3] {$\al (h_Y)_!$} (fld);
\end{tikzpicture}}\label{lem:_!^*basechange_!}}~
\subfigure[][]{\qc{\begin{tikzpicture}[stdcube]
\compuinit
\node (rrd) at (1,0,0) {$\cDb(X')$};
\node[cuber] at (0.5,0.5,0) {$\BC$};
\node[cubed] at (1,0.5,0.5) {$\Comp$};
\node[cubeb] at (0.5,0,0.5) {$\BC$};
\node (rlu) at (0,1,0) {$\cDb(W)$};
\node (rru) at (1,1,0) {$\cDb(X)$};
\node (fru) at (1,1,1) {$\cDb(Z)$};
\node (frd) at (1,0,1) {$\cDb(Z')$};
\node (fld) at (0,0,1) {$\cDb(Y')$};
\node (rld) at (0,0,0) {$\cDb(W')$};
\draw[liner,->] (rru) -- node[arl,pos=.7] {$\al (h_X)_*$} (rrd);
\draw[liner,<-] (rld) -- node[arl,pos=.3] {$\al (g_1')^!$} (rrd);
\draw[liner,->] (rrd) -- node[arl] {$\al (f_1)_*$} (frd);
\draw[<-] (rlu) -- node[arl] {$\al (g')^!$} (rru);
\draw[->] (rru) -- node[arl] {$\al f_*$} (fru);
\draw[->] (fru) -- node[arl] {$\al (h_Z)_*$}(frd);
\draw[<-] (fld) -- node[arr] {$\al (g_1)^!$} (frd);
\draw[->] (rld) -- node[arr] {$\al (f_1')_*$} (fld);
\draw[->] (rlu) -- node[arr] {$\al (h_W)_*$} (rld);
\node[cubel] at (0,0.5,0.5) {$\Comp$};
\node[cubet] at (0.5,1,0.5) {$\BC$};
\node[cubef] at (0.5,0.5,1) {$\BC$};
\node (flu) at (0,1,1) {$\cDb(Y)$};
\draw[->] (rlu) -- node[arr,pos=.7] {$\al (f')_*$} (flu);
\draw[<-] (flu) -- node[arr,pos=.3] {$\al g^!$} (fru);
\draw[->] (flu) -- node[arl,pos=.3] {$\al (h_Y)_*$} (fld);
\end{tikzpicture}}\label{lem:_*^!basechange_*}}

\subfigure[][]{\qc{\begin{tikzpicture}[stdcube]
\compuinit
\node (rrd) at (1,0,0) {$\cDb(X')$};
\node[cuber] at (0.5,0.5,0) {$\BC$};
\node[cubed] at (1,0.5,0.5) {$\Comp$};
\node[cubeb] at (0.5,0,0.5) {$\BC$};
\node (rlu) at (0,1,0) {$\cDb(W)$};
\node (rru) at (1,1,0) {$\cDb(X)$};
\node (fru) at (1,1,1) {$\cDb(Z)$};
\node (frd) at (1,0,1) {$\cDb(Z')$};
\node (fld) at (0,0,1) {$\cDb(Y')$};
\node (rld) at (0,0,0) {$\cDb(W')$};
\draw[liner,<-] (rru) -- node[arl,pos=.7] {$\al (h_X)^*$} (rrd);
\draw[liner,->] (rld) -- node[arl,pos=.3] {$\al (g_1')_!$} (rrd);
\draw[liner,<-] (rrd) -- node[arl] {$\al (f_1)^*$} (frd);
\draw[->] (rlu) -- node[arl] {$\al (g')_!$} (rru);
\draw[<-] (rru) -- node[arl] {$\al f^*$} (fru);
\draw[<-] (fru) -- node[arl] {$\al (h_Z)^*$}(frd);
\draw[->] (fld) -- node[arr] {$\al (g_1)_!$} (frd);
\draw[<-] (rld) -- node[arr] {$\al (f_1')^*$} (fld);
\draw[<-] (rlu) -- node[arr] {$\al (h_W)^*$} (rld);
\node[cubel] at (0,0.5,0.5) {$\Comp$};
\node[cubet] at (0.5,1,0.5) {$\BC$};
\node[cubef] at (0.5,0.5,1) {$\BC$};
\node (flu) at (0,1,1) {$\cDb(Y)$};
\draw[<-] (rlu) -- node[arr,pos=.7] {$\al (f')^*$} (flu);
\draw[->] (flu) -- node[arr,pos=.3] {$\al g_!$} (fru);
\draw[<-] (flu) -- node[arl,pos=.3] {$\al (h_Y)^*$} (fld);
\end{tikzpicture}}\label{lem:_!^*basechange^*}}~
\subfigure[][]{\qc{\begin{tikzpicture}[stdcube]
\compuinit
\node (rrd) at (1,0,0) {$\cDb(X')$};
\node[cuber] at (0.5,0.5,0) {$\BC$};
\node[cubed] at (1,0.5,0.5) {$\Comp$};
\node[cubeb] at (0.5,0,0.5) {$\BC$};
\node (rlu) at (0,1,0) {$\cDb(W)$};
\node (rru) at (1,1,0) {$\cDb(X)$};
\node (fru) at (1,1,1) {$\cDb(Z)$};
\node (frd) at (1,0,1) {$\cDb(Z')$};
\node (fld) at (0,0,1) {$\cDb(Y')$};
\node (rld) at (0,0,0) {$\cDb(W')$};
\draw[liner,<-] (rru) -- node[arl,pos=.7] {$\al (h_X)^!$} (rrd);
\draw[liner,->] (rld) -- node[arl,pos=.3] {$\al (g_1')_*$} (rrd);
\draw[liner,<-] (rrd) -- node[arl] {$\al (f_1)^!$} (frd);
\draw[->] (rlu) -- node[arl] {$\al (g')_*$} (rru);
\draw[<-] (rru) -- node[arl] {$\al f^!$} (fru);
\draw[<-] (fru) -- node[arl] {$\al (h_Z)^!$}(frd);
\draw[->] (fld) -- node[arr] {$\al (g_1)_*$} (frd);
\draw[<-] (rld) -- node[arr] {$\al (f_1')^!$} (fld);
\draw[<-] (rlu) -- node[arr] {$\al (h_W)^!$} (rld);
\node[cubel] at (0,0.5,0.5) {$\Comp$};
\node[cubet] at (0.5,1,0.5) {$\BC$};
\node[cubef] at (0.5,0.5,1) {$\BC$};
\node (flu) at (0,1,1) {$\cDb(Y)$};
\draw[<-] (rlu) -- node[arr,pos=.7] {$\al (f')^!$} (flu);
\draw[->] (flu) -- node[arr,pos=.3] {$\al g_*$} (fru);
\draw[<-] (flu) -- node[arl,pos=.3] {$\al (h_Y)^!$} (fld);
\end{tikzpicture}}\label{lem:_*^!basechange^!}}
\end{center}
\caption{Base change and iterated composition}\label{lem:compbasechange}
\end{figure}
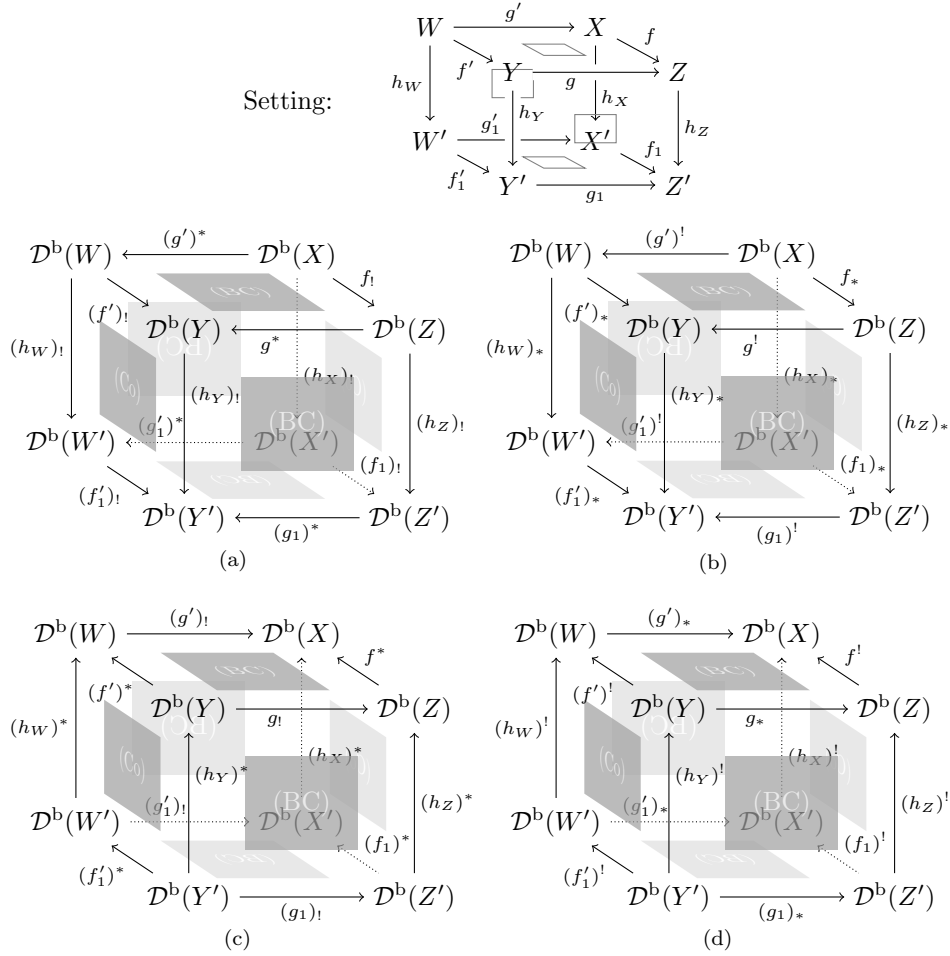

Part~\subref{lem:_!^*basechange_!} follows from the gluing principle, since the cube can be obtained by gluing together the following prisms, which are commutative by Lemma \ref{lem:_!composition^*}:
\[
\vc{\begin{tikzpicture}[stdprism]
\compuinit
\node (rr) at (	1,0.5,0) {$\cDb(X')$};
\node[prismtr] at (0.5,0.75,0.5) {$\BC$};
\node[prismbr] at (0.5,0.25,0.5) {$\BC$};
\node[prismdr] at (1,0.5,1-\tric) {$\Comp$};
\node (flu) at (0,1,1) {$\cDb(W)$};
\node (fru) at (1,1,1) {$\cDb(X)$};
\node (rl) at (0,0.5,0) {$\cDb(W')$};
\node (fld) at (0,0,1) {$\cDb(Y')$};
\node (frd) at (1,0,1) {$\cDb(Z')$};
\draw[liner,<-] (rl) -- node[arl] {$\al (g'_1)^*$} (rr);
\draw[liner,->] (fru) -- node[arl] {$\al (h_X)_!$} (rr);
\draw[liner,->] (rr) -- node[arl] {$\al (f_1)_!$} (frd);
\draw[<-] (flu) -- node[arl] {$\al (g')^*$} (fru);
\draw[->] (fru) -- node[arl] {$\al (f_1h_X)_!$} (frd);
\draw[<-] (fld) -- node[arr] {$\al (g_1)^*$} (frd);
\draw[->] (flu) -- node[arr] {$\al (h_W)_!$} (rl);
\draw[->] (rl) -- node[arr] {$\al (f'_1)_!$} (fld);
\node[prismlr] at (0,0.5,1-\tric) {$\Comp$};
\node[cubef] at (0.5,0.5,1) {$\BC$};
\draw[->] (flu) -- node[arl,pos=.7] {$\al (h_Yf')_!$} (fld);
\end{tikzpicture}}~
\vc{\begin{tikzpicture}[stdprism]
\compuinit
\node[cuber] at (0.5,0.5,0) {$\BC$};
\node[prismdf] at (1,0.5,\tric) {$\Comp$};
\node (rlu) at (0,1,0) {$\cDb(W)$};
\node (rru) at (1,1,0) {$\cDb(X)$};
\node (fr) at (	1,0.5,1) {$\cDb(Z)$};
\node (rld) at (0,0,0) {$\cDb(Y')$};
\node (rrd) at (1,0,0) {$\cDb(Z')$};
\draw[liner,->] (rru) -- node[arr,pos=.3] {$\al (h_Zf)_!$} (rrd);
\draw[<-] (rlu) -- node[arl] {$\al (g')^*$} (rru);
\draw[->] (rlu) -- node[arr] {$\al (h_Yf')_!$} (rld);
\draw[->] (rru) -- node[arl] {$\al f_!$} (fr);
\draw[<-] (rld) -- node[arr] {$\al (g_1)^*$} (rrd);
\draw[->] (fr) -- node[arl] {$\al (h_Z)_!$} (rrd);
\node[prismtf] at (0.5,0.75,0.5) {$\BC$};
\node[prismbf] at (0.5,0.25,0.5) {$\BC$};
\node[prismlf] at (0,0.5,\tric) {$\Comp$};
\node (fl) at (0,0.5,1) {$\cDb(Y)$};
\draw[->] (rlu) -- node[arl] {$\al (f')_!$} (fl);
\draw[->] (fl) -- node[arl,pos=.3] {$\al (h_Y)_!$} (rld);
\draw[<-] (fl)  -- node[arr,pos=.3] {$\al g^*$} (fr);
\end{tikzpicture}}
\]
The proofs of the other parts are similar.

\subsection{Equivariant versions of the above isomorphisms}
\label{subsect:equivariant}

Every isomorphism of functors described above has an equivariant version, where all varieties are assumed to have an action of an algebraic group $H$, every morphism is assumed to be $H$-equivariant, each derived category $\cDb(X)$ is replaced by the equivariant derived category $\cDb_H(X)$ of \cite{bl}, the constant sheaf $\ubk_X$ is replaced by the equivariant constant sheaf $\ubk^H_X$ of \cite[\S3.4.2]{bl}, and $f_*,f_!,f^*,f^!$ are defined as in \cite[\S3.3]{bl}. The equivariant versions of the isomorphisms are constructed from the ordinary isomorphisms, as explained in \cite[\S3.4]{bl}. We continue to use the notation `$\Co$', `$\BC$', and so on for the equivariant versions.

As mentioned before, we will cite any of Lemmas~\ref{lem:compositionadjunction}--\ref{lem:compbasechange} when we actually require the statement for the equivariant versions. To justify this, and for future reference, we briefly recall how the equivariant categories, functors and isomorphisms are defined.

For any $H$-variety $X$, an $H$-resolution $P$ of $X$ means a variety $P$ endowed with a free $H$-action and a smooth $H$-equivariant morphism $P\to X$. By definition, to specify an object $M$ of $\cDb_H(X)$ is to specify a compatible collection of objects of the categories $\cDb(H\backslash P)$ for various $H$-resolutions $P$ of $X$. More precisely, for each $P$ in a `sufficiently rich' class of $H$-resolutions of $X$ we must specify an object $M(P)$ of $\cDb(H\backslash P)$, and for any smooth morphism $g: P \to Q$ between such resolutions we must specify an isomorphism $\overline{g}^* (M(Q)) \cong M(P)$, where $\overline{g}: H\backslash P \to H\backslash Q$ is the morphism induced by $g$, such that a natural compatibility condition holds when we consider the composition of two smooth morphisms. See~\cite[\S\S2.4.4--2.4.5]{bl} for the details. 

The functors $f_*,f_!,f^*,f^!$ between equivariant derived categories are defined by means of the corresponding functors for the ordinary derived categories $\cDb(H\backslash P)$. Explicitly, if $f:X\to Y$ is an $H$-equivariant morphism and $M\in\cDb_H(X)$, then $f_*M\in\cDb_H(Y)$ is defined by $(f_*M)(P)=(\widetilde{f}^H_P)_* \bigl( M(P\times_Y X) \bigr)$, where the fibre product $P\times_Y X$ is defined using $f:X\to Y$, and $\widetilde{f}^H_P:H\backslash(P\times_Y X)\to H\backslash P$ is the map induced by the projection $P\times_Y X\to P$. The definition of $f_!$ is the same but with $(\widetilde{f}^H_P)_!$ instead of $(\widetilde{f}^H_P)_*$. If $N\in\cDb_H(Y)$, then $f^*N\in\cDb_H(X)$ is defined by $(f^*N)(P\times_Y X)=(\widetilde{f}^H_P)^* \bigl( N(P) \bigr)$. (The class of $H$-resolutions of $X$ of the form $P\times_Y X$ where $P$ is an $H$-resolution of $Y$ is `sufficiently rich'.) The definition of $f^!$ is the same but with $(\widetilde{f}^H_P)^!$ instead of $(\widetilde{f}^H_P)^*$.

As an example of an isomorphism of equivariant functors, consider the composition isomorphism for $(\cdot)_*$. Suppose we have $H$-equivariant morphisms $f:X\to Y$ and $g:Y\to Z$. To define an isomorphism between the two functors $(gf)_*:\cDb_H(X)\to\cDb_H(Z)$ and $g_*\circ f_*:\cDb_H(X)\to\cDb_H(Z)$, it suffices to define, for each object $M$ of $\cDb_H(X)$ and each $H$-resolution $P$ of $Z$, an isomorphism between $\bigl( (gf)_* M \bigr)(P)$ and $(g_*(f_*M))(P)$ that is suitably natural in $P$. But by definition, 
\[ (g_*(f_*M))(P) = (\widetilde{g}^H_P)_*\bigl((f_*M)(P\times_Z Y)\bigr) =
(\widetilde{g}^H_P)_*(\widetilde{f}^H_{P\times_Z Y})_*(M(P\times_Z X)), \]
where we have identified $(P\times_Z Y)\times_Y X$ with $P\times_Z X$. Since the composition $\widetilde{g}^H_P\widetilde{f}^H_{P\times_Z Y}:H\backslash(P\times_Z X)\to H\backslash P$ is exactly $\widetilde{(gf)}^H_P$, the ordinary $\Co$ isomorphism $(\widetilde{g}^H_P)_*\circ(\widetilde{f}^H_{P\times_Z Y})_*\natisom(\widetilde{(gf)}^H_P)_*$ provides the required isomorphism.

To show the equivariant version of Lemma~\ref{lem:cocycle_*}, we can restrict attention to a single object $M$ of $\cDb_H(W)$, and evaluate all the resulting objects of $\cDb_H(Z)$ at a single $H$-resolution $P$ of $Z$. Unravelling the definitions, the commutativity statement we have to prove becomes a special case of the ordinary Lemma~\ref{lem:cocycle_*}.

By similar arguments, every part of Lemmas~\ref{lem:compositionadjunction}--\ref{lem:compbasechange} implies the corresponding equivariant statement.

\subsection{Notation for isomorphisms of equivariant functors}

As well as the equivariant versions of $\Co$, $\BC$, etc., we need to consider some isomorphisms of functors specific to the equivariant setting.

\subsubsection{Forgetting and integration}
\label{sss:forgetting}

Let $K$ be a closed subgroup of $H$, and $X$ an $H$-variety. There is a `forgetful' functor $\For^H_K:\cDb_H(X)\to\cDb_K(X)$, denoted $\mathit{Res}_{K,H}$ in \cite[\S2.6.1]{bl}, which is defined so that for $M$ an object of $\cDb_H(X)$ and $P$ a $K$-resolution of $X$, we have 
\[ \bigl( \For^H_K M\bigr)(P)=M(H\times^K P). \] 
Here and subsequently, we use the obvious identification of $H\backslash(H\times^K P)$ with $K\backslash P$. When $K$ is the trivial group, $\For^H_K$ becomes the forgetful functor $\For:\cDb_H(X)\to\cDb(X)$ under the obvious identification of $\cDb_K(X)$ with $\cDb(X)$. 

We also have an `integration' functor $\hamma^H_K:\cDb_K(X)\to\cDb_H(X)$ defined as follows: for $M$ an object of $\cDb_K(X)$ and $P$ an $H$-resolution of $X$, we have 
\[ \bigl( \hamma_K^H M \bigr) (P)=(q_P)_! M(P)[2\dim(H/K)], \] 
where $q_P : K\backslash P \to H\backslash P$ is the quotient morphism and $M(P)$ is defined by regarding $P$ as a $K$-resolution of $X$. It is easy to see that $\hamma^H_K$ is isomorphic to the functor denoted $\mathit{Ind}_!$ in \cite[\S3.7.1]{bl}, and therefore it is left adjoint to $\For^H_K$. In fact, we can see this adjunction explicitly: for any $H$-resolution $P$ of $X$ and objects $M$ of $\cDb_K(X)$ and $N$ of $\cDb_H(X)$, we have natural isomorphisms
\begin{multline*}
\Hom_{\cDb(H\backslash P)} \bigl( (q_P)_! M(P)[2 \dim(H/K)], N(P) \bigr) \\ 
\cong \Hom_{\cDb(K\backslash P)} \bigl( M(P), (q_P)^! N(P) [-2\dim(H/K)] \bigr) \\
\cong \Hom_{\cDb(K\backslash P)} \bigl( M(P), (q_P)^* N(P) \bigr) \\
\cong \Hom_{\cDb(K\backslash P)} \bigl( M(P), N(H \times^K P) \bigr)
\end{multline*}
where the second isomorphism uses the isomorphism $(q_P)^! \natisom (q_P)^*[2\dim(H/K)]$ which holds since $q_P$ is smooth, and the third isomorphism uses the isomorphism $(q_P)^* N(P) \cong N(H \times^K P)$ which is part of the structure of $N$ as an object of $\cDb_H(X)$. We thus obtain an adjunction isomorphism
\[
\vc{\begin{tikzpicture}[smallcube]
\compuinit
\node[cubef] at (0.5,0.5) {$\Adj$};
\node (lu) at (0,1) {$\cDb_H(X)$};
\node (ru) at (1,1) {$\Vect^{\cDb_H(X)^\op}$};
\node (ld) at (0,0) {$\cDb_K(X)$};
\node (rd) at (1,0) {$\Vect^{\cDb_K(X)^\op}$};
\draw[->] (lu) -- node[arl] {$\al \Yon$} (ru);
\draw[->] (lu) -- node[arr] {$\al \For^H_K$} (ld);
\draw[->] (ru) -- node[arl] {$\al -\circ(\hamma^H_K)^\op$} (rd);
\draw[->] (ld) -- node[arr] {$\al \Yon$} (rd);
\end{tikzpicture}}
\]

As stated in \cite[Theorem 3.4.1]{bl}, there are isomorphisms 
\[
\vc{\begin{tikzpicture}[smallercube]
\compuinit
\node[cubef] at (0.5,0.5) {$\mFor$};
\node (lu) at (0,1) {$\al\cDb_H(X)$};
\node (ru) at (1,1) {$\al\cDb_K(X)$};
\node (ld) at (0,0) {$\al\cDb_H(Y)$};
\node (rd) at (1,0) {$\al\cDb_K(Y)$};
\draw[->] (lu) -- node[arl] {$\al \For^H_K$} (ru);
\draw[->] (lu) -- node[arr] {$\al f_*$} (ld);
\draw[->] (ru) -- node[arl] {$\al f_*$} (rd);
\draw[->] (ld) -- node[arr] {$\al \For^H_K$} (rd);
\end{tikzpicture}}
\;
\vc{\begin{tikzpicture}[smallercube]
\compuinit
\node[cubef] at (0.5,0.5) {$\mFor$};
\node (lu) at (0,1) {$\al\cDb_H(X)$};
\node (ru) at (1,1) {$\al\cDb_K(X)$};
\node (ld) at (0,0) {$\al\cDb_H(Y)$};
\node (rd) at (1,0) {$\al\cDb_K(Y)$};
\draw[->] (lu) -- node[arl] {$\al \For^H_K$} (ru);
\draw[->] (lu) -- node[arr] {$\al f_!$} (ld);
\draw[->] (ru) -- node[arl] {$\al f_!$} (rd);
\draw[->] (ld) -- node[arr] {$\al \For^H_K$} (rd);
\end{tikzpicture}}
\;
\vc{\begin{tikzpicture}[smallercube]
\compuinit
\node[cubef] at (0.5,0.5) {$\mFor$};
\node (lu) at (0,1) {$\al\cDb_H(X)$};
\node (ru) at (1,1) {$\al\cDb_K(X)$};
\node (ld) at (0,0) {$\al\cDb_H(Y)$};
\node (rd) at (1,0) {$\al\cDb_K(Y)$};
\draw[->] (lu) -- node[arl] {$\al \For^H_K$} (ru);
\draw[<-] (lu) -- node[arr] {$\al f^*$} (ld);
\draw[<-] (ru) -- node[arl] {$\al f^*$} (rd);
\draw[->] (ld) -- node[arr] {$\al \For^H_K$} (rd);
\end{tikzpicture}}
\;
\vc{\begin{tikzpicture}[smallercube]
\compuinit
\node[cubef] at (0.5,0.5) {$\mFor$};
\node (lu) at (0,1) {$\al\cDb_H(X)$};
\node (ru) at (1,1) {$\al\cDb_K(X)$};
\node (ld) at (0,0) {$\al\cDb_H(Y)$};
\node (rd) at (1,0) {$\al\cDb_K(Y)$};
\draw[->] (lu) -- node[arl] {$\al \For^H_K$} (ru);
\draw[<-] (lu) -- node[arr] {$\al f^!$} (ld);
\draw[<-] (ru) -- node[arl] {$\al f^!$} (rd);
\draw[->] (ld) -- node[arr] {$\al \For^H_K$} (rd);
\end{tikzpicture}}
\]
for any $H$-morphism $f:X\to Y$. To illustrate, we explain the first of these isomorphisms. It suffices to define, for any object $M$ of $\cDb_H(X)$ and any $K$-resolution $P$ of $Y$, an isomorphism between $(\For^H_K f_*M)(P)$ and $(f_*\For^H_K M)(P)$ that is suitably natural in $P$. But by definition, 
\[ 
\begin{split}
(\For^H_K f_*M)(P)&=(\widetilde{f}^H_{H\times^K P})_* M((H\times^K P)\times_Y X),\ \text{and}\\
(f_*\For^H_K M)(P)&=(\widetilde{f}^K_{P})_* M(H\times^K(P\times_Y X)). 
\end{split}
\]
Thus, the required isomorphism is supplied by the obvious $H$-variety isomorphism $H\times^K(P\times_Y X)\simto(H\times^K P)\times_Y X$. 

As stated in \cite[Proposition 3.7.2]{bl}, there are isomorphisms
\[
\vc{\begin{tikzpicture}[smallcube]
\compuinit
\node[cubef] at (0.5,0.5) {$\mInt$};
\node (lu) at (0,1) {$\cDb_H(X)$};
\node (ru) at (1,1) {$\cDb_K(X)$};
\node (ld) at (0,0) {$\cDb_H(Y)$};
\node (rd) at (1,0) {$\cDb_K(Y)$};
\draw[<-] (lu) -- node[arl] {$\al \hamma^H_K$} (ru);
\draw[<-] (lu) -- node[arr] {$\al f^*$} (ld);
\draw[<-] (ru) -- node[arl] {$\al f^*$} (rd);
\draw[<-] (ld) -- node[arr] {$\al \hamma^H_K$} (rd);
\end{tikzpicture}}
\qquad
\vc{\begin{tikzpicture}[smallcube]
\compuinit
\node[cubef] at (0.5,0.5) {$\mInt$};
\node (lu) at (0,1) {$\cDb_H(X)$};
\node (ru) at (1,1) {$\cDb_K(X)$};
\node (ld) at (0,0) {$\cDb_H(Y)$};
\node (rd) at (1,0) {$\cDb_K(Y)$};
\draw[<-] (lu) -- node[arl] {$\al \hamma^H_K$} (ru);
\draw[->] (lu) -- node[arr] {$\al f_!$} (ld);
\draw[->] (ru) -- node[arl] {$\al f_!$} (rd);
\draw[<-] (ld) -- node[arr] {$\al \hamma^H_K$} (rd);
\end{tikzpicture}}
\]
for any $H$-morphism $f:X\to Y$. To define the first of these, it suffices to define, for any object $M$ of $\cDb_K(Y)$ and any $H$-resolution $P$ of $Y$, an isomorphism between $(\hamma^H_K f^* M)(P\times_Y X)$ and $(f^*\hamma^H_K M)(P\times_Y X)$ that is suitably natural in $P$. But by definition,
\[
\begin{split}
(\hamma^H_K f^* M)(P\times_Y X)&=(q_{P\times_Y X})_! (\widetilde{f}^K_P)^* M(P)[2\dim(H/K)],\ \text{and}\\
(f^*\hamma^H_K M)(P\times_Y X)&=(\widetilde{f}^H_P)^* (q_P)_! M(P)[2\dim(H/K)].
\end{split}
\]
Thus, the required isomorphism is supplied by the base change isomorphism for the following cartesian square:
\begin{equation} \label{eqn:quotient-square}
\vc{\begin{tikzpicture}[vsmallcube]
\compuinit
\node[cart] at (1,0.5) {};
\node (lu) at (0,1) {$K\backslash(P\times_Y X)$};
\node (ru) at (2,1) {$K\backslash P$};
\node (ld) at (0,0) {$H\backslash(P\times_Y X)$};
\node (rd) at (2,0) {$H\backslash P$};
\draw[->] (lu) -- node[arl] {$\al \widetilde{f}^K_P$} (ru);
\draw[->] (lu) -- node[arr] {$\al q_{P\times_Y X}$} (ld);
\draw[->] (ru) -- node[arl] {$\al q_P$} (rd);
\draw[->] (ld) -- node[arr] {$\al \widetilde{f}^H_P$} (rd);
\end{tikzpicture}}
\end{equation}
The other $\mInt$ isomorphism is defined similarly, but using the composition isomorphism for $(\cdot)_!$ instead of base change.

\subsubsection{Transitivity of forgetting and integration}
\label{sss:transitivity}

If we have a chain of closed subgroups $K\subset J\subset H$, we have transitivity isomorphisms
\[
\vc{\begin{tikzpicture}[stdtriangle]
\compuinit
\node (lu) at (0,1) {$\al \cDb_H(X)$};
\node (r) at (1,0.5) {$\al \cDb_J(X)$};
\node (ld) at (0,0) {$\al \cDb_K(X)$};
\draw[->] (lu) -- node[arl] {$\al \For^H_J$} (r);
\draw[->] (lu) -- node[arr] {$\al \For^H_K$} (ld);
\draw[->] (r) -- node[arl] {$\al \For^J_K$} (ld);
\node[tricell] at (\tric,0.5) {\tiny$\mTr$};
\end{tikzpicture}}
\qquad
\vc{\begin{tikzpicture}[stdtriangle]
\compuinit
\node (lu) at (0,1) {$\al \cDb_H(X)$};
\node (r) at (1,0.5) {$\al \cDb_J(X)$};
\node (ld) at (0,0) {$\al \cDb_K(X)$};
\draw[<-] (lu) -- node[arl] {$\al \hamma^H_J$} (r);
\draw[<-] (lu) -- node[arr] {$\al \hamma^H_K$} (ld);
\draw[<-] (r) -- node[arl] {$\al \hamma^J_K$} (ld);
\node[tricell] at (\tric,0.5) {\tiny$\mTr$};
\end{tikzpicture}}
\]
The definition of the former uses the obvious identification of $H\times^J(J\times^K P)$ with $H\times^K P$, and the definition of the latter uses the composition isomorphism $(q_P^{K\subset H})_!\natisom(q_P^{J\subset H})_!\circ(q_P^{K\subset J})_!$, where the superscripts on $q_P$ indicate the groups involved.

\subsubsection{Constant sheaf under forgetting and integration}
\label{sss:constantsheaf-fi}

Let $K \subset H$ be a closed subgroup, and $X$ an $H$-variety. By definition, the equivariant constant sheaf $\ubk_X^H$ assigns to every $H$-resolution $P$ of $X$ the constant sheaf on $H\backslash P$. Hence we have a canonical isomorphism $\ubk_X^K \cong \For^H_K(\ubk_X^H)$.

Assume now that $H/K$ is contractible (for instance, that $H$ is the semidirect product of $K$ and a normal unipotent subgroup). Then for any $H$-resolution $P$ of $X$ the natural morphism $(q_P)_! \ubk_{K\backslash P} [2\dim(H/K)] \simto (q_P)_! (q_P)^! \ubk_{H\backslash P} \to \ubk_{H\backslash P}$
induced by adjunction is an isomorphism. We deduce a canonical isomorphism $\hamma_K^H(\ubk_X^K)\cong\ubk_X^H$. (In fact, $\hamma_K^H$ is left inverse to $\For_K^H$ in this situation; see~\cite[Theorem 3.7.3]{bl}.)

We depict the resulting isomorphisms of functors as follows:
\[
\vc{\begin{tikzpicture}[stdtriangle]
\compuinit
\node (lu) at (0,1) {$\bb$};
\node (r) at (1,0.5) {$\cDb_K(X)$};
\node (ld) at (0,0) {$\cDb_H(X)$};
\draw[->] (lu) -- node[arl] {$\al \ubk_X^K$} (r);
\draw[->] (lu) -- node[arr] {$\al \ubk_X^H$} (ld);
\draw[<-] (r) -- node[arl] {$\al \For_K^H$} (ld);
\node[tricell] at (\tric,0.5) {\tiny$\CFor$};
\end{tikzpicture}}
\qquad
\vc{\begin{tikzpicture}[stdtriangle]
\compuinit
\node (lu) at (0,1) {$\bb$};
\node (r) at (1,0.5) {$\cDb_K(X)$};
\node (ld) at (0,0) {$\cDb_H(X)$};
\draw[->] (lu) -- node[arl] {$\al \ubk_X^K$} (r);
\draw[->] (lu) -- node[arr] {$\al \ubk_X^H$} (ld);
\draw[->] (r) -- node[arl] {$\al \hamma_K^H$} (ld);
\node[tricell] at (\tric,0.5) {\tiny$\CInt$};
\end{tikzpicture}}
\]


\addtocounter{figure}{2}

\subsection{Forgetting, integration, and adjunction}

\begin{figure}
\begin{center}
Setting:  \qquad $\xymatrix@1{X\ar[r]^f&Y}$ \quad and \quad $K \subset J \subset H$

\subfigure[][]{\qc{\begin{tikzpicture}[stdcube]
\compuinit
\node (rrd) at (1,0,0) {$\Vect^{\cDb_H(Y)^\op}$};
\node[cuber] at (0.5,0.5,0) {$\Adj$};
\node[cubed] at (1,0.5,0.5) {$\mInt$};
\node[cubeb] at (0.5,0,0.5) {$\Adj$};
\node (rlu) at (0,1,0) {$\cDb_H(X)$};
\node (rru) at (1,1,0) {$\Vect^{\cDb_H(X)^\op}$};
\node (fru) at (1,1,1) {$\Vect^{\cDb_K(X)^\op}$};
\node (frd) at (1,0,1) {$\Vect^{\cDb_K(Y)^\op}$};
\node (fld) at (0,0,1) {$\cDb_K(Y)$};
\node (rld) at (0,0,0) {$\cDb_H(Y)$};
\draw[liner,->] (rru) -- node[arl,pos=.7] {$\al -\circ f^{*,\op}$} (rrd);
\draw[liner,->] (rld) -- node[arl,pos=.3] {$\al \Yon$} (rrd);
\draw[liner,->] (rrd) -- node[arl] {$\al -\circ(\hamma^H_K)^\op$} (frd);
\draw[->] (rlu) -- node[arl] {$\al \Yon$} (rru);
\draw[->] (rru) -- node[arl] {$\al -\circ(\hamma^H_K)^\op$} (fru);
\draw[->] (fru) -- node[arl,pos=.3] {\rlap{$\al -\circ f^{*,\op}$}} (frd);
\draw[->] (fld) -- node[arr] {$\al \Yon$} (frd);
\draw[->] (rld) -- node[arr] {$\al \For^H_K$} (fld);
\draw[->] (rlu) -- node[arr] {$\al f_*$} (rld);
\node[cubel] at (0,0.5,0.5) {$\mFor$};
\node[cubet] at (0.5,1,0.5) {$\Adj$};
\node[cubef] at (0.5,0.5,1) {$\Adj$};
\node (flu) at (0,1,1) {$\cDb_K(X)$};
\draw[->] (rlu) -- node[arr,pos=.7] {$\al \For^H_K$} (flu);
\draw[->] (flu) -- node[arr,pos=.3] {$\al \Yon$} (fru);
\draw[->] (flu) -- node[arl,pos=.3] {$\al f_*$} (fld);
\end{tikzpicture}}\label{lem:for_*gamma^*adjunction}}~
\subfigure[][]{\qc{\begin{tikzpicture}[stdcube]
\compuinit
\node (rrd) at (1,0,0) {$\Vect^{\cDb_H(Y)^\op}$};
\node[cuber] at (0.5,0.5,0) {$\Adj$};
\node[cubed] at (1,0.5,0.5) {$\mInt$};
\node[cubeb] at (0.5,0,0.5) {$\Adj$};
\node (rlu) at (0,1,0) {$\cDb_H(X)$};
\node (rru) at (1,1,0) {$\Vect^{\cDb_H(X)^\op}$};
\node (fru) at (1,1,1) {$\Vect^{\cDb_K(X)^\op}$};
\node (frd) at (1,0,1) {$\Vect^{\cDb_K(Y)^\op}$};
\node (fld) at (0,0,1) {$\cDb_K(Y)$};
\node (rld) at (0,0,0) {$\cDb_H(Y)$};
\draw[liner,<-] (rru) -- node[arl,pos=.7] {$\al -\circ(f_!)^\op$} (rrd);
\draw[liner,->] (rld) -- node[arl,pos=.3] {$\al \Yon$} (rrd);
\draw[liner,->] (rrd) -- node[arl] {$\al -\circ(\hamma^H_K)^\op$} (frd);
\draw[->] (rlu) -- node[arl] {$\al \Yon$} (rru);
\draw[->] (rru) -- node[arl] {$\al -\circ(\hamma^H_K)^\op$} (fru);
\draw[<-] (fru) -- node[arl] {\rlap{$\al -\circ(f_!)^\op$}} (frd);
\draw[->] (fld) -- node[arr] {$\al \Yon$} (frd);
\draw[->] (rld) -- node[arr] {$\al \For^H_K$} (fld);
\draw[<-] (rlu) -- node[arr] {$\al f^!$} (rld);
\node[cubel] at (0,0.5,0.5) {$\mFor$};
\node[cubet] at (0.5,1,0.5) {$\Adj$};
\node[cubef] at (0.5,0.5,1) {$\Adj$};
\node (flu) at (0,1,1) {$\cDb_K(X)$};
\draw[->] (rlu) -- node[arr,pos=.7] {$\al \For^H_K$} (flu);
\draw[->] (flu) -- node[arr,pos=.3] {$\al \Yon$} (fru);
\draw[<-] (flu) -- node[arl,pos=.3] {$\al f^!$} (fld);
\end{tikzpicture}}\label{lem:for^!gamma_!adjunction}}

\subfigure[][]{\qc{\begin{tikzpicture}[stdprism]
\compuinit
\node[cuber] at (0.5,0.5,0) {$\Adj$};
\node[prismdf] at (1,0.5,\tric) {$\mTr$};
\node (rlu) at (0,1,0) {$\cDb_H(X)$};
\node (rru) at (1,1,0) {$\Vect^{\cDb_H(X)^\op}$};
\node (fr) at (	1,0.5,1) {$\Vect^{\cDb_J(X)^\op}$};
\node (rld) at (0,0,0) {$\cDb_K(X)$};
\node (rrd) at (1,0,0) {$\Vect^{\cDb_K(X)^\op}$};
\draw[liner,->] (rru) -- node[arr,pos=.3] {$\al -\circ(\hamma^H_K)^\op$} (rrd);
\draw[->] (rlu) -- node[arl] {$\al \Yon$} (rru);
\draw[->] (rlu) -- node[arr] {$\al \For^H_K$} (rld);
\draw[->] (rru) -- node[arl] {$\al -\circ(\hamma^H_J)^\op$} (fr);
\draw[->] (rld) -- node[arr] {$\al \Yon$} (rrd);
\draw[->] (fr) -- node[arl] {$\al -\circ(\hamma^J_K)^\op$} (rrd);
\node[prismtf] at (0.5,0.75,0.5) {$\Adj$};
\node[prismbf] at (0.5,0.25,0.5) {$\Adj$};
\node[prismlf] at (0,0.5,\tric) {$\mTr$};
\node (fl) at (0,0.5,1) {$\cDb_J(X)$};
\draw[->] (rlu) -- node[arl] {$\al \For^H_J$} (fl);
\draw[->] (fl) -- node[arl,pos=.3] {$\al \For^J_K$} (rld);
\draw[->] (fl)  -- node[arr,pos=.3] {$\al \Yon$} (fr);
\end{tikzpicture}}\label{lem:transitivityadjunction}}
\end{center}
\caption{Forgetting, integration, and adjunction}\label{lem:forgammaadjunction}
\end{figure}
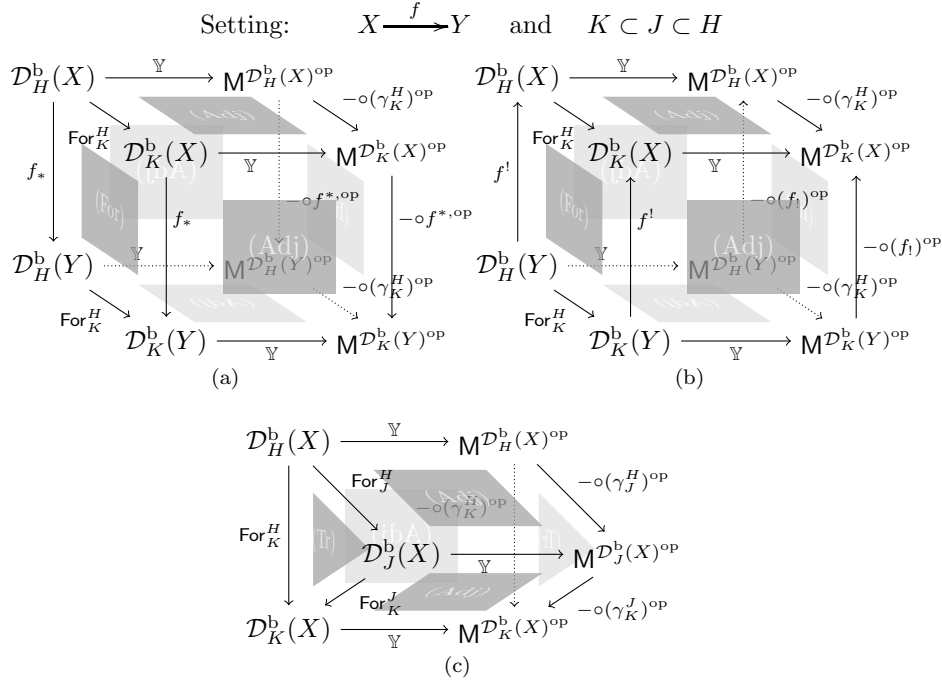

Unravelling the definitions, 
part~\subref{lem:for_*gamma^*adjunction} is equivalent to the statement that for any $M$ in $\cDb_K(Y)$ and $N$ in $\cDb_H(X)$, and any $H$-resolution $P$ of $Y$, the following diagram of isomorphisms commutes:
\[
\vc{\begin{tikzpicture}[vsmallcube2]
\compuinit
\node (lu) at (2,5) {$\Hom((\widetilde{f}^K_P)^*M(P),(q_{P\times_Y X})^!N(P\times_Y X))$};
\node (ru) at (3.6,4) {$\Hom((q_{P\times_Y X})_!(\widetilde{f}^K_P)^*M(P),N(P\times_Y X))$};
\node (lm) at (0,3) {$\Hom(M(P),(\widetilde{f}^K_P)_*(q_{P\times_Y X})^!N(P\times_Y X))$};
\node (rm) at (3.6,2) {$\Hom((\widetilde{f}^H_P)^*(q_P)_!M(P),N(P\times_Y X))$};
\node (ld) at (0,1) {$\Hom(M(P),(q_P)^!(\widetilde{f}^H_P)_*N(P\times_Y X))$};
\node (rd) at (2,0) {$\Hom((q_P)_!M(P),(\widetilde{f}^H_P)_*N(P\times_Y X))$};
\draw[<->] (lu) -- (ru);
\draw[<->] (lu) -- (lm);
\draw[<->] (lm) -- (ld);
\draw[<->] (ru) -- (rm);
\draw[<->] (rm) -- (rd);
\draw[<->] (ld) -- (rd);
\end{tikzpicture}}
\]
Here, to save space, we have omitted the subscripts 
indicating which derived categories we take $\Hom(\cdot,\cdot)$ in.
The isomorphisms are either adjunctions or base changes for the cartesian square \eqref{eqn:quotient-square}, 
so the commutativity of this diagram follows from Lemma~\ref{lem:basechangeadjunction}.
Similarly, parts~\subref{lem:for^!gamma_!adjunction} and~\subref{lem:transitivityadjunction} follow from Lemma~\ref{lem:_!^!compositionadjunction}. In proving \subref{lem:transitivityadjunction}, one also needs the fact that, when $P$ is an $H$-resolution of $X$, the composition
\[
(q_P^{K\subset J})^!\circ(q_P^{J\subset H})^!\natisom (q_P^{K\subset J})^*\circ(q_P^{J\subset H})^*[n]\overset{\Co}{\natisom}(q_P^{K\subset H})^*[n]\natisom (q_P^{K\subset H})^!
\]
(where $n=2\dim(H/K)$) coincides with $(q_P^{K\subset J})^!\circ(q_P^{J\subset H})^!\overset{\Co}{\natisom}(q_P^{K\subset H})^!$. 

\subsection{Forgetting, integration, and transitivity}

\begin{figure}
\begin{center}
Setting:  \qquad $\xymatrix@1{X\ar[r]^f&Y}$ \quad and \quad $I \subset K \subset J \subset H$

\subfigure[][]{\qc{\begin{tikzpicture}[stdtetr]
\compuinit
\node[tetrlr] at ({-\tric/2},0.5,\tric) {$\mTr$};
\node[tetrdr] at ({\tric/2},0.5,\tric) {$\mTr$};
\node (ru) at (0,1,0) {$\cDb_K(X)$};
\node (fl) at (-0.5,0.5,1) {$\cDb_H(X)$};
\node (fr) at (0.5,0.5,1) {$\cDb_I(X)$};
\node (rd) at (0,0,0) {$\cDb_J(X)$};
\draw[liner,->] (ru) -- node[arl,pos=.6] {$\al \For^K_J$} (rd);
\node[tetrtf] at (0, {(1+\tric)/2}, {1-\tric}) {$\mTr$};
\node[tetrbf] at (0, {(1-\tric)/2}, {1-\tric}) {$\mTr$};
\draw[->] (fl) -- node[arl] {$\al \For^H_K$} (ru);
\draw[->] (fl) -- node[arr] {$\al \For^H_J$} (rd);
\draw[->] (ru) -- node[arl] {$\al \For^K_I$} (fr);
\draw[->] (rd) -- node[arr] {$\al \For^J_I$} (fr);
\draw[->] (fl) -- node[arl,pos=.3] {$\al \For^H_I$} (fr);
\end{tikzpicture}}\label{lem:cocycle-For}}
\qquad\qquad
\subfigure[][]{\qc{\begin{tikzpicture}[stdtetr]
\compuinit
\node[tetrlr] at ({-\tric/2},0.5,\tric) {$\mTr$};
\node[tetrdr] at ({\tric/2},0.5,\tric) {$\mTr$};
\node (ru) at (0,1,0) {$\cDb_K(X)$};
\node (fl) at (-0.5,0.5,1) {$\cDb_H(X)$};
\node (fr) at (0.5,0.5,1) {$\cDb_I(X)$};
\node (rd) at (0,0,0) {$\cDb_J(X)$};
\draw[liner,<-] (ru) -- node[arl,pos=.6] {$\al \hamma^K_J$} (rd);
\node[tetrtf] at (0, {(1+\tric)/2}, {1-\tric}) {$\mTr$};
\node[tetrbf] at (0, {(1-\tric)/2}, {1-\tric}) {$\mTr$};
\draw[<-] (fl) -- node[arl] {$\al \hamma^H_K$} (ru);
\draw[<-] (fl) -- node[arr] {$\al \hamma^H_J$} (rd);
\draw[<-] (ru) -- node[arl] {$\al \hamma^K_I$} (fr);
\draw[<-] (rd) -- node[arr] {$\al \hamma^J_I$} (fr);
\draw[<-] (fl) -- node[arl,pos=.3] {$\al \hamma^H_I$} (fr);
\end{tikzpicture}}\label{lem:cocycle-Gamma}}

\subfigure[][]{\qc{\begin{tikzpicture}[stdprism]
\compuinit
\node[cuber] at (0.5,0.5,0) {$\mFor$};
\node[prismdf] at (1,0.5,\tric) {$\mTr$};
\node (rlu) at (0,1,0) {$\cDb_H(X)$};
\node (rru) at (1,1,0) {$\cDb_H(Y)$};
\node (fr) at (	1,0.5,1) {$\cDb_J(Y)$};
\node (rld) at (0,0,0) {$\cDb_K(X)$};
\node (rrd) at (1,0,0) {$\cDb_K(Y)$};
\draw[liner,->] (rru) -- node[arr,pos=.3] {$\al \For^H_K$} (rrd);
\draw[->] (rlu) -- node[arl] {$\al f_*$} (rru);
\draw[->] (rlu) -- node[arr] {$\al \For^H_K$} (rld);
\draw[->] (rru) -- node[arl] {$\al \For^H_J$} (fr);
\draw[->] (rld) -- node[arr] {$\al f_*$} (rrd);
\draw[->] (fr) -- node[arl] {$\al \For^J_K$} (rrd);
\node[prismtf] at (0.5,0.75,0.5) {$\mFor$};
\node[prismbf] at (0.5,0.25,0.5) {$\mFor$};
\node[prismlf] at (0,0.5,\tric) {$\mTr$};
\node (fl) at (0,0.5,1) {$\cDb_J(X)$};
\draw[->] (rlu) -- node[arl] {$\al \For^H_J$} (fl);
\draw[->] (fl) -- node[arl,pos=.3] {$\al \For^J_K$} (rld);
\draw[->] (fl)  -- node[arr,pos=.3] {$\al f_*$} (fr);
\end{tikzpicture}}\label{lem:for_*for}}~
\subfigure[][]{\qc{\begin{tikzpicture}[stdprism]
\compuinit
\node[cuber] at (0.5,0.5,0) {$\mFor$};
\node[prismdf] at (1,0.5,\tric) {$\mTr$};
\node (rlu) at (0,1,0) {$\cDb_H(X)$};
\node (rru) at (1,1,0) {$\cDb_H(Y)$};
\node (fr) at (	1,0.5,1) {$\cDb_J(Y)$};
\node (rld) at (0,0,0) {$\cDb_K(X)$};
\node (rrd) at (1,0,0) {$\cDb_K(Y)$};
\draw[liner,->] (rru) -- node[arr,pos=.3] {$\al \For^H_K$} (rrd);
\draw[<-] (rlu) -- node[arl] {$\al f^*$} (rru);
\draw[->] (rlu) -- node[arr] {$\al \For^H_K$} (rld);
\draw[->] (rru) -- node[arl] {$\al \For^H_J$} (fr);
\draw[<-] (rld) -- node[arr] {$\al f^*$} (rrd);
\draw[->] (fr) -- node[arl] {$\al \For^J_K$} (rrd);
\node[prismtf] at (0.5,0.75,0.5) {$\mFor$};
\node[prismbf] at (0.5,0.25,0.5) {$\mFor$};
\node[prismlf] at (0,0.5,\tric) {$\mTr$};
\node (fl) at (0,0.5,1) {$\cDb_J(X)$};
\draw[->] (rlu) -- node[arl] {$\al \For^H_J$} (fl);operator 
\draw[->] (fl) -- node[arl,pos=.3] {$\al \For^J_K$} (rld);
\draw[<-] (fl)  -- node[arr,pos=.3] {$\al f^*$} (fr);
\end{tikzpicture}}\label{lem:for^*for}}

\subfigure[][]{\qc{\begin{tikzpicture}[stdprism]
\compuinit
\node[cuber] at (0.5,0.5,0) {$\mFor$};
\node[prismdf] at (1,0.5,\tric) {$\mTr$};
\node (rlu) at (0,1,0) {$\cDb_H(X)$};
\node (rru) at (1,1,0) {$\cDb_H(Y)$};
\node (fr) at (	1,0.5,1) {$\cDb_J(Y)$};
\node (rld) at (0,0,0) {$\cDb_K(X)$};
\node (rrd) at (1,0,0) {$\cDb_K(Y)$};
\draw[liner,->] (rru) -- node[arr,pos=.3] {$\al \For^H_K$} (rrd);
\draw[->] (rlu) -- node[arl] {$\al f_!$} (rru);
\draw[->] (rlu) -- node[arr] {$\al \For^H_K$} (rld);
\draw[->] (rru) -- node[arl] {$\al \For^H_J$} (fr);
\draw[->] (rld) -- node[arr] {$\al f_!$} (rrd);
\draw[->] (fr) -- node[arl] {$\al \For^J_K$} (rrd);
\node[prismtf] at (0.5,0.75,0.5) {$\mFor$};
\node[prismbf] at (0.5,0.25,0.5) {$\mFor$};
\node[prismlf] at (0,0.5,\tric) {$\mTr$};
\node (fl) at (0,0.5,1) {$\cDb_J(X)$};
\draw[->] (rlu) -- node[arl] {$\al \For^H_J$} (fl);
\draw[->] (fl) -- node[arl,pos=.3] {$\al \For^J_K$} (rld);
\draw[->] (fl)  -- node[arr,pos=.3] {$\al f_!$} (fr);
\end{tikzpicture}}\label{lem:for_!for}}~
\subfigure[][]{\qc{\begin{tikzpicture}[stdprism]
\compuinit
\node[cuber] at (0.5,0.5,0) {$\mFor$};
\node[prismdf] at (1,0.5,\tric) {$\mTr$};
\node (rlu) at (0,1,0) {$\cDb_H(X)$};
\node (rru) at (1,1,0) {$\cDb_H(Y)$};
\node (fr) at (	1,0.5,1) {$\cDb_J(Y)$};
\node (rld) at (0,0,0) {$\cDb_K(X)$};
\node (rrd) at (1,0,0) {$\cDb_K(Y)$};
\draw[liner,->] (rru) -- node[arr,pos=.3] {$\al \For^H_K$} (rrd);
\draw[<-] (rlu) -- node[arl] {$\al f^!$} (rru);
\draw[->] (rlu) -- node[arr] {$\al \For^H_K$} (rld);
\draw[->] (rru) -- node[arl] {$\al \For^H_J$} (fr);
\draw[<-] (rld) -- node[arr] {$\al f^!$} (rrd);
\draw[->] (fr) -- node[arl] {$\al \For^J_K$} (rrd);
\node[prismtf] at (0.5,0.75,0.5) {$\mFor$};
\node[prismbf] at (0.5,0.25,0.5) {$\mFor$};
\node[prismlf] at (0,0.5,\tric) {$\mTr$};
\node (fl) at (0,0.5,1) {$\cDb_J(X)$};
\draw[->] (rlu) -- node[arl] {$\al \For^H_J$} (fl);
\draw[->] (fl) -- node[arl,pos=.3] {$\al \For^J_K$} (rld);
\draw[<-] (fl)  -- node[arr,pos=.3] {$\al f^!$} (fr);
\end{tikzpicture}}\label{lem:for^!for}}

\subfigure[][]{\qc{\begin{tikzpicture}[stdprism]
\compuinit
\node[cuber] at (0.5,0.5,0) {$\mInt$};
\node[prismdf] at (1,0.5,\tric) {$\mTr$};
\node (rlu) at (0,1,0) {$\cDb_H(X)$};
\node (rru) at (1,1,0) {$\cDb_H(Y)$};
\node (fr) at (	1,0.5,1) {$\cDb_J(Y)$};
\node (rld) at (0,0,0) {$\cDb_K(X)$};
\node (rrd) at (1,0,0) {$\cDb_K(Y)$};
\draw[liner,<-] (rru) -- node[arr,pos=.3] {$\al \hamma^H_K$} (rrd);
\draw[<-] (rlu) -- node[arl] {$\al f^*$} (rru);
\draw[<-] (rlu) -- node[arr] {$\al \hamma^H_K$} (rld);
\draw[<-] (rru) -- node[arl] {$\al \hamma^H_J$} (fr);
\draw[<-] (rld) -- node[arr] {$\al f^*$} (rrd);
\draw[<-] (fr) -- node[arl] {$\al \hamma^J_K$} (rrd);
\node[prismtf] at (0.5,0.75,0.5) {$\mInt$};
\node[prismbf] at (0.5,0.25,0.5) {$\mInt$};
\node[prismlf] at (0,0.5,\tric) {$\mTr$};
\node (fl) at (0,0.5,1) {$\cDb_J(X)$};
\draw[<-] (rlu) -- node[arl] {$\al \hamma^H_J$} (fl);
\draw[<-] (fl) -- node[arl,pos=.3] {$\al \hamma^J_K$} (rld);
\draw[<-] (fl)  -- node[arr,pos=.3] {$\al f^*$} (fr);
\end{tikzpicture}}\label{lem:Gamma^*Gamma}}~
\subfigure[][]{\qc{\begin{tikzpicture}[stdprism]
\compuinit
\node[cuber] at (0.5,0.5,0) {$\mInt$};
\node[prismdf] at (1,0.5,\tric) {$\mTr$};
\node (rlu) at (0,1,0) {$\cDb_H(X)$};
\node (rru) at (1,1,0) {$\cDb_H(Y)$};
\node (fr) at (	1,0.5,1) {$\cDb_J(Y)$};
\node (rld) at (0,0,0) {$\cDb_K(X)$};
\node (rrd) at (1,0,0) {$\cDb_K(Y)$};
\draw[liner,<-] (rru) -- node[arr,pos=.3] {$\al \hamma^H_K$} (rrd);
\draw[->] (rlu) -- node[arl] {$\al f_!$} (rru);
\draw[<-] (rlu) -- node[arr] {$\al \hamma^H_K$} (rld);
\draw[<-] (rru) -- node[arl] {$\al \hamma^H_J$} (fr);
\draw[->] (rld) -- node[arr] {$\al f_!$} (rrd);
\draw[<-] (fr) -- node[arl] {$\al \hamma^J_K$} (rrd);
\node[prismtf] at (0.5,0.75,0.5) {$\mInt$};
\node[prismbf] at (0.5,0.25,0.5) {$\mInt$};
\node[prismlf] at (0,0.5,\tric) {$\mTr$};
\node (fl) at (0,0.5,1) {$\cDb_J(X)$};
\draw[<-] (rlu) -- node[arl] {$\al \hamma^H_J$} (fl);
\draw[<-] (fl) -- node[arl,pos=.3] {$\al \hamma^J_K$} (rld);
\draw[->] (fl)  -- node[arr,pos=.3] {$\al f_!$} (fr);
\end{tikzpicture}}\label{lem:Gamma_!Gamma}}
\end{center}
\caption{Forgetting, integration, and transitivity}
\end{figure}
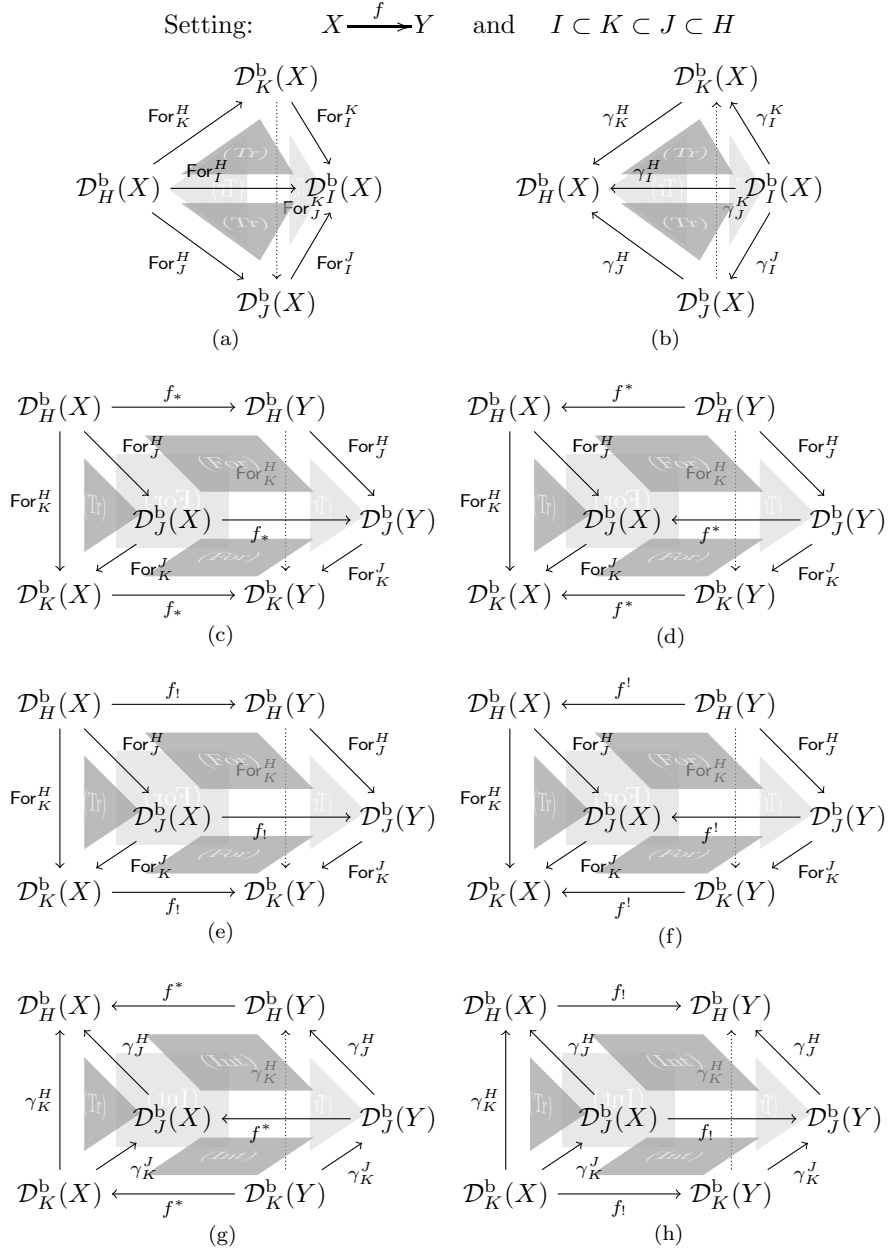

Parts~\subref{lem:cocycle-For}, \subref{lem:for_*for}, \subref{lem:for^*for}, \subref{lem:for_!for}, \subref{lem:for^!for} follow easily from the definitions. As, by Lemma~\ref{lem:transitivityadjunction}, the transitivity isomorphism for $\hamma$ can be obtained from that for $\For$ by adjunction, part~\subref{lem:cocycle-Gamma} follows from part~\subref{lem:cocycle-For} by the same argument we used to deduce Lemma~\ref{lem:cocycle^!} from Lemma~\ref{lem:cocycle_!}. Similarly, part~\subref{lem:Gamma^*Gamma} follows from part~\subref{lem:for_*for} and part~\subref{lem:Gamma_!Gamma} follows from part~\subref{lem:for^!for}.

\subsection{Forgetting, integration, and composition}

\begin{figure}
\begin{center}
Setting: \qquad $\xymatrix@1{X\ar[r]^{f_1}&Y\ar[r]^{f_2}&Z}$ \quad and \quad $K \subset H$

\subfigure[][]{\qc{\begin{tikzpicture}[stdprism]
\compuinit
\node[cuber] at (0.5,0.5,0) {$\mFor$};
\node[prismdf] at (1,0.5,\tric) {$\Comp$};
\node (rlu) at (0,1,0) {$\cDb_H(X)$};
\node (rru) at (1,1,0) {$\cDb_K(X)$};
\node (fr) at (	1,0.5,1) {$\cDb_K(Y)$};
\node (rld) at (0,0,0) {$\cDb_H(Z)$};
\node (rrd) at (1,0,0) {$\cDb_K(Z)$};
\draw[liner,->] (rru) -- node[arr,pos=.3] {$\al (f_2f_1)_*$} (rrd);
\draw[->] (rlu) -- node[arl] {$\al \For^H_K$} (rru);
\draw[->] (rlu) -- node[arr] {$\al (f_2f_1)_*$} (rld);
\draw[->] (rru) -- node[arl] {$\al (f_1)_*$} (fr);
\draw[->] (rld) -- node[arr] {$\al \For^H_K$} (rrd);
\draw[->] (fr) -- node[arl] {$\al (f_2)_*$} (rrd);
\node[prismtf] at (0.5,0.75,0.5) {$\mFor$};
\node[prismbf] at (0.5,0.25,0.5) {$\mFor$};
\node[prismlf] at (0,0.5,\tric) {$\Comp$};
\node (fl) at (0,0.5,1) {$\cDb_H(Y)$};
\draw[->] (rlu) -- node[arl] {$\al (f_1)_*$} (fl);
\draw[->] (fl) -- node[arl,pos=.3] {$\al (f_2)_*$} (rld);
\draw[->] (fl)  -- node[arr,pos=.3] {$\al \For^H_K$} (fr);
\end{tikzpicture}}\label{lem:_*compositionfor}}~
\subfigure[][]{\qc{\begin{tikzpicture}[stdprism]
\compuinit
\node[cuber] at (0.5,0.5,0) {$\mFor$};
\node[prismdf] at (1,0.5,\tric) {$\Comp$};
\node (rlu) at (0,1,0) {$\cDb_H(X)$};
\node (rru) at (1,1,0) {$\cDb_K(X)$};
\node (fr) at (	1,0.5,1) {$\cDb_K(Y)$};
\node (rld) at (0,0,0) {$\cDb_H(Z)$};
\node (rrd) at (1,0,0) {$\cDb_K(Z)$};
\draw[liner,<-] (rru) -- node[arr,pos=.3] {$\al (f_2f_1)^*$} (rrd);
\draw[->] (rlu) -- node[arl] {$\al \For^H_K$} (rru);
\draw[<-] (rlu) -- node[arr] {$\al (f_2f_1)^*$} (rld);
\draw[<-] (rru) -- node[arl] {$\al (f_1)^*$} (fr);
\draw[->] (rld) -- node[arr] {$\al \For^H_K$} (rrd);
\draw[<-] (fr) -- node[arl] {$\al (f_2)^*$} (rrd);
\node[prismtf] at (0.5,0.75,0.5) {$\mFor$};
\node[prismbf] at (0.5,0.25,0.5) {$\mFor$};
\node[prismlf] at (0,0.5,\tric) {$\Comp$};
\node (fl) at (0,0.5,1) {$\cDb_H(Y)$};
\draw[<-] (rlu) -- node[arl] {$\al (f_1)^*$} (fl);
\draw[<-] (fl) -- node[arl,pos=.3] {$\al (f_2)^*$} (rld);
\draw[->] (fl)  -- node[arr,pos=.3] {$\al \For^H_K$} (fr);
\end{tikzpicture}}\label{lem:^*compositionfor}}

\subfigure[][]{\qc{\begin{tikzpicture}[stdprism]
\compuinit
\node[cuber] at (0.5,0.5,0) {$\mFor$};
\node[prismdf] at (1,0.5,\tric) {$\Comp$};
\node (rlu) at (0,1,0) {$\cDb_H(X)$};
\node (rru) at (1,1,0) {$\cDb_K(X)$};
\node (fr) at (	1,0.5,1) {$\cDb_K(Y)$};
\node (rld) at (0,0,0) {$\cDb_H(Z)$};
\node (rrd) at (1,0,0) {$\cDb_K(Z)$};
\draw[liner,->] (rru) -- node[arr,pos=.3] {$\al (f_2f_1)_!$} (rrd);
\draw[->] (rlu) -- node[arl] {$\al \For^H_K$} (rru);
\draw[->] (rlu) -- node[arr] {$\al (f_2f_1)_!$} (rld);
\draw[->] (rru) -- node[arl] {$\al (f_1)_!$} (fr);
\draw[->] (rld) -- node[arr] {$\al \For^H_K$} (rrd);
\draw[->] (fr) -- node[arl] {$\al (f_2)_!$} (rrd);
\node[prismtf] at (0.5,0.75,0.5) {$\mFor$};
\node[prismbf] at (0.5,0.25,0.5) {$\mFor$};
\node[prismlf] at (0,0.5,\tric) {$\Comp$};
\node (fl) at (0,0.5,1) {$\cDb_H(Y)$};
\draw[->] (rlu) -- node[arl] {$\al (f_1)_!$} (fl);
\draw[->] (fl) -- node[arl,pos=.3] {$\al (f_2)_!$} (rld);
\draw[->] (fl)  -- node[arr,pos=.3] {$\al \For^H_K$} (fr);
\end{tikzpicture}}\label{lem:_!compositionfor}}~
\subfigure[][]{\qc{\begin{tikzpicture}[stdprism]
\compuinit
\node[cuber] at (0.5,0.5,0) {$\mFor$};
\node[prismdf] at (1,0.5,\tric) {$\Comp$};
\node (rlu) at (0,1,0) {$\cDb_H(X)$};
\node (rru) at (1,1,0) {$\cDb_K(X)$};
\node (fr) at (	1,0.5,1) {$\cDb_K(Y)$};
\node (rld) at (0,0,0) {$\cDb_H(Z)$};
\node (rrd) at (1,0,0) {$\cDb_K(Z)$};
\draw[liner,<-] (rru) -- node[arr,pos=.3] {$\al (f_2f_1)^!$} (rrd);
\draw[->] (rlu) -- node[arl] {$\al \For^H_K$} (rru);
\draw[<-] (rlu) -- node[arr] {$\al (f_2f_1)^!$} (rld);
\draw[<-] (rru) -- node[arl] {$\al (f_1)^!$} (fr);
\draw[->] (rld) -- node[arr] {$\al \For^H_K$} (rrd);
\draw[<-] (fr) -- node[arl] {$\al (f_2)^!$} (rrd);
\node[prismtf] at (0.5,0.75,0.5) {$\mFor$};
\node[prismbf] at (0.5,0.25,0.5) {$\mFor$};
\node[prismlf] at (0,0.5,\tric) {$\Comp$};
\node (fl) at (0,0.5,1) {$\cDb_H(Y)$};
\draw[<-] (rlu) -- node[arl] {$\al (f_1)^!$} (fl);
\draw[<-] (fl) -- node[arl,pos=.3] {$\al (f_2)^!$} (rld);
\draw[->] (fl)  -- node[arr,pos=.3] {$\al \For^H_K$} (fr);
\end{tikzpicture}}\label{lem:^!compositionfor}}

\subfigure[][]{\qc{\begin{tikzpicture}[stdprism]
\compuinit
\node[cuber] at (0.5,0.5,0) {$\mInt$};
\node[prismdf] at (1,0.5,\tric) {$\Comp$};
\node (rlu) at (0,1,0) {$\cDb_H(X)$};
\node (rru) at (1,1,0) {$\cDb_K(X)$};
\node (fr) at (	1,0.5,1) {$\cDb_K(Y)$};
\node (rld) at (0,0,0) {$\cDb_H(Z)$};
\node (rrd) at (1,0,0) {$\cDb_K(Z)$};
\draw[liner,<-] (rru) -- node[arr,pos=.3] {$\al (f_2f_1)^*$} (rrd);
\draw[<-] (rlu) -- node[arl] {$\al \hamma^H_K$} (rru);
\draw[<-] (rlu) -- node[arr] {$\al (f_2f_1)^*$} (rld);
\draw[<-] (rru) -- node[arl] {$\al (f_1)^*$} (fr);
\draw[<-] (rld) -- node[arr] {$\al \hamma^H_K$} (rrd);
\draw[<-] (fr) -- node[arl] {$\al (f_2)^*$} (rrd);
\node[prismtf] at (0.5,0.75,0.5) {$\mInt$};
\node[prismbf] at (0.5,0.25,0.5) {$\mInt$};
\node[prismlf] at (0,0.5,\tric) {$\Comp$};
\node (fl) at (0,0.5,1) {$\cDb_H(Y)$};
\draw[<-] (rlu) -- node[arl] {$\al (f_1)^*$} (fl);
\draw[<-] (fl) -- node[arl,pos=.3] {$\al (f_2)^*$} (rld);
\draw[<-] (fl)  -- node[arr,pos=.3] {$\al \hamma^H_K$} (fr);
\end{tikzpicture}}\label{lem:^*compositionGamma}}~
\subfigure[][]{\qc{\begin{tikzpicture}[stdprism]
\compuinit
\node[cuber] at (0.5,0.5,0) {$\mInt$};
\node[prismdf] at (1,0.5,\tric) {$\Comp$};
\node (rlu) at (0,1,0) {$\cDb_H(X)$};
\node (rru) at (1,1,0) {$\cDb_K(X)$};
\node (fr) at (	1,0.5,1) {$\cDb_K(Y)$};
\node (rld) at (0,0,0) {$\cDb_H(Z)$};
\node (rrd) at (1,0,0) {$\cDb_K(Z)$};
\draw[liner,->] (rru) -- node[arr,pos=.3] {$\al (f_2f_1)_!$} (rrd);
\draw[<-] (rlu) -- node[arl] {$\al \hamma^H_K$} (rru);
\draw[->] (rlu) -- node[arr] {$\al (f_2f_1)_!$} (rld);
\draw[->] (rru) -- node[arl] {$\al (f_1)_!$} (fr);
\draw[<-] (rld) -- node[arr] {$\al \hamma^H_K$} (rrd);
\draw[->] (fr) -- node[arl] {$\al (f_2)_!$} (rrd);
\node[prismtf] at (0.5,0.75,0.5) {$\mInt$};
\node[prismbf] at (0.5,0.25,0.5) {$\mInt$};
\node[prismlf] at (0,0.5,\tric) {$\Comp$};
\node (fl) at (0,0.5,1) {$\cDb_H(Y)$};
\draw[->] (rlu) -- node[arl] {$\al (f_1)_!$} (fl);
\draw[->] (fl) -- node[arl,pos=.3] {$\al (f_2)_!$} (rld);
\draw[<-] (fl)  -- node[arr,pos=.3] {$\al \hamma^H_K$} (fr);
\end{tikzpicture}}\label{lem:_!compositionGamma}}
\end{center}
\caption{Forgetting, integration, and composition}
\end{figure}
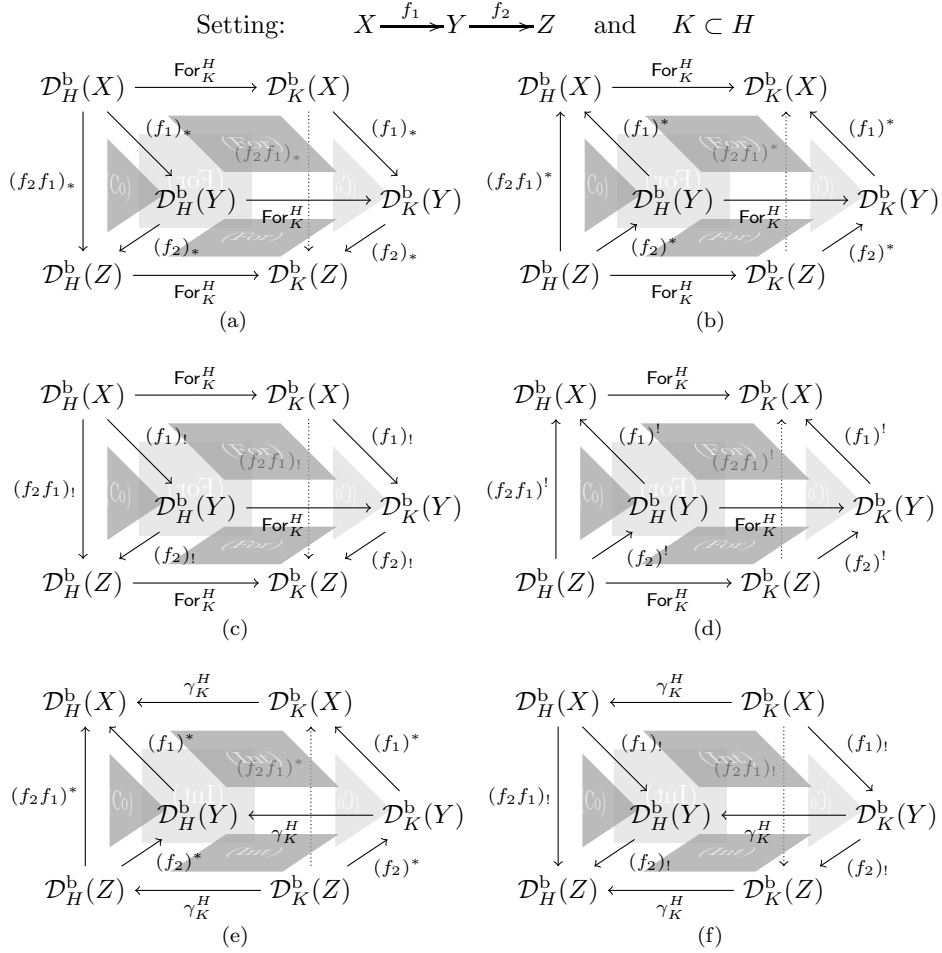

Parts~\subref{lem:_*compositionfor} -- ~\subref{lem:^!compositionfor} follow easily from the definitions. Since we know from Lemma~\ref{lem:for_*gamma^*adjunction} that the $(\cdot)^*$ version of isomorphism $\mInt$ can be obtained from the $(\cdot)_*$ version of isomorphism $\mFor$ by adjunction, part~\subref{lem:^*compositionGamma} follows from part~\subref{lem:_*compositionfor} and Lemma~\ref{lem:^*_*compositionadjunction}. Similarly, in view of Lemma~\ref{lem:for^!gamma_!adjunction}, part~\subref{lem:_!compositionGamma} follows from part~\subref{lem:^!compositionfor} and Lemma~\ref{lem:_!^!compositionadjunction}.

\subsection{Forgetting, integration, and base change}

\begin{figure}
\begin{center}
Setting: \qquad
\qc{\begin{tikzpicture}[vsmallcube]
\compuinit
\node[cart] at (0.5,0.5) {};
\node (lu) at (0,1) {$W$};
\node (ru) at (1,1) {$X$};
\node (ld) at (0,0) {$Y$};
\node (rd) at (1,0) {$Z$};
\draw[->] (lu) -- node[arl] {$\al g'$} (ru);
\draw[->] (lu) -- node[arr] {$\al f'$} (ld);
\draw[->] (ru) -- node[arl] {$\al f$} (rd);
\draw[->] (ld) -- node[arr] {$\al g$} (rd);
\end{tikzpicture}}

\subfigure[][]{\qc{\begin{tikzpicture}[stdcube]
\compuinit
\node (rrd) at (1,0,0) {$\cDb_K(Z)$};
\node[cuber] at (0.5,0.5,0) {$\mFor$};
\node[cubed] at (1,0.5,0.5) {$\BC$};
\node[cubeb] at (0.5,0,0.5) {$\mFor$};
\node (rlu) at (0,1,0) {$\cDb_H(X)$};
\node (rru) at (1,1,0) {$\cDb_K(X)$};
\node (fru) at (1,1,1) {$\cDb_K(W)$};
\node (frd) at (1,0,1) {$\cDb_K(Y)$};
\node (fld) at (0,0,1) {$\cDb_H(Y)$};
\node (rld) at (0,0,0) {$\cDb_H(Z)$};
\draw[liner,->] (rru) -- node[arl,pos=.7] {$\al f_*$} (rrd);
\draw[liner,->] (rld) -- node[arl,pos=.3] {$\al \For^H_K$} (rrd);
\draw[liner,->] (rrd) -- node[arl] {$\al g^!$} (frd);
\draw[->] (rlu) -- node[arl] {$\al \For^H_K$} (rru);
\draw[->] (rru) -- node[arl] {$\al (g')^!$} (fru);
\draw[->] (fru) -- node[arl] {$\al (f')_*$}(frd);
\draw[->] (fld) -- node[arr] {$\al \For^H_K$} (frd);
\draw[->] (rld) -- node[arr] {$\al g^!$} (fld);
\draw[->] (rlu) -- node[arr] {$\al f_*$} (rld);
\node[cubel] at (0,0.5,0.5) {$\BC$};
\node[cubet] at (0.5,1,0.5) {$\mFor$};
\node[cubef] at (0.5,0.5,1) {$\mFor$};
\node (flu) at (0,1,1) {$\cDb_H(W)$};
\draw[->] (rlu) -- node[arr,pos=.7] {$\al (g')^!$} (flu);
\draw[->] (flu) -- node[arr,pos=.3] {$\al \For^H_K$} (fru);
\draw[->] (flu) -- node[arl,pos=.3] {$\al (f')_*$} (fld);
\end{tikzpicture}}\label{lem:basechangefor}}~
\subfigure[][]{\qc{\begin{tikzpicture}[stdcube]
\compuinit
\node (rrd) at (1,0,0) {$\cDb_K(Z)$};
\node[cuber] at (0.5,0.5,0) {$\mInt$};
\node[cubed] at (1,0.5,0.5) {$\BC$};
\node[cubeb] at (0.5,0,0.5) {$\mInt$};
\node (rlu) at (0,1,0) {$\cDb_H(X)$};
\node (rru) at (1,1,0) {$\cDb_K(X)$};
\node (fru) at (1,1,1) {$\cDb_K(W)$};
\node (frd) at (1,0,1) {$\cDb_K(Y)$};
\node (fld) at (0,0,1) {$\cDb_H(Y)$};
\node (rld) at (0,0,0) {$\cDb_H(Z)$};
\draw[liner,->] (rru) -- node[arl,pos=.7] {$\al f_!$} (rrd);
\draw[liner,<-] (rld) -- node[arl,pos=.3] {$\al \hamma^H_K$} (rrd);
\draw[liner,->] (rrd) -- node[arl] {$\al g^*$} (frd);
\draw[<-] (rlu) -- node[arl] {$\al \hamma^H_K$} (rru);
\draw[->] (rru) -- node[arl] {$\al (g')^*$} (fru);
\draw[->] (fru) -- node[arl] {$\al (f')_!$}(frd);
\draw[<-] (fld) -- node[arr] {$\al \hamma^H_K$} (frd);
\draw[->] (rld) -- node[arr] {$\al g^*$} (fld);
\draw[->] (rlu) -- node[arr] {$\al f_!$} (rld);
\node[cubel] at (0,0.5,0.5) {$\BC$};
\node[cubet] at (0.5,1,0.5) {$\mInt$};
\node[cubef] at (0.5,0.5,1) {$\mInt$};
\node (flu) at (0,1,1) {$\cDb_H(W)$};
\draw[->] (rlu) -- node[arr,pos=.7] {$\al (g')^*$} (flu);
\draw[<-] (flu) -- node[arr,pos=.3] {$\al \hamma^H_K$} (fru);
\draw[->] (flu) -- node[arl,pos=.3] {$\al (f')_!$} (fld);
\end{tikzpicture}}\label{lem:basechangeGamma}}

\end{center}
\caption{Forgetting, integration, and base change}
\end{figure}
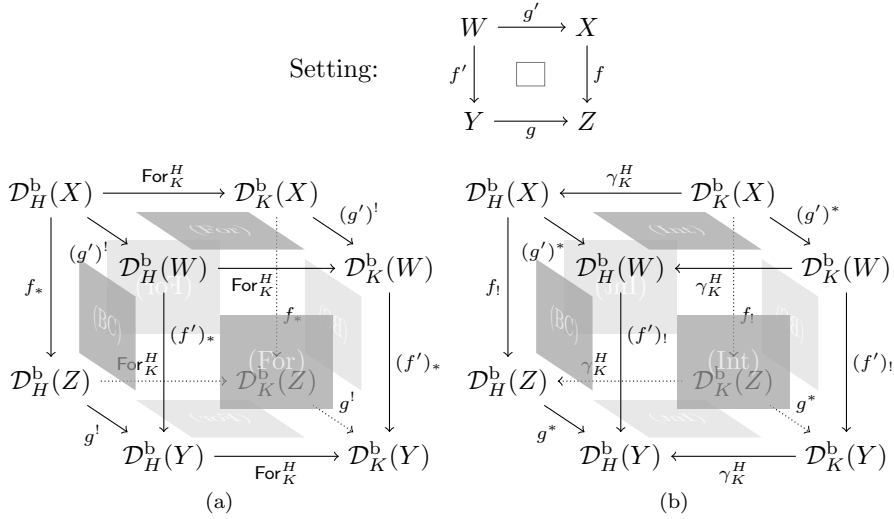

Part~\subref{lem:basechangefor} is easy. In view of Lemmas~\ref{lem:for_*gamma^*adjunction} and~\ref{lem:for^!gamma_!adjunction}, part~\subref{lem:basechangeGamma} follows from part~\subref{lem:basechangefor} using Lemma~\ref{lem:basechangeadjunction}.

\subsection{Constant sheaf and transitivity}
\label{ss:constant-sheaf-transitivity}

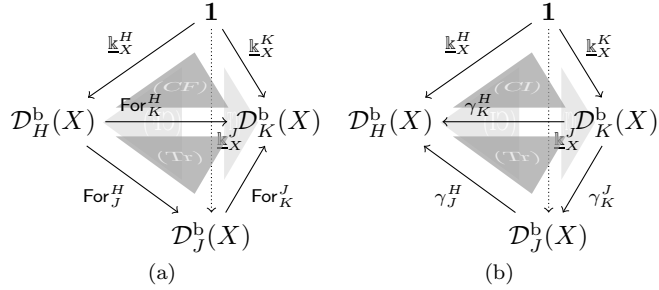
\begin{figure}
\begin{center}
Setting: \qquad $K\subset J\subset H$\quad and \quad (for~\subref{lem:Gammacompositionconstant})\quad $J/K$, $H/J$ contractible

\subfigure[][]{\qc{\begin{tikzpicture}[stdtetr]
\compuinit
\node[tetrlr] at ({-\tric/2},0.5,\tric) {$\CFor$};
\node[tetrdr] at ({\tric/2},0.5,\tric) {$\CFor$};
\node (ru) at (0,1,0) {$\bb$};
\node (fl) at (-0.5,0.5,1) {$\cDb_H(X)$};
\node (fr) at (0.5,0.5,1) {$\cDb_K(X)$};
\node (rd) at (0,0,0) {$\cDb_J(X)$};
\draw[liner,->] (ru) -- node[arl,pos=.6] {$\al \ubk_X^J$} (rd);
\node[tetrtf] at (0, {(1+\tric)/2}, {1-\tric}) {$\CFor$};
\node[tetrbf] at (0, {(1-\tric)/2}, {1-\tric}) {$\mTr$};
\draw[<-] (fl) -- node[arl] {$\al \ubk_X^H$} (ru);
\draw[->] (fl) -- node[arr] {$\al \For_J^H$} (rd);
\draw[->] (ru) -- node[arl] {$\al \ubk_X^K$} (fr);
\draw[->] (rd) -- node[arr] {$\al \For_K^J$} (fr);
\draw[->] (fl) -- node[arl,pos=.3] {$\al \For_K^H$} (fr);
\end{tikzpicture}}\label{lem:Forcompositionconstant}}~
\subfigure[][]{\qc{\begin{tikzpicture}[stdtetr]
\compuinit
\node[tetrlr] at ({-\tric/2},0.5,\tric) {$\CInt$};
\node[tetrdr] at ({\tric/2},0.5,\tric) {$\CInt$};
\node (ru) at (0,1,0) {$\bb$};
\node (fl) at (-0.5,0.5,1) {$\cDb_H(X)$};
\node (fr) at (0.5,0.5,1) {$\cDb_K(X)$};
\node (rd) at (0,0,0) {$\cDb_J(X)$};
\draw[liner,->] (ru) -- node[arl,pos=.6] {$\al \ubk_X^J$} (rd);
\node[tetrtf] at (0, {(1+\tric)/2}, {1-\tric}) {$\CInt$};
\node[tetrbf] at (0, {(1-\tric)/2}, {1-\tric}) {$\mTr$};
\draw[<-] (fl) -- node[arl] {$\al \ubk_X^H$} (ru);
\draw[<-] (fl) -- node[arr] {$\al \hamma_J^H$} (rd);
\draw[->] (ru) -- node[arl] {$\al \ubk_X^K$} (fr);
\draw[<-] (rd) -- node[arr] {$\al \hamma_K^J$} (fr);
\draw[<-] (fl) -- node[arl,pos=.3] {$\al \hamma_K^H$} (fr);
\end{tikzpicture}}\label{lem:Gammacompositionconstant}}
\end{center}
\caption{Constant sheaf and transitivity}
\end{figure}

Part~\subref{lem:Forcompositionconstant} is easy. By definition, part~\subref{lem:Gammacompositionconstant} is equivalent to the commutativity of a diagram of isomorphisms in $\cDb(H\backslash P)$ for a given $H$-resolution $P$ of $X$. This follows from Lemma \ref{lem:_!^!compositionadjunction}.

\subsection{Constant sheaf under inverse image, forgetting, and integration}
\label{ss:constant-inverse-forgetting-integration}

\begin{figure}
\begin{center}
Setting: \qquad $\xymatrix@1{X\ar[r]^{f}&Y}$, \quad $K \subset H$, \quad and \quad (for~\subref{lem:Gamma^*constant})\quad $H/K$ contractible

\subfigure[][]{\qc{\begin{tikzpicture}[stdpyra]
\compuinit
\node (rrd) at (1,0,0) {$\cDb_K(Y)$};
\node[pyrar] at (1-\tric,0.5,0.5*\tric) {$\CFor$};
\node[pyrab] at (1-\tric,0.5*\tric,0.5) {$\Cnst$};
\node (l) at (0,0.5,0.5) {$\bb$};
\node (rru) at (1,1,0) {$\cDb_H(Y)$};
\node (frd) at (1,0,1) {$\cDb_K(X)$};
\draw[liner,->] (l) -- node[arl] {$\al \ubk_Y^K$} (rrd);
\draw[->] (rru) -- node[arl,pos=.7] {$\al \For^H_K$} (rrd);
\draw[->] (rrd) -- node[arl] {$\al f^*$} (frd);
\draw[->] (l) -- node[arl] {$\al \ubk_{Y}^H$} (rru);
\draw[->] (l) -- node[arr] {$\al \ubk_X^K$} (frd);
\node[cubel,xscale=-1] at (1,0.5,0.5) {$\mFor$};
\node[pyraf] at (1-\tric,0.5,1-0.5*\tric) {$\CFor$};
\node[pyrat] at (1-\tric,1-0.5*\tric,0.5) {$\Cnst$};
\node (fru) at (1,1,1) {$\cDb_H(X)$};
\draw[->] (rru) -- node[arl] {$\al f^*$} (fru);
\draw[->] (fru) -- node[arl] {$\al \For^H_K$}(frd);
\draw[->] (l) -- node[arl,pos=.7] {$\al \ubk_{X}^H$} (fru);
\end{tikzpicture}}\label{lem:For^*constant}}~
\subfigure[][]{\qc{\begin{tikzpicture}[stdpyra]
\compuinit
\node (rrd) at (1,0,0) {$\cDb_K(Y)$};
\node[pyrar] at (1-\tric,0.5,0.5*\tric) {$\CInt$};
\node[pyrab] at (1-\tric,0.5*\tric,0.5) {$\Cnst$};
\node (l) at (0,0.5,0.5) {$\bb$};
\node (rru) at (1,1,0) {$\cDb_H(Y)$};
\node (frd) at (1,0,1) {$\cDb_K(X)$};
\draw[liner,->] (l) -- node[arl] {$\al \ubk_Y^K$} (rrd);
\draw[<-] (rru) -- node[arl,pos=.7] {$\al \hamma_K^H$} (rrd);
\draw[->] (rrd) -- node[arl] {$\al f^*$} (frd);
\draw[->] (l) -- node[arl] {$\al \ubk_{Y}^H$} (rru);
\draw[->] (l) -- node[arr] {$\al \ubk_X^K$} (frd);
\node[cubel,xscale=-1] at (1,0.5,0.5) {$\mInt$};
\node[pyraf] at (1-\tric,0.5,1-0.5*\tric) {$\CInt$};
\node[pyrat] at (1-\tric,1-0.5*\tric,0.5) {$\Cnst$};
\node (fru) at (1,1,1) {$\cDb_H(X)$};
\draw[->] (rru) -- node[arl] {$\al f^*$} (fru);
\draw[<-] (fru) -- node[arl] {$\al \hamma_K^H$}(frd);
\draw[->] (l) -- node[arl,pos=.7] {$\al \ubk_{X}^H$} (fru);
\end{tikzpicture}}\label{lem:Gamma^*constant}}
\end{center}
\caption{Constant sheaf under inverse image, forgetting, and integration}
\end{figure}
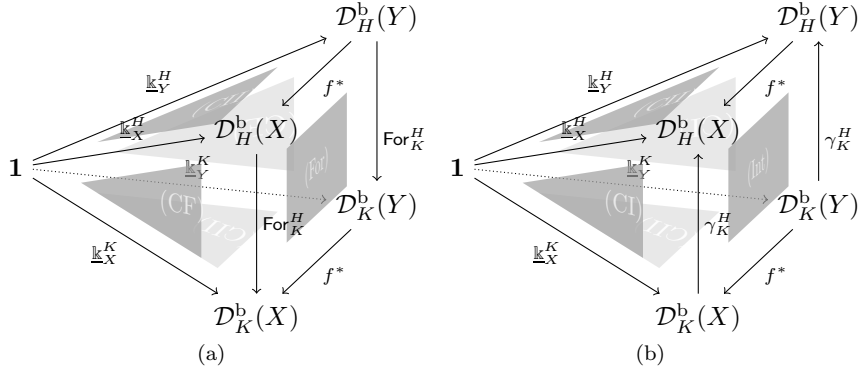

Part~\subref{lem:For^*constant} is easy. Unravelling the definitions, part~\subref{lem:Gamma^*constant} is equivalent to the commutativity of a diagram of isomorphisms in $\cDb(H\backslash(P\times_Y X))$ for a given $H$-resolution $P$ of $Y$. This follows from Lemma~\ref{lem:basechangeadjunction}.

\subsection{Induction equivalence}
\label{subsect:induction-equiv}

Let $K \subset H$ be a closed subgroup, and $X$ a $K$-variety. Form the induced $H$-variety $\widetilde{X}=H\times^K X$, and let $i: X \to \widetilde{X}$ be the inclusion. The category of $K$-resolutions of $X$ and smooth $K$-morphisms over $X$ is equivalent to the category of $H$-resolutions of $\widetilde{X}$ and smooth $H$-morphisms over $\widetilde{X}$ via the functor $P \mapsto H \times^K P$, whose inverse is $Q\mapsto Q\times_{\widetilde{X}} X$. This equivalence induces an equivalence of categories $\sInd_K^H : \cDb_K(X) \simto \cDb_H(\widetilde{X})$. Namely, if $M$ is an object of $\cDb_K(X)$ and $P$ is a $K$-resolution of $X$, we set
\[
(\sInd_K^H M)(H\times^K P)=M(P),
\]
where as usual we identify $H\backslash(H\times^K P)$ with $K\backslash P$. This is the inverse of the equivalence $\cDb_H(\widetilde{X})\simto \cDb_K(X)$ denoted $\nu^*$ in~\cite[\S2.6.3]{bl}, which is isomorphic to $i^*\circ\For^H_K$ in our notation.

Consider the composition $\hamma^H_K \circ i_!:\cDb_K(X)\to\cDb_H(\widetilde{X})$. If $M$ is an object of $\cDb_K(X)$ and $P$ is a $K$-resolution of $X$, we have
\[
(\hamma^H_K i_! M)(H\times^K P)=(q_{H\times^K P})_!(\widetilde{i}^K_{H\times^K P})_! M(P) [2\dim(H/K)],
\]
where we have identified $(H\times^K P)\times_{\widetilde X} X$ with $P$. Since $q_{H\times^K P}\widetilde{i}^K_{H\times^K P}$ is identified with the identity map from $K\backslash P$ to itself, the composition isomorphism for $(\cdot)_!$ gives us an isomorphism $\hamma^H_K \circ i_!\natisom \sInd_K^H[2 \dim(H/K)]$. We depict this isomorphism as follows:
\[
\vc{\begin{tikzpicture}[smallcube]
\compuinit
\node[cubef] at (0.5,0.5) {$\mIE$};
\node (lu) at (0,1) {$\cDb_H(\widetilde{X})$};
\node (ru) at (1,1) {$\cDb_H(\widetilde{X})$};
\node (ld) at (0,0) {$\cDb_K(X)$};
\node (rd) at (1,0) {$\cDb_K(X)$};
\draw[-,double distance=1.5pt] (lu) -- (ru);
\draw[<-] (lu) -- node[arr] {$\al \sInd_K^H[2\dim(H/K)]$} (ld);
\draw[<-] (ru) -- node[arl] {$\al \hamma^H_K i_!$} (rd);
\draw[-,double distance=1.5pt] (ld) -- (rd);
\end{tikzpicture}}
\]
From now on we omit the $\circ$ from the name of $\hamma^H_K \circ i_!$ since we regard it as a basic functor in its own right. Within this appendix, we consider both versions of the induction equivalence, $\sInd_K^H$ and $\hamma^H_K i_!$, using the former to help study the latter. In the main body of the paper, only $\hamma^H_K i_!$ appears.

\subsection{Notation for isomorphisms involving induction equivalence}

Continue with the setting of \S\ref{subsect:induction-equiv}.

\subsubsection{Transitivity of induction equivalence}

Suppose that $K\subset J\subset H$, and let $i_1:X\to J\times^K X$ and $i_2:J\times^K X\to \widetilde{X}$ be the inclusions. As usual, we identify $H\times^J (J\times^K X)$ with $H\times^K X=\widetilde{X}$. We have an obvious transitivity isomorphism for the $\sInd$ version of induction equivalence:
\[
\vc{\begin{tikzpicture}[stdtriangle]
\compuinit
\node (lu) at (0,1) {$\al \cDb_H(\widetilde{X})$};
\node (r) at (1,1) {$\al \cDb_J(J\times^K X)$};
\node (ld) at (1,0) {$\al \cDb_K(X)$};
\draw[<-] (lu) -- node[arl] {$\al \sInd^H_J$} (r);
\draw[<-] (lu) -- node[arr] {$\al \sInd^H_K$} (ld);
\draw[<-] (r) -- node[arl] {$\al \sInd^J_K$} (ld);
\node[lttricelld] at (0.5+0.5*\tric,0.5+0.5*\tric) {\tiny$\ITr$};
\end{tikzpicture}}
\]
We can define an analogous transitivity isomorphism $\hamma^H_J(i_2)_! \circ \hamma^J_K(i_1)_! \natisom \hamma^H_K i_!$ using isomorphisms we have already defined:
\[
\vc{\begin{tikzpicture}[stdtriangle]
\compuinit
\node (lu) at (0,1) {$\al \cDb_H(\widetilde{X})$};
\node (r) at (1,1) {$\al \cDb_J(J\times^K X)$};
\node (ld) at (1,0) {$\al \cDb_K(X)$};
\draw[<-] (lu) -- node[arl] {$\al \hamma^H_J(i_2)_!$} (r);
\draw[<-] (lu) -- node[arr] {$\al \hamma^H_K i_!$} (ld);
\draw[<-] (r) -- node[arl] {$\al \hamma^J_K(i_1)_!$} (ld);
\node[lttricelld] at (0.5+0.5*\tric,0.5+0.5*\tric) {\tiny$\ITr$};
\end{tikzpicture}}
\quad:=\quad
\vc{\begin{tikzpicture}[stdtriangle]
\compuinit
\node[lttricelld] at (0.5+0.5*\tric, 1.5+0.5*\tric) {\tiny$\mTr$};
\node[lttricelld] at (1.5+0.5*\tric, 0.5+0.5*\tric) {\tiny$\Comp$};
\node[cubef] at (1.5,1.5) {$\mInt$};
\node (lluu) at (0,2) {{\tiny $\cDb_H(\widetilde{X})$}};
\node (luu) at (1,2) {{\tiny $\cDb_J(\widetilde{X})$}};
\node (muu) at (2,2) {{\tiny $\cDb_J(J\times^K X)$}};
\node (lu) at (1,1) {{\tiny $\cDb_K(\widetilde{X})$}};
\node (mu) at (2,1) {{\tiny $\cDb_K(J \times^K X)$}};
\node (mm) at (2,0) {{\tiny $\cDb_K(X)$}};
\draw[<-] (lluu) -- node[arl] {{\tiny $\hamma^H_J$}} (luu);
\draw[<-] (luu) -- node[arl] {{\tiny $(i_2)_!$}} (muu);
\draw[<-] (lu) -- node[arl] {{\tiny $(i_2)_!$}} (mu);
\draw[<-] (luu) -- node[arl] {{\tiny $\hamma^J_K$}} (lu);
\draw[<-] (muu) -- node[arl] {{\tiny $\hamma^J_K$}} (mu);
\draw[<-] (mu) -- node[arl] {{\tiny $(i_1)_!$}} (mm);
\draw[<-] (lluu) -- node[arr] {{\tiny $\hamma^H_K$}} (lu);
\draw[<-] (lu) -- node[arr] {{\tiny $i_!$}} (mm);
\end{tikzpicture}}
\]

\subsubsection{Integration and induction equivalence}

Suppose that $I$ is a closed subgroup of $H$ such that $H=IK$. We can identify $I\times^{I\cap K} X$ with $\widetilde{X}$. From the definitions, we have an obvious isomorphism:
\[
\vc{\begin{tikzpicture}[smallcube]
\compuinit
\node[cubef] at (0.5,0.5) {$\mIEI$};
\node (lu) at (0,1) {$\cDb_H(\widetilde{X})$};
\node (ru) at (1,1) {$\cDb_K(X)$};
\node (ld) at (0,0) {$\cDb_I(\widetilde{X})$};
\node (rd) at (1,0) {$\cDb_{I\cap K}(X)$};
\draw[<-] (lu) -- node[arl] {$\al \sInd^H_K$} (ru);
\draw[<-] (lu) -- node[arr] {$\al \hamma^H_I$} (ld);
\draw[<-] (ru) -- node[arl] {$\al \hamma^K_{I\cap K}$} (rd);
\draw[<-] (ld) -- node[arr] {$\al \sInd^I_{I\cap K}$} (rd);
\end{tikzpicture}}
\]

We define an analogous isomorphism for the other version of induction equivalence:
\[
\vc{\begin{tikzpicture}[smallcube]
\compuinit
\node[cubef] at (0.5,0.5) {$\mIEI$};
\node (lu) at (0,1) {$\cDb_H(\widetilde{X})$};
\node (ru) at (1,1) {$\cDb_K(X)$};
\node (ld) at (0,0) {$\cDb_I(\widetilde{X})$};
\node (rd) at (1,0) {$\cDb_{I\cap K}(X)$};
\draw[<-] (lu) -- node[arl] {$\al \hamma^H_K i_!$} (ru);
\draw[<-] (lu) -- node[arr] {$\al \hamma^H_I$} (ld);
\draw[<-] (ru) -- node[arl] {$\al \hamma^K_{I\cap K}$} (rd);
\draw[<-] (ld) -- node[arr] {$\al \hamma^I_{I\cap K} i_!$} (rd);
\end{tikzpicture}}
\quad:=\quad
\vc{\begin{tikzpicture}[smallcube]
\compuinit
\node[lttricelld] at (0.5+0.5*\tric,0.5+0.5*\tric) {\tiny$\mTr$};
\node[rttricellu] at (0.5-0.5*\tric,0.5-0.5*\tric) {\tiny$\mTr$};
\node[cubef] at (1.5,0.5) {$\mInt$};
\node (lu) at (0,1) {$\cDb_H(\widetilde{X})$};
\node (mu) at (1,1) {$\cDb_K(\widetilde{X})$};
\node (ru) at (2,1) {$\cDb_K(X)$};
\node (ld) at (0,0) {$\cDb_I(\widetilde{X})$};
\node (md) at (1,0) {$\cDb_{I\cap K}(\widetilde{X})$};
\node (rd) at (2,0) {$\cDb_{I\cap K}(X)$};
\draw[<-] (lu) -- node[arl] {$\al \hamma^H_K$} (mu);
\draw[<-] (lu) -- node[arl] {$\al \hamma^H_{I\cap K}$} (md);
\draw[<-] (mu) -- node[arl] {$\al i_!$} (ru);
\draw[<-] (lu) -- node[arr] {$\al \hamma^H_I$} (ld);
\draw[<-] (mu) -- node[arl] {$\al \hamma^K_{I\cap K}$} (md);
\draw[<-] (ru) -- node[arl] {$\al \hamma^K_{I\cap K}$} (rd);
\draw[<-] (ld) -- node[arr] {$\al \hamma^I_{I\cap K}$} (md);
\draw[<-] (md) -- node[arr] {$\al i_!$} (rd);
\end{tikzpicture}}
\]

\subsubsection{Inverse image and induction equivalence}

Let $f:X\to Y$ be a morphism of $K$-varieties, $g:\widetilde{X}\to\widetilde{Y}$ the induced morphism of $H$-varieties, and $j:Y\to \widetilde{Y}$ the inclusion. Then we have a cartesian square
\[
\vc{\begin{tikzpicture}[vsmallcube]
\compuinit
\node[cart] at (0.5,0.5) {};
\node (lu) at (0,1) {$X$};
\node (ru) at (1,1) {$Y$};
\node (ld) at (0,0) {$\widetilde{X}$};
\node (rd) at (1,0) {$\widetilde{Y}$};
\draw[->] (lu) -- node[arl] {$\al f$} (ru);
\draw[->] (lu) -- node[arr] {$\al i$} (ld);
\draw[->] (ru) -- node[arl] {$\al j$} (rd);
\draw[->] (ld) -- node[arr] {$\al g$} (rd);
\end{tikzpicture}}
\]

From the definitions, we have an obvious isomorphism
\[
\vc{\begin{tikzpicture}[smallcube]
\compuinit
\node[cubef] at (0.5,0.5) {$\mIBC$};
\node (lu) at (0,1) {$\cDb_H(\widetilde{X})$};
\node (ru) at (1,1) {$\cDb_K(X)$};
\node (ld) at (0,0) {$\cDb_H(\widetilde{Y})$};
\node (rd) at (1,0) {$\cDb_K(Y)$};
\draw[<-] (lu) -- node[arl] {$\al \sInd^H_K$} (ru);
\draw[<-] (lu) -- node[arr] {$\al g^*$} (ld);
\draw[<-] (ru) -- node[arl] {$\al f^*$} (rd);
\draw[<-] (ld) -- node[arr] {$\al \sInd^H_K$} (rd);
\end{tikzpicture}}
\]

We define an analogous isomorphism for the other version of induction equivalence:
\[
\vc{\begin{tikzpicture}[smallcube]
\compuinit
\node[cubef] at (0.5,0.5) {$\mIBC$};
\node (lu) at (0,1) {$\cDb_H(\widetilde{X})$};
\node (ru) at (1,1) {$\cDb_K(X)$};
\node (ld) at (0,0) {$\cDb_H(\widetilde{Y})$};
\node (rd) at (1,0) {$\cDb_K(Y)$};
\draw[<-] (lu) -- node[arl] {$\al \hamma^H_K i_!$} (ru);
\draw[<-] (lu) -- node[arr] {$\al g^*$} (ld);
\draw[<-] (ru) -- node[arl] {$\al f^*$} (rd);
\draw[<-] (ld) -- node[arr] {$\al \hamma^H_K j_!$} (rd);
\end{tikzpicture}}
\quad:=\quad
\vc{\begin{tikzpicture}[smallcube]
\compuinit
\node[cubef] at (0.5,0.5) {$\mInt$};
\node[cubef] at (1.5,0.5) {$\BC$};
\node (lu) at (0,1) {$\cDb_H(\widetilde{X})$};
\node (mu) at (1,1) {$\cDb_K(\widetilde{X})$};
\node (ru) at (2,1) {$\cDb_K(X)$};
\node (ld) at (0,0) {$\cDb_H(\widetilde{Y})$};
\node (md) at (1,0) {$\cDb_{K}(\widetilde{Y})$};
\node (rd) at (2,0) {$\cDb_{K}(Y)$};
\draw[<-] (lu) -- node[arl] {$\al \hamma^H_K$} (mu);
\draw[<-] (mu) -- node[arl] {$\al i_!$} (ru);
\draw[<-] (lu) -- node[arr] {$\al g^*$} (ld);
\draw[<-] (mu) -- node[arl] {$\al g^*$} (md);
\draw[<-] (ru) -- node[arl] {$\al f^*$} (rd);
\draw[<-] (ld) -- node[arr] {$\al \hamma^H_{K}$} (md);
\draw[<-] (md) -- node[arr] {$\al j_!$} (rd);
\end{tikzpicture}}
\]

\subsubsection{Constant sheaf under induction equivalence}
\label{sss:const-induction-equiv}

It is clear from definitions that we have a canonical isomorphism $\sInd_K^H(\ubk_X^K) \cong \ubk_{\widetilde{X}}^H$. Using the isomorphism $\hamma_K^H i_! \natisom \sInd_K^H[2\dim(H/K)]$ we deduce a canonical isomorphism $\hamma_K^H i_!(\ubk_X^K) \cong \ubk_{\widetilde{X}}^H[2 \dim(H/K)]$. We depict the resulting isomorphisms of functors as follows:
\[
\vc{\begin{tikzpicture}[stdtriangle]
\compuinit
\node (lu) at (0,1) {$\bb$};
\node (r) at (1,0.5) {$\cDb_K(X)$};
\node (ld) at (0,0) {$\cDb_H(\widetilde{X})$};
\draw[->] (lu) -- node[arl] {$\al \ubk_X^K$} (r);
\draw[->] (lu) -- node[arr] {$\al \ubk_{\widetilde{X}}^H$} (ld);
\draw[->] (r) -- node[arl] {$\al \sInd_K^H$} (ld);
\node[tricell] at (\tric,0.5) {\tiny$\Rel$};
\end{tikzpicture}}
\qquad
\vc{\begin{tikzpicture}[stdtriangle]
\compuinit
\node (lu) at (0,1) {$\bb$};
\node (r) at (1,0.5) {$\cDb_K(X)$};
\node (ld) at (0,0) {$\cDb_H(\widetilde{X})$};
\draw[->] (lu) -- node[arl] {$\al \ubk_X^K$} (r);
\draw[->] (lu) -- node[arr] {$\al \ubk_{\widetilde{X}}^H[2\dim (H/K)]$} (ld);
\draw[->] (r) -- node[arl] {$\al \hamma_K^H i_!$} (ld);
\node[tricell] at (\tric,0.5) {\tiny$\Rel$};
\end{tikzpicture}}
\]


\addtocounter{figure}{2}

\subsection{Compatibilities of transitivity of induction equivalence}
\label{ss:constant-transitivity-induction}

\begin{figure}
\begin{center}
Setting: \qquad $K \subset J \subset H$,\quad $n = 2\dim(J/K)$,\quad $m = 2\dim(H/K)$,\quad
$\xymatrix@1{X\ar[r]^-{i_1}&J \times^K X\ar[r]^-{i_2}&H \times^K X=\widetilde{X}}$,\quad $i=i_2i_1$

\subfigure[][]{\qc{\begin{tikzpicture}[stdprism]
\compuinit
\node[cuber] at (0.5,0.5,0) {$\mIE$};
\node[prismdf] at (1,0.5,\tric) {$\ITr$};
\node (rlu) at (0,1,0) {$\cDb_H(\widetilde{X})$};
\node (rru) at (1,1,0) {$\cDb_H(\widetilde{X})$};
\node (fr) at (	1,0.5,1) {$\cDb_J(J\times^K X)$};
\node (rld) at (0,0,0) {$\cDb_K(X)$};
\node (rrd) at (1,0,0) {$\cDb_K(X)$};
\draw[liner,<-] (rru) -- node[arr,pos=.3] {$\al \sInd^H_K[m]$} (rrd);
\draw[-,double distance=1.5pt] (rlu) -- (rru);
\draw[<-] (rlu) -- node[arr] {$\al \hamma^H_K i_!$} (rld);
\draw[<-] (rru) -- node[arl] {$\al \sInd^H_J[m-n]$} (fr);
\draw[-,double distance=1.5pt] (rld) -- (rrd);
\draw[<-] (fr) -- node[arl] {$\al \sInd^J_K[n]$} (rrd);
\node[prismtf] at (0.5,0.75,0.5) {$\mIE$};
\node[prismbf] at (0.5,0.25,0.5) {$\mIE$};
\node[prismlf] at (0,0.5,\tric) {$\ITr$};
\node (fl) at (0,0.5,1) {$\cDb_J(J\times^K X)$};
\draw[<-] (rlu) -- node[arl] {$\al \hamma^H_J(i_2)_!$} (fl);
\draw[<-] (fl) -- node[arl,pos=.3] {$\al \hamma^J_K(i_1)_!$} (rld);
\draw[-,double distance=1.5pt] (fl)  -- (fr);
\end{tikzpicture}}\label{lem:ITrcomparison}}

\subfigure[][]{\qc{\begin{tikzpicture}[stdtetr]
\compuinit
\node[tetrlr] at ({-\tric/2},0.5,\tric) {$\Rel$};
\node[tetrdr] at ({\tric/2},0.5,\tric) {$\Rel$};
\node (ru) at (0,1,0) {$\bb$};
\node (fl) at (-0.5,0.5,1) {$\cDb_H(\widetilde{X})$};
\node (fr) at (0.5,0.5,1) {$\cDb_J(J \times^K X)$};
\node (rd) at (0,0,0) {$\cDb_K(X)$};
\draw[liner,->] (ru) -- node[arl,pos=.75] {$\al \ubk_X^K$} (rd);
\node[tetrtf] at (0, {(1+\tric)/2}, {1-\tric}) {$\Rel$};
\node[tetrbf] at (0, {(1-\tric)/2}, {1-\tric}) {\tiny$\ITr$};
\draw[<-] (fl) -- node[arl] {$\al \ubk_{\widetilde{X}}^H$} (ru);
\draw[<-] (fl) -- node[arr] {$\al \sInd_K^H$} (rd);
\draw[->] (ru) -- node[arl] {$\al \ubk_{J \times^K X}^J$} (fr);
\draw[->] (rd) -- node[arr] {$\al \sInd_K^J$} (fr);
\draw[<-] (fl) -- node[arl] {$\al \sInd_J^H$} (fr);
\end{tikzpicture}}\label{lem:Indcompositionconstant}}~
\subfigure[][]{\qc{\begin{tikzpicture}[stdtetr]
\compuinit
\node[tetrlr] at ({-\tric/2},0.5,\tric) {$\Rel$};
\node[tetrdr] at ({\tric/2},0.5,\tric) {$\Rel$};
\node (ru) at (0,1,0) {$\bb$};
\node (fl) at (-0.5,0.5,1) {$\cDb_H(\widetilde{X})$};
\node (fr) at (0.5,0.5,1) {$\cDb_J(J \times^K X)$};
\node (rd) at (0,0,0) {$\cDb_K(X)$};
\draw[liner,->] (ru) -- node[arl,pos=.75] {$\al \ubk_X^K$} (rd);
\node[tetrtf] at (0, {(1+\tric)/2}, {1-\tric}) {$\Rel$};
\node[tetrbf] at (0, {(1-\tric)/2}, {1-\tric}) {\tiny$\ITr$};
\draw[<-] (fl) -- node[arl] {$\al \ubk_{\widetilde{X}}^H[m]$} (ru);
\draw[<-] (fl) -- node[arr] {$\al \hamma_K^H i_!$} (rd);
\draw[->] (ru) -- node[arl] {$\al \ubk_{J \times^K X}^J[n]$} (fr);
\draw[->] (rd) -- node[arr] {$\al \hamma_K^J (i_1)_!$} (fr);
\draw[<-] (fl) -- node[arl] {$\al \hamma_J^H (i_2)_!$} (fr);
\end{tikzpicture}}\label{lem:Gamma_!compositionconstant}}
\end{center}
\caption{Compatibilities of transitivity of induction equivalence}
\end{figure}
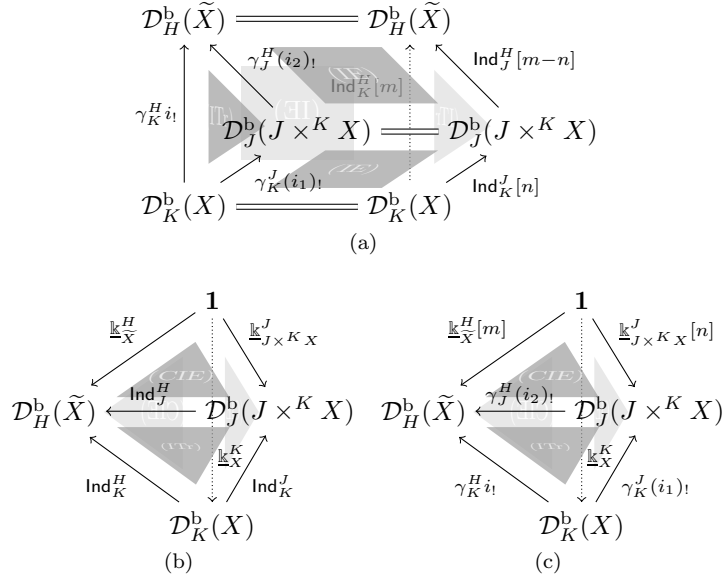

To prove part~\subref{lem:ITrcomparison}, fix a $K$-resolution $P$ of $X$ and consider the following commutative diagram:
\[
\xymatrix@C=1.5cm@R=18pt{
K\backslash P \ar@{^{(}->}[r]^-{(\widetilde{i_1})^K_{J \times^K P}} \ar@{^{(}->}[rd]_-{\widetilde{i}^K_{H \times^K P}} & K\backslash J \times^K P \ar@{^{(}->}[d]^-{(\widetilde{i_2})^K_{H \times^K P}} \ar@{->>}[r]^-{q_{J \times^K P}^{K \subset J}} & J \backslash J \times^K P \ar@{^{(}->}[d]^-{(\widetilde{i_2})^J_{H \times^K P}} \\
& K \backslash H \times^K P \ar@{->>}[r]^-{q_{H \times^K P}^{K \subset J}} \ar@{->>}[rd]_-{q_{H \times^K P}^{K \subset H}} & J \backslash H \times^K P \ar@{->>}[d]^-{q_{H \times^K P}^{J \subset H}}\\
&& H \backslash H \times^K P
}
\]
Denote by $\tau_K^J : K \backslash P \xrightarrow{\sim} J\backslash J \times^K P$, $\tau_J^H :  J\backslash J \times^K P \xrightarrow{\sim}  H\backslash H \times^K P$ and $\tau_K^H=\tau_J^H \tau_K^J$ the natural isomorphisms. The statement we must prove is equivalent to the commutativity of the diagram obtained by gluing the following two prisms, where all faces are labelled by $(\cdot)_!$ composition isomorphisms:
\[
\vc{\begin{tikzpicture}[longprism2]
\compuinit
\node (rlu) at (0,1,0) {$\cDb(K\backslash J\times^K P)$};
\node (rru) at (1,1,0) {$\cDb(J\backslash J \times^K P)$};
\node (fr) at (	1,0.5,1) {$\cDb(J\backslash J \times^K P)$};
\node (rld) at (0,0,0) {$\cDb(K\backslash H\times^K P)$};
\node (rrd) at (1,0,0) {$\cDb(J\backslash H \times^K P)$};
\draw[->] (rru) -- node[arr,pos=.3] {$\al ((\widetilde{i_2})^J_{H \times^K P})_!$} (rrd);
\draw[->] (rlu) -- node[arl] {$\al (q_{J \times^K P}^{K \subset J})_!$} (rru);
\draw[->] (rlu) -- node[arr] {$\al ((\widetilde{i_2})^K_{H \times^K P})_!$} (rld);
\draw[double distance=1.5pt,-] (rru) -- (fr);
\draw[->] (rld) -- node[arr] {$\al (q_{H \times^K P}^{K \subset J})_!$} (rrd);
\draw[->] (fr) -- node[arl] {$\al ((\widetilde{i_2})^K_{H \times^K P})_!$} (rrd);
\node (fl) at (0,0.5,1) {$\cDb(K\backslash P)$};
\draw[<-] (rlu) -- node[arl] {$\al ((\widetilde{i_1})^K_{J \times^K P})_!$} (fl);
\draw[->] (fl) -- node[arl,pos=.3] {$\al (\widetilde{i}^K_{H \times^K P})_!$} (rld);
\draw[linef,->] (fl)  -- node[arr,pos=.3] {$\al (\tau_K^J)_!$} (fr);
\end{tikzpicture}}
\]
\[
\vc{\begin{tikzpicture}[longprism2]
\compuinit
\node (rlu) at (0,1,0) {$\cDb(K\backslash H\times^K P)$};
\node (rru) at (1,1,0) {$\cDb(J\backslash H \times^K P)$};
\node (fr) at (	1,0.5,1) {$\cDb(J\backslash J \times^K P)$};
\node (rld) at (0,0,0) {$\cDb(H\backslash H\times^K P)$};
\node (rrd) at (1,0,0) {$\cDb(H\backslash H \times^K P)$};
\draw[->] (rru) -- node[arr,pos=.3] {$\al (q_{H \times^K P}^{J \subset H})_!$} (rrd);
\draw[->] (rlu) -- node[arl] {$\al (q_{H \times^K P}^{K \subset J})_!$} (rru);
\draw[->] (rlu) -- node[arr] {$\al (q_{H \times^K P}^{K \subset H})_!$} (rld);
\draw[<-] (rru) -- node[arl] {$\al ((\widetilde{i_2})^K_{H \times^K P})_!$} (fr);
\draw[double distance=1.5pt,-] (rld) -- (rrd);
\draw[->] (fr) -- node[arl] {$\al (\tau_J^H)_!$} (rrd);
\node (fl) at (0,0.5,1) {$\cDb(K\backslash P)$};
\draw[<-] (rlu) -- node[arl] {$\al (\widetilde{i}^K_{H \times^K P})_!$} (fl);
\draw[->] (fl) -- node[arl,pos=.3] {$\al (\tau_K^H)_!$} (rld);
\draw[linef,->] (fl)  -- node[arr,pos=.3] {$\al (\tau_K^J)_!$} (fr);
\end{tikzpicture}}
\]
Hence the result follows from Lemma \ref{lem:_!composition_!}.

Part~\subref{lem:Indcompositionconstant} is easy. By definition, the tetrahedron in part~\subref{lem:Gamma_!compositionconstant} is obtained by gluing the prism in part~\subref{lem:ITrcomparison} to the tetrahedron in part~\subref{lem:Indcompositionconstant} (with appropriate shifts included).

\subsection{Compatibilities of integration and induction equivalence}

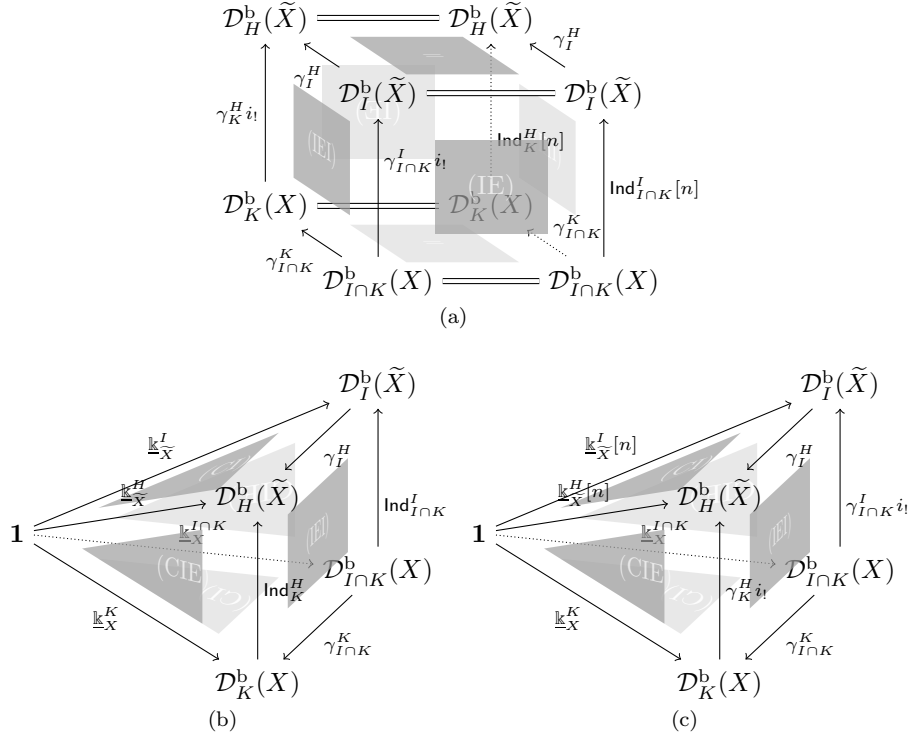
\begin{figure}
\begin{center}
Setting: \qquad $H=IK$, \quad $H/I$ contractible,\quad $n=2\dim(H/K)$, \quad $\xymatrix@1{X\ar[r]^-{i}&H \times^K X}=\widetilde{X}$

\subfigure[][]{\qc{\begin{tikzpicture}[stdcube]
\compuinit
\node (rrd) at (1,0,0) {$\cDb_K(X)$};
\node[cuber] at (0.5,0.5,0) {$\mIE$};
\node[cubed] at (1,0.5,0.5) {$\IEI$};
\node[cubeb] at (0.5,0,0.5) {$=$};
\node (rlu) at (0,1,0) {$\cDb_H(\widetilde{X})$};
\node (rru) at (1,1,0) {$\cDb_H(\widetilde{X})$};
\node (fru) at (1,1,1) {$\cDb_I(\widetilde{X})$};
\node (frd) at (1,0,1) {$\cDb_{I\cap K}(X)$};
\node (fld) at (0,0,1) {$\cDb_{I\cap K}(X)$};
\node (rld) at (0,0,0) {$\cDb_K(X)$};
\draw[liner,<-] (rru) -- node[arl,pos=.7] {$\al \sInd^H_K[n]$} (rrd);
\draw[-,double distance=1.5pt] (rld) -- (rrd);
\draw[liner,<-] (rrd) -- node[arl] {$\al \hamma^K_{I\cap K}$} (frd);
\draw[-,double distance=1.5pt] (rlu) -- (rru);
\draw[<-] (rru) -- node[arl] {$\al \hamma^H_I$} (fru);
\draw[<-] (fru) -- node[arl] {$\al \sInd^I_{I\cap K}[n]$}(frd);
\draw[-,double distance=1.5pt] (fld) -- (frd);
\draw[<-] (rld) -- node[arr] {$\al \hamma^K_{I\cap K}$} (fld);
\draw[<-] (rlu) -- node[arr] {$\al \hamma^H_K i_!$} (rld);
\node[cubel] at (0,0.5,0.5) {$\IEI$};
\node[cubet] at (0.5,1,0.5) {$=$};
\node[cubef] at (0.5,0.5,1) {$\mIE$};
\node (flu) at (0,1,1) {$\cDb_I(\widetilde{X})$};
\draw[<-] (rlu) -- node[arr,pos=.7] {$\al \hamma^H_I$} (flu);
\draw[-,double distance=1.5pt] (flu) -- (fru);
\draw[<-] (flu) -- node[arl,pos=.3] {$\al \hamma^I_{I\cap K}i_!$} (fld);
\end{tikzpicture}}\label{lem:IEIcomparison}}

\subfigure[][]{\qc{\begin{tikzpicture}[stdpyra]
\compuinit
\node (rrd) at (1,0,0) {$\cDb_{I\cap K}(X)$};
\node[pyrar] at (1-\tric,0.5,0.5*\tric) {$\Rel$};
\node[pyrab] at (1-\tric,0.5*\tric,0.5) {$\CInt$};
\node (l) at (0,0.5,0.5) {$\bb$};
\node (rru) at (1,1,0) {$\cDb_I(\widetilde{X})$};
\node (frd) at (1,0,1) {$\cDb_K(X)$};
\draw[liner,->] (l) -- node[arl] {$\al \ubk_X^{I\cap K}$} (rrd);
\draw[<-] (rru) -- node[arl,pos=.7] {$\al \sInd_{I\cap K}^I$} (rrd);
\draw[->] (rrd) -- node[arl] {$\al \hamma_{I\cap K}^K$} (frd);
\draw[->] (l) -- node[arl] {$\al \ubk_{\widetilde{X}}^I$} (rru);
\draw[->] (l) -- node[arr] {$\al \ubk_X^K$} (frd);
\node[cubel,xscale=-1] at (1,0.5,0.5) {$\IEI$};
\node[pyraf] at (1-\tric,0.5,1-0.5*\tric) {$\Rel$};
\node[pyrat] at (1-\tric,1-0.5*\tric,0.5) {$\CInt$};
\node (fru) at (1,1,1) {$\cDb_H(\widetilde{X})$};
\draw[->] (rru) -- node[arl] {$\al \hamma_I^H$} (fru);
\draw[<-] (fru) -- node[arl] {$\al \sInd_K^H$}(frd);
\draw[->] (l) -- node[arl,pos=.7] {$\al \ubk_{\widetilde{X}}^H$} (fru);
\end{tikzpicture}}\label{lem:IndGammaconstant}}~
\subfigure[][]{\qc{\begin{tikzpicture}[stdpyra]
\compuinit
\node (rrd) at (1,0,0) {$\cDb_{I\cap K}(X)$};
\node[pyrar] at (1-\tric,0.5,0.5*\tric) {$\Rel$};
\node[pyrab] at (1-\tric,0.5*\tric,0.5) {$\CInt$};
\node (l) at (0,0.5,0.5) {$\bb$};
\node (rru) at (1,1,0) {$\cDb_I(\widetilde{X})$};
\node (frd) at (1,0,1) {$\cDb_K(X)$};
\draw[liner,->] (l) -- node[arl] {$\al \ubk_X^{I\cap K}$} (rrd);
\draw[<-] (rru) -- node[arl,pos=.7] {$\al \hamma_{I\cap K}^I i_!$} (rrd);
\draw[->] (rrd) -- node[arl] {$\al \hamma_{I\cap K}^K$} (frd);
\draw[->] (l) -- node[arl] {$\al \ubk_{\widetilde{X}}^I[n]$} (rru);
\draw[->] (l) -- node[arr] {$\al \ubk_X^K$} (frd);
\node[cubel,xscale=-1] at (1,0.5,0.5) {$\IEI$};
\node[pyraf] at (1-\tric,0.5,1-0.5*\tric) {$\Rel$};
\node[pyrat] at (1-\tric,1-0.5*\tric,0.5) {$\CInt$};
\node (fru) at (1,1,1) {$\cDb_H(\widetilde{X})$};
\draw[->] (rru) -- node[arl] {$\al \hamma_I^H$} (fru);
\draw[<-] (fru) -- node[arl] {$\al \hamma_K^H i_!$}(frd);
\draw[->] (l) -- node[arl,pos=.7] {$\al \ubk_{\widetilde{X}}^H[n]$} (fru);
\end{tikzpicture}}\label{lem:Gamma_!Gammaconstant}}
\end{center}
\caption{Compatibilities of integration and induction equivalence}
\end{figure}

Part~\subref{lem:IEIcomparison} can be proved in the same way as Lemma~\ref{lem:ITrcomparison}. Part~\subref{lem:IndGammaconstant} is easy. By definition, the pyramid in part~\subref{lem:Gamma_!Gammaconstant} is obtained by gluing the cube in part~\subref{lem:IEIcomparison} to the pyramid in part~\subref{lem:IndGammaconstant} (with appropriate shifts included).

\subsection{Compatibilities of inverse image and induction equivalence}

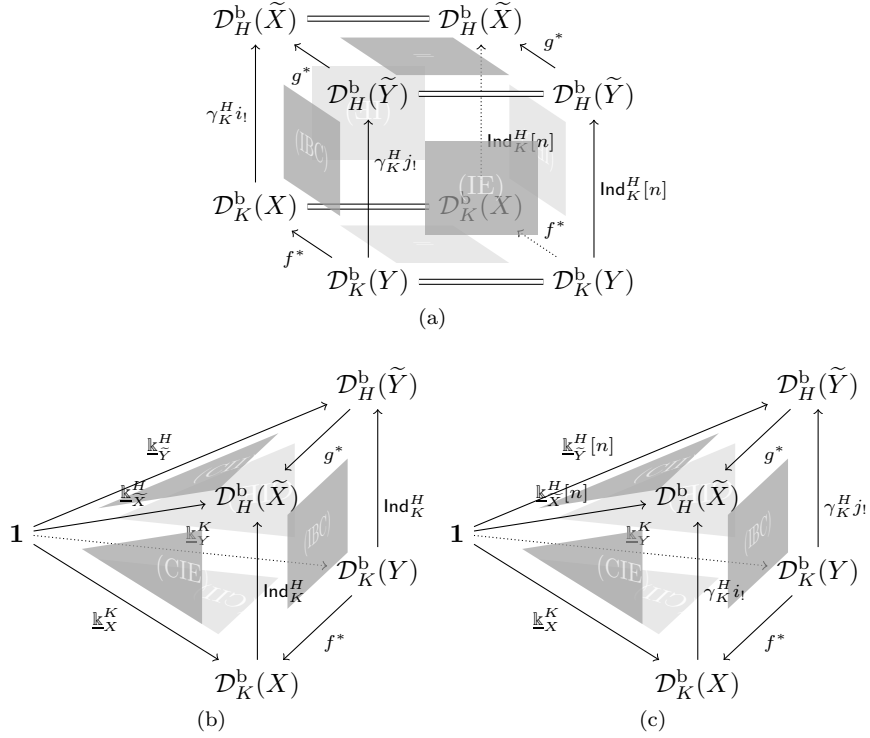
\begin{figure}
\begin{center}
Setting: \qquad $K \subset H$, \quad $\widetilde{X}=H\times^K X$, \quad
\qc{\begin{tikzpicture}[vsmallcube]
\compuinit
\node[cart] at (0.5,0.5) {};
\node (lu) at (0,1) {$X$};
\node (ru) at (1,1) {$Y$};
\node (ld) at (0,0) {$\widetilde{X}$};
\node (rd) at (1,0) {$\widetilde{Y}$};
\draw[->] (lu) -- node[arl] {$\al f$} (ru);
\draw[->] (lu) -- node[arr] {$\al i$} (ld);
\draw[->] (ru) -- node[arl] {$\al j$} (rd);
\draw[->] (ld) -- node[arr] {$\al g$} (rd);
\end{tikzpicture}},\quad $n = 2\dim(H/K)$

\subfigure[][]{\qc{\begin{tikzpicture}[stdcube]
\compuinit
\node (rrd) at (1,0,0) {$\cDb_K(X)$};
\node[cuber] at (0.5,0.5,0) {$\mIE$};
\node[cubed] at (1,0.5,0.5) {$\IBC$};
\node[cubeb] at (0.5,0,0.5) {$=$};
\node (rlu) at (0,1,0) {$\cDb_H(\widetilde{X})$};
\node (rru) at (1,1,0) {$\cDb_H(\widetilde{X})$};
\node (fru) at (1,1,1) {$\cDb_H(\widetilde{Y})$};
\node (frd) at (1,0,1) {$\cDb_{K}(Y)$};
\node (fld) at (0,0,1) {$\cDb_{K}(Y)$};
\node (rld) at (0,0,0) {$\cDb_K(X)$};
\draw[liner,<-] (rru) -- node[arl,pos=.7] {$\al \sInd^H_K[n]$} (rrd);
\draw[-,double distance=1.5pt] (rld) -- (rrd);
\draw[liner,<-] (rrd) -- node[arl] {$\al f^*$} (frd);
\draw[-,double distance=1.5pt] (rlu) -- (rru);
\draw[<-] (rru) -- node[arl] {$\al g^*$} (fru);
\draw[<-] (fru) -- node[arl] {$\al \sInd^H_{K}[n]$}(frd);
\draw[-,double distance=1.5pt] (fld) -- (frd);
\draw[<-] (rld) -- node[arr] {$\al f^*$} (fld);
\draw[<-] (rlu) -- node[arr] {$\al \hamma^H_K i_!$} (rld);
\node[cubel] at (0,0.5,0.5) {$\IBC$};
\node[cubet] at (0.5,1,0.5) {$=$};
\node[cubef] at (0.5,0.5,1) {$\mIE$};
\node (flu) at (0,1,1) {$\cDb_H(\widetilde{Y})$};
\draw[<-] (rlu) -- node[arr,pos=.7] {$\al g^*$} (flu);
\draw[-,double distance=1.5pt] (flu) -- (fru);
\draw[<-] (flu) -- node[arl,pos=.3] {$\al \hamma^H_{K}j_!$} (fld);
\end{tikzpicture}}\label{lem:IBCcomparison}}

\subfigure[][]{\qc{\begin{tikzpicture}[stdpyra]
\compuinit
\node (rrd) at (1,0,0) {$\cDb_K(Y)$};
\node[pyrar] at (1-\tric,0.5,0.5*\tric) {$\Rel$};
\node[pyrab] at (1-\tric,0.5*\tric,0.5) {$\Cnst$};
\node (l) at (0,0.5,0.5) {$\bb$};
\node (rru) at (1,1,0) {$\cDb_H(\widetilde{Y})$};
\node (frd) at (1,0,1) {$\cDb_K(X)$};
\draw[liner,->] (l) -- node[arl] {$\al \ubk_Y^K$} (rrd);
\draw[<-] (rru) -- node[arl,pos=.7] {$\al \sInd_K^H$} (rrd);
\draw[->] (rrd) -- node[arl] {$\al f^*$} (frd);
\draw[->] (l) -- node[arl] {$\al \ubk_{\widetilde{Y}}^H$} (rru);
\draw[->] (l) -- node[arr] {$\al \ubk_X^K$} (frd);
\node[cubel,xscale=-1] at (1,0.5,0.5) {$\IBC$};
\node[pyraf] at (1-\tric,0.5,1-0.5*\tric) {$\Rel$};
\node[pyrat] at (1-\tric,1-0.5*\tric,0.5) {$\Cnst$};
\node (fru) at (1,1,1) {$\cDb_H(\widetilde{X})$};
\draw[->] (rru) -- node[arl] {$\al g^*$} (fru);
\draw[<-] (fru) -- node[arl] {$\al \sInd_K^H$}(frd);
\draw[->] (l) -- node[arl,pos=.7] {$\al \ubk_{\widetilde{X}}^H$} (fru);
\end{tikzpicture}}\label{lem:Ind^*constant}}~
\subfigure[][]{\qc{\begin{tikzpicture}[stdpyra]
\compuinit
\node (rrd) at (1,0,0) {$\cDb_K(Y)$};
\node[pyrar] at (1-\tric,0.5,0.5*\tric) {$\Rel$};
\node[pyrab] at (1-\tric,0.5*\tric,0.5) {$\Cnst$};
\node (l) at (0,0.5,0.5) {$\bb$};
\node (rru) at (1,1,0) {$\cDb_H(\widetilde{Y})$};
\node (frd) at (1,0,1) {$\cDb_K(X)$};
\draw[liner,->] (l) -- node[arl] {$\al \ubk_Y^K$} (rrd);
\draw[<-] (rru) -- node[arl,pos=.7] {$\al \hamma_K^H j_!$} (rrd);
\draw[->] (rrd) -- node[arl] {$\al f^*$} (frd);
\draw[->] (l) -- node[arl] {$\al \ubk_{\widetilde{Y}}^H[n]$} (rru);
\draw[->] (l) -- node[arr] {$\al \ubk_X^K$} (frd);
\node[cubel,xscale=-1] at (1,0.5,0.5) {$\IBC$};
\node[pyraf] at (1-\tric,0.5,1-0.5*\tric) {$\Rel$};
\node[pyrat] at (1-\tric,1-0.5*\tric,0.5) {$\Cnst$};
\node (fru) at (1,1,1) {$\cDb_H(\widetilde{X})$};
\draw[->] (rru) -- node[arl] {$\al g^*$} (fru);
\draw[<-] (fru) -- node[arl] {$\al \hamma_K^H i_!$}(frd);
\draw[->] (l) -- node[arl,pos=.7] {$\al \ubk_{\widetilde{X}}^H[n]$} (fru);
\end{tikzpicture}}\label{lem:Gamma_!^*constant}}
\end{center}
\caption{Compatibilities of inverse image and induction equivalence}
\end{figure}

The proof of~\subref{lem:IBCcomparison} is similar to that of Lemma~\ref{lem:ITrcomparison}, but using Lemma \ref{lem:_!composition^*} rather than Lemma \ref{lem:_!composition_!}. Part~\subref{lem:Ind^*constant} is easy. By definition, the pyramid in~\subref{lem:Gamma_!^*constant} is obtained by gluing the cube in~\subref{lem:IBCcomparison} to the pyramid in~\subref{lem:Ind^*constant} (with appropriate shifts included).

\subsection{Equivariance under a finite group action}
\label{ss:equivariance-finite}

Let $f : X \to Y$ be a morphism of $H$-varieties, and assume that we have an action of a finite group $A$ on $X$ which commutes with the $H$-action, and such that $f$ is $A$-equivariant for the trivial $A$-action on $Y$. Then we obtain a canonical action of $A$ on the object $f_! \ubk_X^H$ of $\cDb_H(Y)$, in which the action of $a \in A$ is given by the composition $f_! \ubk_X^H \mathrel{\overset{\CnstRes}{\cong}} f_! a^* \ubk_X^H \mathrel{\overset{\BC}{\cong}} f_! \ubk_X^H$. Here $a$ denotes the action of $a$ on $X$, and the base change is for the square
\[
\vc{\begin{tikzpicture}[vsmallcube2]
\compuinit
\node[cart] at (0.5,0.5) {};
\node (lu) at (0,1) {$X$};
\node (ru) at (1,1) {$X$};
\node (ld) at (0,0) {$Y$};
\node (rd) at (1,0) {$Y$};
\draw[->] (lu) -- node[arl] {$\al a$} (ru);
\draw[->] (lu) -- node[arr] {$\al f$} (ld);
\draw[->] (ru) -- node[arl] {$\al f$} (rd);
\draw[->] (ld) -- node[arr] {$\al \id$} (rd);
\end{tikzpicture}}
\]
(This construction defines an action of $A$ by Lemmas~\ref{lem:^*compositionconstant} and~\ref{lem:^*composition_!}.)

Now, consider a closed subgroup $K \subset H$, a $K$-variety $X$, and an $H$-variety $Y$. As usual, let $\widetilde{X}=H\times^K X$ and let $i : X \to \widetilde{X}$ be the inclusion. Assume that we have
an $H$-equivariant morphism $g: \widetilde{X} \to Y$. Let $f:=g \circ i$; it is automatically $K$-equivariant.
Assume furthermore that a finite group $A$ acts on $X$ compatibly with $K$ and that $f$ is $A$-equivariant for the trivial $A$-action on $Y$. Then we have a natural $A$-action on $\widetilde{X}$, and $g$ is $A$-equivariant. In particular, we obtain $A$-actions on 
$f_! \ubk_X^K \in \cDb_K(Y)$ and $g_! \ubk_{\widetilde{X}}^H \in \cDb_H(Y)$. Recall 
the isomorphism $\hamma_K^H i_! \ubk_X^K \overset{\CnstIE}{\cong} \ubk_{\widetilde{X}}^H[2 \dim(H/K)]$ from \S\ref{sss:const-induction-equiv}. 
Applying the functor $g_!$, this induces an isomorphism
\begin{equation}
\label{eqn:isom-action-A}
g_! \ubk_{\widetilde{X}}^H[2 \dim(H/K)] \overset{\CnstIE}{\cong} g_! \hamma_K^H i_! \ubk_X^K \mathrel{\overset{\mInt}{\cong}} \hamma_K^H g_! i_! \ubk_X^K \mathrel{\overset{\Co}{\cong}}\hamma_K^H f_! \ubk_X^K.
\end{equation}

\setcounter{thm}{\value{subsection}}
\addtocounter{thm}{-1}

\begin{lem}\label{lem:_!action}
Isomorphism \eqref{eqn:isom-action-A} is $A$-equivariant.
\end{lem}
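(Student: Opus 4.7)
Fix $a \in A$. The claim is that a certain square of automorphisms of $g_!\ubk_{\widetilde X}^H[2\dim(H/K)]$ and $\hamma_K^H f_! \ubk_X^K$, whose vertical edges are the $a$-actions and whose horizontal edges are two copies of \eqref{eqn:isom-action-A}, commutes. The plan is to decompose this square into a labelled $2$-computad built out of isomorphisms $\Co$, $\BC$, $\Cnst$, $\mInt$, $\CnstIE$, for each of which an appropriate compatibility lemma was established in Appendix~\ref{sect:lemmas}, and then to invoke the gluing principle of \S\ref{subsect:gluing}.

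First I would spell out the two $a$-actions concretely. The action on $f_!\ubk_X^K$ is by definition the composition $f_!\ubk_X^K\overset{\Cnst}{\cong} f_! a^*\ubk_X^K \overset{\BC}{\cong} f_!\ubk_X^K$, where the base change uses the cartesian square with $f,f$ vertically and $a,\id_Y$ horizontally; applying $\hamma_K^H$ gives the action on $\hamma_K^H f_!\ubk_X^K$. Likewise the action on $g_!\ubk_{\widetilde X}^H$ is built from $\Cnst$ and $\BC$ using the action of $a$ on $\widetilde X=H\times^K X$ (which is well-defined because the $A$- and $K$-actions commute) and the fact that $g$ is $A$-equivariant with trivial action on $Y$. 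Shifting by $2\dim(H/K)$ is harmless.

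Next I would expand \eqref{eqn:isom-action-A} as the ordered composition $\CnstIE\,;\,\mInt\,;\,\Comp$ (after applying $g_!$ to $\CnstIE_X$), and handle the three constituents separately. The $\mInt$ step (where $g_!$ passes through $\hamma_K^H$) and the $\Comp$ step (where $g_!\circ i_!$ identifies with $f_!$) are natural in the argument $\ubk_X^K$, so $A$-equivariance at these steps reduces to showing that the $a$-action one builds on $\hamma_K^H g_! i_!\ubk_X^K$ from the action on $i_!\ubk_X^K$ via $\mInt$ and $\Comp$ is the same as the action pulled back along the isomorphism from the $\widetilde X$-side; this follows from Lemmas~\ref{lem:basechangecomposition}, \ref{lem:compbasechange} and \ref{lem:basechangeGamma} applied to the two $3$-dimensional cartesian prisms obtained by stacking the $a$-squares on top of the defining squares of $\mInt$ and $\Comp$. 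The substantive step is $\CnstIE$: one must show that
\[
g_!\CnstIE_X : g_!\hamma_K^H i_!\ubk_X^K \simto g_!\ubk_{\widetilde X}^H[2\dim(H/K)]
\]
intertwines the action induced by $a:X\to X$ on the left with the action induced by $\tilde a:\widetilde X\to\widetilde X$ on the right.

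The hard part is this last step, and my plan is to reduce it to an $A$-equivariance statement for $\CnstIE$ itself that does not involve $g_!$. Applying Lemma~\ref{lem:Gamma_!^*constant} in the situation $X\overset{a}{\to} X$ on one side and $\widetilde X\overset{\tilde a}{\to}\widetilde X$ on the other (using the cartesian square expressing that $\tilde a \circ i = i\circ a$), one obtains a pyramid whose commutativity encodes exactly the compatibility of $\CnstIE$ with $\Cnst$ along a morphism. Gluing two copies of this pyramid (one for $a=\id$ and one for the given $a$) along the $\Cnst$-faces, and then gluing with the $\BC$-cube coming from Lemma~\ref{lem:basechangeGamma} applied to the $a$-square, produces the commutative $2$-computad asserting that $\CnstIE_X$ is $A$-equivariant. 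The main obstacle, in practice, will be bookkeeping: identifying the correct cartesian squares and the correct sequence of Appendix lemmas so that the resulting $3$-dimensional pasting picture can be drawn in $\mathbb{R}^3$ and the gluing principle applies. Once that is in place, post-composing with $g_!$ and combining with the $\mInt$ and $\Comp$ steps finishes the proof.
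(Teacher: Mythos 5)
Your proposal is correct and follows essentially the same route as the paper: the substantive step is exactly the application of Lemma~\ref{lem:Gamma_!^*constant} to the cartesian square expressing $\tilde a\circ i=i\circ a$, glued to cubes supplied by Lemmas~\ref{lem:_!^*basechange_!} and~\ref{lem:basechangeGamma} for the $\Co$ and $\mInt$ constituents of \eqref{eqn:isom-action-A}. The only cosmetic difference is that the paper needs just one pyramid (Lemma~\ref{lem:Gamma_!^*constant} already packages both $\Cnst$ faces and the $\mIBC$ face together), so your ``two copies, one for $a=\id$'' is redundant but harmless.
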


\begin{proof}
Let $n = 2\dim (H/K)$.  The compatibility of \eqref{eqn:isom-action-A} with the action of $a \in A$ is equivalent to the commutativity of the diagram obtained by gluing the pyramid
\[
\vc{\begin{tikzpicture}[xscale=4,yscale=2,z={(-0.4,-0.6)}]
\compuinit
\node (rrd) at (1,0,0) {$\cDb_K(X)$};
\node[pyrar] at (1-\tric,0.5,0.5*\tric) {$\Rel$};
\node[pyrab] at (1-\tric,0.5*\tric,0.5) {$\Cnst$};
\node (l) at (0,0.5,0.5) {$\bb$};
\node (rru) at (1,1,0) {$\cDb_H(\widetilde{X})$};
\node (frd) at (1,0,1) {$\cDb_K(X)$};
\draw[liner,->] (l) -- node[arl] {$\al \ubk_X^K$} (rrd);
\draw[<-] (rru) -- node[arl,pos=.7] {$\al \hamma_K^H i_!$} (rrd);
\draw[->] (rrd) -- node[arl] {$\al a^*$} (frd);
\draw[->] (l) -- node[arl] {$\al \ubk_{\widetilde{X}}^H[n]$} (rru);
\draw[->] (l) -- node[arr] {$\al \ubk_X^K$} (frd);
\node[cubel,xscale=-1] at (1,0.5,0.5) {$\IBC$};
\node[pyraf] at (1-\tric,0.5,1-0.5*\tric) {$\Rel$};
\node[pyrat] at (1-\tric,1-0.5*\tric,0.5) {$\Cnst$};
\node (fru) at (1,1,1) {$\cDb_H(\widetilde{X})$};
\draw[->] (rru) -- node[arl] {$\al a^*$} (fru);
\draw[<-] (fru) -- node[arl] {$\al \hamma_K^H i_!$}(frd);
\draw[->] (l) -- node[arl,pos=.7] {$\al \ubk_{\widetilde{X}}^H[n]$} (fru);
\end{tikzpicture}}
\]
which is commutative by Lemma \ref{lem:Gamma_!^*constant} to the two cubes
\[
\vc{\begin{tikzpicture}[xscale=3,yscale=2,z={(0.5,-0.4)}]
\compuinit
\node (rrd) at (1,0,0) {$\cDb_K(Y)$};
\node[cuber] at (0.5,0.5,0) {$\Comp$};
\node[cubed] at (1,0.5,0.5) {$\BC$};
\node[cubeb] at (0.5,0,0.5) {$\BC$};
\node (rlu) at (0,1,0) {$\cDb_K(\widetilde{X})$};
\node (rru) at (1,1,0) {$\cDb_K(Y)$};
\node (fru) at (1,1,1) {$\cDb_K(Y)$};
\node (frd) at (1,0,1) {$\cDb_K(Y)$};
\node (fld) at (0,0,1) {$\cDb_K(X)$};
\node (rld) at (0,0,0) {$\cDb_K(X)$};
\draw[liner,<-] (rru) -- node[arl,pos=.7] {$\al \id_!$} (rrd);
\draw[liner,->] (rld) -- node[arl,pos=.3] {$\al f_!$} (rrd);
\draw[liner,->] (rrd) -- node[arl] {$\al \id^*$} (frd);
\draw[->] (rlu) -- node[arl] {$\al g_!$} (rru);
\draw[->] (rru) -- node[arl] {$\al \id^*$} (fru);
\draw[<-] (fru) -- node[arl,pos=.3] {$\al \id_!$}(frd);
\draw[->] (fld) -- node[arr] {$\al f_!$} (frd);
\draw[->] (rld) -- node[arr] {$\al a^*$} (fld);
\draw[<-] (rlu) -- node[arr] {$\al i_!$} (rld);
\node[cubel] at (0,0.5,0.5) {$\BC$};
\node[cubet] at (0.5,1,0.5) {$\BC$};
\node[cubef] at (0.5,0.5,1) {$\Comp$};
\node (flu) at (0,1,1) {$\cDb_K(\widetilde{X})$};
\draw[->] (rlu) -- node[arl,pos=.7] {$\al a^*$} (flu);
\draw[->] (flu) -- node[arr,pos=.3] {$\al g_!$} (fru);
\draw[<-] (flu) -- node[arl,pos=.3] {$\al i_!$} (fld);
\end{tikzpicture}}\!
\vc{\begin{tikzpicture}[xscale=3,yscale=2,z={(0.5,-0.4)}]
\compuinit
\node (rrd) at (1,0,0) {$\cDb_K(Y)$};
\node[cuber] at (0.5,0.5,0) {$\mInt$};
\node[cubed] at (1,0.5,0.5) {$\mInt$};
\node[cubeb] at (0.5,0,0.5) {$\BC$};
\node (rlu) at (0,1,0) {$\cDb_H(\widetilde{X})$};
\node (rru) at (1,1,0) {$\cDb_H(Y)$};
\node (fru) at (1,1,1) {$\cDb_H(Y)$};
\node (frd) at (1,0,1) {$\cDb_K(Y)$};
\node (fld) at (0,0,1) {$\cDb_K(\widetilde{X})$};
\node (rld) at (0,0,0) {$\cDb_K(\widetilde{X})$};
\draw[liner,<-] (rru) -- node[arl,pos=.7] {$\al \hamma_K^H$} (rrd);
\draw[liner,->] (rld) -- node[arl,pos=.3] {$\al g_!$} (rrd);
\draw[liner,->] (rrd) -- node[arl] {$\al \id^*$} (frd);
\draw[->] (rlu) -- node[arl] {$\al g_!$} (rru);
\draw[->] (rru) -- node[arl] {$\al \id^*$} (fru);
\draw[<-] (fru) -- node[arl] {$\al \hamma_K^H$}(frd);
\draw[->] (fld) -- node[arr] {$\al g_!$} (frd);
\draw[->] (rld) -- node[arr] {$\al a^*$} (fld);
\draw[<-] (rlu) -- node[arr] {$\al \hamma_K^H$} (rld);
\node[cubel] at (0,0.5,0.5) {$\mInt$};
\node[cubet] at (0.5,1,0.5) {$\BC$};
\node[cubef] at (0.5,0.5,1) {$\mInt$};
\node (flu) at (0,1,1) {$\cDb_H(\widetilde{X})$};
\draw[->] (rlu) -- node[arl,pos=.7] {$\al a^*$} (flu);
\draw[->] (flu) -- node[arr,pos=.3] {$\al g_!$} (fru);
\draw[<-] (flu) -- node[arl,pos=.3] {$\al \hamma_K^H$} (fld);
\end{tikzpicture}}
\]
which are commutative by Lemmas~\ref{lem:_!^*basechange_!} and~\ref{lem:basechangeGamma}, respectively.
\end{proof}


\end{document}